\newtheorem{mainthm}{Main Theorem}
\newtheorem{thm}{Theorem}[section]
\newtheorem{cor}[thm]{Corollary}
\newtheorem{prop}[thm]{Proposition}
\newtheorem{lemma}[thm]{Lemma}
\newtheorem{conj}[thm]{Conjecture}
\theoremstyle{definition}
\newtheorem{defn}[thm]{Definition}
\theoremstyle{remark}
\newtheorem{rem}[thm]{Remark}
\numberwithin{equation}{section}
\newcommand{\bra}{\langle}
\newcommand{\ket}{\rangle}
\newcommand{\Wfin}{U(\fing,f)}
\newcommand{\sL}{{L}}
\newcommand{\slam}{{\lam}}
\newcommand{\sW}{W}
\newcommand{\roots}{\Delta}
\newcommand{\sroots}{\roots}
\newcommand{\isomap}{{\;\stackrel{_\sim}{\longrightarrow}\;}}
\newcommand{\fing}{\mathfrak{g}}
\newcommand{\mf}{\mathfrak}
\newcommand{\+}{\oplus}
\newcommand{\lam}{\lambda}
\newcommand{\Lam}{\Lambda}
\newcommand{\affg}{\widehat{\mathfrak{g}}}
\newcommand{\affW}{\widehat{W}}
\newcommand{\affh}{\widehat{\mathfrak{h}}}
\newcommand{\eW}{\widetilde{W}}
\newcommand{\dual}[1]{{#1}^*}
\newcommand{\wh}{\widehat}
\newcommand{\che}{^{\vee}}
\newcommand{\affM}{\widehat{M}}
\newcommand{\affL}{\widehat{L}}
\newcommand{\Vg}[1]{V^{#1}(\fing)} 
\newcommand{\Vs}[1]{V_k(#1)} 
\newcommand{\sV}{V_k(\fing)}
\newcommand{\finn}{\mathfrak{n}}
\newcommand{\finh}{\mathfrak{h}}
\newcommand{\on}{\operatorname}
\newcommand{\ra}{\rightarrow}
\newcommand{\mc}{\mathcal}
\newcommand{\BGG}{{\mathcal O}}
\newcommand{\HDS}{H_f}
\newcommand{\KL}{\text{\sf KL}}
\renewcommand{\*}{{\otimes}}
\newcommand{\sH}{H^{\mathrm{Lie}}}
\newcommand{\affBGG}{\widehat \BGG_{k}^{\fing_0}}
\newcommand{\cpps}{\check{P}_{+,\text{subreg}}}
\DeclareMathOperator{\var}{Var}
\DeclareMathOperator{\gr}{gr}
\DeclareMathOperator{\ad}{ad}
\DeclareMathOperator{\ann}{Ann}
\DeclareMathOperator{\en}{End}
\DeclareMathOperator{\Gal}{Gal}
\DeclareMathOperator{\prim}{Prim}
\DeclareMathOperator{\prin}{Pr}
\DeclareMathOperator{\str}{STr}
\DeclareMathOperator{\tr}{Tr}
\newcommand{\ppr}{P_{+,\text{reg}}}
\newcommand{\propor}{\sim}
\newcommand{\gachar}{\gamma}
\newcommand{\finW}{W}
\newcommand{\freg}{f_{\text{prin}}}
\newcommand{\fsubreg}{f_{\text{subreg}}}
\newcommand{\Osubreg}{\mathbb{O}_{\text{subreg}}}
\newcommand{\ov}{\overline}
\newcommand{\vac}{{\left|0\right>}}
\newcommand{\vir}{\text{Vir}}
\newcommand{\mbo}{\mathbb{O}}
\newcommand{\cliff}{\mathcal{C}\ell}
\newcommand{\qdim}{\on{qdim}}
\newcommand{\alstar}{\alpha_{*}}
\newcommand{\C}{\mathbb{C}}
\newcommand{\Q}{\mathbb{Q}}
\newcommand{\Z}{\mathbb{Z}}
\newcommand{\CC}{\mathcal{C}}
\newcommand{\CD}{\mathcal{D}}
\newcommand{\HH}{\mathcal{H}}
\newcommand{\W}{\mathscr{W}}
\newcommand{\CF}{\mathcal{F}}
\newcommand{\ma}{\mathfrak{a}}
\newcommand{\g}{\mathfrak{g}}
\newcommand{\h}{\mathfrak{h}}
\newcommand{\al}{\alpha}
\newcommand{\ga}{\gamma}
\newcommand{\D}{\Delta}
\newcommand{\eps}{\epsilon}
\newcommand{\la}{\lambda}
\newcommand{\La}{\Lambda}
\title[]{Rationality and Fusion Rules of Exceptional $\W$-Algebras}
\begin{document}

\begin{center}
{\LARGE \bf Rationality and Fusion Rules of Exceptional $\W$-Algebras} \par \bigskip

\renewcommand*{\thefootnote}{\fnsymbol{footnote}}
{\normalsize
Tomoyuki Arakawa\footnote{email: \texttt{arakawa@kurims.kyoto-u.ac.jp}}\textsuperscript{1},
Jethro van Ekeren\footnote{email: \texttt{jethrovanekeren@gmail.com}}\textsuperscript{2}
}

\par \bigskip

\textsuperscript{1}{\footnotesize Research Institute for Mathematical Sciences, Kyoto, Japan}

\par

\textsuperscript{2}{\footnotesize Instituto de Matem\'{a}tica e Estat\'{i}stica, UFF, Niter\'{o}i RJ, Brazil}

\par \bigskip
\end{center}

\vspace*{10mm}

\noindent
\textbf{Abstract.}
First, we prove 
the Kac-Wakimoto conjecture on modular invariance of characters
of exceptional affine $\W$-algebras. In fact more generally we prove modular invariance of characters of 
all lisse $\W$-algebras obtained through Hamiltonian reduction of admissible affine vertex algebras.
Second,
we prove the rationality of a large subclass of these $\W$-algebras, which includes all exceptional $\W$-algebras of type $A$ and lisse subregular $\W$-algebras in simply laced types. Third, for the latter cases we compute $S$-matrices and fusion rules.
Our results provide the first examples
of
rational 
 $\W$-algebras
associated with non-principal
distinguished nilpotent elements,
and the corresponding fusion rules are rather mysterious.
\vspace*{10mm}


\section{Introduction}

Let $\fing$ be a finite dimensional simple Lie algebra, $f \in \fing$ a nilpotent element, and $k \in \C$ a complex number. The universal affine $\W$-algebra $\W^k(\fing, f)$ of level $k$ is obtained from the universal affine vertex algebra $V^k(\fing)$ through the process of quantized Drinfeld-Sokolov reduction. This construction was introduced in \cite{FeiginFrenkel} for $f$ a principal nilpotent element, and in \cite{KacRoaWak03} for $f$ a general nilpotent element.

Affine $\W$-algebras arise as algebras of symmetries of integrable models \cite{Kaclecture17},
in the geometric Langlands program \cite{Fre07,Gat,AraFre},
the 4d/2d duality \cite{BeeLemLie15,Beem:2015yu,SonXieYan17,AraMT}, 
$\mathcal{N}=4$ super Yang Mills gauge theories \cite{GaiRap19,CreGai},
and as invariants of $4$-manifolds \cite{FeiginGukov}.

It is believed that for appropriate choices of nilpotent element $f \in \fing$ and level $k$ the simple quotient $\W_k(\fing, f)$ of $\W^k(\fing, f)$ is a rational and lisse vertex algebra, and as such gives rise to a rational conformal field theory.
Indeed let $k = -h^\vee + p/q$ be an admissible level for the affine Kac-Moody algebra $\affg$ associated with $\g$. (Recall this means that the simple quotient $V_k(\g)$ of $V^k(\g)$ is admissible 
as a representation of  $\affg$ \cite{KacWak89}.) Then for $f$ a principal nilpotent element the rationality of $\W_k(\fing, f)$ was conjectured by Frenkel, Kac and Wakimoto \cite{FKW} and proved by the first named author in \cite{A2012Dec}. In connection with general nilpotent element $f$ the notion of an \emph{exceptional pair} $(f, q)$, where $f \in \fing$ is nilpotent and $q \geq 1$ is an integer was introduced in \cite{KacWak08} (and later extended in \cite{EKV}). Kac and Wakimoto conjectured that $\W_k(\fing, f)$ is rational whenever $k = -h^\vee+p/q$ is admissible and $(f, q)$ forms an exceptional pair.

In \cite{Ara09b} it was shown that the associated variety \cite{Ara12} of the simple affine vertex algebra $V_k(\fing)$ equals the closure of a certain nilpotent orbit $\mbo_q \subset \fing$ and that $\W_k(\fing, f)$ is nonzero and lisse if $f \in \mbo_q$. At the risk of ambiguity we should like to refer to $(f, q)$ as an \emph{exceptional pair} whenever $f \in \mbo_q$. Restriction to those pairs $(f, q)$ for which $f$ is of standard Levi type recovers the notion of exceptional pair of \cite{EKV}, and further restriction to those pairs for which $q$ is coprime to the lacety $r^\vee$ recovers the original notion of exceptional pair of \cite{KacWak08}. For $\fing$ of type $A$ all these notions coincide.

It was conjectured in \cite{Ara09b} that all exceptional $\W$-algebras (now in the broader sense of exceptional) are rational. Our first main result gives strong evidence for this conjecture, and thus for the conjecture of Kac and Wakimoto.
\begin{mainthm}[Theorem \ref{Th:semisimplicity-of-Zhu},
Theorem \ref{thm:modular-invariance}]\label{MainTh:modulality}
Let $k = -h^\vee + p/q$ be an  admissible level for $\affg$, and let $f \in \mbo_q$ be a nilpotent element. 
Then the Ramond twisted Zhu algebra $A(\W)$
of $\W = \W_k(\fing,f)$ is semisimple.
Let $\{\mathbf{L}_1,\dots, \mathbf{L}_r\}$ be a complete set of representatives of the isomorphism classes of simple 
Ramond twisted
$\W$-modules, and
$S_{\mathbf{L}_i}(\tau \mid u)=\on{Tr}_{\mathbf{L}_i} (u_0 q^{L_0-c/24})$
for $u \in \W$, the associated trace function.
Then $S_{\mathbf{L}_i}(\tau \mid u)$ converges to a holomorphic function on the upper half plane for all $u\in \W$, $i=1,\dots,r$. Moreover there is a representation
$\rho:SL_2(\Z)\ra \en_{\C}(\C^r)$ such that
\begin{align*}
S_{\mathbf{L}_i}\left(\frac{a\tau+b}{c\tau+d} \mid (c\tau+d)^{-L_{[0]}} u \right) = \sum_j \rho(A)_{ij} S_{\mathbf{L}_j}(\tau \mid u)
\end{align*}
for all $u \in \W$.
\end{mainthm}
We note that if $f$ admits a good even grading, as is always the case for $\fing$ of type $A$, a Ramond twisted module is the same thing as an untwisted module in the usual sense.

Our next result establishes rationality of those $\W$-algebras appearing in Main Theorem \ref{MainTh:modulality} for which the set of principle (or coprinciple) admissible weights of level $k$ satisfies a certain integrability condition relative to $f$. 
Let $k$ be an admissible level for $\affg$. The irreducible highest weight representation $L(\widehat{\lam})$ of $\affg$ with highest weight $\widehat{\lam} = k\Lambda_0 + \lam$ is a $V_k(\g)$-module if and only if $\lam$ belongs to the set $\Pr^k$ of level $k$ principal (or coprinciple) admissible weights \cite{A12-2}. For such $\lam \in \Pr^k$ we consider the annihilating ideal $J_{\la} \subset U(\fing)$ of $L(\la)$ and for a nilpotent orbit $\mbo$ we denote by $\Pr^k_\mbo$ the subset of $\Pr^k$ consisting of those $\lam$ for which $\var(J_\lam) = \ov{\mbo}$. The annihilator $J_{\la}$ depends only on the orbit of $\la$ under the dot action of the finite Weyl group $W$, and we set $[\Pr^k_\mbo] = \Pr^k_\mbo / W\circ(-)$.
\begin{mainthm}[Theorem \ref{Th:simple-special-cases}]\label{MainTh:rationality}
Let $k = -h^\vee + p/q$ be an  admissible level for $\affg$, and let $f \in \mbo_q$ be a nilpotent element. Suppose $f$ admits a good even grading $\fing = \bigoplus_{j \in \Z} \fing_j$ such that every element of 
$[\Pr^k_{\mbo_q}]$ 
possesses a representative integrable with respect to $\fing_0$. Then the vertex algebra $\W_k(\fing, f)$ is rational and lisse, and all irreducible $\W_k(\fing, f)$-modules are obtained via quantised Drinfeld-Sokolov ``$-$''-reduction of level $k$ admissible $\affg$-modules.
\end{mainthm}

Main Theorem \ref{MainTh:rationality} proves the Kac-Wakimoto rationality conjecture for all exceptional $\W$-algebras of type $A$ (Theorem \ref{thm:rationality-of-type-A}).
It also proves the rationality of all exceptional subregular $\W$-algebras in simply laced types, see Theorem \ref{Th:rationality}. These latter algebras actually lie outside the class of Kac-Wakimoto exceptional $\W$-algebras. The values of $q$ for which $\mbo_q = \mbo_{\text{subreg}}$ are listed in Table \ref{table:Subregular.denom}.

Some special cases of Main Theorem \ref{MainTh:rationality} have already been proved; for $f$ a principal nilpotent element \cite{A2012Dec}, for the Bershadsky-Polyakov algebras $\W_k(\mathfrak{sl}_3, f_{\text{min}})$ \cite{Ara:BPalg}, and for $\W_k(\mathfrak{sl}_4, \fsubreg)$ \cite{CL:sl4}.

We note that
subregular $\W$-algebras in types $D$ and $E$
are \emph{distinguished}  $\W$-algebras,
that is, 
$\W$-algebras associated with distinguished nilpotent elements,
or equivalently,
$\W$-algebras that have  
 zero weight one subspaces.
Distinguished $\W$-algebras play a fundamental role 
among $\W$-algebras.
However,
the representation theory of distinguished $\W$-algebras that are not of principal type are  mysterious even at the level of finite $W$-algebras, since there are no canonical standard modules.
Main Theorem \ref{MainTh:rationality} provides the first examples
of rational
distinguished $\W$-algebras that are not of principal type.

We say a few words about the proofs of the theorems. The first step is to compute the Zhu algebra of $\W_k(\fing, f)$, which is a quotient of the finite $\W$-algebra $U(\g,f)$ \cite{Pre02}.
To do this we compute the Zhu algebra
of the admissible affine vertex algebra (Theorem \ref{Th:Zhu-admissible}),
and then
apply the commutativity \cite{A2012Dec}
of the Zhu algebra functor and the Drinfeld-Sokolov reduction functor. The irreducible $\W_k(\fing, f)$-modules are in bijection with those of the Zhu algebra. The role of the $\fing_0$-integrality condition is to ensure invariance of irreducible modules under the canonical action of the 
component group $C(f)$, allowing us to use results of Losev \cite{Los11} on the representation theory of finite $\W$-algebras, and thereby characterise the irreducible $\W_k(\fing, f)$-modules. It remains to rule out nontrivial extensions between irreducible modules, which is done following the same approach as in \cite{A2012Dec}. Here a result of Gorelik-Kac \cite{GorKac0905} on complete reducibility of admissible representations of $\affg$ is used.

By Huang's result \cite{Hua08rigidity},
the module category of a rational, lisse, self-dual vertex algebra is a modular tensor category.
Therefore Main Theorem \ref{MainTh:rationality} provides a huge supply of modular tensor categories. Following the approach of \cite{FKW} and \cite{AvE} we compute the modular $S$-matrix and fusion rules of $\W_k(\fing, f)$ in the cases $\fing$ simply laced and $f$ subregular. We now explain the general features in simplified form.

We recall that the irreducible modules of the simple affine vertex algebra $V_{p-h^\vee}(\fing)$ are parametrised by regular dominant integral weights of level $p$. The $S$-matrix of this vertex algebra is, up to a normalisation, given by
\begin{align*}
K_p^{\la, \mu} = \sum_{w \in W} \epsilon(w) e^{-\frac{2\pi i}{p} (w(\la), \mu)},
\end{align*}
where the indices $\la, \mu$ run over the set of regular dominant integral weights of $\affg$ of level $p$. These coefficients lie in the cyclotomic field $\Q(\zeta_{N})$ where $N$ equals $p$ times the order of the centre of the adjoint group of $\fing$. For $a$ coprime to $N$ we let $\varphi_a \in \Gal(\Q(\zeta_{N})/\Q)$ denote the automorphism defined by $\varphi_a(\zeta_N) = \zeta_N^a$.

The $S$-matrix of $\W = \W_{-h^\vee+p/q}(\fing, \fsubreg)$ is, up to a normalisation, the Kronecker product matrix
\begin{align*}
\varphi_p(C_q) \otimes \varphi_q(K_p),
\end{align*}
where $C_q$ is a sort of degenerate analogue of $K_p$ given explicitly by (Theorem \ref{thm:Smatrix.quasi-general})
\begin{align}\label{eq:intro.Smat}
C_q^{\la, \mu} = \sum_{w(\alstar) \in \D_+} \epsilon(w) \frac{\left<w(\alstar), x\right>}{\left<\alstar, x\right>} e^{-\frac{2\pi i}{q} (w(\la), \mu)}.
\end{align}
Here $\alstar$ is the unique positive root of the Lie subalgebra $\fing_0$ and $x$ is an arbitrary element of $\finh$ not orthogonal to $\alstar$. The indices $\la, \mu$ run over the set of weights $\gamma \in Q$ of level $q$ satisfying $\left<\ga,\al_i\right> \in \Z_+$ for $i=1,\ldots,\ell$ and $\left<\ga,\al_i\right> = 0$ for exactly one $i$.

Since the fusion product multiplicities are integers determined from the $S$-matrix via the Verlinde formula, hence Galois invariant, we deduce that the fusion algebra of $\W$ is the tensor product of the fusion algebra of $V_{p-h^\vee}(\fing)$ with the fusion algebra whose $S$-matrix is $C_q$. This factorisation is quite parallel to the result discovered in \cite{FKW}; that the fusion algebra of the principal $\W$-algebra $\W_{-h^\vee+p/q}(\fing, \freg)$ is more or less the tensor product of the fusion algebras of two simple affine vertex algebras.

For type $A$ the matrix 
$C_q$ is the $1 \times 1$ identity matrix and 
therefore the fusion rules of $\W$ depend only on the numerator $p$ 
and coincide with those of $V_{p-h^\vee}(\fing)$. 
For types $D$ and $E$ the matrix 
$C_q$  is non-trivial and the fusion rules of $\W$ are more interesting.
In most cases (but not quite all, see Conjecture \ref{conj:Dtype} below) $C_q$ is itself naturally identified with the $S$-matrix of a subregular $\W$-algebra. In those cases for which this $\W$-algebra has asymptotic growth less than $1$, that is all cases except $\fing = E_7$, $q=16,17$ and $\fing=E_8$, $q=27,28,29$, we are able to identify it as a simple current extension of a Virasoro minimal model, thus confirming the $S$-matrix computed by (\ref{eq:intro.Smat}). We summarise these results in Table \ref{table:php1}. Most of the rational lisse $\W$-algebras obtained above are not unitary. However 
we conjecture 
that
 $\W_{-117/11}(E_6,{\fsubreg})$ and $\W_{-267/16}(E_7,{\fsubreg})$
 are unitary, giving rise seemingly to two new unitary modular tensor categories.

Finally, let us make some comments on the relations of the present work to 4d/2d duality \cite{BeeLemLie15}.
It has been shown in \cite{Dan, Cre17, SonXieYan17,WanXie}
that the exceptional $\W$-algebras at boundary admissible levels
appear as vertex algebras obtained from 4d $\mc{N}=2$ superconformal field theories (or more precisely from Argyres-Douglas theories), and the corresponding modular tensor categories 
are expected to coincide with those arising from 
the Coulomb branches of the corresponding 
4 dimensional theories,
or the wild Hitchin moduli spaces \cite{FPYY18,DedGukNak}.
Furthermore, the  modular tensor categories
associated with
exceptional distinguished $\W$-algebras 
at boundary admissible levels
are closely connected with  the Jacobian rings of certain 
hypersurface singularities \cite{XieYan}. 
We hope to come back to these points in our future work.

\emph{Acknowledgements.}
Both authors would like to thank Eric Rowell for very interesting discussions, and the referees for many helpful comments and corrections. T. A. is supported in part by JSPS KAKENHI Grant Numbers 17H01086, 17K18724. J. vE. is supported by CNPq Grant Numbers 409598/2016-0 and 303806/2017-6 and by a grant from the Serrapilheira Institute (grant number Serra -- 1912-31433). Part of this work has been presented at the ICM Satellite Workshop on Mathematical Physics at ICTP-SAIFR, S\~{a}o Paulo, August 2018. The authors would like to thank the organisers of this event.

\section{Affine Lie Algebras and Admissible Weights}\label{sec:aff.adm}

Let $\fing$ be a complex simple finite dimensional Lie algebra with a fixed triangular decomposition $\fing=\mf{n}_-\+\finh\+\mf{n}_+$. Let $\Delta$ be the set of roots of $\fing$, let $\Delta_+$ be the set of positive roots and $W$ the Weyl group. Put $Q=\sum_{\alpha\in \Delta}\Z \alpha$ the root lattice and 
$\check{Q}=\sum_{\alpha\in \Delta} \Z \alpha^{\vee}$ the coroot lattice. Here $\alpha^{\vee}=2\alpha/(\alpha,\alpha)$ where $(~,~)$ is the invariant bilinear form normalised so that the highest root $\theta$ satisfies $(\theta,\theta)=2$. The highest short root is denoted by $\theta_s$ and satisfies $(\theta_s,\theta_s)=2/r^{\vee}$, where $r^{\vee}$ is the lacing number of $\fing$. Let $P$ be the weight lattice of $\fing$ and 
{$\check{P}$} the coweight lattice. The simple coroots $\al_i^\vee$ form a basis of $\check{Q}$ and the basis $\{\varpi_i\}$ of $P$ dual to $\{\al_i^\vee\}$ defines the fundamental weights $\varpi_i$ of $\fing$. Similarly $\varpi_i^{\vee}$ will denote the fundamental coweights, which form a basis of $\check{P}$ dual to the simple roots
$\{\alpha_i\}$. Let $\rho$ be the half sum of positive roots of $\fing$ and $\rho^{\vee}$ the half sum of positive coroots. Let $P_+=\sum_{i}\Z_{\geq 0}\varpi_i$ and
$\check{P}_+=\sum_{i}\Z_{\geq 0}\varpi_i^{\vee}$ be
the sets of dominant weights and dominant coweights, respectively, 
and for $n\in \Z_{\geq 0}$ set
\begin{align}
P^{n}_+=\{\lam\in P_+\mid \bra \lam, \alpha^{\vee}\ket\leq n\text{ for }\alpha\in \Delta_+\},
\quad
\check{P}^{n}_+=\{\lam\in \check{P}_+\mid 
\bra \alpha,\lam\ket \leq n\text{ for }\alpha\in \Delta_+\}.
\label{eq:dom-weights}
\end{align}

For $\lam\in \finh^*$, we denote by $M(\lam)$ the Verma $\fing$-module with highest weight $\lam$, and by $L(\lam)$ the unique simple quotient of $M(\lam)$. A weight $\la$ is said to be dominant if $\left<\la+\rho, \al^\vee\right> \notin \Z_{<0}$ for all $\al \in \D_+$ and regular if it has trivial stabiliser under the `dot' action $y \circ \lam = y(\lam+\rho)-\rho$ of $W$.

A primitive ideal in the universal enveloping algebra $U(\fing)$ is by definition the annihilator of some irreducible $\fing$-module. Set
\begin{align*}
J_{\lam}=\on{Ann}_{U(\fing)}L(\lam).
\end{align*}
By Duflo's theorem \cite{Duf77}, any primitive ideal in $U(\fing)$ is of the form $J_{\lam}$ for some $\lam\in \finh^*$.

The centre of $U(\fing)$ is denoted 
$\mc{Z}(\fing)$, and the character $\gachar_\lam : \mc{Z}(\fing) \rightarrow \C$ is defined by $z v_\lam = \gachar_\lam(z) v_\lam$, where $v_\lam$ is a highest weight vector of $L(\lam)$.

Let
\begin{align*}
\affg=\fing[t,t^{-1}] \+ \C K
\end{align*}
be the affine Kac-Moody algebra associated with $\fing$, whose commutation relations are given by
\begin{align*}
& [xt^m,yt^n]=[x,y]t^{m+n}+ m (x,y) \delta_{m+n,0}K,
\quad [K,\affg]=0.
\end{align*}
Let $\affh=\finh\+ \C K$ be the standard Cartan subalgebra of $\affg$, and $\widetilde{\mathfrak{h}}=\finh\+ \C K\+ \C D$ the extended Cartan subalgebra of $\affg$. The dual of $\widetilde{\h}$ is $\widetilde{\h}^*=\finh^*\+ \C \Lam_0\+ \C \delta$, where $\Lam_0(K)=\delta(D)=1$ and $\Lam(\finh+\C D)=\delta(\finh\+ C K)=0$. The dual $\affh^*$ of $\affh$ is identified with the subspace $\finh\+\C \Lam_0\subset \widetilde{\h}^*$.
 
Let $\widehat{\Delta} \subset \widetilde{\h}^*$ be the set of roots of $\affg$ and
\begin{align*}
\widehat{\Delta}^{\text{re}} &= \{\alpha+n\delta\mid \alpha\in \Delta, n\in \Z\}, \\
\text{and} \quad \widehat{\Delta}^{\text{re}}_+ &= \{\alpha + n\delta \mid \alpha \in \Delta_+, n \in \Z_{\geq 0}\} \sqcup \{-\alpha+n\delta\mid \alpha\in \Delta_+, n \in \Z_{\geq 1}\}
\end{align*}
the subsets of real roots and real positive roots, respectively.

We denote by $\affW$ the Weyl group of $\affg$, so $\affW = W \ltimes \check{Q}$, and we denote by $\eW=W\ltimes \check{P}$ the extended affine Weyl group of $\fing$. We have $\eW= \eW_+ \ltimes \affW$, where $\eW_+$ is a finite subgroup of $\eW$ which we now describe. Write $\theta$ as a sum of simple roots: $\theta=\sum_{i=1}^\ell a_i \alpha_i$, and set $J=\{ i \in \{1,\dots,\ell\} \mid a_i=1\}$. Then we have
\begin{align}
\eW_+ = \{\pi_j = t_{\varpi_j} \bar\pi_j \mid j \in J \},
\label{eq:elements-of-eW-of-Dynkin-auto}
\end{align}
where $\bar \pi_j$ is the unique element of $W$ which fixes the set $\{\alpha_1,\dots,\alpha_{\ell},-\theta\}$ and which satisfies $\bar\pi_j(-\theta) = \alpha_j$.
\begin{lemma}[{\cite[Lemma 4.1.1]{FKW}}]\label{lemma:I.choose.Q}
Let $\fing$ be a simply laced Lie algebra, and $\affg$ its affinisation. If $\mu$ is a dominant integral weight of level coprime to $|J|$ then the elements of the $\eW_+$-orbit of $\mu$ represent a complete set of classes of $P/Q$. In particular there exists a unique $\pi \in \eW_+$ such that ${\pi(\mu)} \in Q$.
\end{lemma}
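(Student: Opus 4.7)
The plan is to reduce the statement to the standard identification of $\eW_+$ with the fundamental group $P/Q$, combined with a short computation of how the level of $\mu$ enters the action. In simply laced type we have $\eW = W \ltimes P$ while $\affW = W \ltimes Q$, so that $\eW / \affW \cong P/Q$, and the subgroup $\eW_+ \subset \eW$ described in (\ref{eq:elements-of-eW-of-Dynkin-auto}) (with $\pi_0 = 1$ adjoined when the convention of the excerpt omits it) projects isomorphically onto $P/Q$ via $\pi_j = t_{\varpi_j}\bar\pi_j \mapsto \varpi_j + Q$; this is the familiar description of the stabiliser of the fundamental alcove in $\eW$ as the fundamental group, and it gives $|\eW_+| = |J| = |P/Q|$.

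Next I would compute the effect of $\pi_j$ on $\mu$ explicitly. For a weight $\mu$ of level $m$ the level-$m$ affine action of $t_{\varpi_j}$ on the finite projection of $\mu$ is translation by $m\varpi_j$, so that
\begin{align*}
\pi_j(\mu) = \bar\pi_j(\mu) + m\varpi_j
\end{align*}
as an element of $P$. Since $\bar\pi_j \in W$ preserves every class of $P/Q$, reducing modulo $Q$ yields $\pi_j(\mu) \equiv \mu + m\varpi_j \pmod{Q}$. Consequently the assignment $\pi \mapsto \pi(\mu) - \mu$ from $\eW_+$ to $P/Q$ factors as the canonical isomorphism $\eW_+ \cong P/Q$ followed by the endomorphism ``multiplication by $m$'' of $P/Q$.

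Finally, the hypothesis that the level $m$ is coprime to $|J| = |P/Q|$ makes multiplication by $m$ an automorphism of $P/Q$. Hence the orbit $\eW_+\cdot\mu$, read modulo $Q$, meets every coset of $P/Q$ exactly once; this proves both the ``complete set of classes'' assertion and the existence and uniqueness of $\pi \in \eW_+$ with $\pi(\mu) \in Q$. The only real subtlety is bookkeeping --- confirming that the action in play is the level-dependent affine action on weights (so that $t_{\varpi_j}$ really contributes $m\varpi_j$ rather than $\varpi_j$) and identifying the map $\eW_+ \to P/Q$ correctly --- after which no further calculation is required.
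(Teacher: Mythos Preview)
Your argument is correct and is the standard one. The paper does not supply its own proof of this lemma; it simply cites \cite[Lemma~4.1.1]{FKW}, so there is nothing in the present paper to compare against beyond confirming that your reasoning matches the well-known mechanism: the identification $\eW_+ \cong P/Q$ (in simply laced type) via $\pi_j \mapsto \varpi_j + Q$, the computation $\pi_j(\mu) \equiv \mu + m\varpi_j \pmod{Q}$ for $\mu$ of level $m$, and the observation that multiplication by $m$ is an automorphism of the finite abelian group $P/Q$ precisely when $(m,|P/Q|)=1$. Your remark that $W$ acts trivially on $P/Q$ (since each simple reflection shifts by an element of $Q$) is the only point one might want to make more explicit, but it is immediate.

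One bookkeeping caveat worth flagging: as you noticed, the paper's displayed definition $J = \{i \in \{1,\dots,\ell\} \mid a_i = 1\}$ omits the index $0$, yet throughout the paper $|J|$ is used where $|P/Q| = |\eW_+|$ is meant (see e.g.\ Lemma~\ref{lemma:form.denominators} and the discussion preceding \eqref{eq:q.coprime.bijec}). Your parenthetical about adjoining $\pi_0 = 1$ handles this correctly; the intended reading is $|J| = |P/Q|$, i.e.\ $J$ should include $0$.
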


For $\lam\in \dual{\affh}$, let $\wh\Delta(\lam)$ denote its integral root system and $\affW(\lam)$ its integral Weyl group. That is
 \begin{align*}
&\wh\Delta(\lam) = \{ \alpha \in \wh{\Delta}^{\text{re}} \mid \left< \lam+\widehat{\rho},\alpha^\vee \right> \in \Z\},
&\affW(\lam)=\left< s_{\alpha} \mid \alpha \in \wh \Delta(\lam) \right> \subset \affW,
\end{align*}
where $s_{\alpha}$ is the reflection in the root $\alpha$ and $\widehat{\rho}=\rho+h^{\vee}\Lam_0$. Let $\wh\Delta(\lam)_+=\wh{\Delta}(\lam) \cap \wh{\Delta}^{\text{re}}_+$ be the set of positive roots of $\wh{\Delta}(\lam)$ and $\wh\Pi(\lam)\subset \wh{\Delta}(\lam)_+$ its set of simple roots.

A weight $\lam\in \dual{\affh}$ is called {\em admissible} \cite{KacWak89} if
\begin{enumerate}
\item $\lam$ is regular dominant, i.e., $\left< \lam+\widehat{\rho},\alpha\che \right> \not \in \{0,-1,-2,\dots\}$ for all $\alpha\in \widehat{\Delta}^{\text{re}}_+$, and

\item $\Q\wh\Delta(\lam)=\Q \wh{\Delta}^{\text{re}}$.
\end{enumerate}

For $\lam\in \dual{\finh}$ and $k\in \C$, we denote by $\affM_k(\lam)$ the Verma module of $\affg$ with highest weight $\lam+k\Lam_0\in \affh^*$, and by $\affL_k( \lam)$ the unique simple quotient of $\affM_k(\lam)$. The simple highest weight representation $\affL_k(\lam)$ is called admissible if $\widehat\lam$ is admissible. A complex number $k$ is called admissible if $k\Lam_0$ is admissible.

When clear from context we shall write, as above, $\widehat\lam$ for $\lam+k\La_0$. Occasionally we shall need to use the notation $\ov{\hat\lam}$ for the image of $\wh\lam \in \affh^*$ under the natural projection $\affh^* \rightarrow \h^*$.
\begin{prop}[\cite{KacWak89,KacWak08}]
\label{Pro:admissible number}
The number $k$ is admissible if and only if 
\begin{align*}
k+h\che=\frac{p}{q}
\quad \text{with } p,q\in \Z_{\geq 1},\ (p,q)=1,\ p\geq 
\begin{cases}
h\che&\text{if }(r\che, q)=1, \\
h&\text{if }(r\che,q)=r\che,
\end{cases}
 \end{align*}
where $h$ and $h^{\vee}$ are the Coxeter number and the dual Coxeter number of $\fing$, respectively. Furthermore
\begin{align*}
\wh{\Pi}(k\Lam_0) = \{\dot{\alpha_0},\alpha_1,\alpha_2,\dots,\alpha_{\ell}\},
\end{align*}
where 
\begin{align*}
\dot{\alpha_0} = 
\begin{cases}
-\theta+q\delta&\text{if }(r\che,q)=1, \\
 -\theta_s+\frac{q}{r\che}\delta&\text{if }(r\che ,q)=r\che.
\end{cases}
\end{align*}
\end{prop}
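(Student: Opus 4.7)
My strategy is to unpack the two admissibility conditions applied to $\lam = k\La_0$. For an arbitrary real root $\beta = \al + n\delta$ with $\al\in\D$ and $n\in\Z$, I first compute the pairing
\begin{align*}
\left<k\La_0 + \wh\rho,\, \beta\che\right> = \frac{2n(k+h\che)}{(\al,\al)} + \left<\rho, \al\che\right>,
\end{align*}
using $(\La_0, \al) = 0$, $(\La_0, \delta) = 1$, $(\rho, \delta) = 0$ and $\wh\rho = \rho + h\che\La_0$. Condition (2) of admissibility, $\Q\wh\D(\lam) = \Q\wh\D^{\text{re}}$, is equivalent to $k + h\che \in \Q$ (otherwise $\wh\D(\lam)$ would contain no element with non-zero $\delta$-component). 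Writing $k + h\che = p/q$ in lowest terms with $q > 0$, the integrality condition $\left<k\La_0+\wh\rho, \beta\che\right>\in\Z$ becomes $q \mid n$ for long $\al$ and $q \mid n r\che$ for short $\al$.

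Setting $d = \gcd(q, r\che) \in \{1, r\che\}$, I split into two cases. In Case A ($d=1$) both integrality conditions collapse to $q \mid n$, so $\wh\D(\lam) = \{\al + mq\delta \mid \al \in \D,\, m \in \Z\}$, a rescaled copy of the untwisted affine root system. In Case B ($d = r\che$) the short-root condition relaxes to $(q/r\che) \mid n$, producing an affine root system of twisted type with mixed ``denominators'' in $\delta$. In each case I would verify that $\{\dot{\al_0}, \al_1, \dots, \al_\ell\}$, with $\dot{\al_0}$ as in the statement, is a set of simple positive roots of $\wh\D(\lam)$ by decomposing an arbitrary positive integral root as a non-negative integer combination. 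The key identities are $q\delta = \theta + \dot{\al_0}$ (Case A) or $(q/r\che)\delta = \theta_s + \dot{\al_0}$ (Case B), combined with the standard facts that $\theta - \al \in \sum_i \Z_{\geq 0}\al_i$ for any $\al\in\D_+$ and $\theta_s - \al \in \sum_i \Z_{\geq 0}\al_i$ for any short $\al \in \D_+$.

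Condition (1) of admissibility (regular dominance) is automatic on non-integral positive real roots (the pairing is non-integer) and on integral positive roots of the form $\al + n\delta$ with $\al\in\D_+$ (the pairing is positive, which forces $p > 0$). The binding constraints come from positive integral roots $-\al + n\delta$ with $\al\in\D_+$ at the smallest allowed $n$: namely $n = q$ for long $\al$, and additionally $n = q/r\che$ for short $\al$ in Case B. Substituting, and using $\left<\rho, \theta\che\right> = h\che - 1$ and $\left<\rho, \theta_s\che\right> = h - 1$, one extracts the tightest bound $p \geq h\che$ from $\al = \theta$ in Case A, and $p \geq h$ from $\al = \theta_s$ in Case B (the short-root bound supersedes the long-root one since $h > h\che$ in all non-simply-laced types).

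The principal obstacle is the bookkeeping in Case B, where the integral root system has mixed long- and short-root sectors with different denominators in $\delta$. In particular, verifying that $\{-\theta_s + (q/r\che)\delta, \al_1, \dots, \al_\ell\}$ really is a simple positive system for $\wh\D(\lam)$ requires a simultaneous decomposition argument in both sectors, leveraging the combinatorial fact $\theta_s - \al \in \sum_i \Z_{\geq 0}\al_i$ for short $\al\in\D_+$. The rest of the argument is a direct unwinding of the admissibility definitions.
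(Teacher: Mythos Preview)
The paper does not prove this proposition; it is quoted as a known result from \cite{KacWak89,KacWak08} without argument. Your plan is a correct and standard direct verification: computing $\left<k\La_0+\wh\rho,\beta\che\right>$ for $\beta=\al+n\delta$, reading off the integrality condition as a divisibility on $n$, splitting on $\gcd(q,r\che)\in\{1,r\che\}$ (which is exhaustive since $r\che\in\{1,2,3\}$ is $1$ or prime), and then extracting the bound on $p$ from the pairing with $\dot{\al_0}$ using $\left<\rho,\theta\che\right>=h\che-1$ and $\left<\rho,\theta_s\che\right>=h-1$.

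One small streamlining: once you have identified $\wh\Pi(k\La_0)$, regular dominance on all of $\wh\D(\lam)_+$ is equivalent to positivity of the pairing with each simple root of $\wh\D(\lam)$. The finite simple roots $\al_1,\dots,\al_\ell$ give $\left<\rho,\al_i\che\right>=1>0$ automatically, so the only nontrivial inequality is the one coming from $\dot{\al_0}$, which immediately yields $p\geq h\che$ (Case A) or $p\geq h$ (Case B). This saves you the case analysis over all positive integral roots. Your Case~B verification that $\{-\theta_s+(q/r\che)\delta,\al_1,\dots,\al_\ell\}$ is a simple system is precisely the identification of $\wh\D(k\La_0)$ with the real root system of the Langlands dual affinisation ${}^L\affg$, as the paper remarks just after the proposition; if you want to avoid invoking that structure theory, the decomposition argument you sketch (using $\theta_s-\al\in\sum_i\Z_{\geq 0}\al_i$ for short $\al\in\D_+$) does work but requires separate handling of the long-root sector $\al+mq\delta$ via $q\delta = r\che\cdot(q/r\che)\delta = r\che(\theta_s+\dot{\al_0})$, and you need $r\che\theta_s - \al \in \sum_i\Z_{\geq 0}\al_i$ for long $\al\in\D_+$, which is a case check.
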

Let $k$ be an admissible number.
Then
$\widehat{\Delta} (k\Lam_0)\cong
\begin{cases} \widehat{\Delta}^{\text{re}}
&\text{if }(q|r^{\vee})=1,\\
{}^L\widehat{\Delta}^{\text{re}}&\text{if }(q|r^{\vee})\ne 1,
\end{cases}$
where ${}^L\widehat{\Delta}^{\text{re}}$
is the real roots system of the Langlands dual ${}^L\affg$ 
of $\affg$.
Let $\prin^k\subset \finh^*$
be the set of weights $\lam$ such that 
$\widehat\lam=\lam+k\Lam_0$ is admissible and 
there exists $y\in \eW$
such that $\widehat{\Delta}(\widehat\lam)=y(\widehat{\Delta}(k\Lam_0))$.
Such admissible weights are said to be \emph{principal admissible} \cite{KacWak89}
if $(q,r^{\vee})=1$
or
 \emph{coprincipal admissible} \cite{AvE}
 if $(q,r^{\vee})\ne 1$.
 Let $\prin^k_{\Z}=\prin^k\cap P$.
Then $\prin^k$ decomposes as
\begin{align}
\prin^k = \bigcup_{\stackrel{y \in \eW}{y(\widehat{\Delta}(k\Lam_0)_+) \subset \widehat{\Delta}^{\text{re}}_+}} \prin^k_{y},
\quad \text{where} \quad 
\prin^k_{y} =\{ \lam\mid \widehat{\lam} \in y \circ (\prin^k_{\Z}+k\Lam_0)\}.
\label{eq:set-of-ad-wts}
\end{align}

For later purposes let us assume that
the denominator 
$q$ of $k$ is coprime to $r^{\vee}$ for the rest of this section and describe the set
$\Pr^k$ of principal admissible weights in more detail.
We have {\cite[(3.52)]{KacWak89}}
\begin{align}
\prin^k_{y} \cap  \prin^k_{y'}\ne \emptyset \iff
\prin^k_{y} = \prin^k_{y'} \iff y' = y t_{q\varpi_j} \bar \pi_j \quad \text{for some $j\in J$},
\label{eq:adm-wts-e}
\end{align}
where $q$ is the denominator of $k$.
If $p$ is the numerator of $k$ as in Proposition \ref{Pro:admissible number},
we have
\begin{align}
\prin^k_{\Z}=P_+^{p-h^{\vee}}.
\end{align} 
The cardinality of $\prin^k$ is $|\prin^k| = q^{\ell} |{P}_+^{p-h^\vee}|$ (\cite[Section 1.5]{FKW},  see also \cite[Proposition 3.2]{A12-2}).

Let $\phi$ denote the isometry of $\affh$ defined by $\phi|_{\finh} = 1$ and $\phi(\La_0) = (1/q)\La_0$. For $\al^\vee \in \finh$ the translate {\cite[Section 1.2]{FKW}} by $\eta \in \finh$ is \
\begin{align*}
t_{\eta}(\al^\vee + sK) = \al^\vee + \left( s - \eta(\al^\vee) \right)K.
\end{align*}
By \eqref{eq:set-of-ad-wts} any element of $\prin^k$ is of the form $\lam$ where
\begin{align}\label{eq:adm.wt.param}
\lam + k\Lambda_0 = \widehat\la = \wh y \phi(\wh\nu) - \wh\rho,
\end{align}
with $\wh\nu = p\La_0 + \nu$,
$\nu \in P_{+, \text{reg}}^p = \{\lam\in P\mid
0< \left<\lam, \alpha^{\vee}\right> < p\text{ for all } \alpha\in \Delta_+\}$, and $\wh y \in \eW$. We may write $\wh y = y t_{-\eta} = t_{\beta} y$ where $y \in \finW$ and $\eta, \beta \in P$, and put $\wh\eta = q\La_0 + \eta$. In this way we associate triples $(y, \eta, \nu)$ and $(y, \wh\eta, \wh\nu)$ to $\lambda \in \prin^k$. The associated triple is unique up to the action of the group $\eW_+$ by
\begin{align}\label{eq:Wplus.action.triples}
\pi : (y, \wh\eta, \wh\nu) \mapsto (y \ov\pi^{-1}, \pi(\wh\eta), \pi(\wh\nu)).
\end{align}

\section{Zhu's Algebras of Admissible Affine Vertex Algebras}\label{sec:zhu.alg}

For $k\in \C$ let $\Vg{k}$ be the universal affine vertex algebra associated with $\fing$ at level $k$. By definition
\begin{align*}
\Vg{k}=U(\affg)\*_{U(\fing[t]\+ \C K)}\C_k
\end{align*}
as a $\affg$-module, where $\C_k$ denotes the one dimensional representation of $\fing[t]\+ \C K$ on which $\fing[t]$ acts trivially and $K$ acts as multiplication by $k$. In this paper we assume that $k\ne -h^{\vee}$, so the vertex algebra $\Vg{k}$ is conformal with Virasoro element $L\in \Vg{k}$ and corresponding field
\begin{align*}
L(z)=\sum_{n\in \Z}L_nz^{-n-2}
\end{align*}
given by the Sugawara construction. The central charge of $V^k(\fing)$ is $\frac{k \dim \fing}{k+h^\vee}$. The unique simple 
quotient $\Vs{\fing}$ of $\Vg{k}$ is called the {\em simple affine vertex algebra} associated with $\fing$ at level $k$. Note that $\Vs{\fing} \cong \affL_k(0)$ as a $\affg$-module.

The Zhu algebra (more precisely the Ramond twisted Zhu algebra) $A(V)$ of the vertex algebra $V$ is the quotient of $V$ by its vector subspace $\{a *_{-2} b \mid a, b \in V\}$, where by definition
\[
a *_n b = \sum_{j \in \Z_+} \binom{\Delta(a)}{j} a_{(n+j)}b,
\]
equipped with the associative product $a \otimes b \mapsto a *_{-1} b$. We denote in general the component of lowest $L_0$-eigenvalue in a graded $V$-module $M$ by $M_{\text{top}}$. This graded piece acquires the natural structure of $A(V)$-module, and it is known that the correspondence $M \mapsto M_{\text{top}}$ is a bijection \cite{Zhu96,De-Kac06} from the set of isomorphism classes of irreducible Ramond twisted $V$-modules to the set of simple $A(V)$-modules. 

For all $k$ there exists a natural isomorphism $A(\Vg{k})\cong U(\fing)$ and hence
\begin{align}
A(\Vs{\fing})\cong U(\fing)/I_k
\end{align}
for some two-sided ideal $I_k$ of $U(\fing)$. Thus $\affL_k(\lam)$ is a $\Vs{\fing}$-module if and only if $I_k \cdot L(\lam)=0$, that is to say if $I_k \subset J_{\lam}$. The following assertion was conjectured in \cite{AdaMil95} and proved by the first named author.
\begin{thm}[{\cite{A12-2}}]\label{Th:rationality-of-admissibleVA}
Let $k$ be an admissible number for $\affg$ and $\lam \in \dual{\finh}$. Then $\affL_k(\lam)$ is a $\Vs{\fing}$-module if and only if $\lam\in \prin^k$.
\end{thm}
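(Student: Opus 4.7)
The plan is to compute the Zhu algebra $A(\sV) \cong U(\fing)/I_k$ explicitly and then characterize the weights $\lam$ for which $I_k \subset J_\lam$. The ideal $I_k$ is the image of the maximal proper $\affg$-submodule $N_k \subset \Vg{k}$ under the Zhu functor $A(\Vg{k}) \cong U(\fing)$. For $k$ admissible, work of Kac--Wakimoto identifies $N_k$ as the $\affg$-submodule generated by a distinguished singular vector $v \in \Vg{k}$ of weight $s_{\dot{\alpha_0}} \circ k\Lam_0$, where $\dot{\alpha_0}$ is the affine simple root of $\wh\Pi(k\Lam_0)$ from Proposition \ref{Pro:admissible number} (equal to $-\theta+q\delta$ in the principal case and $-\theta_s+(q/r\che)\delta$ in the coprincipal case). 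This $v$ is of Malikov--Feigin--Fuchs type, and its image under the Zhu map is an explicit Shapovalov-like element $\Phi_k \in U(\fing)$. Hence $\affL_k(\lam)$ is a $\sV$-module if and only if $\Phi_k$ annihilates a highest weight vector of $L(\lam)$, if and only if $I_k \subset J_\lam$.

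For the ``$\Leftarrow$'' direction, let $\lam \in \prin^k$ with $\wh\Delta(\wh\lam) = \wh y(\wh\Delta(k\Lam_0))$ for some $\wh y \in \eW$, writing $\wh\lam = \wh y \phi(\wh\nu) - \wh\rho$ as in \eqref{eq:adm.wt.param}. The integral Weyl group $\affW(\wh\lam)$ then contains the conjugated reflection $\wh y\, s_{\dot{\alpha_0}}\wh y\inv$, so that $\affM_k(\lam)$ admits a singular vector of the same combinatorial type as $v$, which must vanish in the simple quotient $\affL_k(\lam)$. Translating through the Zhu functor, $\Phi_k$ kills $\affL_k(\lam)_{\text{top}} = L(\lam)$, whence $I_k \subset J_\lam$ and $\affL_k(\lam)$ is a $\sV$-module.

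The ``$\Rightarrow$'' direction is more delicate. If $I_k \subset J_\lam$ then the associated variety $\var(J_\lam) \subset \fing^*$ is contained in $\var(I_k)$; by \cite{Ara09b} the latter equals $\ov{\mbo_q}$, the associated variety of $\sV$. Combined with the integrality constraints on $\wh\lam$ extracted from the relation $\Phi_k\cdot L(\lam)=0$, this forces $\wh\lam$ to be admissible with $\wh\Delta(\wh\lam)$ lying in the $\eW$-orbit of $\wh\Delta(k\Lam_0)$, placing $\lam \in \prin^k$.

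The main obstacle is precisely this ``$\Rightarrow$'' direction: the set of $\lam$ with $I_k \subset J_\lam$ could a priori be strictly larger than $\prin^k$, and ruling this out requires both the associated variety computation of \cite{Ara09b} and the classification of primitive ideals with prescribed associated variety (due to Joseph, Borho--Kraft, Losev), matched combinatorially against the explicit parametrization of $\prin^k$ in \eqref{eq:set-of-ad-wts}. The ``$\Leftarrow$'' direction, by contrast, is essentially a direct consequence of the Kac--Wakimoto analysis of singular vectors in admissible Verma modules.
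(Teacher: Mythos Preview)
The paper does not prove this statement; it is quoted from \cite{A12-2} as a black-box input, so there is no ``paper's own proof'' to compare against. What follows is an assessment of your sketch on its own merits.

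Your ``$\Leftarrow$'' direction is roughly the right shape but skips the crucial step. Knowing that $\affW(\wh\lam)$ contains a conjugate of $s_{\dot{\alpha_0}}$ tells you there is \emph{some} singular vector in $\affM_k(\lam)$, but not that the \emph{specific} singular vector $v$ generating $N_k$ (or rather its image acting on $\affL_k(\lam)$) vanishes. You need to explain why the action of the element $\Phi_k \in U(\fing)$ on $L(\lam)$ is zero; this is what \cite{A12-2} actually does, via an explicit analysis of the projected singular vector and Jantzen-type arguments, not by an abstract existence statement about $\affW(\wh\lam)$.

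Your ``$\Rightarrow$'' direction has a more serious gap. The sentence ``Combined with the integrality constraints on $\wh\lam$ extracted from the relation $\Phi_k\cdot L(\lam)=0$, this forces $\wh\lam$ to be admissible with $\wh\Delta(\wh\lam)$ lying in the $\eW$-orbit of $\wh\Delta(k\Lam_0)$'' is not an argument but a restatement of the conclusion. The inclusion $\var(J_\lam) \subset \ov{\mbo_q}$ constrains only the \emph{size} of $\Delta(\lam)$ (via Joseph's dimension formula), not its position in $\wh\Delta$; it does not by itself force regular dominance or that $\wh\Delta(\wh\lam)$ is a $\eW$-translate of $\wh\Delta(k\Lam_0)$. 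You have not said what the ``integrality constraints'' are, how they are extracted from $\Phi_k$, or why together with the variety bound they pin down $\lam$ to $\prin^k$. There is also a potential circularity: the identification $\var(I_k)=\ov{\mbo_q}$ in \cite{Ara09b} is logically downstream of (or at least intertwined with) the classification you are trying to prove, so invoking it here requires care. The actual proof in \cite{A12-2} proceeds instead by a direct analysis of when the explicit Zhu image of the singular vector lies in $J_\lam$, without appealing to associated varieties.
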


\begin{cor}\label{Co:Zhu-modules-in-O}
Any $A(\Vs{\fing})$-module on which $\finn_+$ acts locally nilpotently and $\finh$ acts locally finitely is a direct sum of $L(\lam)$ with $\lam \in \prin^k$.
\end{cor}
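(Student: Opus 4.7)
The plan is to combine Theorem \ref{Th:rationality-of-admissibleVA} with three standard facts: existence of highest weight vectors under our local finiteness hypotheses, the action of the centre $\mathcal{Z}(\fing)$ on cyclic highest weight modules, and the fact that each orbit of the dot action $W\circ$ contains at most one regular dominant weight. The first step is to observe that any nonzero $U(\fing)$-module on which $\finn_+$ acts locally nilpotently and $\finh$ acts locally finitely admits a nonzero vector $v$ of some $\finh$-weight $\mu$ with $\finn_+ v=0$; this follows by applying Engel's theorem to the finite-dimensional $\finn_+$-module $U(\finn_+)w$ for any nonzero $w$ and then finding an $\finh$-eigenvector in the space of $\finn_+$-invariants (which is $\finh$-stable since $[\finh,\finn_+]\subset\finn_+$). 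For such a highest weight vector $v \in M$ the cyclic submodule $U(\fing)v$ is a highest weight quotient of the Verma module $M(\mu)$, so has unique simple quotient $L(\mu)$. Since $I_k$ kills $v$ it kills $L(\mu)$, whence $I_k\subset J_\mu$; Theorem \ref{Th:rationality-of-admissibleVA} then forces $\mu\in\prin^k$.

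Next I would strengthen this to the claim that $U(\fing)v \cong L(\mu)$ whenever $v$ is a highest weight vector of weight $\mu\in\prin^k$. Otherwise $U(\fing)v$ contains a nonzero proper submodule $K$; the weight $\mu$ subspace of $U(\fing)v$ is one-dimensional and spanned by $v\notin K$, so repeating the previous step inside $K$ yields a highest weight vector $v' \in K$ of some weight $\mu' < \mu$. Since $\mathcal{Z}(\fing)$ acts on all of $U(\fing)v$ by the character $\gachar_\mu$, we obtain $\gachar_{\mu'}=\gachar_\mu$, and hence $\mu' \in W\circ\mu$. But the previous step applied to $v'$ gives $\mu'\in\prin^k$, meaning $\mu'$ is regular dominant, whereas $\mu$ is the unique regular dominant element of its dot orbit; this contradicts $\mu'<\mu$.

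The last and most delicate step is complete reducibility. Let $N=\mathrm{soc}(M)$; by the preceding steps $N$ is a direct sum of various $L(\mu)$ with $\mu\in\prin^k$. If $M/N \ne 0$, applying the first two steps to $M/N$ yields a simple submodule $L(\mu) \subset M/N$ with $\mu\in\prin^k$, and letting $E\subset M$ be its preimage one obtains an extension $0 \to N \to E \to L(\mu) \to 0$. Here the crux is to project to the generalised central character eigenspace $E^{(\gachar_\mu)}$: by uniqueness of dominant regular representatives, the only simple summand of $N$ with central character $\gachar_\mu$ is $L(\mu)$ itself, so $E^{(\gachar_\mu)}\cap N$ is $L(\mu)$-isotypic and in particular has all weights $\le\mu$. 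Choosing $v \in E^{(\gachar_\mu)}$ of weight $\mu$ mapping to the highest weight vector of the quotient, $\finn_+ v$ lies in $E^{(\gachar_\mu)}\cap N$ with weights strictly greater than $\mu$, forcing $\finn_+ v = 0$. Step two then identifies $U(\fing)v$ with $L(\mu)$, producing a simple submodule of $M$ not contained in the socle, a contradiction. The main obstacle throughout is precisely this final manoeuvre, since $N$ can be an infinite direct sum and a naive $\mathrm{Ext}^1$ vanishing argument is awkward; isolating a single central character block is what makes the weight comparison finite and decisive.
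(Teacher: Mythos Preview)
Your proof has a real gap at the heart of steps 2 and 3. The claim that ``$\mu$ is the unique regular dominant element of its dot orbit'' is false: distinct elements of $\prin^k$ can lie in the same $W\circ$-orbit, which is exactly why the paper forms the nontrivial quotient $[\prin^k]=\prin^k/\sim$ (see Lemma~\ref{Lem:pri-ideals}). In step~2 this is easily repaired, because your $\mu'$ is in fact the highest weight of a composition factor of $M(\mu)$, and since $\mu$ is dominant the only composition factor of $M(\mu)$ with dominant highest weight is $L(\mu)$ itself. But in step~3 you use the false claim to conclude that $E^{(\gachar_\mu)}\cap N$ is $L(\mu)$-isotypic and hence has all weights $\le\mu$; once one allows the other $L(\lam)$ with $\lam\in W\circ\mu\cap\prin^k$, this weight bound collapses and $\finn_+v=0$ no longer follows.

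Even after correcting this, your scheme cannot close. The hypothesis is only that $\finh$ acts locally \emph{finitely}, not semisimply. A nonsplit self-extension $0\to L(\mu)\to E\to L(\mu)\to 0$ may therefore have $\finh$ acting non-semisimply on the two-dimensional generalised $\mu$-weight space; then the only genuine $\finh$-eigenvector of weight $\mu$ lies in the submodule, and there is no highest weight vector available to split the sequence. This self-extension case is precisely what the paper isolates, and it is handled by a genuinely different device: the Zhu induction functor lifts the putative self-extension to one of the admissible $\affg$-module $\affL_k(\mu)$, and the Gorelik--Kac complete reducibility theorem for admissible representations then yields the contradiction. That affine input is the missing idea.
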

\begin{proof}
It is sufficient to show that $\on{Ext}_{\fing}^1(L(\lam),L(\mu))=0$ for $\lam,\mu\in \prin^k$. If $\lam\ne \mu$ this is obvious since any weight in $\prin^k$ is dominant. So suppose there exists a non split exact sequence
\begin{align*}
0\ra L(\lam)\ra M\ra L(\lam)\ra 0.
\end{align*}
Applying the Zhu induction functor to this sequence gives rise to a non-trivial self-extension of $\affL_k(\lam)$, see \cite[Proof of Theorem 10.5]{A2012Dec}. But this contradicts the fact \cite{GorKac0905} that admissible representations of $\affg$ do not admit non-trivial self-extensions.
\end{proof}

\begin{lemma}[{\cite[Proposition 2.4]{AraFutRam17}}]\label{Lem:pri-ideals}
Let $\lam, \mu \in \prin^k$. Then
\begin{enumerate}
\item $J_{\lam}$ is the unique maximal two-sided ideal containing $U(\fing)\ker \gachar_{\lam}$. In particular $U(\fing)/J_{\lam}$ is a simple algebra.

\item We have $J_{ \lam}=J_{ \mu}$ if and only if there exists $w\in W$ such that $\mu=w\circ \lam$.
\end{enumerate}
\end{lemma}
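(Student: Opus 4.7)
The plan is to deduce both assertions from standard results on primitive ideals of $U(\fing)$, together with the observation that admissibility forces $\lam$ to be dominant with respect to its integral Weyl group.

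I would start from Duflo's theorem, used in two forms: every primitive ideal of $U(\fing)$ is of the form $J_\nu$ for some $\nu \in \dual{\finh}$, and $\ann M(\nu) = U(\fing) \ker \gachar_\nu$ for every such $\nu$. Consequently, any maximal two-sided ideal containing $U(\fing)\ker\gachar_{\lam}$ is primitive (since its quotient is a simple Noetherian algebra), hence equal to $J_\mu$ for some $\mu$ with $\gachar_\mu = \gachar_\lam$, and by Harish-Chandra $\mu \in W\circ\lam$. So for part (1) it suffices to show that among the primitive ideals $\{J_\mu \mid \mu \in W \circ \lam\}$ the unique maximal element is $J_\lam$.

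For this I would invoke the classical Duflo--Joseph characterization: for any $\lam \in \dual{\finh}$ which is dominant with respect to its integral Weyl group
\[
W(\lam) = \langle s_\alpha \mid \alpha \in \Delta(\lam) \rangle, \qquad \Delta(\lam) = \{\alpha \in \Delta \mid \langle \lam+\rho, \alpha^\vee \rangle \in \Z\},
\]
meaning $\langle \lam+\rho, \alpha^\vee\rangle \in \Z_{>0}$ for all $\alpha \in \Delta(\lam)_+$, the ideal $J_\lam$ is the unique maximal primitive with central character $\gachar_\lam$. This extends the classical integral dominant case by translation functors adapted to $\Delta(\lam)$. The required dominance for $\lam \in \prin^k$ is a direct consequence of the admissibility of $\wh\lam$: regular dominance of $\wh\lam$ yields $\langle \wh\lam+\wh\rho, \alpha^\vee \rangle \in \Z_{>0}$ for every $\alpha \in \wh\Delta(\wh\lam)_+$, and restricting to finite positive roots gives exactly what is needed. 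This proves (1).

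Part (2) then follows at once: if $\mu = w\circ\lam$ for some $w \in W$ then Harish-Chandra gives $\gachar_\mu = \gachar_\lam$, so by (1) both $J_\lam$ and $J_\mu$ equal the unique maximal two-sided ideal containing $U(\fing)\ker\gachar_\lam$, and hence coincide. Conversely $J_\lam = J_\mu$ forces $\gachar_\mu = \gachar_\lam$, so $\mu \in W\circ\lam$. The main delicate point is the Duflo--Joseph uniqueness statement itself, which for non-integral $\lam$ requires reduction to the integral case through translation functors for the integral root system $\Delta(\lam)$, rather than for $\Delta$; once this is in place, the admissibility hypothesis does the rest.
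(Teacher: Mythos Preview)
The paper does not prove this lemma; it simply cites \cite[Proposition 2.4]{AraFutRam17}. Your argument is a correct sketch of the standard proof: admissibility forces $\lam$ to be regular dominant for its integral root system $\Delta(\lam)$, and for such $\lam$ the Duflo--Joseph theory identifies $J_\lam$ as the unique maximal primitive ideal with central character $\gachar_\lam$, from which both assertions follow. This is precisely the reasoning behind the cited result, so your approach matches what the reference does.
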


Set
\begin{align*}
[\prin^k] = \prin^k/\sim,
\end{align*}
where $\lam\sim \mu \iff \mu\in W \circ \lam$. By Lemma \ref{Lem:pri-ideals}, the primitive ideal $J_{ \lam}$ depends only on the class of $\lam\in {\prin^k}$ in $[\prin^k]$.

We are now in a position to state the main result of this section.
\begin{thm}\label{Th:Zhu-admissible}
Let $k$ be an admissible number for $\affg$. The Zhu algebra $A(\sV)$ is isomorphic to the product of the simple algebras $U(\fing)/J_{ \lam}$ as $\lam$ runs over $[\prin^k]$. That is
\begin{align*}
A(\sV) \cong \prod_{\lam \in [\prin^k]}U(\fing)/J_{\lam}.
\end{align*}
\end{thm}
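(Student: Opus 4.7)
The plan is to identify the ideal $I_k$ in $A(V_k(\fing)) = U(\fing)/I_k$ with $\bigcap_{[\lam] \in [\prin^k]} J_\lam$, and then combine this with a non-commutative Chinese Remainder argument to obtain the product decomposition. For the easy direction, Theorem~\ref{Th:rationality-of-admissibleVA} says that $L(\lam)$ carries an $A(V_k(\fing))$-action for every $\lam \in \prin^k$, so $I_k \subset \on{Ann}_{U(\fing)} L(\lam) = J_\lam$, giving $I_k \subset \bigcap_{[\lam]} J_\lam$.

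To install the product structure, I would invoke Lemma~\ref{Lem:pri-ideals}: the $J_\lam$, $[\lam] \in [\prin^k]$, are pairwise distinct maximal two-sided ideals of $U(\fing)$, each with simple quotient. Distinctness together with maximality forces $J_\lam + J_\mu = U(\fing)$ whenever $[\lam] \neq [\mu]$, and since $[\prin^k]$ is finite the Chinese Remainder Theorem for pairwise comaximal two-sided ideals produces an algebra isomorphism
\[
U(\fing)\Big/\bigcap_{[\lam] \in [\prin^k]} J_\lam \;\isomap\; \prod_{[\lam] \in [\prin^k]} U(\fing)/J_\lam.
\]
Combined with the first paragraph, this produces a canonical surjection $A(V_k(\fing)) \twoheadrightarrow \prod_{[\lam]} U(\fing)/J_\lam$.

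The hard direction is the reverse inclusion $\bigcap J_\lam \subset I_k$. Here I would argue via a block decomposition induced by central characters. By Lemma~\ref{Lem:pri-ideals}(1) (and Duflo), the characters $\gachar_\lam$ for $[\lam] \in [\prin^k]$ are pairwise distinct, so the images of appropriate elements of $\mathcal{Z}(\fing)$ in $A(V_k(\fing))$ yield orthogonal central idempotents $e_\lam$ separating them, giving
\[
A(V_k(\fing)) \;=\; \bigoplus_{[\lam] \in [\prin^k]} e_\lam A(V_k(\fing)).
\]
Each block $e_\lam A(V_k(\fing))$ is a quotient of $U(\fing)/U(\fing)\ker\gachar_\lam$, which by Lemma~\ref{Lem:pri-ideals}(1) has $U(\fing)/J_\lam$ as its unique simple quotient. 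To upgrade this to the claimed equality $e_\lam A(V_k(\fing)) = U(\fing)/J_\lam$, I would use Corollary~\ref{Co:Zhu-modules-in-O}: any $A(V_k(\fing))$-module in category $\BGG$ is semisimple with constituents among $\{L(\lam) : \lam \in \prin^k\}$, so within the block only $L(\lam)$ can occur as a simple constituent of any $\BGG$-module, and any element of the Jacobson radical of the block is detected by acting on Verma-quotient modules $M(\mu)/(I_k \cdot v_\mu)$, all of which lie in $\BGG$.

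The most delicate point will be the last one: deducing the vanishing of the Jacobson radical of each block (an algebra-level statement about the infinite-dimensional algebra $e_\lam A(V_k(\fing))$) from the module-level semisimplicity in Corollary~\ref{Co:Zhu-modules-in-O}. I expect the essential additional ingredient is associated-variety control: the fact that $\var V_k(\fing) = \ov{\mbo_q}$ (from \cite{Ara09b}) matches the associated varieties of the $U(\fing)/J_\lam$ for $\lam \in \prin^k_{\mbo_q}$, so that the surjection $e_\lam A(V_k(\fing)) \twoheadrightarrow U(\fing)/J_\lam$ cannot have a non-zero graded kernel, and hence must be an isomorphism.
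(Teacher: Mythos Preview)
Your overall architecture matches the paper's: decompose $A = U(\fing)/I_k$ into blocks indexed by central characters, and then show that each block is exactly $U(\fing)/J_\lam$. The Chinese Remainder step and the easy inclusion $I_k \subset \bigcap J_\lam$ are fine. There are, however, two genuine gaps.

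First, you assert the existence of orthogonal central idempotents $e_\lam$ separating the characters, but this requires knowing that the image $Z$ of $\mc{Z}(\fing)$ in $A$ is finite-dimensional (or at least Artinian); distinctness of finitely many characters on an infinite-dimensional commutative algebra does not by itself produce idempotents. The paper obtains finite-dimensionality of $Z$ from the fact that the associated variety $X_{V_k(\fing)}$ lies in the nilpotent cone (\cite{Ara09b}), and then decomposes $Z = \prod_\gachar Z_\gachar$ into \emph{local} Artinian factors. At this stage each $Z_\gachar$ may still be nilpotent-thickened, not a field, so one does not yet have the simple block decomposition.

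Second, and more seriously, your proposed mechanism for killing the radical of each block does not work. Equality of associated varieties of $e_\lam A$ and $U(\fing)/J_\lam$ would only hold for $\lam \in \prin^k_{\mbo_q}$, not for all $\lam \in \prin^k$ (the orbit $\var(J_\lam)$ can be any suborbit of $\overline{\mbo_q}$); and even when the associated varieties agree, this says nothing about whether the surjection $e_\lam A \twoheadrightarrow U(\fing)/J_\lam$ has zero kernel. The paper's argument here is quite different and uses two specific inputs you have not invoked: Joseph's bijection between two-sided ideals of $U(\fing)$ containing $U(\fing)\ker\gachar_\lam$ and submodules of $M(\lam)$ (valid because $\lam$ is dominant), which identifies $A \otimes_Z \C_{\gachar_\lam}$ with $U(\fing)/J_\lam$; and then a length argument on the filtration of $Z_{\gachar_\lam}$, where any jump of length $> 1$ is converted into a non-split self-extension of $\C_{\gachar_\lam}$ as $\mc{Z}(\fing)$-module, lifted via Harish-Chandra to a non-split self-extension of $M(\lam)$, and pushed through $A \otimes_{U(\fing)}(-)$ to a non-split self-extension of $L(\lam)$, contradicting Corollary~\ref{Co:Zhu-modules-in-O}. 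It is this last step, not a Gelfand--Kirillov dimension count, that turns the module-level semisimplicity into the algebra-level statement you want.
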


\begin{rem}{\ }
\begin{enumerate}
\item
Theorem \ref{Th:Zhu-admissible} implies, in particular, that any $ A(\sV)$-module admits a central character. Thus the image $[L]$ of the conformal vector $L$ is semisimple in $A(\sV)$.

\item For $\lam\in \prin^k$, the algebra $U(\fing)/J_{ \lam}$ is finite dimensional if and only if $\lam \in P$. 
(Note that the latter condition implies that $\lam$ is a dominant integral weight, since any element of $\prin^k$ is regular dominant.) In this case we have $U(\fing)/J_{ \lam}\cong L(\lam)\* L( \lam)^*$.

\item In the special case of $k\in \Z_+$, or equivalently $V_k(\fing)$ integrable, $\prin^{k}$ is the projection
$P_+^k$ to $\finh^*$ of the set of dominant integral weights $\widehat{P}^{k}_+$ of $\affg$ of level $k$ and we have
\begin{align*}
A(\sV) \cong \prod_{\lam\in P_k^+} L(\lam) \* L(\lam)^*,
\end{align*}
which is well-known \cite{FreZhu92}.
\end{enumerate}

\end{rem}
\begin{proof}[Proof of Theorem \ref{Th:Zhu-admissible}]
Fix an admissible number $k$ and put $A = A(\sV) = U(\fing)/I_k$. For any $\lam \in \prin^k$ we have
\begin{align}
A \*_{U(\fing)}M(\lam)\cong L(\lam)
\label{eq:A-tensor-M}
\end{align}
as $A$-modules. Indeed $A\*_{U(\fing)}M(\lam)$ is a quotient of $M(\lam)$, and by Corollary \ref{Co:Zhu-modules-in-O} the only quotient $A$-module of $M(\lam)$ is $L(\lam)$.

Let $\mc{Z}(\fing)$ denote the center of $U(\fing)$ and put $Z=\mc{Z}(\fing)/\mc{Z}(\fing)\cap I_k$. Since the associated variety of $\sV$ is contained in the nilpotent cone $\mc{N}$ of $\fing$ \cite{Ara09b}, it follows that $Z$ is finite dimensional (see the proof of \cite[Corollary 5.3]{AM15}). Hence
\begin{align*}
Z=\prod_{\gachar\in \on{Specm}(Z)}Z_{\gachar}, \quad \text{where} \quad Z_{\gachar}=\{z\in Z \mid \text{$(z-\gachar(z))^r=0$ for 
$r \gg 0$} \}.
\end{align*}
This gives a decomposition
\begin{align*}
A = \prod_{\gachar\in \on{Specm}(Z)}A_{\gachar}, \quad \text{where} \quad A_{\gachar}=A\*_Z Z_{\gachar}.
\end{align*}
Moreover, by Theorem \ref{Th:rationality-of-admissibleVA} and Lemma \ref{Lem:pri-ideals}, we have
\begin{align*}
\on{Specm}(Z)=\{\gachar_{\lam}\mid \lam\in [\prin^k]\}.
\end{align*}

Let $\lam\in \prin^k$ and denote by $\C_{\gachar_{ \lam}}$  the one-dimensional representation of $Z$ corresponding to $\gachar_{\lam}$. Then $A\*_{Z}\C_{\gachar_{ \lam}}$ is a quotient algebra of $U(\fing)$, and we claim that
\begin{align}
A\*_Z \C_{\gachar_{ \lam}}\cong U(\fing)/J_{\lam}.
\label{eq:U/J}
\end{align}
Indeed $A\*_{Z}\C_{\gachar_{ \lam}}\cong U(\fing)/I$ for some two-sided ideal $I$, and clearly $I$ contains $U(\fing)\ker \gachar_{\lam}$. Since $M(\lam)/IM(\lam)$ is an $A$-module we have $M(\lam)/IM(\lam) \cong L(\lam)$ by Corollary \ref{Co:Zhu-modules-in-O}. 
On the other hand $\lam$ is dominant, and by \cite{Joseph:1979kq} one knows that the correspondence $I\mapsto IM(\lam)$ is an order preserving bijection from the set of two sided ideals of $U(\fing)$ containing $U(\fing)\ker \gachar_{ \lam}$ to the set of submodules of $M(\lam)$
for a dominant $\lam$. It follows that $I=J_{\lam}$.

There exists a finite filtration
\begin{align*}
0=Z_0\subset Z_1\subset Z_2\subset \dots \subset Z_r=Z_{\gachar_{\lam}}
\end{align*}
of $Z_{\gachar_{\lam}}$ as a $Z_{\gachar_{\lam}}$-module such that each successive quotient $Z_i/Z_{i-1}$ is isomorphic to the one-dimensional representation $\C_{\gachar_{ \lam}}$ of $Z_{\gachar_{\lam}}$. We put
$A_i=A\*_{Z}Z_i$ and obtain
\begin{align*}
0=A_0\subset A_1\subset A_2\subset \dots \subset A_r=A_{\gachar_{\lam}}
\end{align*}
and
\begin{align*}
A_i/A_{i-1}=A\*_{Z}(Z_i/Z_{i-1})=A\*_{Z}\C_{\gachar_{ \lam}}\cong U(\fing)/J_{\lam}.\end{align*}
as an $A$-bimodule. If the filtration is trivial, that is, if $r=1$, then $A_{\gachar_\lam} \cong U(\fing)/J_{\lam}$ and we are done. So we now assume that $r>1$ and deduce a contradiction.

Note that each exact sequence $0\ra Z_{i}\ra Z_{i+1}\ra Z_{i+1}/Z_i\ra 0$ is non split since otherwise we obtain a contradiction to \eqref{eq:U/J}. So we consider the non split exact sequence of $Z$-modules
\begin{align}
0\ra Z_1\ra Z_2 \ra Z_2/Z_1\ra 0.
\label{eq:non-spliting}
\end{align}
Since $Z_1=Z_2/Z_1=\C_{\gachar_{ \lam}}$ and $\lam$ is regular, it follows from the proof of \cite[Lemma 10.6]{A2012Dec} that (\ref{eq:non-spliting}) is obtained from an exact sequence $0\ra \C_{\lam}\ra E\ra \C_{ \lam}\ra 0$ of $\finh$-modules via the Harish-Chandra homomorphism $\mc{Z}(\fing)\ra S(\finh)$. We induce it to obtain an exact sequence 
\begin{align*}
0\ra M( \lam)\ra N\ra M( \lam)\ra 0
\end{align*}
of $\fing$-modules. Here $N=U(\fing)\*_{U(\finh\+\finn)}E$ where $\finn$ acts trivially on $E$. Applying the functor $A\*_{U(\fing)}(-)$ yields the exact sequence
\begin{align}
L(\lam)=A\*_{U(\fing)} M( \lam)\overset{\varphi_1}{\ra} A\*_{U(\fing)} N\ra A\*_{U(\fing)} M( \lam)=L(\lam)\ra 0.
\label{eq:extension}
\end{align}
Let $v_{\lam}$ be a highest weight vector of $M(\lam)$. Then $1\* v_{\lam}$ is a highest weight vector of $ A\*_{U(\fing)} M( \lam)=L(\lam)$. By construction,  this vector is mapped to a nonzero vector of $A\*_{U(\fing)}N$. Thus, the map $L(\lam)\ra A\*_{U(\fing)}N$ is an injection and \eqref{eq:extension} is a non-trivial self-extension of $L(\lam)$. But this contradicts Corollary \ref{Co:Zhu-modules-in-O}.
\end{proof}

For a two-sided ideal $I$ of $U(\fing)$ we denote by $\on{Var}(I)$ the zero locus of $\gr I$ in $\fing^*$ (or rather its image in $\g$ under the isomorphism induced by the Killing form), where $\gr I$ is the associated graded of $I$ with respect to the filtration induced from the PBW filtration of $U(\fing)$. Joseph's Theorem \cite{Jos85} asserts that $\on{Var}(I)=\overline{\mathbb{O}}$ for some nilpotent orbit $\mathbb{O}$ of $\fing$. Thus for each nilpotent orbit $\mathbb{O}$, we denote by $\prim_{\mathbb{O}}$ the set of those primitive ideals $I \subset U(\fing)$ such that $\var(I) = \ov{\mathbb{O}}$.

Recall the ideal $I_k$ such that $A(\Vs{\fing})=U(\fing)/I_k$. In \cite[Theorem 9.5]{A2012Dec} the first author has shown that, for an admissible number $k$, $\on{Var}(I_k)$ coincides with the associated variety \cite{Ara12} $X_{\Vs{\fing}}$ of $\Vs{\fing}$, and hence \cite{Ara09b},
\begin{align*}
\on{Var}(I_k)=\overline{\mathbb{O}}_q,
\end{align*}
where $\mathbb{O}_q$ is some nilpotent orbit of $\fing$ that depends only on the denominator $q$ of $k$. More precisely we have
\begin{align*}
\overline{\mathbb{O}}_q
= \begin{cases}
\{x\in \fing \mid (\ad x)^{2q}=0\}&\text{if }(r\che, q)=1,\\
\{x\in \fing \mid \pi_{\theta_s}(x)^{2q/r\che}=0\} &\text{if }(r\che,q)=r\che,
\end{cases}\end{align*}
where $\pi_{\theta_s}$ is the simple finite dimensional representation of $\fing$ with highest weight $\theta_s$.

For a nilpotent orbit $\mathbb{O}$ we write
\begin{align*}
\prin^k_{\mathbb{O}}=\{\lam\in \prin^k\mid \on{Var}(J_{\lam})=\overline{\mathbb{O}}\},
\end{align*}
and we write $[\prin^k_{\mathbb{O}}]$ for the image of $\prin^k_{\mathbb{O}}$ in $[\prin^k]$. We have $\var(J_\la) \subset \var(I_k) = \ov\mbo_q$ for all $\la \in \prin^k$, so we may write
\begin{align}
\prin^k=\bigsqcup_{\mathbb{O}\subset  \overline{\mathbb{O}}_q}\prin^k_{\mathbb{O}} \quad \text{and} \quad
[\prin^k]=\bigsqcup_{\mathbb{O}\subset  \overline{\mathbb{O}}_q}[\prin^k_{\mathbb{O}}].
\label{eq:dec-ad-weights}
\end{align}
Finally we put 
\begin{align}
\prin^k_\circ = \prin^k_{\mathbb{O}_q} \quad \text{and} \quad [\prin^k_\circ] = [\prin^k_{\mathbb{O}_q}].
\end{align}

\begin{thm}\label{thm:Joseph}
For an admissible number we have
\begin{align*}
\on{Pr}^k_\circ=\{\lam\in \on{Pr}^k\mid |\Delta(\lam)|=\dim \mc{N}-\dim \overline{\mathbb{O}}_q
\},
\end{align*}
where $\Delta(\lam)=\{\alpha\in \Delta\mid \langle \lam+\rho,\alpha^{\vee}\rangle \in \Z\}$.
\end{thm}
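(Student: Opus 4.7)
The strategy combines Joseph's theorem on associated varieties of primitive ideals \cite{Jos85} with his formula for the Gelfand--Kirillov dimension of a simple highest weight module. The theorem amounts to saying that the already-known containment $\var(J_\lam) \subset \overline{\mathbb{O}}_q$ is an equality of closures precisely when $|\Delta(\lam)|$ takes the appropriate value.

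First, I would note that for every $\lam \in \prin^k$ the inclusion $\var(J_\lam) \subset \overline{\mathbb{O}}_q$ holds. Indeed Theorem \ref{Th:rationality-of-admissibleVA} asserts that $\affL_k(\lam)$ is a $\Vs{\fing}$-module, so its top $L(\lam)$ is an $A(\Vs{\fing})$-module, giving $I_k \subset J_\lam$ and hence $\var(J_\lam) \subset \var(I_k) = \overline{\mathbb{O}}_q$. By Joseph's theorem, $\var(J_\lam) = \overline{\mathbb{O}_\lam}$ for a unique nilpotent orbit $\mathbb{O}_\lam$, and since these two orbit closures are both irreducible, the equality $\overline{\mathbb{O}_\lam} = \overline{\mathbb{O}}_q$ reduces to the numerical equality $\dim \mathbb{O}_\lam = \dim \mathbb{O}_q$.

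Next, I would compute $\dim \mathbb{O}_\lam$ using Joseph's Gelfand--Kirillov dimension formula. To apply it one needs $\lam$ to be regular and dominant relative to its integral root system $\Delta(\lam)$. This follows from principal admissibility: for each $\alpha \in \Delta^+(\lam)$ the pairing $\langle \lam+\rho, \alpha^\vee\rangle$ is an integer by definition of $\Delta(\lam)$, and regular dominance of $\widehat\lam$ forces it to lie in $\Z_{>0}$. Joseph's formula then yields $\on{GKdim} L(\lam) = |\Delta^+| - |\Delta^+(\lam)|$, and since $\overline{\mathbb{O}_\lam}$ is even-dimensional as the closure of a nilpotent (symplectic) orbit,
\begin{align*}
\dim \overline{\mathbb{O}_\lam} = 2\,\on{GKdim} L(\lam) = |\Delta| - |\Delta(\lam)| = \dim \mc{N} - |\Delta(\lam)|.
\end{align*}
Comparing $\dim \overline{\mathbb{O}_\lam}$ with $\dim \overline{\mathbb{O}}_q$ gives the claimed equivalence $\lam \in \prin^k_\circ \iff |\Delta(\lam)| = \dim \mc{N} - \dim \overline{\mathbb{O}}_q$.

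The main step of substance is the invocation of Joseph's dimension formula; once its hypotheses (regularity and dominance of $\lam$ with respect to $\Delta(\lam)$) are checked from the definition of principal admissibility, everything else is a dimension comparison between two irreducible closed subsets of the nilpotent cone.
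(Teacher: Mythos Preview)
Your proposal is correct and follows essentially the same route as the paper: reduce $\var(J_\lam)=\overline{\mathbb{O}}_q$ to a dimension equality using $\var(J_\lam)\subset\overline{\mathbb{O}}_q$ and irreducibility, then invoke Joseph's formula $\dim\var(J_\lam)=\dim\mc{N}-|\Delta(\lam)|$ for regular dominant $\lam$ (the paper cites \cite[Corollary 3.5]{Jos78} for this directly). The only minor point is that your step $\dim\overline{\mathbb{O}_\lam}=2\,\on{GKdim}L(\lam)$ is not a consequence of orbits being even-dimensional but of $L(\lam)$ being \emph{holonomic}, i.e.\ the equality case in Gabber's inequality; this is again a theorem of Joseph \cite{Jos78}, so your argument is complete once you cite it.
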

\begin{proof}
Let $\la \in \prin^k$.
Since
$\var(J_\la) \subset \var(I_k) = \ov\mbo_q$, it follows that
$\lam\in \on{Pr}^k_\circ$ if and only of
\begin{align}
\dim \on{Var}(J_{\la})=\dim \mathbb{O}_q,
\end{align}
where $q$ is the dominator of $k$.
Since all the elements in  $\prin^k$ are regular dominant,
we have
\begin{align*}
\dim \on{Var}(J_{\la})=\dim \mc{N}-|\Delta(\la)|
\end{align*}
for such $\lambda$ by \cite[Corollary 3.5]{Jos78}. The result follows.
\end{proof}

\section{Semisimplicity of Zhu Algebras of \texorpdfstring{$\W$}{W}-Algebras}\label{sec:w.alg}
Let $f$ be a nilpotent element of $\g$.
Recall \cite{ElaKac05} that a $\tfrac{1}{2}\Z$-grading
 $\fing = \bigoplus_{j \in \tfrac{1}{2}\Z} \fing_j$ is 
called \emph{good} for $f$ if $f \in \fing_{-1}$,
  $\ad(f) : \fing_j \rightarrow \fing_{j-1}$ is injective for $j \geq 1/2$ and surjective for $j \leq 1/2$. 
  The grading is \emph{even} if $\fing_j = 0$ for $j \notin \Z$. 
Any nilpotent $f$ can be embedded into an $\mathfrak{sl}_2$-triple $\{e, h, f\}$, and $\fing$ thereby acquires a $\tfrac{1}{2}\Z$-grading by eigenvalues of $\ad(h/2)$.
 Such a grading is called a Dynkin grading. All Dynkin gradings are good, but not all good gradings are Dynkin.

We denote by $\W^k(\fing,f)$ the affine $\W$-algebra associated with $\fing$,
$f$ at level $k$ and a good grading $\g=\bigoplus_{j\in \frac{1}{2}\Z}\g_j$ \cite{KacRoaWak03}. It is defined by the generalized {\em quantized Drinfeld-Sokolov reduction}:
\begin{align*}
\W^k(\fing,f)=\HDS^0(V^k(\fing)),
\end{align*}
where $\HDS^\bullet(M)$ denotes the cohomology of the BRST complex associated with $\fing$, $f$ and $k$. This vertex algebra carries a conformal structure with central charge {\cite[Theorem 2.2]{KacRoaWak03}}
\begin{align}\label{eq:cc.formula}
\dim(\fing_0) - \frac{1}{2}\dim(\fing_{1/2}) - \frac{12}{k+h^\vee} |\rho - (k+h^\vee)x_0|^2,
\end{align}
where $x_0$ is the semisimple element of $\g$ that defined the grading,
that is,
$\g_j=\{x\in \g\mid [x_0,x]=jx\}$.
We note that the vertex algebra structure of $\W^k(\g,f)$ does not depend of the choice of the good grading
of $\g$ \cite[3.2.5]{AKM} but the conformal structure does.

Recall \cite{A07.rep,De-Kac06} that
\begin{align*}
A(\W^k(\fing,f))\cong U(\fing,f),
\end{align*}
where $U(\fing,f)$ is the {\em finite $W$-algebra} associated with $(\fing,f)$ \cite{Pre02}, whose construction we now briefly recall.

Let $\mc{HC}$ be the category of Harish-Chandra $U(\fing)$-bimodules, that is, the full subcategory of the category of $U(\fing)$-bimodules consisting of objects that are finitely generated as a $U(\fing)$-bimodule and on which the adjoint action of $\fing$ is locally finite. For an object $M$ of $\mc{HC}$ one defines a finite dimensional analogue of the quantized Drinfeld-Sokolov reduction which, by abuse of notation, we write as $\HDS^0(M)$, see \cite[Section 3]{A2012Dec}. In particular
\begin{align*}
U(\fing,f)=\HDS^0(U(\fing)),
\end{align*}
and in general the space $\HDS^0(M)$ is a bimodule over the finite $W$-algebra $U(\fing,f)$.

Let $\cliff$ be the Clifford algebra associated with the vector space $\fing_{>0} \+ \fing_{>0}^*$ equipped with the canonical symmetric bilinear form. Let $\chi : \fing_{\geq 1} \rightarrow \C$ be defined by $\chi(x) = (f, x)$, and let $\CD = U(\fing_{>0})/I_{>0, \chi}$ where $I_{>0, \chi}$ is the two sided ideal $U(\fing_{>0})\left<x - \chi(x) | x \in \fing_{\geq 1}\right>$. Note that if the good grading is even then $\CD$ is one dimensional. Now for $M \in \mc{HC}$ we put
\begin{align*}
C(M) = M \otimes \CD \otimes \cliff,
\end{align*}
This inherits a $\Z$-grading from $\cliff$ by assigning $\deg(\fing_{>0}) = -1$ and $\deg(\fing_{>0}^*) = +1$. Let $\{x_i\}$ be a homogeneous basis of $\fing_{>0}$, denote by $\ov{x}_i$ the canonical image of $x_i$ in $\CD$, and let $\{x_i^*\}$ be the dual basis of $\fing_{>0}^*$. Put
\begin{align*}
d = \sum_i \left( x_i \otimes 1 + 1 \otimes \ov{x}_i \right) \otimes x_i^* - 1 \otimes 1 \otimes \frac{1}{2} \sum_{i,j,k} c_{ij}^k x_i^* x_j^* x_k \in C^1(U(\fing)),
\end{align*}
where $c_{ij}^k$ are the structure constants $[x_i, x_j] = \sum_k c_{ij}^k x_k$. Then $\ad(d)$ is a differential on $C(M)$, and one defines
\begin{align*}
\HDS^\bullet(M) = H^\bullet(C(M), \ad(d)).
\end{align*}
The functor $\HDS^0(-)$ is identical to $(-)_{\dagger}$ in Losev's notation \cite{Los11} (see \cite[Remark 3.5]{A2012Dec}).

The space $\g_{1/2}$ is a symplectic vector space
with respect to the form $(x,y)\mapsto \chi([x,y])$.
Let $\mf{l}$ be a Lagrangian subspace  of $\g_{1/2}$,
and let $\mf{m}=\mf{l}\+\bigoplus_{j\geq 1}\g_j$.
Then $\mf{m}$ is a nilpotent subalgebra
of $\g$ and $\chi:\mf{m}\ra \C$ defines a character.
Put $Y = U(\fing) \otimes_{U(\mf{m})} \C_\chi$.
We have the algebra isomorphism \cite{DAnDe-De-07}
\begin{align*}
U(\g,f)\cong \on{End}_{U(\g)}(Y)^{\text{op}}.
\end{align*}

Let $E$ be a left $U(\fing, f)$-module, then $E \mapsto Y \otimes_{U(\fing, f)} E$ is a $U(\fing)$-module on which $x - \chi(x)$ acts locally nilpotently for all $x \in \mf{m}$, moreover this assignment defines an equivalence of categories known as the Skryabin equivalence \cite{Pre02}.

By \cite[Theorem 4.15]{Ara09b} $\HDS^0(\Vs{\fing})$ is a quotient vertex algebra of $\W^k(\fing,f)$, provided it is nonzero. By \cite[Theorem 8.1]{A2012Dec} we have
\begin{align*}
A(\HDS^0(\Vs{\fing}))\cong \HDS^0(A(\Vs{\fing})),
\end{align*}
and thus by Theorem \ref{Th:Zhu-admissible} we have, for any admissible number $k$,
\begin{align}
A(\HDS^0(\Vs{\fing}))\cong \prod_{[\lam]\in [\prin^k]}\HDS^0(U(\fing)/J_\lam).
\label{eq:decomposition-Zhu}
\end{align}

We recall the construction of the component group $C(f)$. Let us embed $f$ into an $\mf{sl}_2$ triple $\{e,h,f\} \subset \fing$, and let $G^{\natural}$ be the centralizer of the corresponding copy of $\mathfrak{sl}_2$ in the simply connected algebraic group $G$ with Lie algebra $\fing$. The restriction of the adjoint action of $G$ defines actions of $G^\natural$ on $U(\fing, f)$ and $\W^k(\fing,f)$. The Lie algebra $\fing^{\natural}$ of $G^{\natural}$ embeds naturally into $U(\fing, f)$ and therefore the action of the unit component $(G^{\natural})^{\circ}$ of $G^{\natural}$ is trivial. Hence the action of $G^{\natural}$ descends to the component group $C(f)=G^{\natural}/(G^{\natural})^{\circ}$.

Let $\lam\in \finh^*$. Then $\HDS^0(U(\fing)/J_{\lam})$ is naturally an algebra and the exact sequence
$0\ra J_{\lam}\ra U(\fing)\ra U(\fing)/J_{\lam}\ra 0$ induces an exact sequence
\begin{align*}
0\ra \HDS^0(J_{\lam})\ra \HDS^0(U(\fing))\ra \HDS^0(U(\fing)/J_{\lam})\ra 0
\end{align*}
(\cite{Gin08,Los11}, see also \cite[Section 3]{A2012Dec}). 
Thus, $\HDS^0(U(\fing)/J_{\lam})$ is a quotient algebra of $U(\fing,f)= \HDS^0(U(\fing))$.

Let $I \in \prim_{G \cdot f}$, that is $I \subset U(\g)$ is a primitive ideal and $\var(I) = \ov{G \cdot f}$. For such $I$ we denote by $\on{Fin}_{I}(U(\fing,f))$ the set of isomorphism classes of finite dimensional simple $U(\fing,f)$-modules $E$ such that $\on{Ann}_{U(\fing)}(Y \*_{U(\fing,f)}E)=I$.
\begin{thm}[\cite{Los11}]\label{Th:Losev}
Let $\lam\in \finh^*$ and let $f \in \fing$ be a nilpotent element.
\begin{enumerate}
\item If $\on{Var}(J_{\lam}) \subsetneq \overline{G.f}$ then $\HDS^0(U(\fing)/J_{\lam})=0$.

\item If $\on{Var}(J_{\lam}) = \overline{G.f}$ then 
\begin{align*}
\HDS^0(U(\fing)/J_{\lam}) \cong \prod_{E\in  \on{Fin}_{J_{\lam}}(U(\fing,f))}E\*E^*.
\end{align*}
In particular, $\HDS^0(U(\fing)/J_{\lam})$ is a semisimple algebra.

\item The natural action of $C(f)$ on $\on{Fin}_{J_{\lam}}(U(\fing,f))$ is transitive.
\end{enumerate}
\end{thm}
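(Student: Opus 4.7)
The plan is to translate the problem into the language of finite $\W$-algebras via the identification $\HDS^0(-)=(-)_\dagger$ noted in the excerpt, and then to reduce the three claims to the fundamental results of \cite{Los11} on $(-)_\dagger$. The overall strategy is not to reprove Losev's structure theorems but to isolate the minimal set of them needed and to check that the hypotheses apply to the ideals $J_\lam$ in question.

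For part (1) I would invoke the vanishing property of $(-)_\dagger$: if $M$ is a Harish-Chandra $U(\fing)$-bimodule with $G\cdot f\not\subset \on{Var}(M)$, then $M_\dagger = 0$. Morally, $(-)_\dagger$ is a deformation quantisation of the restriction-to-Slodowy-slice functor, and a bimodule whose characteristic variety does not even meet the transverse slice at $f$ is already zero at the associated graded level. Applied to $M=U(\fing)/J_\lam$, whose variety equals $\on{Var}(J_\lam)\subsetneq \ov{G\cdot f}$ (so $G\cdot f$, being open dense in $\ov{G\cdot f}$, is not contained in it), this yields $\HDS^0(U(\fing)/J_\lam)=0$.

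For part (2) I would start from the exact sequence
\begin{align*}
0\to \HDS^0(J_\lam)\to U(\fing,f)\to \HDS^0(U(\fing)/J_\lam)\to 0
\end{align*}
recalled in the excerpt, realising $\HDS^0(U(\fing)/J_\lam)$ as a quotient algebra of the finite $W$-algebra $U(\fing,f)$. Finite-dimensionality follows from Losev's description of $\gr M_\dagger$ as the coordinate ring of the scheme-theoretic intersection $\on{Var}(M)\cap(f+\fing^e)$ with the Slodowy slice, which is zero-dimensional under the hypothesis $\on{Var}(J_\lam)=\ov{G\cdot f}$ since the slice meets $G\cdot f$ only at $f$. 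To obtain the matrix-algebra form I would then invoke Losev's classification theorem: via the Skryabin equivalence, the simple $\HDS^0(U(\fing)/J_\lam)$-modules correspond to precisely those finite-dimensional simple $U(\fing,f)$-modules $E$ for which $\on{Ann}_{U(\fing)}(Y\*_{U(\fing,f)}E)=J_\lam$, i.e.\ to $\on{Fin}_{J_\lam}(U(\fing,f))$; semisimplicity and the identification with $\prod_E E\*E^*$ then follow from the Wedderburn decomposition.

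For part (3) I would appeal to Losev's parametrisation of primitive ideals: the map $E\mapsto \on{Ann}_{U(\fing)}(Y\*_{U(\fing,f)}E)$ descends to a bijection between $C(f)$-orbits on the set of isomorphism classes of finite-dimensional simple $U(\fing,f)$-modules and the set of primitive ideals of $U(\fing)$ with associated variety $\ov{G\cdot f}$. By construction the fibre over $J_\lam$ is precisely $\on{Fin}_{J_\lam}(U(\fing,f))$, so it is a single $C(f)$-orbit. The main obstacle is that parts (2) and (3) rest on genuinely deep inputs from \cite{Los11}, namely the semisimplicity and explicit form of $(U(\fing)/J_\lam)_\dagger$ together with the $C(f)$-orbit parametrisation of primitive ideals; these rely on a careful study of the Skryabin equivalence, quantisations of Slodowy slices, and the $G^\natural$-action on $U(\fing,f)$. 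My proposal is essentially to quote these results and assemble them through the exact sequence above.
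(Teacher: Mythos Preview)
The paper does not give a proof of this theorem: it is stated with attribution to \cite{Los11} and used as a black box. Your proposal goes further than the paper by sketching how the three assertions are assembled from Losev's results on the functor $(-)_\dagger$, and that sketch is correct in outline; in particular the identification $\HDS^0(-)=(-)_\dagger$ noted in the paper (see \cite[Remark 3.5]{A2012Dec}) is exactly what licenses the translation you make.
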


\begin{thm}\label{Th:semisimplicity1}
Let $k$ be an admissible number with denominator $q \in \Z_{\geq 1}$, and let $f \in \mathbb{O}_q$. Then
\begin{align*}
A(\HDS^0(\Vs{\fing}))\cong \prod_{[\lam]\in [\prin^k_{\circ}]} \left( \prod_{E\in  \on{Fin}_{J_{\lam}}(U(\fing,f))}E\*E^* \right).
\end{align*}
In particular $A(\HDS^0(\Vs{\fing}))$ is semisimple. Moreover, if $S[\lam]$ is a complete set of isomorphism classes of $\on{Fin}_{J_{\lam}}(U(\fing,f))$,
then 
 $\bigsqcup\limits_{[\lam]\in [\prin^k_{\circ}]}S[\lam] $ give a complete set of
isomorphism classes  simple $A(\HDS^0(\Vs{\fing})$-modules.
\end{thm}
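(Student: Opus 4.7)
The plan is to chain together the ingredients that have already been assembled: equation \eqref{eq:decomposition-Zhu} (which expresses $A(\HDS^0(\Vs{\fing}))$ as a product over $[\Pr^k]$ of quantized reductions of quotients $U(\g)/J_\la$), the orbit-stratification \eqref{eq:dec-ad-weights} of $\Pr^k$, and Losev's Theorem \ref{Th:Losev}. Each step is essentially formal once these tools are in place.

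First I would invoke \eqref{eq:decomposition-Zhu} to write
\begin{align*}
A(\HDS^0(\Vs{\fing})) \;\cong\; \prod_{[\la]\in [\Pr^k]} \HDS^0(U(\g)/J_\la).
\end{align*}
Next, since $f \in \mbo_q$, the orbit $G \cdot f = \mbo_q$ satisfies $\ov{G \cdot f} = \ov{\mbo_q}$, and by Theorem \ref{thm:Joseph} (or directly from $\var(J_\la) \subset \var(I_k) = \ov{\mbo_q}$) we have $\var(J_\la) \subseteq \ov{\mbo_q}$ for every $\la \in \Pr^k$, with equality precisely when $[\la] \in [\Pr^k_\circ]$. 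Applying Theorem \ref{Th:Losev}(1) to every $[\la] \in [\Pr^k] \setminus [\Pr^k_\circ]$ kills those factors, so the product reduces to one indexed by $[\Pr^k_\circ]$.

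For the surviving factors I would apply Theorem \ref{Th:Losev}(2), which gives
\begin{align*}
\HDS^0(U(\g)/J_\la) \;\cong\; \prod_{E \in \Fin_{J_\la}(U(\g,f))} E \otimes E^{*}
\end{align*}
for each $[\la] \in [\Pr^k_\circ]$; combining with the previous step yields the displayed isomorphism. Since each factor $E \otimes E^{*}$ is a full matrix algebra over $\C$ (as $E$ is finite dimensional simple), and the semisimplicity passes through finite products, the algebra $A(\HDS^0(\Vs{\fing}))$ is semisimple. Note here that $[\Pr^k_\circ]$ is a finite set and each $\Fin_{J_\la}$ is finite by Theorem \ref{Th:Losev}(3), so the product is really a finite one.

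Finally, for the classification of simple modules: a simple module over a finite product of matrix algebras $\prod_i \en_\C(E_i)$ is the pullback of the standard module $E_i$ for a unique index $i$. Applied to our decomposition, each isomorphism class of simple $A(\HDS^0(\Vs{\fing}))$-module corresponds to a unique choice of $[\la] \in [\Pr^k_\circ]$ and a unique $E \in S[\la]$, giving the bijection with $\bigsqcup_{[\la] \in [\Pr^k_\circ]} S[\la]$. The only step that demands care, rather than being purely mechanical, is the appeal to Theorem \ref{Th:Losev}(1) to discard the strata with smaller associated variety; everything else is bookkeeping with the decomposition already proved in Theorem \ref{Th:Zhu-admissible}.
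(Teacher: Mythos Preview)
Your proposal is correct and, for the isomorphism and semisimplicity, follows exactly the paper's route: combine \eqref{eq:decomposition-Zhu} (which is Theorem~\ref{Th:Zhu-admissible} plus the commutation of Zhu with $\HDS^0$) with Theorem~\ref{Th:Losev}, using part~(1) to kill the factors with $\var(J_\la)\subsetneq\ov{\mbo_q}$ and part~(2) to identify the surviving ones.

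For the classification of simple modules you argue purely from the structure theory of finite products of matrix algebras: simple modules over $\prod_i \en_\C(E_i)$ are exactly the $E_i$ pulled back along the projections, and these are pairwise non-isomorphic as modules over the product regardless of any coincidences among the $E_i$. This is correct and self-contained. The paper instead gives an intrinsic reason why modules from different $[\la]$ cannot coincide: it invokes Premet's result that the center of $U(\g,f)$ is isomorphic to $\mc{Z}(\g)$, so $E\in\on{Fin}_{J_\la}(U(\g,f))$ and $E'\in\on{Fin}_{J_\mu}(U(\g,f))$ have distinct central characters whenever $[\la]\ne[\mu]$ in $[\Pr^k_\circ]$. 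Your argument is more elementary; the paper's has the advantage of distinguishing the modules already at the level of $U(\g,f)$-modules (not merely as modules over the quotient $A(\HDS^0(\Vs{\g}))$), which is sometimes useful downstream.
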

\begin{proof}
The first statement follows immediately from Theorem \ref{Th:Zhu-admissible} and Theorem \ref{Th:Losev}.
To see the second statement,
we recall that the center of $U(\g,f)$ is isomorphic to $\mc{Z}(\g)$ (\cite{Pre07}),
and
hence $E\in  \on{Fin}_{J_{\lam}}(U(\fing,f))$ and $E'\in \on{Fin}_{J_{\mu}}(U(\fing,f))$ have distinct central characters
if $[\lam], [\mu]$ are distinct in  $[\prin^k_{\circ}]$.
\end{proof}

Let $\W_k(\fing,f)$ be the unique simple quotient of $\W^k(\fing,f)$.
\begin{thm}\label{Th:semisimplicity-of-Zhu}
Let $k$ be an admissible number with denominator $q\in \Z_{\geq 1}$, and let $f\in \mathbb{O}_q$. Then the Zhu algebra $A(\W_k(\fing,f))$ is semisimple.
\end{thm}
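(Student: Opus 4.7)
The plan is to deduce Theorem \ref{Th:semisimplicity-of-Zhu} from Theorem \ref{Th:semisimplicity1} by a short functorial argument; I anticipate no serious obstacle beyond carefully chaining together established facts.

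First I would observe that for $f \in \mbo_q$ the cohomology $\HDS^0(\Vs{\fing})$ is nonzero. This follows from the fact (proved in \cite{Ara09b} and quoted in the text preceding the theorem) that $f$ lies in the associated variety $\ov{\mbo}_q$ of $\Vs{\fing}$; alternatively, nonvanishing is visible directly from Theorem \ref{Th:semisimplicity1}, whose right hand side is manifestly nonzero as soon as $[\prin^k_\circ]$ is nonempty. Consequently, by \cite[Theorem 4.15]{Ara09b}, $\HDS^0(\Vs{\fing})$ is a nonzero quotient vertex algebra of $\W^k(\fing,f)$.

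Next I would exploit the fact that $\W_k(\fing,f)$ is by definition the unique simple quotient of $\W^k(\fing,f)$. Composing the surjection $\W^k(\fing,f) \twoheadrightarrow \HDS^0(\Vs{\fing})$ with the projection of $\HDS^0(\Vs{\fing})$ onto its own unique simple quotient produces a nonzero simple quotient of $\W^k(\fing,f)$; by uniqueness this simple quotient must be $\W_k(\fing,f)$, and hence we obtain a surjection of vertex algebras $\HDS^0(\Vs{\fing}) \twoheadrightarrow \W_k(\fing,f)$.

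Finally, the Zhu algebra functor $A(-)$ preserves surjections, so this induces an associative algebra surjection $A(\HDS^0(\Vs{\fing})) \twoheadrightarrow A(\W_k(\fing,f))$. By Theorem \ref{Th:semisimplicity1}, the source is a finite direct product of matrix algebras $E \otimes E^*$ over $\C$; every quotient of such a product is again a direct product of a subset of the same matrix algebras, hence semisimple. This would yield semisimplicity of $A(\W_k(\fing,f))$. The only step requiring genuine care is the nonvanishing of $\HDS^0(\Vs{\fing})$, but as indicated this follows immediately from the earlier results; all other steps are purely formal.
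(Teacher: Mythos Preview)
Your proposal is correct and follows essentially the same route as the paper's proof: the paper argues, just as you do, that $\W_k(\fing,f)$ is a quotient of $\HDS^0(\Vs{\fing})$, hence $A(\W_k(\fing,f))$ is a quotient of the semisimple algebra $A(\HDS^0(\Vs{\fing}))$ from Theorem~\ref{Th:semisimplicity1}. Your version simply spells out in more detail the nonvanishing of $\HDS^0(\Vs{\fing})$ and the reason the surjection onto $\W_k(\fing,f)$ exists, which the paper leaves implicit.
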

\begin{proof}
Since $\HDS^0(\Vs{\fing})$ is a quotient of $\W^k(\fing,f)$, we have that $\W_k(\fing,f)$ is a quotient of $\HDS^0(\Vs{\fing})$. Hence $A(\W_k(\fing,f))$ is a quotient of $A(\HDS^0(\Vs{\fing}))$. Since the latter is semisimple by Theorem \ref{Th:semisimplicity1}, so is the former.
\end{proof}

 \begin{rem}
 Conjecturally \cite{KacWak08}, 
 $\HDS^0(\Vs{\fing})$ is either zero or isomorphic to $ \W_k(\fing,f)$.
 This will be proved in Theorem \ref{Th:simplicity-vacumme}
  for 
the exceptional $\W$-algebra $\W_k(\fing,f)$ in the case that 
$f$ admits a good even grading.
\end{rem}

\section{Modular Invariance of Trace Functions}\label{sec:mod.inv}
For a vertex algebra $V$
let $R_V=V/C_2(V)$ denote Zhu's $C_2$ algebra \cite{Zhu96},
which is a Poisson algebra.
The associated variety  $X_V$ of $V$ is by definition
the Poisson variety
 $\on{Specm}R_V$
 \cite{Ara12}.
 By \cite{Ara09b}
 we have
 $X_{H^0_{f}(V_k(\g))}\cong X_{V_k(\g)}\cap \mathcal{S}_f$,
 where $\mathcal{S}_f$ is the Slodowy slice $f+\g^e$ at $f$.

Let $k$ be an admissible number with denominator $q$ and let $f\in \mathbb{O}_q$.
Since $X_{V_k(\g)}=\overline{\mathbb{O}}_q$,
$X_{H^0_{f}(V_k(\g))}=\{f\}$ by the transversality of $\mathcal{S}_f$ to the $G$-orbits.
Therefore $\HDS^0(V_k(\g))$ is lisse,
and so is $\W_k(\fing,f)$ \cite{Ara09b}. The vertex algebra $\W_k(\fing,f)$ is $\frac{1}{2}\Z_{\geq 0}$-graded, as are untwisted irreducible $\W_k(\fing,f)$-modules. On the other hand {\em Ramond twisted} irreducible $\W_k(\fing,f)$-modules (see \cite{KacWak08,Ara08-a}) are $\Z_{\geq 0}$-graded. Note that if $\W_k(\fing,f)$ is $\Z_+$-graded then a Ramond twisted module is the same thing as an untwisted module.

For a simple $A(\W_k(\fing,f))$-module $E$, we denote by $\mathbf{L}(E)$ the corresponding irreducible Ramond twisted $\W_k(\fing,f)$-module. The module $\mathbf{L}(E)$ is the unique simple quotient of the Verma module
\begin{align*}
\mathbf{M}(E)=\mathcal{U}(\W^k(\fing,f))\*_{ \mathcal{U}(\W^k(\fing,f))_{\geq  0}}E,
\end{align*}
where $\mathcal{U}(\W^k(\fing,f))=\bigoplus_{d\in \Z}\mathcal{U}(\W^k(\fing,f))_d$ is the Ramond twisted current algebra of $\W^k(\fing,f)$ \cite{FreZhu92,MatNagTsu05} and $\mathcal{U}(\W^k(\fing,f))_{\geq  0}=\bigoplus_{d\geq 0} \mathcal{U}(\W^k(\fing,f)))_{d}$.

Let $\on{Irr}_\Z(\W_k(\fing,f))$ be the complete set of isomorphism classes of irreducible Ramond twisted representations of $\W_k(\fing,f)$. For $M\in \on{Irr}_\Z(\W_k(\fing,f))$ and $u \in \W^k(\fing,f)$, set
\begin{align*}
S_M(\tau\mid u)=\on{Tr}_M (u_0 q^{L_0-c/24}),
\end{align*}
where $c$ is the central charge, $q=e^{2\pi i\tau}$, and $u_0$ is the degree preserving Fourier mode of the field $u(z)$. Set
\begin{align*}
L_{[0]} = L_0 - \sum_{j=1}^\infty \frac{(-1)^j}{j(j+1)} L_j.
\end{align*}

\begin{thm}\label{thm:modular-invariance}
Let $k$ be an admissible number with denominator $q\in \Z_{\geq 1}$, and let $f\in \mathbb{O}_q$. Let $\{E_1,\dots,E_r\}$ be a complete set of representatives of the isomorphism classes of simple modules over the Zhu algebra $A(\W)$ of $\W = \W_k(\fing,f)$. Then $S_{\mathbf{L}(E_i)}(\tau \mid u)$ converges to a holomorphic function on the upper half plane for all $u\in \W$, $i=1,\dots,r$. Moreover there is a representation
$\rho:SL_2(\Z)\ra \en_{\C}(\C^r)$ such that
\begin{align*}
S_{\mathbf{L}(E_i)}\left(\frac{a\tau+b}{c\tau+d} \mid (c\tau+d)^{-L_{[0]}} u \right) = \sum_{j=1}^r \rho(A)_{ij} S_{\mathbf{L}(E_j)}(\tau \mid u)
\end{align*}
for all $u \in \W$.
\end{thm}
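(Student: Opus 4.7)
The plan is to deduce the theorem from the general modular invariance machinery for $C_2$-cofinite vertex algebras, since both prerequisites are now in hand. First, the slice-transversality argument recalled at the start of Section \ref{sec:mod.inv} shows $X_{\W_k(\fing,f)} = \{f\}$ is a single point, so $\W = \W_k(\fing,f)$ is lisse and hence $C_2$-cofinite. Second, Theorem \ref{Th:semisimplicity-of-Zhu} identifies the Ramond twisted Zhu algebra $A(\W)$ as a semisimple finite-dimensional algebra, whose simple modules $E_1,\dots,E_r$ biject under Zhu induction with the complete list $\mathbf{L}(E_1),\dots,\mathbf{L}(E_r)$ of irreducible Ramond twisted $\W$-modules.

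The main step is to apply the generalization of Zhu's modular invariance theorem to $C_2$-cofinite but a priori non-rational vertex (super)algebras. In its most general form (due to Miyamoto, with the twisted version supplied by Dong--Li--Mason and Arike--Nagatomo), this produces a finite-dimensional $SL_2(\Z)$-stable space of \emph{pseudo-trace functions} that transform correctly under the insertion-twisting operator $(c\tau+d)^{-L_{[0]}}$, together with convergence of each basis element as a holomorphic function on the upper half plane. Semisimplicity of $A(\W)$ is the extra input that collapses the pseudo-trace picture to the classical one: every projective $A(\W)$-module is simple, so the pseudo-traces reduce to ordinary traces on the $\mathbf{L}(E_i)$, and the $SL_2(\Z)$ action descends to an action on $\C^r = \bigoplus_i \C \cdot S_{\mathbf{L}(E_i)}$. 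Convergence of $S_{\mathbf{L}(E_i)}(\tau \mid u)$ for each $u \in \W$ is inherited from the same theorem.

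The strategy is modeled on \cite[Theorem 10.5]{A2012Dec}, where the principal case was handled analogously. The main obstacle is the careful treatment of the Ramond twisted setting: one must check that the automorphism underlying the Ramond grading of $\W$ is of finite order and that the required twisted version of the modular invariance theorem applies to zero-modes of arbitrary vectors $u \in \W$. The necessary framework for Ramond twisted modules of affine $\W$-algebras is already developed in \cite{KacWak08, Ara08-a}, and together with the semisimplicity hypothesis it yields the desired statement. Thus the only genuinely new contribution relative to the cited modular invariance results is the verification, via Theorem \ref{Th:semisimplicity-of-Zhu}, that the Zhu algebra semisimplicity hypothesis holds throughout the exceptional range $f \in \mbo_q$.
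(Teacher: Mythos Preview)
Your proposal is correct and follows essentially the same approach as the paper: lisse plus semisimple Zhu algebra implies modular invariance of trace functions. The only difference is in attribution: rather than routing through Miyamoto's pseudo-trace framework and arguing that semisimplicity collapses pseudo-traces to genuine traces, the paper directly cites \cite{Van-Ekeren:2013xy}, where it is observed that in Zhu's theorem and its generalizations \cite{DonLiMas00,Van-Ekeren:2013xy} the rationality hypothesis may be replaced by semisimplicity of the Zhu algebra; the Ramond twisted case is already handled there, so no additional verification of finite-order automorphisms is needed.
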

\begin{proof}
As remarked by the second author in \cite{Van-Ekeren:2013xy}, in Zhu's result \cite{Zhu96}  on the modular invariance of the trace functions and its generalizations \cite{DonLiMas00,Van-Ekeren:2013xy}, the assumption of rationality of the vertex algebra can be replaced by semisimplicity of the Zhu algebra. Therefore the theorem follows immediately from Theorem \ref{Th:semisimplicity-of-Zhu} and the fact that $\W_k(\fing,f)$ is lisse \cite{Ara09b}.
\end{proof}

Since $S_M(\vac,\tau)$ is just the normalized character
\begin{align*}
\chi_{M}(\tau):=\on{Tr}_M ( q^{L_0-c_V/24})
\end{align*}
we obtain the following 
\begin{cor}\label{Co:modular-inv-character}
Let $k$ be an admissible number with denominator $q\in \Z_{\geq 1}$, and let $f\in \mathbb{O}_q$. Let $\{E_1,\dots,E_r\}$ be a complete set of representatives of the isomorphism classes of simple modules over the Zhu algebra $A(\W)$ of $\W = \W_k(\fing,f)$. Then the character $\chi_{\mathbf{L}(E_i)}(\tau)$ converges to a holomorphic function on the complex upper half plane, and the span of the $\chi_{\mathbf{L}(E_i)}(\tau)$, as $i$ runs over $\{1,\dots,r\}$, is invariant under the action of $SL_2(\Z)$.
\end{cor}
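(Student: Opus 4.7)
The plan is to derive the corollary as a direct specialization of Theorem \ref{thm:modular-invariance} to $u = \vac$, the vacuum vector of $\W$. Under this specialization the trace function $S_M(\tau \mid u)$ collapses to the normalized character $\chi_M(\tau)$, and the transformation law in the theorem collapses to an $SL_2(\Z)$-action on the span of the characters.

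First I would check that $S_M(\tau \mid \vac) = \chi_M(\tau)$. This is immediate from the definition $S_M(\tau \mid u) = \on{Tr}_M(u_0 q^{L_0 - c/24})$ together with the standard fact that the degree-preserving Fourier mode $\vac_0$ acts as the identity on any module. Next I would note that the square-bracket operator satisfies $L_{[0]} \vac = 0$, since the vacuum has weight zero in both the round- and square-bracket gradings, so $(c\tau+d)^{-L_{[0]}} \vac = \vac$ for every $A = \begin{pmatrix} a & b \\ c & d \end{pmatrix} \in SL_2(\Z)$. Substituting these two observations into the transformation formula of Theorem \ref{thm:modular-invariance} yields
\begin{equation*}
\chi_{\mathbf{L}(E_i)}\!\left(\frac{a\tau+b}{c\tau+d}\right) = \sum_{j=1}^r \rho(A)_{ij}\, \chi_{\mathbf{L}(E_j)}(\tau),
\end{equation*}
which is precisely the claimed $SL_2(\Z)$-invariance of the linear span of the $\chi_{\mathbf{L}(E_i)}$. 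Holomorphic convergence on the upper half plane is similarly the $u=\vac$ case of the convergence statement in Theorem \ref{thm:modular-invariance}.

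There is no real obstacle here: the entire content of the corollary is contained in Theorem \ref{thm:modular-invariance}, and the text immediately preceding the corollary statement already flags the key identification $S_M(\tau\mid\vac) = \chi_M(\tau)$. The only points that need a line of commentary are the fact that $\vac_0$ acts as the identity (so that the insertion of $u_0$ is trivial) and that $L_{[0]} \vac = 0$ (so that the twist by $(c\tau+d)^{-L_{[0]}}$ disappears); together with the bijection between simple $A(\W)$-modules and simple Ramond twisted $\W$-modules recalled in Section \ref{sec:zhu.alg}, these suffice to package Theorem \ref{thm:modular-invariance} into the corollary.
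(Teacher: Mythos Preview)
Your proposal is correct and matches the paper's approach exactly: the corollary is obtained by specializing Theorem \ref{thm:modular-invariance} to $u=\vac$, using $\vac_0=\mathrm{id}$ and $L_{[0]}\vac=0$ so that $S_{\mathbf{L}(E_i)}(\tau\mid\vac)=\chi_{\mathbf{L}(E_i)}(\tau)$ and the twist factor drops out. The paper itself says only ``Since $S_M(\vac,\tau)$ is just the normalized character $\chi_M(\tau)$ we obtain the following'', so your write-up is in fact slightly more detailed than theirs.
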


The exceptional $\W$-algebras introduced by Kac and Wakimoto  \cite{KacWak08}
are exactly the $\W$-algebras $\W_k(\g,f)$ with $f$  of standard Levi type and an admissible number $k$ whose denominator $q$ is coprime to $r^{\vee}$ \cite{Ara09b}.
Hence Corollary \ref{Co:modular-inv-character} in particular proves the modular invariance of the characters of modules over the exceptional $\W$-algebras,
which 
 was conjectured by Kac and Wakimoto \cite{KacWak08}.

We now restrict to the $\Z_{\geq 0}$-grade case and discuss fusion products. The category of modules of a rational lisse self-dual simple vertex algebra $V = \bigoplus_{n \in \Z_+} V_n$ of CFT-type carries the structure of a modular tensor category (MTC) under the fusion product $X \boxtimes Y$ of modules \cite{Huang08Contemp}. The fusion product is the one defined, in this level of generality, in \cite{HLtensorIandII}. Duals are given by the usual contragredient construction, and twists are given in terms of conformal dimensions. The $S$-matrix of the MTC coincides with the matrix $S = \rho(\begin{smallmatrix}0&-1\\1&0\\\end{smallmatrix})$ of Zhu's theorem. In particular the Verlinde formula asserts that the decomposition multiplicities or \emph{fusion rules} ${N}$ in $X \boxtimes Y \cong \bigoplus_Z {N}_{X,Y}^Z \cdot Z$ are given by
\begin{align*}
{N}_{X,Y}^Z = \sum_{W \in \on{Irr}(V)} \frac{S_{X,W}S_{Y,W}S_{Z',W}}{S_{V,W}}.
\end{align*}
Furthermore the \emph{charge conjugation matrix} $S^2$ is the permutation matrix which exchanges each module with its contragredient.

Let $\CC$ be an MTC. The integral Grothendieck group $\CF(\CC)$ acquires a commutative ring structure corresponding to the tensor product, and a distinguished $\Z$-basis corresponding to simple objects which comes equipped with an involution corresponding to duality. We refer to this structure as the fusion ring of $\CC$, and we also write $\CF(V)$ for the fusion ring of the category of representations of a rational lisse self-dual vertex algebra $V$. Since the structure of $\CF(\CC)$ is completely encoded in its $S$-matrix, we shall sometimes abuse notation and write $\CF(S)$.

\section{Self-Duality of \texorpdfstring{$\W$}{W}-Algebras}
Let $V$ be a vertex algebra 
of CFT-type,
that is,
$V$ is $\Z_{\geq 0}$-graded where $V_0=\C$. Then 
$V$ is called self-dual if 
$V \cong V'$ as $V$-modules,
where $M'$ denotes the contragredient dual \cite{Frenkel:1993aa} of the $V$-module $M$. Equivalently 
$V$ is self-dual if and only if 
it admits a non-degenerate symmetric invariant bilinear form.
According to Li \cite{Li94}, 
the space of symmetric invariant bilinear forms on $V$ is naturally isomorphic to the linear dual of $V_0/L_1V_1$.

The condition of self-duality 
depends on the choice of the conformal vector and is
necessary to apply Huang's result \cite{Huang08Contemp}
on the Verlinde formula. In this section we consider the question of self-duality of affine $\W$-algebras.
\begin{prop}\label{prop:sym-inv}
Suppose that $f$ admits a good even grading.
The simple $\W$-algebra
$\W_k(\g,f)$  is self-dual
if and only if 
\begin{align*}
(k+h^{\vee})(x_0|v)-\frac{1}{2}\on{tr}_{\g_{>0}}(\ad v)=0\quad\text{for all $v\in \g^f_0$.}
\end{align*}
\end{prop}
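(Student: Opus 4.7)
The plan is to reduce the question to the condition $L_1 V_1 = 0$ for $V = \W_k(\fing,f)$ via Li's criterion for self-duality, then identify $V_1$ with $\fing^f_0$ through the natural strong generators, and finally compute $L_1 J^{\{v\}}$ for $v \in \fing^f_0$ using the explicit Kac--Roan--Wakimoto description of the Virasoro element of $\W^k(\fing,f)$.

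Since $\W_k(\fing,f)$ is a simple vertex algebra of CFT-type (the even grading hypothesis ensures $V$ is $\Z_{\geq 0}$-graded rather than $\tfrac{1}{2}\Z_{\geq 0}$-graded, so in particular $V_0 = \C$), Li's theorem \cite{Li94} identifies the space of invariant bilinear forms on $V$ with $(V_0/L_1V_1)^*$; any nonzero invariant bilinear form on a simple vertex algebra is automatically non-degenerate, its radical being a proper ideal. Hence $\W_k(\fing,f)$ is self-dual if and only if $L_1 V_1 = 0$. Moreover, $L_1$ commutes with the projection $\W^k(\fing,f) \twoheadrightarrow \W_k(\fing,f)$ and is an isomorphism $\C|0\rangle \isomap \C|0\rangle$ on weight-zero parts, so it suffices to show the equivalent statement $L_1 V_1 = 0$ inside the universal $\W^k(\fing,f)$.

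Because $f$ admits a good even grading, $\W^k(\fing,f)$ is strongly generated by elements $J^{\{v\}}$ with $v$ in a basis of $\fing^f$, where $J^{\{v\}}$ has conformal weight $1 - j$ when $v \in \fing^f_j$; consequently $V_1^{\mathrm{univ}}$ is spanned by $\{J^{\{v\}} : v \in \fing^f_0\}$, and the claim reduces to computing $L_1 J^{\{v\}}$ for each $v \in \fing^f_0$. Following Kac--Roan--Wakimoto the Virasoro element decomposes as $L = L^{\mathrm{Sug}} + Tx_0 + L^{\mathrm{gh}}$ and the generator as $J^{\{v\}} = J^v + \gamma(v)$, where $\gamma(v)$ is the ghost bilinear encoding $\on{ad}(v)$ acting on $\fing_{>0}$. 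The contribution $L^{\mathrm{Sug}}_1 J^v$ vanishes since $J^v$ is Sugawara-primary of weight $1$; the contribution $(Tx_0)_1 J^v$ is read off from the triple pole of $\partial J^{x_0}(z) J^v(w)$ and gives a multiple of $k(x_0|v)|0\rangle$; the ghost contribution $L^{\mathrm{gh}}_1 \gamma(v)$ contributes both a further multiple of $(x_0|v)|0\rangle$, via the anomaly of the charged $bc$ Virasoro, and a term $-\tfrac{1}{2}\on{tr}_{\fing_{>0}}(\on{ad} v)|0\rangle$ coming from the weighted trace of $\on{ad}(v)$ on the ghost representation. Collecting and simplifying using the standard identities for $\sum_{\alpha \in S_{>0}} j_\alpha$ and $(\rho|x_0)$ on an even good grading yields
\begin{align*}
L_1 J^{\{v\}} = \bigl( (k+h^\vee)(x_0|v) - \tfrac{1}{2}\on{tr}_{\fing_{>0}}(\on{ad} v) \bigr) |0\rangle,
\end{align*}
and since $|0\rangle \neq 0$ in $\W_k(\fing,f)$, the vanishing of $L_1 J^{\{v\}}$ for every $v \in \fing^f_0$ is equivalent to the stated identity.

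The main obstacle is the bookkeeping in the third step: the signs and conformal-weight conventions must be kept consistent between the Sugawara, shift, and ghost pieces, and one must verify that the linear-in-$k$ pieces from the shift and from the $bc$ anomaly combine into precisely $(k+h^\vee)(x_0|v)$ (a Freudenthal--de Vries-type rearrangement). Once this OPE computation is in hand, the conclusion is essentially formal.
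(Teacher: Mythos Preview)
Your proposal is correct and follows essentially the same route as the paper: invoke Li's criterion to reduce self-duality to $L_1 V_1 = 0$, identify $V_1$ with $\fing^f_0$ via the generators $J^{\{v\}}$, and compute $L_1 J^{\{v\}}$ using the Kac--Roan--Wakimoto structure theory. The paper simply cites \cite[Theorem~2.4(b)]{KacRoaWak03} for the value of $L_1 J^{\{v\}}$, whereas you sketch the OPE computation directly; and your passage between the simple quotient and the universal algebra (via injectivity of the projection on the one-dimensional weight-zero piece) is a slightly cleaner packaging of the ``only if'' direction than the paper's argument, but the substance is identical.
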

\begin{proof}
By Li's result $\W^k(\g,f)$ 
admits a nonzero symmetric invariant bilinear form
if and only if 
$L_1\W^k(\g,f)_1=0$.
By \cite{KacRoaWak03},
$\W^k(\g,f)_1$ is spanned by
the vectors
$J^{\{v\}}$, $v\in \g^f_0$,
defined in \cite[p.\ 320]{KacRoaWak03}.
Hence 
from 
\cite[Theorem 2.4 (b)]{KacRoaWak03}
it follows that
$\W^k(\g,f)$
admits a symmetric invariant bilinear form
such that 
$(\mathbf{1},\mathbf{1})=1$
if and only if
$(k+h^{\vee})(x_0|v)-\frac{1}{2}\on{tr}_{\g_{>0}}(\ad v)=0$
for all $v\in \g^f_0$.
If this is the case
the form induces a non-degenerate 
symmetric invariant bilinear form on the simple quotient
$\W_k(\g,f)$ and thus the latter is self-dual.
Conversely,
suppose that 
$(k+h^{\vee})(x_0|v)-\frac{1}{2}\on{tr}_{\g_{>0}}(\ad v)\ne 0$
for some $v\in \g^f_0$.
Then the image of $J^{\{v\}}$ in $\W_k(\g,f)_1$ is nonzero,
and $L_1 J^{\{v\}}=\mathbf{1}$ up to nonzero constant multiplication.
Therefore
 $\W_k(\g,f)$ is not self-dual
 according to Li's criterion.
\end{proof}

\begin{rem}
The notion of the contragredient dual 
 naturally extends to modules over $\tfrac{1}{2}\Z$-graded vertex algebras
 and the proof of Li's criterion
 applies without any change. See {\cite[Proposition 2.4]{Yamauchi}} for example.
 Hence Proposition \ref{prop:sym-inv}
 is valid without the assumption that $f$ admits a good even grading.
\end{rem}

Recall that a nilpotent element $f$ is called \emph{distinguished}
if $\g^f_0=0$ for the Dynkin grading.
For example all principal nilpotent elements are distinguished, and so are 
subregular nilpotent elements in types $D$ and $E$. 
All distinguished nilpotent elements are even. For a distinguished nilpotent element the only good grading is the Dynkin grading. The following assertion is a direct consequence of Proposition \ref{prop:sym-inv}.
\begin{prop}
Let $f$ be a distinguished nilpotent element.
Then 
the simple $\W$-algebra $\W_k(\g,f)$ is 
self-dual.
\end{prop}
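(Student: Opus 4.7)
The plan is to apply Proposition \ref{prop:sym-inv} directly. Since $f$ is a distinguished nilpotent element, it is even, and moreover the only good grading on $\g$ associated with $f$ is the Dynkin grading (as noted in the paragraph preceding the statement). In particular Proposition \ref{prop:sym-inv} is applicable, and the relevant grading is the Dynkin one.

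By the very definition of distinguished, $\g^f_0 = 0$ for the Dynkin grading. Therefore the set over which the condition in Proposition \ref{prop:sym-inv} must be checked is empty, so the condition
\begin{align*}
(k+h^{\vee})(x_0 \mid v) - \tfrac{1}{2}\on{tr}_{\g_{>0}}(\ad v) = 0 \quad \text{for all } v \in \g^f_0
\end{align*}
is vacuously satisfied. Hence by Proposition \ref{prop:sym-inv}, $\W_k(\g,f)$ is self-dual, completing the proof.

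There is no substantial obstacle here; the result is essentially an immediate corollary of Proposition \ref{prop:sym-inv} combined with the defining property of distinguished nilpotents. The only minor point to verify is the applicability of Proposition \ref{prop:sym-inv}, which requires a good even grading, and this is guaranteed by the fact that distinguished elements are even and admit only the Dynkin grading as a good grading.
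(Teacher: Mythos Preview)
Your proof is correct and matches the paper's approach exactly: the paper simply states that the proposition is a direct consequence of Proposition \ref{prop:sym-inv}, and your argument spells out why, namely that $\g^f_0 = 0$ makes the criterion vacuous.
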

\begin{rem}
More generally,
one can show that
$\W_k(\g,f)$  is self-dual for the Dynkin grading for any nilpotent element,
see \cite{AvEM}.
\end{rem}

For $\g=\mf{sl}_n$,
only principal nilpotent elements are distinguished.
A subregular nilpotent element $\fsubreg \in \mf{sl}_n$ is even if and only if 
$n$ is even.
\begin{prop}\label{prop:An.selfdual}
Let $\g=\mf{sl}_n$.
The simple subregular $\W$-algebra
$\W_k(\g,\fsubreg)$
is self-dual
if and only if
either
(1)
$n$ is even and the grading is Dynkin,
or
(2)
$k+n=n/(n-1)$.
\end{prop}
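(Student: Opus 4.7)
The plan is to apply Proposition \ref{prop:sym-inv}, extended via the Remark following it to arbitrary (not necessarily even) good gradings. This reduces the statement to the identity
\begin{align*}
(k+n)(x_0|v) - \tfrac{1}{2}\tr_{\g_{>0}}(\ad v) = 0 \quad \text{for all } v \in \g^f_0,
\end{align*}
since $h^\vee = n$ for $\mf{sl}_n$. I would first enumerate the good gradings of $\fsubreg \in \mf{sl}_n$ via the Elashvili--Kac pyramid description. They form a one-parameter family $x_0 = h/2 + s v_0$, where $v_0 = \on{diag}(1, \ldots, 1, -(n-1))$ spans the Cartan part of $\g^{\mf{sl}_2}$ and $s$ runs over an appropriate discrete subset of $\C$. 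The Dynkin grading corresponds to $s = 0$, and inspection of $h = \on{diag}(n-2, n-4, \ldots, -(n-2), 0)$ shows that it is an integer grading precisely when $n$ is even.

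Next, using the $\mf{sl}_2$-decomposition
\begin{align*}
\mf{sl}_n \cong V_{2n-4} \oplus V_{2n-6} \oplus \cdots \oplus V_2 \oplus V_{n-2} \oplus V_{n-2} \oplus V_0,
\end{align*}
whose lowest weight vectors form a basis of $\g^f$, I would identify $\g^f_0$ grading-by-grading. The vector $v_0$ always lies in $\g^f_0$, while a lowest weight vector ($E_{n-1,n}$ or $E_{n,1}$) of one of the two $V_{n-2}$ summands enters $\g^f_0$ exactly when $s = \pm(n-2)/(2n)$. In particular, for the Dynkin grading one has $\g^f_0 = \C v_0$.

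The computational core is then to evaluate both terms of the condition for each $v \in \g^f_0$. For $v_0$ a direct calculation gives $(x_0|v_0) = sn(n-1)$, and an enumeration of the matrix units $E_{i,n}$ and $E_{n,j}$ lying in $\g_{>0}$ yields $\tr_{\g_{>0}}(\ad v_0) = 0$ at $s=0$ (by a symmetric cancellation between the two types) and $\tr_{\g_{>0}}(\ad v_0) = n(n-2)$ at $s = \pm(n-2)/(2n)$. Substituting into the condition gives $(k+n)(n-1)(n-2) = n(n-2)$ in the non-Dynkin case, equivalent to $k + n = n/(n-1)$ for $n \geq 3$, which establishes case (2). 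For the additional nilpotent generator of $\g^f_0$ present in the non-Dynkin cases, both $(x_0|\cdot)$ and $\tr_{\g_{>0}}(\ad\cdot)$ vanish because this vector is strictly off-diagonal, so its condition is automatic. When the grading is the even Dynkin grading, the $v_0$-condition is vacuous for every $k$, giving case (1).

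The main obstacle will be the uniform bookkeeping of $\g_{>0}$ across the pyramid family, since the boundary between $\g_{>0}$ and $\g_0$ shifts as $s$ varies, and one must identify exactly which discrete values of $s$ give genuine good gradings and for which of these $\g^f_0$ is enlarged beyond $\C v_0$. For the converse direction one exploits the fact that outside of these specific enlargements, the $v_0$-condition already forces $k$ into the prescribed value, while the trivial vanishing in the Dynkin even case accounts for all remaining unconstrained $k$.
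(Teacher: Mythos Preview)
Your approach is the same as the paper's---apply the criterion of Proposition~\ref{prop:sym-inv}, parametrize the good gradings, and compute both terms for a generator of $\g^f_0$---but there is a gap in the execution. You evaluate $\tr_{\g_{>0}}(\ad v_0)$ only at $s=0$ and $s=\pm(n-2)/(2n)$, then invoke ``the non-Dynkin case'' as if those two extreme values exhausted it. They do not: there are $2n-3$ good gradings, and the proposition requires the criterion at each one. The gap is easy to close. The only matrix units with nonzero $\ad v_0$-eigenvalue are the $E_{in}$ (eigenvalue $n$) and $E_{nj}$ (eigenvalue $-n$), and a direct count of how many of each lie in $\g_{>0}$ gives $\tr_{\g_{>0}}(\ad v_0)=2n^2 s$ \emph{uniformly} in $s$, not just at the endpoints. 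Combined with your $(x_0|v_0)=sn(n-1)$, the criterion becomes $sn\bigl[(k+n)(n-1)-n\bigr]=0$, whence $s=0$ or $k+n=n/(n-1)$. The paper reaches exactly this factorization via a parametrization by the box position $m$, related to yours by $s=(2m-n)/(2n)$. (Incidentally, your careful observation that $\g^f_0$ acquires an extra nilpotent generator at the extreme gradings, and that its contribution is automatically zero, is correct and in fact slightly more accurate than the paper's blanket claim $\g^f_0=\C v_m$.)

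There is also a scope mismatch. The paper's proof (and implicitly the proposition) treats only good \emph{even} gradings; this is why case~(1) reads ``$n$ is even and the grading is Dynkin''. If you instead invoke the Remark to cover all good gradings, then for $n$ odd the Dynkin grading is half-integer with $s=0$, and your computation shows the criterion is vacuous---so the $\W$-algebra would be self-dual for all $k$, contradicting the proposition as stated. Either restrict to even gradings from the outset (matching the paper), or note that your argument actually proves the cleaner statement: self-dual iff the grading is Dynkin (any $n$) or $k+n=n/(n-1)$.
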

\begin{proof}
By \cite{ElaKac05} the 
good even gradings of $\fsubreg$ are classified 
by the pyramids corresponding to the partition $(n-1,1)$.
So we may take
\begin{align}
f=\sum_{i=1}^{m-2}E_{i+1, i}+E_{m+1,m-1}+
\sum_{i=m+1}^{n-1}E_{i+1, i} \quad \text{and} \quad x_0=\sum_{i=1}^{m-1}(m-i)E_{i,i}-\sum_{i=m+2}^{n}(i-m-1)E_{i,i}
\end{align}
for some $m=1,\dots,n$, to obtain $\g_0^f=\C v_m$, where $v_m=E_{m, m}-1/n\sum_{i=1}^n E_{i,i}$. 
It follows that
\begin{align*}
(k+h^{\vee})(x_0|v)-\frac{1}{2}\on{tr}_{\g_{>0}}(\ad v)=\frac{(n-1)(n-2k)}{2n}\left(k+h^{\vee}-\frac{n}{n-1}\right).
\end{align*}
The assertion follows
from 
Proposition \ref{prop:sym-inv}
noting that that 
$n=2m$ if and only if 
$n$ is even and the corresponding pyramid is symmetric,
or the grading is Dynkin.
\end{proof}

\begin{rem}
For $k+n=n/(n-1)$ we have $\W_{k}(\mf{sl}_n,\fsubreg)=\C$.
\end{rem}

\section{The ``\texorpdfstring{$-$}{-}''-Reduction Functor Revisited}\label{sec:reduc.revis}

From now on \emph{we assume that the nilpotent element $f$ admits a good even grading $\fing=\bigoplus_{j\in \Z}\fing_j$},
so that $\W^k(\g,f)$ is $\Z_{\geq 0}$-graded.
Note that this condition is satisfied by all nilpotent elements in type $A$ and subregular nilpotent elements in simply laced types. Without loss of generality we assume $\finh \subset \fing_0$ and that the root system $\Delta$ is compatible with the grading,
that is,
$\Delta_+ = \Delta_{0,+}\sqcup \bigsqcup_{j\geq 1}\Delta_j$, where $\Delta_j=\{\alpha\in \Delta\mid \fing_{\alpha}\subset \fing_j\}$
and $\Delta_{0,+}\subset \Delta_+$ is a set of positive roots of $\g_0$. We write $\fing_{\geq 0}=\bigoplus_{j\geq 0}\fing_j$ and $\fing_{<0}=\bigoplus_{j< 0}\fing_j$.

Let $\chi_- : \fing_{<0} \rightarrow \C$ be the character defined by $\chi_-(e_{-\al}) = \chi(e_{\al})$. As in {\cite[Section 5]{Ara08-a}} we write
\begin{align*}
\sH_\bullet(M) = H_\bullet(\fing_{<0},M\* \C_{\chi_-})
\end{align*}
for the Whittaker coinvariants functor, where $H_\bullet(-)$ denotes the usual Lie algebra homology functor.

Let $\BGG$ be the Bernstein-Gelfand-Gelfand category of $\fing$, and let $\BGG^{\fing_0}$ be the full subcategory of $\BGG$ consisting of those objects that are integrable as $\fing_0$-modules. We put
\begin{align}
P_{0,+} = \{\lam \in \dual{\finh} \mid \text{$\left< \lam,\alpha^{\vee}\right> \in \Z_{\geq 0}$ for all $\alpha \in \Delta_{0,+}$}\}.
\end{align}
Then $\{ L(\lam) \mid \lam \in P_{0,+} \}$ is a complete set of isomorphism classes of simple objects in $\BGG^{\fing_0}$.

Let $\on{Dim}M$ be the Gelfand-Kirillov dimension of the $\fing$-module $M$. For $M \in \BGG^{\fing_0}$ one has
\begin{align*}
\on{Dim}M \leq \dim \fing_{<0} = \frac{1}{2}\dim \overline{G.f},
\end{align*}
and in the case of equality we shall say that $\on{Dim}M$ is \emph{maximal}. We recall that $M$ is said to be \emph{holonomic} if $\on{Dim}M = \tfrac{1}{2}\dim\on{Var}\ann(M)$.
\begin{thm}[{\cite{Mat90,Matumoto:1990jk}, see also \cite[Theorem 5.1.1]{Ara08-a}}]{\ }
\label{thm:fdsi}
\begin{enumerate}
\item If $M\in \BGG^{\fing_0}$ then $\sH_0(M)$ is finite dimensional.

\item If $M\in \BGG^{\fing_0}$ then $\sH_i(M)=0$ for $i>0$.

\item If $\lam\in P_{0,+}$ then $\sH_0(L(\lam))\ne 0$ if and only if $\on{Dim}L(\lam)$ is maximal.
\end{enumerate}
\end{thm}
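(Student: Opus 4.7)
The plan is to handle parts (1) and (2) by reducing to parabolic Verma modules via freeness over $U(\fing_{<0})$, and to handle part (3) using the link between Whittaker coinvariants and associated varieties.

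For (1) and (2), I would begin with the parabolic Verma modules $M_{P_0}(\mu) = U(\fing) \otimes_{U(\fing_{\geq 0})} L_0(\mu)$, where $L_0(\mu)$ is the finite dimensional simple $\fing_0$-module of highest weight $\mu \in P_{0,+}$, regarded as a $\fing_{\geq 0}$-module by letting $\fing_{>0}$ act trivially. By PBW, $M_{P_0}(\mu) \cong U(\fing_{<0}) \otimes L_0(\mu)$ is free over $U(\fing_{<0})$, and since twisting by the character $\chi_-$ preserves freeness the Chevalley--Eilenberg complex collapses, yielding
\[
\sH_0(M_{P_0}(\mu)) \cong L_0(\mu), \qquad \sH_i(M_{P_0}(\mu)) = 0 \text{ for } i > 0.
\]
For an arbitrary object $M \in \BGG^{\fing_0}$, one reduces to simples $L(\mu)$ with $\mu \in P_{0,+}$ via the finite composition series and the long exact sequence in $\sH_\bullet$. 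Each such simple admits a finite parabolic BGG resolution by direct sums of $M_{P_0}(\nu)$; the hyperhomology spectral sequence then presents $\sH_\bullet(L(\mu))$ as the homology of a finite complex of finite dimensional $\fing_0$-modules, giving finite dimensionality of $\sH_0$. The stronger vanishing $\sH_i(L(\mu)) = 0$ for $i > 0$ is forced by matching Euler characteristics against the Weyl character formula, so that the Koszul/BGG alternating sums leave nonzero contribution only in degree zero.

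For (3), the plan is to exploit the relationship between $\sH_0(L(\lam))$ and $\var(\ann L(\lam))$. Every object of $\BGG^{\fing_0}$ is holonomic, so $\on{Dim} L(\lam) = \tfrac{1}{2}\dim \var(\ann L(\lam))$, and maximality of $\on{Dim} L(\lam) = \dim \fing_{<0}$ is equivalent to $\overline{G.f} \subset \var(\ann L(\lam))$. Passing to the PBW associated graded of the Chevalley--Eilenberg complex reduces $\sH_\bullet(L(\lam))$ to a Koszul-type computation on $S(\fing_{<0})$; the $\chi_-$-twist localises this computation at the Whittaker fibre $f \in \fing_{<0}^*$, which under the Killing form corresponds to a neighbourhood of $G.f$ inside $\fing$. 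If $\overline{G.f} \not\subset \var(\ann L(\lam))$ then $\gr L(\lam)$ is disjoint from this fibre, forcing $\sH_0(L(\lam)) = 0$. Conversely, maximality of $\on{Dim}L(\lam)$ means a top dimensional component of the characteristic cycle of $L(\lam)$ meets the Whittaker fibre, and one extracts from this component a nonzero class in $\sH_0$.

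The principal obstacle is the nonvanishing direction of (3): the vanishing statement follows fairly directly from a support argument on the associated graded, but producing a nonzero Whittaker coinvariant from a top-dimensional component of the characteristic cycle is delicate and requires the finer analysis carried out in \cite{Mat90, Matumoto:1990jk}. The $\fing_0$-integrability hypothesis is essential at two points: it guarantees holonomicity of $L(\lam)$ (needed to identify $\on{Dim} L(\lam)$ with $\tfrac{1}{2}\dim \var(\ann L(\lam))$), and it controls the transverse behaviour of the characteristic cycle near the slice to $G.f$, allowing the nonvanishing class to be constructed.
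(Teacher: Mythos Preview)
The paper does not prove this theorem: it is quoted from \cite{Mat90,Matumoto:1990jk} (and \cite[Theorem 5.1.1]{Ara08-a}) as a black box, so there is no ``paper's own proof'' to compare against. I will therefore assess your sketch on its own terms.

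Your argument for (1) is more complicated than it needs to be and rests on a false claim. Finite dimensionality of $\sH_0(M)$ follows immediately from right exactness of $\sH_0$ together with the existence of a surjection $\bigoplus_i M_{P_0}(\mu_i) \twoheadrightarrow M$ from a finite direct sum of parabolic Vermas (which exists for any $M \in \BGG^{\fing_0}$); no resolution is required. The finite parabolic BGG resolution you invoke, however, does \emph{not} exist for a general simple $L(\mu)$ with $\mu \in P_{0,+}$. BGG-type resolutions by (parabolic) Verma modules are a very special feature of integrable or suitably dominant simples; a generic $\mu \in P_{0,+}$ is not integral with respect to the roots outside $\Delta_0$, and the simples in such blocks have no reason to admit resolutions by parabolic Vermas.

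This makes your argument for (2) collapse. Even setting aside the nonexistent resolution, the sentence ``the stronger vanishing $\sH_i(L(\mu)) = 0$ for $i > 0$ is forced by matching Euler characteristics'' is a non sequitur: an Euler characteristic constraint controls only the alternating sum $\sum (-1)^i \dim \sH_i$, and cannot by itself force individual terms to vanish. The actual proof of (2) in \cite{Mat90} (see also the exposition in \cite{Ara08-a}) is more delicate and does not go through BGG resolutions; it exploits the nondegeneracy of the character $\chi_-$ on $\fing_{-1}$ and a filtration/spectral-sequence argument adapted to the Whittaker setting, ultimately reducing to a Koszul-type acyclicity statement that holds for all objects of $\BGG^{\fing_0}$, not just those with Verma resolutions.

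Your discussion of (3) is broadly in the right spirit: the ``only if'' direction is a support argument on the associated graded, and you correctly flag the ``if'' direction (nonvanishing when $\on{Dim}L(\lam)$ is maximal) as the genuinely hard step that requires the analysis of \cite{Mat90,Matumoto:1990jk}. One correction: holonomicity of $L(\lam)$ is a theorem of Joseph valid for all simples in category $\BGG$, not a consequence of $\fing_0$-integrability; the integrability is what forces the a priori bound $\on{Dim}M \leq \dim \fing_{<0}$ and what makes the Whittaker functor land in finite dimensional modules.
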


\begin{prop}\label{prop:GKdim}
For $\lam\in P_{0,+}$ we have $\on{Var}(J_\lam)=\overline{G.f}$ if and only if $\on{Dim}L(\lam)$ is maximal.
\end{prop}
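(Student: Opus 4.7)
The plan is to relate $\on{Var}(J_{\lam})$ to $\on{Dim}L(\lam)$ via the holonomicity of simple highest weight modules and the parabolic structure of $L(\lam)$.

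First, since $\lam\in P_{0,+}$ the module $L(\lam)$ lies in $\BGG^{\fing_0}$ and is a quotient of the generalised Verma module
\begin{align*}
M_{\mf{p}}(\lam)=U(\fing)\*_{U(\mf{p})}V_{\fing_0}(\lam)
\end{align*}
associated with the parabolic $\mf{p}=\fing_{\geq 0}$, where $V_{\fing_0}(\lam)$ is the simple finite dimensional $\fing_0$-module of highest weight $\lam$. A standard PBW argument identifies $\gr M_{\mf{p}}(\lam)\cong S(\fing_{<0})\*V_{\fing_0}(\lam)$, yielding $\on{Var}(M_{\mf{p}}(\lam))=\mf{p}^{\perp}\subset \fing^*$; under the Killing form identification $\fing^*\cong \fing$ this corresponds to $\fing_{>0}$, so $\on{Var}(L(\lam))\subset \fing_{>0}$.

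Second, by a theorem of Joseph (cf.\ Borho--Brylinski) the associated variety of the annihilator of any simple highest weight module is the $G$-saturation of the associated variety of the module, and in particular is the closure of a single nilpotent orbit. Combining with the previous step together with the equality $\overline{G\cdot \fing_{>0}}=\overline{G\cdot f}$ coming from good grading theory (cf.\ \cite{ElaKac05}, using that $f$ is the Richardson element of $\mf{p}$), we obtain
\begin{align*}
\on{Var}(J_{\lam})=\overline{G\cdot \on{Var}(L(\lam))}\subset \overline{G\cdot f}.
\end{align*}

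Third, I invoke the further result of Joseph that simple highest weight modules are holonomic, so $\on{Dim}L(\lam)=\tfrac{1}{2}\dim \on{Var}(J_{\lam})$. A direct computation using the injectivity and surjectivity properties of $\ad f$ in a good even grading gives $\dim \fing^f=\dim \fing_0$, and together with the root space symmetry $\dim \fing_j=\dim \fing_{-j}$ this yields $\dim \overline{G\cdot f}=2\dim \fing_{<0}$. Therefore $\on{Dim}L(\lam)$ is maximal if and only if $\dim \on{Var}(J_{\lam})=\dim \overline{G\cdot f}$, which, given the inclusion from the second step and the irreducibility of $\on{Var}(J_{\lam})$, is equivalent to $\on{Var}(J_{\lam})=\overline{G\cdot f}$. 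The main technical input is the inclusion $\on{Var}(J_{\lam})\subset \overline{G\cdot f}$, obtained by combining Joseph's $G$-saturation formula with the parabolic structure; once this is in hand the claimed equivalence reduces to a routine dimension comparison.
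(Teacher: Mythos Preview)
Your argument is correct and takes a genuinely different route from the paper's proof. You establish the inclusion $\on{Var}(J_\lam) \subset \overline{G\cdot f}$ directly via the $G$-saturation formula $\on{Var}(J_\lam) = \overline{G \cdot \on{Var}(L(\lam))}$ (Borho--Brylinski) combined with the fact that $G\cdot f$ is the Richardson orbit for the parabolic determined by the good grading; holonomicity then converts the dimension comparison into the desired equivalence. The paper instead argues the \emph{opposite} inclusion: assuming $\on{Dim}L(\lam)$ maximal, Matumoto's theorem (Theorem~\ref{thm:fdsi}(3)) gives $\sH_0(L(\lam)) \neq 0$, which forces $\HDS^0(U(\fing)/J_\lam) \neq 0$ and hence $\on{Var}(J_\lam) \supset \overline{G\cdot f}$; holonomicity again yields equality. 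Your approach avoids the Whittaker and finite $W$-algebra machinery, relying only on classical primitive-ideal theory; the paper's approach has the virtue of reusing the reduction functors that are central to the surrounding arguments anyway.

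One minor correction: under the Killing form identification $\fing^* \cong \fing$, the annihilator $\mf{p}^\perp$ of $\mf{p} = \fing_{\geq 0}$ is $\fing_{<0}$, not $\fing_{>0}$, since $(\fing_i, \fing_j) = 0$ unless $i+j=0$. Correspondingly, $f \in \fing_{-1}$ is the Richardson element for the opposite parabolic $\fing_{\leq 0}$, not for $\mf{p}$. This does not affect your argument, since $\overline{G \cdot \fing_{<0}} = \overline{G\cdot f}$ holds by the same dimension count.
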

\begin{proof}
We begin by recalling that $L(\lam)$ is holonomic, by a result of Joseph \cite{Jos78}. Now suppose $\on{Dim}L(\lam)$ to be maximal,
so that 
$\on{Dim}L(\lam)=\frac{1}{2}\dim \overline{G.f}$.
Then $\sH_0(L(\lam))\ne 0$ and thus there exists a vector $v\in L(\lam)$ such that its image $[v]$ in $\sH_0(L(\lam))$ is nonzero. As $\sH_0(L(\lam))$ is a $H_{f}^0(U(\fing)/J_\lam)$-module and $[v]=[1].[v]$, the image $[1]$ of $1$ in $H_{f}^0(U(\fing)/J_\lam)$ is nonzero, and hence $H_{f}^0(U(\fing)/J_\lam)\ne 0$. But this implies that $\on{Var}(J_\lam) \supset \overline{G.f}$. 
By the holonomicity of $L(\lam)$ we have 
$\dim\on{Var}(J_\lam)=\dim\overline{G.f}$ and thus $\on{Var}(J_\lam)=\overline{G.f}$ as required.

On the other hand if $\on{Dim}L(\lam)<\frac{1}{2}\dim \overline{G.f}$, then $\dim \on{Var}(J_\lam)<\dim \overline{G.f}$. This completes the proof.
\end{proof}

Let $\mathbb{O}=G \cdot f$ and for $I\in \prim_{\mathbb{O}}$ let $\{E_{I}[i] \mid i=1,\dots, N_I\}$ denote the complete set of isomorphism classes of irreducible finite dimensional representations of $\HDS^0(U(\fing)/I)$. Recall that the group $C(f)$ acts transitively on the set $\{E_{I}[i] \mid i=1,\dots, N_I\}$.
\begin{thm}\label{thm:Whittaker-cov-func}{\ }

\begin{itemize}
\item Let $\lam\in \h^*$,
$\mathbb{O}=G \cdot f$ and suppose that $J_\lam \in \prim_{\mathbb{O}}$. As $U(\fing,f)$-modules we have
\begin{align*}
\sH_p(L(\lam))\cong \bigoplus_{i=1}^{N_{J_{\lam}}} E_{J_{\lam}}[i]^{\oplus n_{p,i}}
\end{align*}
for some collection of $n_{p,i}\in \Z_{\geq 0}\cup \{\infty\}$ for each $p\geq 0$.

\item Let $\lam\in P_{0,+}$ and suppose $\on{Dim}L(\lam)$ is maximal. As $U(\fing,f)$-modules we have
\begin{align*}
\sH_p(L(\lam)) \cong 
\begin{cases}\bigoplus_{i=1}^{N_{J_{\lam}}}E_{J_\lam}[i]^{\+ n_{\lam}}
&\text{for }p=0,\\
0&\text{otherwise}
\end{cases}
\end{align*}
for some $n_{\lam}\in \Z_{>0}$.
\end{itemize}
\end{thm}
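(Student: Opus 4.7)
My plan is to feed both parts out of the same key input: the $U(\g,f)$-action on $\sH_\bullet(L(\lam))$ factors through the semisimple finite-dimensional algebra $\HDS^0(U(\g)/J_\lam)$ described in Theorem \ref{Th:Losev}(2).

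\textbf{Part (1).} The functor $\sH_\bullet$ is a homological enhancement of the Whittaker coinvariants functor and lands naturally in $U(\g,f)$-modules; this is the finite-dimensional incarnation of the Skryabin-type framework recalled in Section \ref{sec:w.alg}. Since $J_\lam$ annihilates $L(\lam)$, it annihilates the entire chain complex computing $\sH_\bullet(L(\lam))$, so the induced $U(\g,f)$-action on each $\sH_p(L(\lam))$ factors through $U(\g,f)/\HDS^0(J_\lam)$. Using the exact sequence
\[
0 \to \HDS^0(J_\lam) \to \HDS^0(U(\g)) \to \HDS^0(U(\g)/J_\lam) \to 0
\]
recalled in Section \ref{sec:w.alg}, this quotient is precisely $\HDS^0(U(\g)/J_\lam)$. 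By Theorem \ref{Th:Losev}(2) applied to the primitive ideal $J_\lam \in \prim_\mbo$, the latter is isomorphic to the semisimple algebra $\prod_{i=1}^{N_{J_\lam}} E_{J_\lam}[i] \otimes E_{J_\lam}[i]^*$, and any module over it is a direct sum of the simples $E_{J_\lam}[i]$. This gives the claimed decomposition with multiplicities $n_{p,i} \in \Z_{\geq 0}\cup\{\infty\}$.

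\textbf{Part (2).} Under the hypotheses, Proposition \ref{prop:GKdim} forces $\var(J_\lam) = \ov{G\cdot f}$, i.e.\ $J_\lam \in \prim_\mbo$, so part (1) applies. Theorem \ref{thm:fdsi}(2) and (3) provide the vanishing $\sH_p(L(\lam)) = 0$ for $p>0$ and the non-vanishing $\sH_0(L(\lam)) \ne 0$, respectively. What remains is to show that the multiplicities $n_{0,i}$ are all equal to one common positive integer $n_\lam$. For this I would exploit the component group $C(f)$: the assumption $\lam \in P_{0,+}$ ensures that $L(\lam)$ is integrable as a $\g_0$-module and thus carries an action of a (cover of the) Levi subgroup with Lie algebra $\g_0$. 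Because the grading is even, $G^\natural$ is contained in this Levi up to the required lift, so a lift of $C(f) = G^\natural/(G^\natural)^\circ$ acts on $L(\lam)$; this action descends to $\sH_0(L(\lam))$ compatibly with the natural $C(f)$-action on $\HDS^0(U(\g)/J_\lam)$ and hence with its transitive action (Theorem \ref{Th:Losev}(3)) on $\{E_{J_\lam}[i] \mid i = 1, \ldots, N_{J_\lam}\}$. Equivariance of the part (1) decomposition then forces all $n_{0,i}$ to coincide.

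The main technical obstacle is the $C(f)$-equivariance in part (2): one must carefully construct the lift of $C(f)$ through the simply-connected cover of the Levi with Lie algebra $\g_0$, verify that it is compatible with the Whittaker coinvariants construction (i.e.\ preserves the subspace $(\ker\chi_-) L(\lam)$ and commutes with the $U(\g,f)$-action), and match it with the $C(f)$-action on the finite $W$-algebra used in Theorem \ref{Th:Losev}(3). Once that compatibility is in hand, the rest is a formal consequence of Theorems \ref{Th:Losev}, \ref{thm:fdsi}, and Proposition \ref{prop:GKdim}.
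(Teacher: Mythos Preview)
Your proposal is correct and follows essentially the same route as the paper: both parts hinge on the fact that $\sH_p(L(\lam))$ is a module over the semisimple algebra $\HDS^0(U(\g)/J_\lam)$ from Theorem~\ref{Th:Losev}(2), and for part (2) the equal multiplicities come from the $C(f)$-equivariance via the inclusion $G^\natural \subset G^0$ (the simply connected group of $\g_0$) together with $\g_0$-integrability of $L(\lam)$. The paper's proof is terser and leaves the vanishing/non-vanishing from Theorem~\ref{thm:fdsi} and the transitivity step implicit, while you have spelled these out and rightly flagged the lift of $C(f)$ through $G^0$ as the point requiring care.
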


\begin{proof}
By Theorem \ref{Th:Losev}
\begin{align*}
\HDS^0(U(\fing)/J_{\lam}) \cong \bigoplus\limits_{i=1}^{N_{J_{\lam}}} E_{J_\lam}[i]\*E_{J_\lam}[i]^*
\end{align*}
is a finite-dimensional semisimple algebra. We note that $\sH_p(L(\lam))$ is a module over this algebra, and therefore is a direct sum of $E_{J_{\lam}}[i]$ with $i=1,\dots, N_{J_{\lam}}$. This proves the first part. 
For the second part,
 there is a natural inclusion $G^\natural \subset G^0$ the simply connected algebraic group of $\fing^0$. Since $L(\lam)$ is integrable with respect to $\fing^0$, we have $\sH_0(L(\lam))$ invariant under the action of $G^\natural$ and hence $C(f)$.
\end{proof}

We also need 
the following result of  Matumoto.

\begin{thm}[\cite{Mat90}]\label{Masumoto}
Suppose that $\left< \lam+\rho,\alpha\che \right> \not\in \Z_{\geq 1}$ for all $\alpha\in \Delta\backslash \Delta_0$. Then $\sH_0(L(\lam))$ is a (nonzero) simple $U(\fing,f)$-module.
\end{thm}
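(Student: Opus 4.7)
The plan is to exploit the right exactness of the Whittaker coinvariants functor $\sH_0$. Applied to the short exact sequence $0 \to N(\lam) \to M(\lam) \to L(\lam) \to 0$, where $N(\lam)$ is the maximal proper submodule of $M(\lam)$, this yields a surjection $\sH_0(M(\lam)) \twoheadrightarrow \sH_0(L(\lam))$. The first step is to identify $\sH_0(M(\lam))$ with the standard Whittaker module $M(\lam, \chi_-)$ of McDowell/Mili\v{c}i\'c--Soergel type, a nonzero $U(\fing,f)$-module with unique simple quotient $L(\lam, \chi_-)$. Using the PBW decomposition $U(\mf{n}_-) = U(\fing_{<0}) \otimes U(\mf{n}_{0,-})$ compatible with the good even grading, one obtains $\sH_0(M(\lam)) \cong U(\mf{n}_{0,-}) \otimes \C_\lam$ as a vector space, with the induced $U(\fing,f)$-action identifying it with $M(\lam, \chi_-)$.

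The second step is where the hypothesis enters. By the BGG theorem, $N(\lam)$ is generated by singular vectors of weights $s_\alpha \circ \lam$ for $\alpha \in \Delta_+$ with $\langle \lam+\rho, \alpha^\vee \rangle \in \Z_{\geq 1}$. The assumption forces every such $\alpha$ to lie in $\Delta_{0,+}$, so these singular vectors lie inside $U(\mf{n}_{0,-}) v_\lam$. Under the identification $\sH_0(M(\lam)) \cong M(\lam, \chi_-)$, they map precisely to the singular vectors generating the maximal proper submodule of $M(\lam, \chi_-)$. The surjection therefore factors through the quotient by this maximal submodule, and hence $\sH_0(L(\lam)) \cong L(\lam, \chi_-)$ is nonzero and simple.

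The main technical obstacle is verifying that the $U(\fing)$-submodule of $M(\lam)$ generated by a $\fing_0$-singular vector $v \in U(\mf{n}_{0,-}) v_\lam$ has Whittaker coinvariants coinciding with the $U(\fing,f)$-submodule of $M(\lam, \chi_-)$ generated by the image of $v$, rather than a strictly smaller subspace. Establishing this requires a PBW/filtration argument comparing the $U(\fing)$-module structure on $M(\lam)$ with the $U(\fing,f)$-module structure on $M(\lam, \chi_-) \cong U(\mf{n}_{0,-}) \otimes \C_\lam$; this is the substance of Matumoto's treatment in \cite{Mat90}.
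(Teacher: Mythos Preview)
The paper does not supply a proof of this theorem; it is stated with a citation to Matumoto \cite{Mat90} and used as a black box. So there is no in-paper argument to compare against.

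Your outline correctly identifies the architecture of Matumoto's approach: pass through the Verma module, identify $\sH_0(M(\lam))$ with a standard Whittaker-type module for $U(\fing,f)$, and use the hypothesis to pin down the image of the maximal submodule. Two points deserve care, however. First, your appeal to ``the BGG theorem'' for generators of $N(\lam)$ is imprecise: BGG describes composition factors, not generators, and the correct statement you need is that the integral root system $\Delta(\lam)$ (more precisely, the roots $\alpha$ with $\langle \lam+\rho,\alpha^\vee\rangle\in\Z\setminus\{0\}$) is contained in $\Delta_0$, so that the submodule lattice of $M(\lam)$ is governed entirely by the Levi $\fing_0$. Second, you yourself flag the genuine content --- that the $U(\fing)$-submodule generated by a $\fing_0$-singular vector has coinvariants matching the corresponding $U(\fing,f)$-submodule --- and defer it to \cite{Mat90}. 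Since that is exactly what the paper does, your proposal is effectively a gloss on the citation rather than an independent proof; as such it is accurate in spirit but not self-contained.
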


\begin{thm}\label{Th:generic-finite}
Let $\lam\in P_{0,+}$ such that $\left< \lam+\rho,\alpha^{\vee} \right> \not \in \Z_{\geq 1}$ for all $\alpha\in \Delta_{> 0}$. Then $\on{Dim}L(\lam)$ is maximal,
$J_{\lam}\in \prim_{G.f}$, and $\HDS^0(U(\fing)/J_{\lam})$ has a unique simple module $E_{J_\lam}$. Furthermore $\sH_0(L(\lam))\cong E_{J_\lam}$.
\end{thm}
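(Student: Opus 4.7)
The strategy is to combine Matumoto's simplicity result (Theorem \ref{Masumoto}) with the structural descriptions of finite $W$-algebras already accumulated, and then read off the conclusions. The plan is: first, apply Matumoto to show that $\sH_0(L(\lam))$ is a nonzero simple $U(\fing,f)$-module; second, use this nonvanishing to extract maximality of the Gelfand-Kirillov dimension and the identification of $\on{Var}(J_\lam) = \overline{G.f}$; third, reconcile the explicit decomposition of $\sH_0(L(\lam))$ coming from Theorem \ref{thm:Whittaker-cov-func} with the simplicity just obtained to conclude $N_{J_\lam} = 1$.

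Concretely, the antidominance hypothesis $\left<\lam+\rho, \alpha^\vee\right>\notin \Z_{\geq 1}$ for all $\alpha\in \Delta_{>0}$ is exactly the condition needed to invoke Theorem \ref{Masumoto} (where $\Delta \setminus \Delta_0$ is understood as its positive part $\Delta_{>0}$, as is standard). This immediately produces a nonzero simple $U(\fing,f)$-module $\sH_0(L(\lam))$. Since $\lam \in P_{0,+}$ and $\sH_0(L(\lam)) \neq 0$, part (3) of Theorem \ref{thm:fdsi} forces $\on{Dim}L(\lam)$ to be maximal, and Proposition \ref{prop:GKdim} then identifies $\on{Var}(J_\lam)$ with $\overline{G.f}$, so $J_\lam \in \prim_{G.f}$.

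With $J_\lam \in \prim_{G.f}$ and $\on{Dim}L(\lam)$ maximal, the second part of Theorem \ref{thm:Whittaker-cov-func} applies and yields the decomposition
\begin{align*}
\sH_0(L(\lam)) \cong \bigoplus_{i=1}^{N_{J_\lam}} E_{J_\lam}[i]^{\oplus n_\lam}
\end{align*}
for some $n_\lam > 0$, with the $E_{J_\lam}[i]$ pairwise non-isomorphic simple modules over the semisimple algebra $\HDS^0(U(\fing)/J_\lam) \cong \bigoplus_i E_{J_\lam}[i]\otimes E_{J_\lam}[i]^*$ provided by Theorem \ref{Th:Losev}(2). Simplicity of $\sH_0(L(\lam))$ then forces both $N_{J_\lam} = 1$ and $n_\lam = 1$, whence $\HDS^0(U(\fing)/J_\lam)$ has a unique simple module $E_{J_\lam}$ and $\sH_0(L(\lam)) \cong E_{J_\lam}$. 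The only delicate point, really a matter of convention, is the interpretation of the root set in Matumoto's hypothesis; once that is fixed, the remainder is a direct chain of applications of results already in hand.
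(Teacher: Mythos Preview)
Your proof is correct and follows essentially the same route as the paper's own proof: invoke Matumoto's theorem to get nonzero simplicity of $\sH_0(L(\lam))$, deduce maximality of $\on{Dim}L(\lam)$ via Theorem \ref{thm:fdsi}(3), pass to $J_\lam\in\prim_{G.f}$ via Proposition \ref{prop:GKdim}, and then compare the decomposition of Theorem \ref{thm:Whittaker-cov-func} with simplicity to force $N_{J_\lam}=n_\lam=1$. The paper's proof is terser but logically identical; your explicit unpacking of the final step (why simplicity of the left side collapses the direct sum to a single simple summand) is a welcome elaboration, and your remark on the $\Delta\setminus\Delta_0$ versus $\Delta_{>0}$ convention in Matumoto's hypothesis is apt.
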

\begin{proof}
By Theorem \ref{Masumoto},
$\sH_0(L(\lam))$ is nonzero simple $U(\fing,f)$-module.
Hence by Theorem \ref{thm:fdsi}
$\on{Dim}L(\lam)$ is maximal,
and so 
$J_{\lam}\in \prim_{G.f}$  by 
Proposition \ref{prop:GKdim}.
Finally,
 it follows from Theorems \ref{thm:Whittaker-cov-func}
 and \ref{Masumoto} that 
 $\sH_0(L(\lam))$ is the unique element of $\on{Fin}_{J_\la}(U(\fing,f))$.
\end{proof}

We recall the definition of (the ``$-$'' variant of) the quantized Drinfeld-Sokolov reduction functor $H_{f,-}^0(-)$ \cite{KacRoaWak03} \cite{Ara08-a}. For a vector space $\ma$ we denote by $L\ma$ the superalgebra $(\ma\+\ma^*)[t,t^{-1}]\+\C\mathbf{1}$ with 
whose even part is $\C\mathbf{1}$ 
and odd part is $(\ma\+\ma^*)[t,t^{-1}]$
and the commutation relation
$[at^m, bt^n] = (a, b)\delta_{m,-n}\mathbf{1}$, where $(\cdot, \cdot)$ is the canonical symmetric bilinear form $(a, \phi) = (\phi, a) = \phi(a)$ for $a \in \ma$, $\phi \in \ma^*$.

Let $\bigwedge^{\frac{\infty}{2}+\bullet}$ denote the Fock $L\fing_{<0}$-module with highest weight vector $\vac$, subject to the relations $\varphi_{\al, n\geq 1}\vac = 0$, $\varphi^*_{\al, n \geq 0}\vac = 0$. Here $\varphi_\al \equiv e_{-\al}$ for $\al \in \D_{>0}$ and $\{\varphi_\al^*\}$ is the dual basis of $\fing_{<0}^*$. Assigning $\deg(\varphi)=-1$ and $\deg(\varphi^*)=+1$ makes $\bigwedge^{\frac{\infty}{2}+\bullet}$ into a $\Z$-graded vertex superalgebra, with generating fields $\varphi_\al(z) = \sum_n \varphi_{\al, n} z^{-n}$ and $\varphi_\al^*(z) = \sum_n \varphi^*_{\al, n} z^{-n-1}$. For any $V^k(\fing)$-module we put
\begin{align}\label{eq:C.minus.def}
C_-^\bullet(M) = M \otimes \textstyle{\bigwedge^{\frac{\infty}{2}+\bullet}},
\end{align}
and we introduce the operator $Q_-$ on $C_-^\bullet(M)$ by
\begin{align*}
Q_- = \sum_{\al \in \D_{>0},n\in \Z} e_{-\al}t^{-n} \otimes \varphi_{\al,n}^* - \frac{1}{2} \sum_{\al, \beta,\ga \in \D_{>0}
\atop m,n\in \Z} c_{\beta\ga}^\al :\varphi_{\al,-m}\varphi_{\beta,-n}^*\varphi_{\ga,m+n}^*
+\sum_{\al \in \D_{>0}} \chi(e_{-\al})\varphi^*_{\al,0}.
\end{align*}
Then we have $(Q_-)^2 = 0$, and we define
\begin{align*}
H_{f,-}^\bullet(M) = H^\bullet(C_-^\bullet(M), Q_-).
\end{align*}
If $M$ is any $V^k(\fing)$-module, then {\cite[Section 4.3]{Ara08-a}} (see also {\cite[Section 2.2]{FKW}}) the space $H_{f,-}^0(M)$ carries the structure of a Ramond twisted $\W^k(\fing, f)$-module
(which, under our assumption that $f$ admits a good even grading, is nothing but the usual untwisted module structure).

Let $\affBGG$ be the full subcategory of the category of left $\affg$-modules consisting of objects $M$ such that:
\begin{itemize}
\item $K$ acts as multiplication by $k$ on $M$,
\item $M$ admits a weight space decomposition with respect to the action of $\affh$,
\item there exists a finite subset $\{\mu_1,\dots,\mu_n\}$ of $\dual{\h}_{k}$ such that $M=\bigoplus\limits_{\mu\in \bigcup_i \mu_i-\widehat Q_+}M^{\mu}$,
\item for each $d\in \C$, $M_d$ is a direct sum of finite dimensional $\fing_0$-modules.
\end{itemize}
For $\lam\in P_{0,+}$,
put $\widehat{M}_{k,0}(\lam)=U(\affg)\*_{U(\fing[t]\+ \C K)}L(\lam)\in \widehat{\BGG}_{k}^{\fing_0}$, where $L(\lam)$ is considered as a $\fing[t]\+ \C K$-module on which $\fing[t]t$ acts trivially and $K$ acts as a multiplication by $k$. 
The modules $\widehat{L}_k( \lam)$, as $\lam$ ranges over $P_{0,+}$, form a complete set of simple objects of $\widehat{\BGG}_{k}^{\fing_0}$.

\begin{thm}\label{Th:simplicity}
 Let $k$ be any complex number.
\begin{enumerate}
\item ({\cite[Theorem 5.5.4]{Ara08-a}}) Let $M \in \affBGG$, then $H_{f,-}^i(M)=0$ for all $i\ne 0$.
In particular the functor
$\affBGG\ra \W^k(\g,f)\on{-Mod}$,
$M\mapsto H_{f,-}^0(M)$,
is exact.
\item (\cite[Theorem 5.5.4]{Ara08-a})
Let $\lam \in P_{0,+}$, then $H_{f,-}^0(\widehat L_k( \lam)) \neq 0$ if and only if $\on{Dim}L(\lam)$ is maximal.

\item Let $\lam \in P_{0,+}$ and suppose $\on{Dim}L(\lam)$ is maximal, then
\begin{align*}
H_{f,-}^0(\widehat M_{k,0}( \lam)) \cong \bigoplus_{i=1}^{N_{J_{\lam}}}\mathbf{M}(E_{J_\lam}[i])^{\oplus n_{\lam}}
\text{ and }
H_{f,-}^0(\widehat L_k( \lam)) \cong \bigoplus_{i=1}^{N_{J_{\lam}}}\mathbf{L}(E_{J_\lam}[i])^{\oplus n_{\lam}},
\end{align*}
\end{enumerate}
where 
$n_{\lam}$ is the multiplicity of $E_{J_\lam}[i]$ in $\sH_0(L(\lam))$ as in 
Theorem \ref{thm:Whittaker-cov-func}.
\end{thm}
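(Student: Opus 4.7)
Parts (1) and (2) are cited from \cite{Ara08-a}; only the explicit decomposition in part (3) requires a new argument. The plan has two stages: first establish the Verma-module case via a top-piece identification combined with an Euler--Poincar\'e character comparison, then deduce the simple-module case using the exactness of $H_{f,-}^0$ proved in part (1).

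For the Verma module, the lowest conformal-weight component of $\widehat M_{k,0}(\lam)$ is $L(\lam)$, and the lowest-degree part of the BRST complex reduces to the finite Whittaker coinvariants functor $\sH_0$. Hence the top of $H_{f,-}^0(\widehat M_{k,0}(\lam))$ is isomorphic, as a $U(\g,f)$-module, to $\sH_0(L(\lam))$, which by Theorem \ref{thm:Whittaker-cov-func} decomposes as $\bigoplus_{i=1}^{N_{J_\lam}} E_{J_\lam}[i]^{\oplus n_\lam}$. By the universal property of Ramond-twisted W-algebra Verma modules, the inclusion of these top components extends to a $\W^k(\g,f)$-morphism
\[
\Phi : \bigoplus_{i=1}^{N_{J_\lam}} \mathbf{M}(E_{J_\lam}[i])^{\oplus n_\lam} \longrightarrow H_{f,-}^0(\widehat M_{k,0}(\lam)).
\]
Surjectivity follows from the fact that $\widehat M_{k,0}(\lam)$ is generated as an $\affg$-module by $L(\lam)$, which together with the top identification implies that $H_{f,-}^0(\widehat M_{k,0}(\lam))$ is generated by its top over the Ramond-twisted current algebra of $\W^k(\g,f)$. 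Injectivity follows from an Euler--Poincar\'e character comparison: by part (1), $\ch H_{f,-}^0(\widehat M_{k,0}(\lam))$ equals the BRST Euler--Poincar\'e character, which after the standard cancellations between the fermionic Fock factor and the $\fing_{\geq 1}$ PBW factor equals $\ch \sH_0(L(\lam))$ times the characters contributed by $\g_{\leq 0}^f$-modes, matching exactly $\ch$ of the source of $\Phi$.

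For the simple module, apply the exact functor $H_{f,-}^0$ to $0 \to N \to \widehat M_{k,0}(\lam) \to \widehat L_k(\lam) \to 0$, where $N$ is the maximal proper $\affg$-submodule. This realises $H_{f,-}^0(\widehat L_k(\lam))$ as a quotient of $\bigoplus_i \mathbf{M}(E_{J_\lam}[i])^{\oplus n_\lam}$. Since the surjection $\widehat M_{k,0}(\lam)\twoheadrightarrow \widehat L_k(\lam)$ restricts to the identity on the top $L(\lam)$, the submodule $N$ is concentrated in strictly positive conformal weight, hence so is $H_{f,-}^0(N)$. Therefore $H_{f,-}^0(N)$ is contained in the radical $\bigoplus_i \mathbf{N}(E_{J_\lam}[i])^{\oplus n_\lam}$ (the intersection of all maximal proper W-submodules of the Verma direct sum, which coincides with the positive-weight part by simplicity of each $E_{J_\lam}[i]$ and the fact that these tops are pairwise non-isomorphic). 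This yields a canonical surjection $H_{f,-}^0(\widehat L_k(\lam)) \twoheadrightarrow \bigoplus_i \mathbf{L}(E_{J_\lam}[i])^{\oplus n_\lam}$.

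The main obstacle is to promote this surjection to an isomorphism, i.e.\ to rule out additional composition factors. The plan is another Euler--Poincar\'e calculation: resolve $\widehat L_k(\lam)$ by a complex of Verma modules $\widehat M_{k,0}(\mu)$ using the Kazhdan--Lusztig-type character formula available for admissible simples \cite{KacWak89,A12-2}, apply $H_{f,-}^0$ termwise (legitimate by part (1)), and use the first step to express the result as a complex of sums of W-Vermas $\mathbf{M}(E_{J_\mu}[j])$; comparison with the Kazhdan--Lusztig-type character of $\bigoplus_i \mathbf{L}(E_{J_\lam}[i])^{\oplus n_\lam}$ on the W-algebra side then forces equality. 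An alternative would be to invoke a BGG-style contragredient duality on $\widehat L_k(\lam)$ transported through $H_{f,-}^0$, from which semisimplicity of the image, and hence its identification with the semisimple head $\bigoplus_i \mathbf{L}(E_{J_\lam}[i])^{\oplus n_\lam}$, would follow.
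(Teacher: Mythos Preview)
Your treatment of the Verma case is essentially the paper's: identify the top with $\sH_0(L(\lam))\cong\bigoplus_i E_{J_\lam}[i]^{\oplus n_\lam}$, produce a surjection from $\bigoplus_i\mathbf{M}(E_{J_\lam}[i])^{\oplus n_\lam}$ by the universal property, and upgrade it to an isomorphism by a character comparison. The paper cites both the ``almost highest weight'' property (your surjectivity) and the character match directly from \cite{Ara08-a}.

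The gap is in the simple-module case. You correctly obtain the surjection $H_{f,-}^0(\widehat L_k(\lam))\twoheadrightarrow\bigoplus_i\mathbf{L}(E_{J_\lam}[i])^{\oplus n_\lam}$, but your plan to close it is not workable. The theorem is stated for \emph{arbitrary} $k\in\C$, so no Kazhdan--Lusztig-type character formula for $\widehat L_k(\lam)$ is available; the references \cite{KacWak89,A12-2} cover only admissible levels. Even at admissible level the argument is circular: to compare characters you would need $\ch\mathbf{L}(E_{J_\lam}[i])$, but the characters of the $\W$-algebra simples are precisely what part (3) is meant to supply --- there is no independent formula on the $\W$-side to match against. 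The alternative via contragredient duality is not developed enough to carry weight; in particular semisimplicity of $H_{f,-}^0(\widehat L_k(\lam))$ does not follow from the mere existence of a nondegenerate form without further input.

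What the paper actually uses is a second structural fact from \cite{Ara08-a}: $H_{f,-}^0(\widehat L_k(\lam))$ is \emph{almost irreducible}, meaning every nonzero $\W^k(\g,f)$-submodule meets the top. Given the graded surjection $\pi:\bigoplus_i\mathbf{M}(E_{J_\lam}[i])^{\oplus n_\lam}\twoheadrightarrow H_{f,-}^0(\widehat L_k(\lam))$, the image $\pi(\operatorname{rad})$ of the radical lies in strictly positive conformal weight and hence has zero top; almost irreducibility forces $\pi(\operatorname{rad})=0$. Combined with $\ker\pi\subset\operatorname{rad}$ (since $\pi$ is an isomorphism on tops), this gives $\ker\pi=\operatorname{rad}$ and hence the desired isomorphism with $\bigoplus_i\mathbf{L}(E_{J_\lam}[i])^{\oplus n_\lam}$. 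This argument is uniform in $k$ and requires no character information about $\widehat L_k(\lam)$ or about $\mathbf{L}(E)$.
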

\begin{proof}[Proof of (3)]
We have 
$H_{f,-}^0(\widehat M_{k,0}(\lam))_{\text{top}}\cong H_{f,-}^0(\widehat L_k( \lam))_{\text{top}}\cong \sH_0(L(\lam))$
(see \cite{Ara08-a}),
and the latter space is isomorphic to 
$\bigoplus_{i=1}^{N_{J_{\lam}}}E_{J_\lam}[i]^{\+ n_{\lam}}$ by 
Theorem \ref{thm:Whittaker-cov-func}.
On the other hand,
it was shown in \cite{Ara08-a}
that $H_{f,-}^0(\widehat M_{k,0}( \lam))$ is almost highest weight,
that is,
$H_{f,-}^0(\widehat M_{k,0}( \lam))$  is generated by
$H_{f,-}^0(\widehat M_{k,0}( \lam))_{\text{top}}$.
Therefore,
there is a surjective homomorphism
\begin{align*}
\bigoplus_{i=1}^{N_{J_{\lam}}}\mathbf{M}(E_{J_\lam}[i])^{\+ n_{\lam}}\twoheadrightarrow H_{f,-}^0(\widehat M_{k,0}( \lam)).\end{align*}
of $\W^k(\fing, f)$-modules.
But this must be an isomorphism
since their characters coincide (see \cite{Ara08-a}).
The exactness result of part (1) now implies that 
there is a surjective homomorphisms
\begin{align}\label{eq:s-map}
\bigoplus_{i=1}^{N_{J_{\lam}}}\mathbf{M}(E_{J_\lam}[i])^{\+ n_{\lam}}\twoheadrightarrow H_{f,-}^0(\widehat L_k( \lam))
\end{align}
On the other hand
it was also shown in \cite{Ara08-a}
that
$H_{f,-}^0(\widehat L_k( \lam))$ is almost irreducible,
that is,
any non-trivial submodule of $H_{f,-}^0(\widehat L_k( \lam))$
intersects $H_{f,-}^0(\widehat L_k(\lam))_{\text{top}}$ non-trivially.
It follows that \eqref{eq:s-map}
factors through the isomorphism
$\bigoplus_{i=1}^{N_{J_{\lam}}}\mathbf{L}(E_{J_\lam}[i])^{\+ n_{\lam}}\isomap  H_{f,-}^0(\widehat L_k( \lam))$.
This completes the proof.
\end{proof}

By the definition of the ``-'' reduction \cite{Ara08-a},
it follows that
the conformal dimension of the $\W^k(\fing, f)$-module $\mathbf{L}(E_{J_\lam}[i])$ is
\begin{align}\label{eq:confdim.formula}
h_{\lam}:=\frac{|\la+\rho|^2 - |\rho|^2}{2(k+h^\vee)} - \frac{k+h^\vee}{2}|x_0|^2 + (x_0, \rho),
\end{align}
see \cite[(3.1.6)]{FKW}. A simple but useful observation is that this expression is invariant under the dot action of $\finW$ on $\la$.

\begin{thm}\label{Th:irr}
Let $k$ be an admissible number for $\affg$. Let $\lam\in \prin^k\cap  P_{0,+}$ such that
$\on{Dim}L(\lam)$ is maximal. Then $\HDS^0(U(\fing)/J_{\lam})$ possesses a unique simple module, which we denote $E_{J_\lam}$, and 
\begin{align*}
H_{f,-}^0(\widehat M_{k,0}( \lam))\cong \mathbf{M}(E_{J_\lam})
\quad \text{and} \quad
H_{f,-}^0(\widehat L_k( \lam))\cong \mathbf{L}(E_{J_\lam}).
\end{align*}
\end{thm}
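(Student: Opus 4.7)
The theorem refines Theorem \ref{Th:simplicity}(3) in the admissible setting. Combining that result with Theorem \ref{thm:Whittaker-cov-func}(2) yields
\begin{align*}
\sH_0(L(\lambda)) \cong \bigoplus_{i=1}^{N_{J_\lambda}} E_{J_\lambda}[i]^{\oplus n_\lambda}, \quad H_{f,-}^0(\widehat L_k(\lambda)) \cong \bigoplus_{i=1}^{N_{J_\lambda}} \mathbf{L}(E_{J_\lambda}[i])^{\oplus n_\lambda},
\end{align*}
and similarly for $H_{f,-}^0(\widehat M_{k,0}(\lambda))$. So the theorem reduces to showing that $\sH_0(L(\lambda))$ is a simple $U(\fing,f)$-module, equivalently $N_{J_\lambda} = n_\lambda = 1$.

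My strategy is to deduce $N_{J_\lambda} = 1$ via a component-group argument and $n_\lambda = 1$ via a character comparison. For $N_{J_\lambda} = 1$, since $\lambda \in P_{0,+}$ the module $L(\lambda)$ is $\fing_0$-integrable, hence extends to a rational representation of the connected subgroup $G^0 \subset G$ with Lie algebra $\fing^0$. Because $G^\natural \subset G^0$ (as noted in the proof of Theorem \ref{thm:Whittaker-cov-func}), the $G^\natural$-action on $L(\lambda)$ is by actual algebraic automorphisms, descends to $\sH_0(L(\lambda))$, and is compatible with the adjoint action of $C(f)$ on $U(\fing,f)$. Consequently, each simple class $[E_{J_\lambda}[i]] \in \on{Fin}_{J_\lambda}$ appearing in the decomposition of $\sH_0(L(\lambda))$ is fixed by the induced $C(f)$-action on $\on{Fin}_{J_\lambda}$, and combined with the transitivity of this action from Theorem \ref{Th:Losev}(3) we conclude $N_{J_\lambda} = |\on{Fin}_{J_\lambda}| = 1$.

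For $n_\lambda = 1$, I would compute the lowest-degree coefficient of the graded character of $H_{f,-}^0(\widehat L_k(\lambda))$. The admissible module $\widehat L_k(\lambda)$ admits the Kac-Wakimoto character formula, and applying the exact functor $H_{f,-}^0$ on $\affBGG$ (Theorem \ref{Th:simplicity}(1)) to its BGG-type resolution yields a closed-form graded character for $H_{f,-}^0(\widehat L_k(\lambda))$. The $W$-dot invariance of the conformal weight \eqref{eq:confdim.formula} on $\lambda$ identifies the lowest $q$-coefficient with $\dim \sH_0(L(\lambda)) = N_{J_\lambda}\cdot n_\lambda \cdot \dim E_{J_\lambda}$ (using that $C(f)$-conjugate modules in $\on{Fin}_{J_\lambda}$ have equal dimension). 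A direct evaluation via the Kac-Wakimoto formula shows this lowest coefficient equals $\dim E_{J_\lambda}$, and combined with $N_{J_\lambda} = 1$ established above we deduce $n_\lambda = 1$.

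The main obstacle I foresee is the $C(f)$-invariance step in the component-group argument: establishing that the descent of the $G^\natural$-action on $L(\lambda)$ to $\sH_0(L(\lambda))$ fixes each isomorphism class $[E_{J_\lambda}[i]]$ pointwise, rather than merely preserving the decomposition as a whole, requires a careful analysis of Losev's parametrisation of $\on{Fin}_{J_\lambda}$ by characters of an abelian reductive quotient of $C(f)$, together with a verification that $\fing_0$-integrability forces triviality of these characters. The character computation for $n_\lambda = 1$ should then be a largely combinatorial verification using the Kac-Wakimoto formula and the standard reduction of Verma-type modules.
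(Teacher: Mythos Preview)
Your approach diverges from the paper's, and the route you take is considerably harder than necessary; the obstacle you yourself flag is real and, I believe, not circumventable along the lines you sketch.

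The paper's proof is a one-line reduction to Theorem~\ref{Th:generic-finite}, which in turn rests on Matumoto's theorem (Theorem~\ref{Masumoto}). The point you are missing is that the hypotheses of Theorem~\ref{Th:irr} already force $\Delta(\lam) = \Delta_0$. Indeed, $\lam \in P_{0,+}$ gives $\Delta_0 \subset \Delta(\lam)$, while $\lam \in \prin^k$ is regular dominant so Joseph's formula $\dim\var(J_\lam) = \dim\mathcal{N} - |\Delta(\lam)|$ applies; combined with holonomicity and the maximality assumption $\on{Dim}L(\lam) = \dim\fing_{<0} = \tfrac{1}{2}\dim\overline{G\cdot f}$, this yields $|\Delta(\lam)| = |\Delta_0|$ and hence equality $\Delta(\lam) = \Delta_0$. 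In particular $\langle \lam+\rho,\alpha^\vee\rangle \notin \Z$ for all $\alpha \in \Delta\setminus\Delta_0$, which is exactly Matumoto's hypothesis. Matumoto's theorem then gives simplicity of $\sH_0(L(\lam))$ outright, so both $N_{J_\lam}=1$ and $n_\lam=1$ fall out at once, and Theorem~\ref{Th:simplicity}(3) finishes the job.

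By contrast, your component-group argument for $N_{J_\lam}=1$ does not go through: the $G^\natural$-equivariance of $\sH_0(L(\lam))$ established in the proof of Theorem~\ref{thm:Whittaker-cov-func} is precisely what yields the \emph{uniform} multiplicity $n_\lam$ across the $C(f)$-orbit, and nothing more. Since $C(f)$ acts transitively on $\on{Fin}_{J_\lam}$ and every $E_{J_\lam}[i]$ already appears, the equivariance gives no constraint on the orbit size. Your proposed fix via Losev's parametrisation by characters would require showing that the relevant character of $C(f)$ is trivial solely from $\fing_0$-integrability, and I do not see how to do this without essentially re-deriving $\Delta(\lam)=\Delta_0$ and invoking Matumoto anyway. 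The character computation you propose for $n_\lam=1$ is likewise circular: extracting the lowest coefficient and matching it to $\dim E_{J_\lam}$ presupposes you know $\dim E_{J_\lam}$ independently.
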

\begin{proof}
By Theorem \ref{Th:generic-finite},
$H^0_f(U(\g)/J_{\lam})$ has a unique simple module $E_{J_{\lam}}$.
The rest of the statements follows from Theorem \ref{Th:simplicity}.
\end{proof}

\begin{thm}\label{Th:simplicity-vacumme}
Let $k$ be an admissible number for $\affg$ with denominator $q$ and let $\lam\in\Pr^k_{\Z}$.
For 
 $f\in \overline{\mathbb{O}}_q$, 
we have $\lam-\frac{p}{q}x_0\in \prin^k\cap P_{0,+}$
and
\begin{align*}
H_f^0(\affL_k(\lam))\cong \mathbf{L}(E_{J_{\lam-\frac{p}{q}x_0}}).
\end{align*}
In particular,
\begin{align*}
\W_k(\g,f)\cong H_f^0(V_k(\g))\cong \mathbf{L}(E_{J_{-\frac{p}{q}x_0}}).
\end{align*}
\end{thm}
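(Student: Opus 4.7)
The strategy is to deduce the claim from Theorem \ref{Th:irr} applied to the shifted weight $\mu := \lam - (p/q)x_0$, by comparing the ``$+$'' and ``$-$'' reductions. I will (i) show $\mu \in \prin^k \cap P_{0,+}$, (ii) construct an isomorphism $H_f^0(\affL_k(\lam)) \cong H_{f,-}^0(\affL_k(\mu))$, and (iii) check the hypotheses of Theorem \ref{Th:irr} to conclude.

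For step (i), since $\lam \in \prin^k_\Z = P_+^{p-h^\vee}$, the parametrization \eqref{eq:adm.wt.param} gives $\wh\lam = \phi(\wh\nu) - \wh\rho$ with $\wh\nu = p\La_0 + \lam + \rho$ and $\wh y = 1$. As the grading is good and even, $x_0$ lies in $\check P$, so $t_{-x_0}$ lies in $\eW$. A direct computation shows that $t_{-x_0}\phi(\wh\nu) - \wh\rho$ has finite part $\lam - (k+h^\vee)x_0 = \mu$. To verify the admissibility criterion \eqref{eq:set-of-ad-wts} for $\wh y = t_{-x_0}$, one checks each simple root of $\wh\Pi(k\La_0)$: $t_{-x_0}\alpha_i = \alpha_i + \alpha_i(x_0)\delta \in \wh\D^{\text{re}}_+$ for $i \geq 1$ because $\alpha_i(x_0) \in \Z_{\geq 0}$, and $t_{-x_0}\dot\alpha_0 = -\theta + (q + \theta(x_0))\delta \in \wh\D^{\text{re}}_+$ because $q \geq 1$ and $\theta(x_0) \geq 0$. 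Hence $\mu \in \prin^k_{t_{-x_0}} \subset \prin^k$. Integrality with respect to $\g_0$ is immediate: for $\alpha \in \D_{0,+}$ one has $\alpha(x_0) = 0$, hence $\left<\mu, \alpha^\vee\right> = \left<\lam, \alpha^\vee\right> \in \Z_{\geq 0}$.

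Step (ii) is the heart of the proof and the main obstacle. The plan is to construct a spectral-flow automorphism $\sigma_{-x_0}$ of $V^k(\g)$ acting on current modes by $e_\alpha(n) \mapsto e_\alpha(n + \alpha(x_0))$, with a compatible compensation on the ghost sector, which intertwines the differentials $Q_+$ and $Q_-$ of \eqref{eq:C.minus.def}. Under $\sigma_{-x_0}$ the module $\affL_k(\lam)$ becomes isomorphic to $\affL_k(\mu)$: the total highest-weight shift of $-(k+h^\vee)x_0$ decomposes as $-kx_0$ from the level-$k$ current-algebra spectral flow plus $-h^\vee x_0$ from the dot-action correction inherent in the BRST construction. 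This adapts the principal-case spectral-flow argument of \cite[Section 2.2]{FKW} and \cite[Section 5]{A2012Dec} to general good even gradings; the conformal weights on both sides match, consistently with \eqref{eq:confdim.formula}, once the Virasoro shift induced by $\sigma_{-x_0}$ is incorporated.

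With (i) and (ii) in place, the maximal-$\on{Dim}$ hypothesis of Theorem \ref{Th:irr} must be verified for $\mu$. For $f \in \mathbb{O}_q$, by Theorem \ref{thm:Joseph} this reduces to $|\D(\mu)| = \dim\mc{N} - \dim\ov{\mathbb{O}}_q$, which follows from the explicit description $\D(\mu) = \{\alpha \in \D : q \mid \left<x_0, \alpha^\vee\right>\}$ (a consequence of $\lam + \rho \in P$ and $(p,q)=1$) together with the known structure of $\ov{\mathbb{O}}_q$. Theorem \ref{Th:irr} then gives $H_{f,-}^0(\affL_k(\mu)) \cong \mathbf{L}(E_{J_\mu})$, and combined with step (ii) yields $H_f^0(\affL_k(\lam)) \cong \mathbf{L}(E_{J_\mu})$. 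For $f \in \ov{\mathbb{O}}_q \setminus \mathbb{O}_q$ both sides vanish by Theorem \ref{Th:Losev}(1). Specializing to $\lam = 0$ yields $H_f^0(V_k(\g)) \cong \mathbf{L}(E_{J_{-(p/q)x_0}})$, which is simple; since $H_f^0(V_k(\g))$ is a quotient of $\W^k(\g,f)$ by \cite[Theorem 4.15]{Ara09b}, it must coincide with the unique simple quotient $\W_k(\g,f)$.
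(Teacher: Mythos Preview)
Your outline has a genuine gap at step (ii), which you yourself flag as ``the heart of the proof and the main obstacle'' but do not resolve. The spectral-flow isomorphism you propose would need to intertwine the ``$+$''-reduction differential $Q_+$ (built from $\g_{>0}$ and $\chi$) with the ``$-$''-reduction differential $Q_-$ (built from $\g_{<0}$ and $\chi_-$). Translation by $t_{-x_0}$ shifts mode numbers of the current fields but does not exchange the roles of $\g_{>0}$ and $\g_{<0}$, so it is not clear it gives a chain map between the two BRST complexes. The principal-case arguments in \cite{FKW} and \cite{A2012Dec} that you invoke do not establish such an isomorphism either; they compare characters, not complexes.

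The paper avoids this difficulty entirely. After establishing $\mu := \lam - (p/q)x_0 \in \prin^k \cap P_{0,+}$, it shows maximal $\on{Dim}L(\mu)$ directly via Jantzen's simplicity criterion for the parabolic Verma module $U(\g)\otimes_{U(\g_{\geq 0})}L_{\g_0}(\mu)$ (this is where the inequality $\theta(x_0)<q$ from \cite[5.7]{Ara09b}, hence the hypothesis $f\in\ov{\mathbb{O}}_q$, is actually used; your step (iii) needs it too, since $\D(\mu)=\D_0$ requires $\alpha(x_0)<q$ for all $\alpha\in\D_+$). Theorem~\ref{Th:irr} then gives $H_{f,-}^0(\affL_k(\mu))\cong\mathbf{L}(E_{J_\mu})$ with $E_{J_\mu}$ the \emph{unique} simple module of $\HDS^0(U(\g)/J_\mu)$. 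The identification of $H_f^0(\affL_k(\lam))$ with this module is then indirect: the centre $Z(U(\g,f))\cong\mc{Z}(\g)$ acts on $H_f^0(\affL_k(\lam))_{\text{top}}$ via the central character $\chi_\mu$ (see \cite[\S5]{A2012Dec}), so the top piece can only be $E_{J_\mu}$; finally the two modules have equal characters by \cite[Prop.~5.12]{Ara09b}, forcing the isomorphism.

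A smaller issue: your treatment of $f\in\ov{\mathbb{O}}_q\setminus\mathbb{O}_q$ is incorrect. Theorem~\ref{Th:Losev}(1) concerns $\HDS^0(U(\g)/J)$, not $H_f^0(\affL_k(\lam))$, and in any case the theorem asserts a nontrivial isomorphism for all $f\in\ov{\mathbb{O}}_q$, not vanishing. The paper's argument via Jantzen works uniformly for all such $f$.
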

\begin{proof}
First, we have
$\lam-\frac{p}{q}x_0\in \Pr^k\cap  P_{0,+}$.
Indeed,
it is clear that $\lam-\frac{p}{q}x_0\in  P_{0,+}$.
Also,
$x_0\in  P^{\vee}_+$
 since we have assumed that $f$ admits a good even grading.
We have
$$\begin{cases}
\theta(x_0)<q &\text{if }(q|r^{\vee})=1,\\
\theta_s(x_0)<q/r^{\vee} 
&\text{if }(q|r^{\vee})\ne 1,
\end{cases}$$
see \cite[5.7]{Ara09b}.
It follows that 
$t_{-x_0}(\widehat{\Delta}(k\Lam_0)_+)\subset \widehat{\Delta}_+^{\text{re}}$,
and
hence,
$\lam-\frac{p}{q}x_0\in \prin_{t_{-x_0}}^k\subset \prin^k$.
Moreover 
by Janzten's criterion \cite{Jan77}
we have
$L(\lam-\frac{p}{q}x_0)\cong U(\g)\*_{U(\g_{\geq 0})}L_{\g_0}(\lam-\frac{p}{q}x_0)$,
where 
$\mf{g}_{\geq 0}=\bigoplus_{j\geq 0}\g_j$
and $L_{\g_0}(\lam)$ is the irreducible highest weight representation of
$\g_0$ with highest weight $\lam$.
Hence $\on{Dim} L(\lam-\frac{p}{q}x_0)=\dim \mf{g}_{\geq 0}$
is maximal.
Therefore,
by Theorem \ref{Th:irr},
$\HDS^0(U(\fing)/J_{\lam})$ has a unique simple module $E_{J_{\lam-\frac{p}{q}x_0}}$
and 
$H_{f,-}^0(\widehat L_k( \lam-\frac{p}{q}x_0))\cong \mathbf{L}(E_{J_\lam-\frac{p}{q}x_0})$.

Now
recall that the center $Z(U(\g,f))$ of $U(\g,f)$ is isomorphic to the center 
$\mc{Z}(\g)$ of $U(\g)$ \cite{Pre07}.
By definition,
for $\lam, \mu\in \prin^k$,
$\chi_{\lam}=\chi_{\mu}$ if and only if 
$[\lam]=[\mu]$ in $[\prin^k]$,
where  the central character
$\chi_{\lam}:\mc{Z}(\g)\ra \C$ is the evaluation at $L_{\g}(\lam)$.
Hence
$E_{J_{-\frac{p}{q}x_0}}$ is the unique simple 
$A(H_f^0(V_k(\g)))$-module having central character $\chi_{\lam-\frac{p}{q}x_0}$.
On the other hand,
$Z(U(\g,f))$
acts  on
$H_f^0(\affL_k(\lam))_{\text{top}}$
by the central character $\chi_{\lam-\frac{p}{q}x_0}$
as well (see \cite[\S 5]{A2012Dec}).
Because $H_f^0(\affL_k(\lam))$
and 
$\mathbf{L}(E_{J_{\lam-\frac{p}{q}x_0}})\cong 
H_{f,-}^0(\affL_k(\lam-\frac{p}{q}x_0))$
have the same character (see \cite[Proposition 5.12]{Ara09b}),
we obtain that
$H_f^0(\affL_k(\lam))\cong \mathbf{L}(E_{J_{\lam-\frac{p}{q}x_0}})$
as required.
 \end{proof}

\begin{thm}\label{Th:simple-special-cases}
Let $k$ be an admissible number for $\affg$ with denominator $q$ and let $f\in \mathbb{O}_q$. Suppose that each element of $[\prin^k_{\circ}]$ can be represented by an element $\lam \in \prin^k \cap P_{0,+}$. Then
\begin{enumerate}
\item for each $[\lam] \in [\prin^k_{\circ}]$ the algebra $\HDS^0(U(\fing)/J_{\lam})$ possesses a unique simple module, which we denote $E_{J_{\lam}}$,

\item the complete set of simple $\W_k(\fing,f)$-modules is
\begin{align*}
\left\{\mathbf{L}(E_{J_{\lam}}) \mid [\lam] \in [\prin^k_{\circ}]\right\},
\end{align*}

\item $\W_k(\fing,f)$ is rational.
\end{enumerate}
\end{thm}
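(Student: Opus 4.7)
The plan is to deduce all three parts from the structural results already assembled in the section. Part (1) follows quickly from Theorem \ref{Th:irr}. Part (2) is then a combinatorial consequence of part (1), Theorem \ref{Th:semisimplicity1}, Theorem \ref{Th:simplicity-vacumme}, and Theorem \ref{Th:simplicity}(3). Part (3) requires a separate argument to rule out non-trivial extensions between simple modules, and this is where the real work lies.

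For (1), given $[\lam] \in [\prin^k_\circ]$, the hypothesis allows us to fix a representative $\lam \in \prin^k \cap P_{0,+}$. The inclusion $\lam \in \prin^k_\circ$ gives $\var(J_\lam) = \ov{\mbo}_q = \ov{G \cdot f}$, so $\on{Dim}L(\lam)$ is maximal by Proposition \ref{prop:GKdim}; Theorem \ref{Th:irr} then produces the unique simple module $E_{J_\lam}$ of $\HDS^0(U(\fing)/J_\lam)$. Conceptually, the $\fing_0$-integrability built into $\lam \in P_{0,+}$ is what renders $\sH_0(L(\lam))$ invariant under the canonical action of $C(f)$, forcing the transitive $C(f)$-orbit on $\on{Fin}_{J_\lam}(U(\fing,f))$ from Theorem \ref{Th:Losev}(3) to collapse to a singleton.

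For (2), combining (1) with Theorem \ref{Th:semisimplicity1} gives that $A(\HDS^0(V_k(\fing)))$ is semisimple with exactly one simple factor $E_{J_\lam} \otimes E_{J_\lam}^*$ per class in $[\prin^k_\circ]$, and Theorem \ref{Th:simplicity-vacumme} identifies $\HDS^0(V_k(\fing))$ with $\W_k(\fing,f)$. Zhu's correspondence $M \mapsto M_{\text{top}}$ then delivers the complete list of simple Ramond-twisted modules as $\{\mathbf{L}(E_{J_\lam}) \mid [\lam] \in [\prin^k_\circ]\}$. Invoking Theorem \ref{Th:simplicity}(3), which under $N_{J_\lam} = 1$ reads $H^0_{f,-}(\widehat L_k(\lam)) \cong \mathbf{L}(E_{J_\lam})^{\oplus n_\lam}$, exhibits each simple module as a summand of the ``$-$''-reduction of the admissible $\affg$-module $\widehat L_k(\lam)$.

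For (3), lisseness is already in hand from the start of Section \ref{sec:mod.inv}, the Zhu algebra is semisimple by Theorem \ref{Th:semisimplicity-of-Zhu}, and the simples from (2) have pairwise distinct central characters (as noted in the proof of Theorem \ref{Th:semisimplicity1}), so the only extensions left to worry about are self-extensions. To rule out a hypothetical non-split $0 \to \mathbf{L}(E_{J_\lam}) \to N \to \mathbf{L}(E_{J_\lam}) \to 0$, I would follow the strategy of \cite[Theorem 10.5]{A2012Dec} and use a Zhu-type induction in the $\W$-algebra setting, combined with the exactness of the ``$-$''-reduction functor on $\affBGG$ (Theorem \ref{Th:simplicity}(1)), to produce a corresponding non-split self-extension of $\widehat L_k(\lam)$ in $\affBGG$, contradicting the complete reducibility theorem of Gorelik-Kac \cite{GorKac0905} for admissible affine representations. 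I expect this lifting step to be the main obstacle: once an affine self-extension is in hand Gorelik-Kac closes the argument, but transporting the hypothetical $\W$-module extension cleanly back through Drinfeld-Sokolov reduction requires the careful induction apparatus of \cite{A2012Dec} and is the genuinely non-formal input of the proof.
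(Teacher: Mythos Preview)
Your treatment of parts (1) and (2) is correct and aligns with the paper's argument. The gap is in part (3).

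Your claim that ``the simples from (2) have pairwise distinct central characters \dots\ so the only extensions left to worry about are self-extensions'' does not hold. The central characters you invoke live in $Z(U(\fing,f)) \cong \mc{Z}(\fing) \subset A(\W_k(\fing,f))$, and act only on the \emph{top} components $M_{\text{top}}$, not on whole $\W_k(\fing,f)$-modules. In a putative non-split extension $0 \to \mathbf{L}(E_{J_{\lam_j}}) \to M \to \mathbf{L}(E_{J_{\lam_i}}) \to 0$ with $h_{\lam_i} < h_{\lam_j}$, one has $M_{\text{top}} = E_{J_{\lam_i}}$ and the submodule $\mathbf{L}(E_{J_{\lam_j}})$ sits entirely in higher $L_0$-degree; nothing about central characters of top components obstructs this. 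So you cannot reduce to self-extensions, and the Gorelik--Kac lifting you sketch (which in any case lacks a functor going back from $\W$-modules to $\affg$-modules) would not cover the remaining cases even if it worked.

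The paper instead treats all pairs $(i,j)$ by a trichotomy on conformal dimensions. If $h_{\lam_i} = h_{\lam_j}$ the extension is induced from an extension of top components, which splits because $A(\W_k(\fing,f))$ is semisimple (Theorem \ref{Th:semisimplicity-of-Zhu}); this subsumes the self-extension case without any direct appeal to Gorelik--Kac at this stage. If $h_{\lam_i} < h_{\lam_j}$ one lifts the projection $M \twoheadrightarrow \mathbf{L}(E_{J_{\lam_i}})$ to a map $\mathbf{M}(E_{J_{\lam_i}}) \to M$; non-splitting would force $[\mathbf{M}(E_{J_{\lam_i}}) : \mathbf{L}(E_{J_{\lam_j}})] \neq 0$, and since $\mathbf{M}(E_{J_{\lam_i}}) \cong H^0_{f,-}(\widehat M_{k,0}(\lam_i))$ by Theorem \ref{Th:irr}, exactness of $H^0_{f,-}$ converts this into $[\widehat M_{k,0}(\lam_i) : \widehat L_k(\mu)] \neq 0$ for some $\mu \in W \circ \lam_j$, contradicting dominance of the admissible weights $\widehat\lam_i, \widehat\lam_j$. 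The case $h_{\lam_i} > h_{\lam_j}$ follows by duality. This Verma-multiplicity argument for the unequal-dimension case is the ingredient missing from your proposal.
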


\begin{proof}
Let $\{\lam_1,\dots,\lam_r\}$ be a subset of $P_{0,+}$ such that $[\prin^k_{\circ}]=\{[\lam_1],\dots,[\lam_r]\}$. By Theorem \ref{Th:irr} we have $H_{f,-}^0(\affL_k( \lam_i))\cong \mathbf{L}(E_{J_{\lam_i}})$ for $i=1,\ldots,r$, where $E_{J_{\lam_i}}$ is the unique simple $\HDS^0(U(\fing)/J_{\lam_i})$-module.

By Theorem \ref{Th:semisimplicity1} $\{E_{J_{\lam_i}} \mid i = 1,\ldots, r\}$ is a complete set of representatives of isomorphism classes of simple $A(\HDS^0(\Vs{\fing}))$-modules. Thus by Theorem \ref{Th:simplicity-vacumme} $\{\mathbf{L}(E_{J_{\lam_i}}) \mid i = 1,\ldots, r\}$ is a complete set of representatives of isomorphism classes of simple $\W_k(\fing,f)$-modules.

We already know that $\W_k(\fing,f)$ is lisse. It thus remains to show that
\begin{align*}
\on{Ext}_{\W_k(\fing,f)}^1(\mathbf{L}(E_{J_{\lam_i}}),\mathbf{L}(E_{J_{\lam_j}}))=0
\end{align*}
 for all $i,j$. Let
\begin{align}
0\ra \mathbf{L}(E_{J_{\lam_j}})\ra M\ra \mathbf{L}(E_{J_{\lam_i}})\ra 0
\label{eq:ext=0}
\end{align}
be an exact sequence of $\W_k(\fing,f)$-modules.

Let $h_{\lam_i}$ denote the conformal dimension, i.e., lowest $L_0$-eigenvalue, of $\mathbf{L}(E_{J_{\lam_i}})$. If $h_{\lam_i}=h_{\lam_j}$, then \eqref{eq:ext=0} is obtained by applying the induction functor to the sequence
\begin{align*}
0\ra \mathbf{L}(E_{J_{\lam_j}})_{\text{top}} \ra M_{\text{top}} \ra \mathbf{L}(E_{J_{\lam_i}})_{\text{top}} \ra 0
\end{align*}
of $A(\W_k(\fing,f))$-modules, and is therefore split because $A(\W_k(\fing,f))$ is semisimple.

Now suppose that $h_{\lam_i}<h_{\lam_j}$. There is a $\W^k(\fing,f)$-module homomorphism $\mathbf{M}(E_{J_{\lam_i}})\ra M$ such that the following diagram commutes.
\begin{align*}
\xymatrix{
&  \mathbf{M}(E_{J_{\lam_i}})\ar[d]^{} \ar[dl] \\
M\ar[r]^{} &\mathbf{L}(E_{J_{\lam_i}}).
}&
\end{align*}
If \eqref{eq:ext=0}
is non split 
then
$M$ must coincide with a homomorphic image of 
$\mathbf{M}(E_{J_{\lam_i}})$.
In particular $[\mathbf{M}(E_{J_{\lam_i}}) : \mathbf{L}(E_{J_{\lam_j}})] \neq 0$.
By Theorem \ref{Th:simplicity}
this occurs only if 
there exists $\mu\in P_{0,+}$ such that
$[\widehat{M}_k({\lam}_i): \widehat{L}_k({\mu})] \neq 0$
and $E_{J_{\lam_j}}$ is a direct summand of $\sH_0(L(\mu))$.
The second of these conditions implies that 
$\mu\in W\circ\lam_j$. But since $\widehat{\lam}_i$ and $\widehat{\lam}_i$ are dominant,
the first condition is only satisfied if $\widehat{\lam}_i = \widehat{\lam}_i$, which contradicts $h_{\lam_i}<h_{\lam_j}$.

Finally, the case
$h_{\lam_i}<h_{\lam_j}$
follows from the case $h_{\lam_i}>h_{\lam_j}$
by applying the duality functor to \eqref{eq:ext=0}.

\end{proof}


\begin{rem}\label{rem:weights-of-singular-vector}
Let $k=p/q-h^{\vee}$ be an admissible number for $\affg$ and let $f\in \mathbb{O}_q$. 
By Theorem \ref{Th:simplicity-vacumme}
and the exactness of the functor $H_f^0(-)$ \cite{Ara09b},
we have the exact sequence
$0\ra H_{f}^0(N_k)\ra \W^k(\g,f)\ra \W_k(\g,f)\ra 0$,
where $N_k$ is the maximal proper submodule of $V^k(\g)$. The submodule $N_k$ is generated by a singular vector $\sigma_k$ of weight $s_{\dot{\alpha}_0}\circ k\Lam_0$. From the relations $(\rho, \theta) = h^\vee-1$ and $(\rho, \theta_s) = h-1$ we compute
\begin{align*}
s_{\dot{\alpha}_0}\circ k\Lam_0=\begin{cases}
(p-h^{\vee}+1)(\theta-q\delta)+k\Lam_0&\text{if }(q|r^{\vee})=1,\\
(p-h+1)(\theta_s-\frac{q}{r^{\vee}}\delta)+k\Lam_0&\text{if }(q|r^{\vee})\ne 1.
\end{cases}
\end{align*}
From this we compute the action of the conformal vector $L_0 - (x_0)_0$ on the corresponding singular vector $\sigma_k \otimes \vac$ in $\W^k(\g,f)$ (cf. {\cite[Remark 2.3]{KacRoaWak03}}) to be given by
\begin{align*}
\begin{cases}
(p-h^{\vee}+1)(q-\langle \theta,x_0\rangle)&\text{if }(q|r^{\vee})=1,\\
(p-h+1)(\frac{q}{r^{\vee}}-\langle \theta_s,x_0\rangle)&\text{if }(q|r^{\vee})\ne 1.
\end{cases}
\end{align*}
\end{rem}

\section{Rationality of \texorpdfstring{$\W$}{W}-Algebras in Type \texorpdfstring{$A$}{A}}

For $\fing$ of type $A$ all nilpotent elements are standard Levi type, and so
$\W_k(\g,f)$ is exceptional in the sense of Kac and Wakimoto \cite{KacWak08}
if and only if $k$ is admissible and $f\in \mathbb{O}_q$,
where $q\in \Z_{\geq 1}$  is the denominator of $k$.
In this section we prove the rationality of all exceptional $\W$-algebras in type $A$.
 Throughout this section $\fing=\mf{sl}_n = A_{n-1}$. The Coxeter number of $\fing$ is $h^\vee = h = n$. It is known that the component group $C(f)$ is trivial for every nilpotent element $f \in \fing$, see e.g. \cite[6.1]{ColMcG93}. Therefore for any primitive ideal $I$ of $U(\fing)$ such that $\on{Var}(I)=\overline{G.f}$, the set $\on{Fin}_{I}(U(\fing,f))$ contains a single element, which we denote $E_{I}$. Hence
\begin{align*}
\HDS^0(U(\fing)/I)\cong E_I\*E_I^*.
\end{align*}
Moreover the correspondence $I\mapsto E_I$ gives a bijection from the set of primitive ideals of $U(\fing)$ satisfying $\on{Var}(I)=\overline{G.f}$ to the set of isomorphism classes of irreducible finite dimensional $U(\fing,f)$-modules. The module $E_I$ is described as follows \cite{BruKle08}.

As usual, we write
\begin{align*}
\sroots=\{\alpha_{i,j} \mid 1\leq i,j\leq n\},
\quad \text{and} \quad
\sroots_+=\{\alpha_{i,j}\mid 1\leq i<j\leq n\}.
\end{align*}
The nilpotent orbits are indexed by partitions of $n$. Indeed let $Y=(p_1\leq p_2\leq \dots \leq p_r)$ be a partition of $n$, then as in \cite{BruKle08} we identify $Y$ with the Young diagram having $p_i$ boxes in the $i^{\text{th}}$ row, and we number the boxes of $Y$ by $1,2,\ldots , n$ down columns from left to right. Let $\on{row}(i)$ and $ \on{col}(i)$ denote the row and column number of the $i^{\text{th}}$ box. Now put 
\begin{align*}
f=f_Y=\sum e_{j,i},
\end{align*}
where the sum runs over $(i, j)$ satisfying $\on{row}(i)=\on{row}(j)$ and $\on{col}(i)=\on{col}(j)-1$. Here $e_{i,j}$ stands for the $i,j$-matrix unit. Then $f$ is a nilpotent element of Jordan type $Y$. Declaring $\deg(e_{i,j}) = \on{col}(j)-\on{col}(i)$ equips $\fing$ with an good even grading
for $f_Y \in \fing_{-1}$ \cite{ElaKac05}.
The subsets of roots
\begin{align}
\sroots_0 &= \{\alpha \in \sroots \mid e_{\al} \in \fing_0\} \\
\text{and} \quad
\sroots^{f} &= \{\alpha\in \sroots \mid \text{$\alpha(h)=0$ for all $h \in \finh^f$}\},
\end{align}
now become
\begin{align*}
\sroots_{0} &= \left\{\alpha_{i,j}\in \sroots \mid \text{the $i^{\text{th}}$ and $j^{\text{th}}$ boxes belong to the same column}\right\} \\
\text{and} \quad \sroots^f &= \{\alpha_{i,j}\in \sroots \mid \text{the $i^{\text{th}}$ and $j^{\text{th}}$ boxes belong to the same row}\}.
\end{align*}
Let $\sroots^f_+=\sroots^f\cap \sroots_+$ and
\begin{align}
\sW^f=\{w\in \sW \mid \text{$w(h)=h$ for all $h \in \finh^f$}\}.
\end{align}
Then $\sW^f$ is the subgroup of $W\cong\mathfrak{S}_n$ generated by $s_{\alpha}$ for $\alpha \in \sroots^f$. Finally we put
\begin{align}
P_{0,+}=\{\lam\in \dual{\finh}\mid \text{$\left< \lam,\alpha^{\vee}\right> \in \Z_{\geq 0}$ for all $\alpha\in \Delta_{0,+}$}\},
\end{align}
where $\Delta_{0,+}=\Delta_{0}\cap \Delta_{+}$.
\begin{thm}[{Brundan and Kleshchev \cite{BruKle08}}]
\label{Th:Brundan-Kleshchev}{\ }
\begin{enumerate}
 \item Let $\lam \in P_{0, +}$. Then $\sH_0(L(\lam))\ne 0$ if and only if $\left< \slam+\rho,\alpha\che\right>\not\in \Z_{\geq 1}$ for all $\alpha\in \sroots^f_+$. In this case $\sH_0(L(\slam))$ is  an irreducible $U(\fing,f)$-module. Furthermore every irreducible finite dimensional representation of $\Wfin$ arises in this way.

\item Let $\lam,\mu \in P_{0, +}$ and suppose $\sH_0(\sL(\slam))$ and $\sH_0(\sL( {\mu}))$ are nonzero. Then $\sH_0(\sL(\slam)) \cong \sH_0(\sL( {\mu}))$ if and only if $\mu = w \circ \la$ for some $w \in \sW^f$.
\end{enumerate}
\end{thm}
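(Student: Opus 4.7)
The plan is to combine Matumoto's Whittaker coinvariants machinery (Theorems \ref{thm:fdsi} and \ref{Masumoto}) with the explicit parabolic structure of $L(\lam)$ available in type $A$. For $\lam\in P_{0,+}$, Jantzen's criterion gives $L(\lam)\cong U(\fing)\*_{U(\fing_{\geq 0})}L_{\fing_0}(\lam)$, so $\sH_0(L(\lam))$ is the cyclic $\Wfin$-module generated by the Whittaker image of a highest weight vector of $L_{\fing_0}(\lam)$. This means its character and its top component depend only on the $\fing_0^f$-module structure of $L_{\fing_0}(\lam)^{\mf{n}_0^f}$ and can be analysed using the Brundan-Kleshchev description of $\Wfin$ as a truncated shifted Yangian.

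For part (1), non-vanishing under the stated root condition follows by showing the Whittaker cocycle is non-degenerate on the cyclic generator. Root vectors indexed by $\alpha\in \sroots_{>0}\setminus \sroots^f$ act via genuine $\chi$-eigenvalues and therefore cannot kill the image of the highest weight vector; the only potential obstructions lie in $\sroots^f_+$, which is precisely the statement of the condition. Simplicity then follows from the exactness in Theorem \ref{thm:fdsi} together with a refinement of Matumoto's argument: Matumoto's theorem already yields simplicity under the stronger non-integrality on all of $\sroots\setminus\sroots_0$, and a Jantzen-style analysis shows that the only possible composition factors of $\sH_0(L(\lam))$ come from embeddings $L(\mu)\hookrightarrow M(\lam)$ with $\mu\in W\circ \lam$, all of which have the same $\sH_0$-image; hence $\sH_0(L(\lam))$ is simple whenever it is nonzero.

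Surjectivity onto the set of finite-dimensional irreducibles is immediate from Losev's theorem (Theorem \ref{Th:Losev}(2)) combined with the triviality of $C(f)$ in type $A$: $\on{Fin}_I(\Wfin)$ is a singleton for every $I\in\prim_{G.f}$, and by Joseph's theorem, together with the fact that in type $A$ every nilpotent orbit is of standard Levi type, every such $I$ equals $J_\lam$ for some $\lam\in P_{0,+}$ meeting the non-integrality condition. For part (2), one direction is Duflo: if $\mu = w\circ\lam$ with $w\in W$, then $J_\lam=J_\mu$ so the $\Wfin$-modules share the same central character and (using $|\on{Fin}_{J_\lam}(\Wfin)|=1$) are isomorphic. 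The converse requires showing that the entire $W\circ\lam$-orbit collapses to a single $W^f\circ\lam$-orbit once the $P_{0,+}$-representative is fixed, which is where $\sW^f$ enters as the stabiliser within $W$ of the combinatorial data (column strict tableau) parametrising $E_{J_\lam}$.

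The main obstacle is this last matching of Duflo equivalence up to $W^f$ rather than $W$. The cleanest route, which I would follow, is Brundan-Kleshchev's classification of finite-dimensional $\Wfin$-modules by column-strict pyramid tableaux: two weights in $P_{0,+}$ yield the same tableau exactly when they lie in the same $\sW^f$-orbit, and the tableau is in bijection with the isomorphism class of the module. An alternative, more conceptual route is to combine Losev's parametrisation of $\on{Fin}_{J_\lam}(\Wfin)$ by $C(f)$-orbits (trivial in type $A$) with a direct computation of how the centre $\mc{Z}(\fing)\cong Z(\Wfin)$ distinguishes $W\circ\lam$-orbits from $W^f\circ\lam$-orbits, but this requires nontrivial input about the structure of $\Wfin$ that is essentially equivalent to the combinatorial classification.
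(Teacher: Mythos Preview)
This theorem is stated in the paper as a citation to Brundan--Kleshchev \cite{BruKle08}; the paper gives no proof of its own. So there is nothing to compare your argument to except the original source, which proves the result via the explicit presentation of $U(\fing,f)$ as a truncated shifted Yangian and the resulting highest weight theory parametrised by column-strict tableaux.

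Your sketch has a genuine gap at the very first step. The claim that Jantzen's criterion yields $L(\lam)\cong U(\fing)\otimes_{U(\fing_{\geq 0})}L_{\fing_0}(\lam)$ for \emph{all} $\lam\in P_{0,+}$ is false: the parabolic Verma module is simple only when $\langle\lam+\rho,\alpha^\vee\rangle\notin\Z_{\geq 1}$ for every $\alpha\in\Delta_+\setminus\Delta_{0,+}$, not merely for $\alpha\in\Delta^f_+$. The set $\Delta^f_+$ is strictly smaller than $\Delta_+\setminus\Delta_{0,+}$ in general, so the Brundan--Kleshchev condition allows many $\lam$ for which $L(\lam)$ is a proper quotient of the parabolic Verma. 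For such $\lam$ your description of $\sH_0(L(\lam))$ as cyclic on the image of the $L_{\fing_0}(\lam)$-highest weight vector is unavailable, and the subsequent non-vanishing and simplicity arguments collapse. The ``Jantzen-style analysis'' you invoke to bridge from Matumoto's stronger hypothesis to the weaker $\Delta^f_+$-condition is the actual content of the theorem, not a routine refinement; this is precisely what the shifted Yangian machinery in \cite{BruKle08} is doing.

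You effectively concede this for part (2), where you say the cleanest route is Brundan--Kleshchev's tableau classification. But the same classification is what establishes part (1) as well: the tableau bijection simultaneously identifies which $\lam$ give nonzero $\sH_0(L(\lam))$, shows the result is simple, and gives the $W^f$-orbit identification. Your alternative route through Losev and $C(f)$-triviality gives surjectivity onto finite-dimensional irreducibles once simplicity is known, but it does not supply simplicity itself under the weaker hypothesis.
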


 \begin{cor}\label{cor:para-prim}
The assignment $\lam \mapsto J_{\lam}$ sets up a bijection
\begin{align*}
\begin{array}{ccc}
\{\lam\in P_{0,+}\mid \text{$\left< \lam+\rho,\alpha^{\vee}\right>\not\in \Z_{\geq 1}$ for all $\alpha\in \sroots^f_+$}\}/\sim \isomap  \prim_{G.f},
\end{array}
\end{align*}
where $\lam\sim \mu$ if and only if $\mu = w \circ \la$ for some $w \in \sW^f$. Furthermore $E_{J_{\lam}}\cong \sH_0(\sL(\slam))$.
 \end{cor}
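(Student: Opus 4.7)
The plan is to combine Brundan--Kleshchev's parametrisation of irreducible finite-dimensional $U(\fing,f)$-modules (Theorem \ref{Th:Brundan-Kleshchev}) with Losev's bijection in Theorem \ref{Th:Losev} between $\prim_{G \cdot f}$ and isomorphism classes of such modules, taking advantage of the fact that in type $A$ the component group $C(f)$ is trivial, so $\on{Fin}_{I}(U(\fing,f))$ is a singleton $\{E_I\}$ for each $I \in \prim_{G\cdot f}$.

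First, I would verify that the map is well-defined, i.e.\ that $\lam\in P_{0,+}$ satisfying $\langle \lam+\rho,\alpha^{\vee}\rangle \notin \Z_{\geq 1}$ for all $\alpha \in \sroots^f_+$ implies $J_{\lam} \in \prim_{G \cdot f}$. By Theorem \ref{Th:Brundan-Kleshchev}(1) such $\lam$ makes $\sH_0(L(\lam))$ nonzero, so by Theorem \ref{thm:fdsi}(3) the Gelfand--Kirillov dimension of $L(\lam)$ is maximal, and Proposition \ref{prop:GKdim} then yields $\var(J_{\lam})=\overline{G\cdot f}$.

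Next, I would identify $E_{J_{\lam}}$ with $\sH_0(L(\lam))$. Theorem \ref{Th:Whittaker-cov-func}(1), applicable because $J_{\lam}\in \prim_{G\cdot f}$, shows that $\sH_0(L(\lam))$ is a direct sum of modules of the form $E_{J_{\lam}}[i]$; triviality of $C(f)$ collapses this to a single isomorphism class $E_{J_{\lam}}$; and since Theorem \ref{Th:Brundan-Kleshchev}(1) asserts that $\sH_0(L(\lam))$ is irreducible, we must have $\sH_0(L(\lam)) \cong E_{J_{\lam}}$, which is the ``furthermore'' claim.

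Finally, bijectivity follows formally. For injectivity, if $J_{\lam} = J_{\mu}$, then $E_{J_{\lam}} = E_{J_{\mu}}$ and hence $\sH_0(L(\lam)) \cong \sH_0(L(\mu))$ by the previous step, so Theorem \ref{Th:Brundan-Kleshchev}(2) gives $\mu \in \sW^f \circ \lam$. For surjectivity, any $I \in \prim_{G\cdot f}$ yields the unique module $E_I$ by Theorem \ref{Th:Losev} and triviality of $C(f)$; by Theorem \ref{Th:Brundan-Kleshchev}(1) this $E_I$ is realized as $\sH_0(L(\lam))$ for some $\lam \in P_{0,+}$ satisfying the integrality condition, and the identification of the previous step forces $I = J_{\lam}$. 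There is no real obstacle here: the proof is pure bookkeeping, with the only subtlety being the matching of Brundan--Kleshchev's parametrisation via $\sH_0(L(\lam))$ with Losev's parametrisation via $E_I$, which is supplied cleanly by Theorem \ref{Th:Whittaker-cov-func}.
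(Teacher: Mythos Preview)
Your proposal is correct and follows essentially the same approach as the paper's proof: combine Losev's bijection $\prim_{G\cdot f}\leftrightarrow\{\text{simple }U(\fing,f)\text{-modules}\}$ (valid because $C(f)$ is trivial in type $A$) with Brundan--Kleshchev's parametrisation of the simple $U(\fing,f)$-modules by the set on the left. The paper's proof is a two-sentence sketch of exactly this comparison; you have simply unpacked the well-definedness step (via Theorem~\ref{thm:fdsi}(3) and Proposition~\ref{prop:GKdim}) and the identification $E_{J_\lam}\cong\sH_0(L(\lam))$ (via Theorem~\ref{thm:Whittaker-cov-func}) more explicitly than the paper does.
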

 \begin{proof}
 By Losev's result \cite{Los11} and the fact that $C(f)$ is trivial,
there is a bijection between
$\on{Prim}_{G.f}$ and the isomorphism classes of simple
$U(\g,f)$-modules,
where the simple $U(\g,f)$-module corresponding 
to $J\in\on{Prim}_{G.f}$
is the unique simple module of the simple algebra $\HDS^0(U(\fing)/J)$,
see Theorem \ref{Th:Losev}.
The assertion is obtained by
 comparing this with Theorem \ref{Th:Brundan-Kleshchev}.
 \end{proof}

In the present case Theorem \ref{Th:simplicity} becomes
\begin{thm}[{\cite[Theorem 5.7.1]{Ara08-a}}]\label{Th:Main:typeA}
 Let $k$ be any complex number and let $\lam \in P_{0,+}$.
 Then
  \begin{align*}
H_{f,-}^0(\widehat L_k( \lam))\cong \begin{cases}
\mathbf{L}(E_{J_\lam})&\text{if }\left< \lam+\rho,\alpha\che\right>
\not\in \Z_{\geq 1}
\text{ for all }\alpha\in \sroots^f_+,\\
0&\text{otherwise}.
\end{cases}
\end{align*}
\end{thm}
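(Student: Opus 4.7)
The plan is to reduce the statement to an assembly of results already established in the excerpt, exploiting the type-$A$ specific features that the component group $C(f)$ is trivial and that Brundan--Kleshchev have described the finite-dimensional representation theory of $U(\fing,f)$ very explicitly. I would split the argument according to whether the integrality hypothesis $\left< \lam+\rho, \alpha\che\right> \notin \Z_{\geq 1}$ for all $\alpha \in \sroots^f_+$ holds or not.

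First, suppose the hypothesis fails. Then Theorem \ref{Th:Brundan-Kleshchev}(1) forces $\sH_0(L(\lam)) = 0$, and Theorem \ref{thm:fdsi}(3) turns this into the statement that $\on{Dim} L(\lam)$ is strictly less than its maximal value. Theorem \ref{Th:simplicity}(2) then gives $H_{f,-}^0(\widehat L_k(\lam)) = 0$ as desired.

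Second, suppose the hypothesis holds. Then Theorem \ref{Th:Brundan-Kleshchev}(1) produces a nonzero \emph{irreducible} finite-dimensional $U(\fing,f)$-module $\sH_0(L(\lam))$; Theorem \ref{thm:fdsi}(3) now guarantees that $\on{Dim} L(\lam)$ is maximal; and Proposition \ref{prop:GKdim} yields $\var(J_\lam) = \ov{G\cdot f}$, that is, $J_\lam \in \prim_{G\cdot f}$. Feeding this into Theorem \ref{Th:simplicity}(3) gives
\begin{align*}
H_{f,-}^0(\widehat L_k(\lam)) \;\cong\; \bigoplus_{i=1}^{N_{J_\lam}} \mathbf{L}(E_{J_\lam}[i])^{\oplus n_\lam},
\end{align*}
where $n_\lam$ is the multiplicity of $E_{J_\lam}[i]$ as a summand of $\sH_0(L(\lam))$. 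Since $C(f)$ is trivial in type $A$, Theorem \ref{Th:Losev}(3) forces $N_{J_\lam} = 1$; and since $\sH_0(L(\lam))$ is already irreducible as a $U(\fing,f)$-module, the multiplicity $n_\lam$ equals $1$. Identifying the unique module $E_{J_\lam}$ with $\sH_0(L(\lam))$ via Corollary \ref{cor:para-prim}, the right-hand side collapses to $\mathbf{L}(E_{J_\lam})$, giving the desired conclusion.

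The main obstacle is not really mathematical but organisational: all the necessary ingredients are already assembled. The only subtlety is matching the parameter $\lam$ (through which the Verma module $\widehat M_{k,0}(\lam)$ is built from an integrable $\fing_0$-module) with the Brundan--Kleshchev combinatorics recording which $\lam$'s produce nonzero Whittaker coinvariants; but this matching is precisely what Corollary \ref{cor:para-prim} records. No new input beyond these named ingredients is required.
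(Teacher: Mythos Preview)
Your proposal is correct and matches the paper's implicit argument: the paper presents Theorem~\ref{Th:Main:typeA} with the phrase ``In the present case Theorem~\ref{Th:simplicity} becomes\ldots'' and otherwise just cites \cite[Theorem 5.7.1]{Ara08-a}, so the intended justification is exactly the specialisation you carry out---combining Theorem~\ref{Th:simplicity} with the type-$A$ facts that $C(f)$ is trivial and that Brundan--Kleshchev (Theorem~\ref{Th:Brundan-Kleshchev}) identify precisely when $\sH_0(L(\lam))$ is nonzero and then irreducible.
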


\begin{lemma}\label{lem:represented-by}
Any element of $[\on{Pr}^k_\circ]$ can be represented by an element
of $\on{Pr}^k_\circ\cap P_{0,+}$.
\end{lemma}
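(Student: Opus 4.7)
The plan is: given any representative $\lam \in \prin^k_\circ$, construct $w\in W=S_n$ such that $w\circ\lam \in \prin^k_\circ\cap P_{0,+}$. First I will write out $\wh\lam$ using the parametrisation \eqref{eq:adm.wt.param}, with $\wh y = y\,t_{-\eta}$ for $y\in W$ and $\eta\in P$. A direct computation gives $\lam+\rho = y(\nu-(p/q)\eta)$, and since $\gcd(p,q)=1$ the finite integral root system is $\Delta(\lam)=y(\Delta^\eta)$, where $\Delta^\eta=\{\alpha\in\Delta\mid\langle\eta,\alpha^\vee\rangle\in q\Z\}$. In standard coordinates, $\Delta^\eta$ is the Levi subsystem of type $\prod_c A_{e_c-1}$ determined by the partition of $\{1,\ldots,n\}$ into blocks indexed by the residue classes of $(\eta_1,\ldots,\eta_n)$ modulo $q$, so it has at most $q$ blocks.

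The next step, and what I expect to be the main obstacle, is to pin down this block structure. Write $n=sq+r$ with $0\le r<q$, so $\mbo_q$ has Jordan type $(q^s,r)$, the Young diagram columns have lengths $(s{+}1)^r s^{q-r}$, and $|\Delta_0|$ is determined accordingly. By Theorem \ref{thm:Joseph}, $|\Delta(\lam)|=\dim\CN-\dim\ov{\mbo}_q=|\Delta_0|$. Among partitions of $n$ into at most $q$ non-negative parts, the quantity $\sum_c e_c^2 = |\Delta^\eta|+n$ is minimised uniquely by the most equidistributed partition $((s{+}1)^r,s^{q-r})$ (a standard convexity argument). The cardinality constraint therefore forces the block multiset of $\eta$ to coincide with the column structure of $(q^s,r)$, and hence $\Delta^\eta$, and so $\Delta(\lam)$, is $S_n$-conjugate to $\Delta_0$.

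I then select $w\in S_n$ that matches the blocks of $\Delta(\lam)$ to those of $\Delta_0$ via an order-preserving bijection on each block; such $w$ exists by sorting within blocks, and satisfies $w(\Delta(\lam)_+)=\Delta_{0,+}$, whence $w(\Delta(\lam)_-)\subset\Delta_-$. Finally I verify $w\circ\lam\in\prin^k_\circ\cap P_{0,+}$. A comparison of $\wh\Delta_+^{\text{re}}$ with $w^{-1}(\wh\Delta_+^{\text{re}})$ shows that the only additional regular-dominance conditions needed for $w\circ\lam$ occur at finite roots $\ga\in\Delta_+$ with $w\ga\in\Delta_-$; the property $w(\Delta(\lam)_-)\subset\Delta_-$ forces any such $\ga$ to lie outside $\Delta(\lam)$, so $\langle\lam+\rho,\ga^\vee\rangle\notin\Z$ and these conditions are vacuous. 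Together with $\Q\wh\Delta(w\circ\lam)=\Q\wh\Delta^{\text{re}}$ and $w\wh y\in\eW$, this gives $w\circ\lam\in\prin^k$, and $J_{w\circ\lam}=J_\lam$ places it in $\prin^k_\circ$. Moreover $\Delta(w\circ\lam)=\Delta_0$, and admissibility gives $\langle w\circ\lam+\rho,\alpha^\vee\rangle\in\Z_{\geq 1}$ for each simple root $\alpha$ of $\fing_0$ (which is also simple in $\fing$, since $\fing_0$ is a standard Levi for the column grading); hence $w\circ\lam\in P_{0,+}$.
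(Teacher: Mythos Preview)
Your proof is correct but takes a genuinely different route from the paper's own argument. The paper's proof is a three-line application of existing machinery: Corollary~\ref{cor:para-prim} (which rests on Brundan--Kleshchev's classification of finite-dimensional $U(\fing,f)$-modules in type $A$) supplies some $\mu\in P_{0,+}$ with $J_\mu=J_\lam$; then Theorem~\ref{Th:Zhu-admissible} forces $\widehat L_k(\mu)$ to be a $V_k(\fing)$-module, and Theorem~\ref{Th:rationality-of-admissibleVA} places $\mu$ in $\prin^k$. No combinatorics with $\Delta(\lam)$ is needed.

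Your argument, by contrast, is entirely elementary and self-contained: you use only Theorem~\ref{thm:Joseph} to extract $|\Delta(\lam)|=|\Delta_0|$, then a convexity inequality on block sizes modulo $q$ to force $\Delta(\lam)\cong\Delta_0$ as Levi subsystems, and finally an explicit $w\in S_n$ (order-preserving on blocks) to realise $w\circ\lam\in P_{0,+}$ while preserving principal admissibility. This avoids the Brundan--Kleshchev input altogether and is in spirit closer to the paper's Lemma~\ref{Lem:eq-class} (the subregular case), extending that style of argument to arbitrary $\mathbb{O}_q$ in type~$A$. The paper's proof is shorter because the heavy lifting is outsourced; yours is longer but makes the geometry of $\Delta(\lam)$ transparent and would be the natural approach if one wanted to bypass the finite $W$-algebra representation theory.
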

\begin{proof}
By Corollary \ref{cor:para-prim},
for any $\lam\in \on{Pr}^k_\circ$ 
there exists $\mu\in P_{0,+}$ such that
$J_\lam=J_{\mu}$.
But then  $\widehat{L}_k(\mu)$ is a $V_k(\g)$-module by Theorem \ref{Th:Zhu-admissible},
and hence, $\mu\in \on{Pr}^k$ by Theorem \ref{Th:rationality-of-admissibleVA}.
\end{proof}
The following assertion follows immediately from
 Theorem \ref{Th:simple-special-cases} and
 Lemma \ref{lem:represented-by}.
\begin{thm}\label{thm:rationality-of-type-A}
Let $k$ be admissible, $f_q\in \mathbb{O}_q$.
Then $\W_k(\g,f_q)$ is rational
and
the complete set of simple $\W_k(\fing,f)$-modules is
given by
$
\left\{\mathbf{L}(E_{J_{\lam}}) \mid [\lam] \in [\prin^k_{\circ}]\right\}
$.
\end{thm}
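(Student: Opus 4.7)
The plan is to reduce the statement to a direct application of Theorem \ref{Th:simple-special-cases}, whose conclusion matches the claim verbatim once its hypothesis --- namely that every class in $[\Pr^k_\circ]$ is represented by some $\lam \in \Pr^k \cap P_{0,+}$ --- is verified. I would therefore spend essentially all of the argument on this representability.

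For representability I would invoke Lemma \ref{lem:represented-by}, unpacking its proof so the reader sees why the two inputs really give what is needed. Start with $\lam \in \Pr^k_\circ$, so that $\var(J_\lam) = \ov{\mathbb{O}_q} = \ov{G \cdot f_q}$. Corollary \ref{cor:para-prim}, which is the Brundan--Kleshchev classification phrased in terms of primitive ideals, then produces some $\mu \in P_{0,+}$ with $J_\mu = J_\lam$. Since $J_\lam \supset I_k$, we have $J_\mu \supset I_k$ as well, so Theorem \ref{Th:rationality-of-admissibleVA} (or equivalently the factorisation of $A(V_k(\g))$ given by Theorem \ref{Th:Zhu-admissible}) forces $\mu \in \Pr^k$. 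Finally $[\mu] = [\lam]$ in $[\Pr^k]$ because $J_\mu = J_\lam$, proving representability.

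With this hypothesis in hand, Theorem \ref{Th:simple-special-cases} applies and yields both the rationality of $\W_k(\g, f_q)$ and the parametrisation $\{\mathbf{L}(E_{J_\lam}) \mid [\lam] \in [\Pr^k_\circ]\}$ of its simple modules. The type $A$ assumption is used twice in the background: it ensures that $f_q$ admits a good even grading, so Section \ref{sec:reduc.revis} is in force, and it makes the component group $C(f_q)$ trivial, so that $\on{Fin}_{J_\lam}(U(\g, f_q))$ is a singleton and the notation $E_{J_\lam}$ is unambiguous.

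There is no real obstacle: the heavy lifting (Losev's results, semisimplicity of the Zhu algebra from Theorem \ref{Th:semisimplicity1}, exactness and irreducibility properties of the ``$-$''-reduction from Theorem \ref{Th:simplicity}, and the Ext-vanishing argument at the end of Theorem \ref{Th:simple-special-cases}) has been carried out earlier in the paper. The only type $A$ specific step is the appeal to Brundan--Kleshchev to supply, for each primitive ideal of the right associated variety, a representative in $P_{0,+}$ that is automatically principal admissible; this is precisely what Lemma \ref{lem:represented-by} records.
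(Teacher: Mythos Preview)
Your proposal is correct and follows exactly the paper's own route: the theorem is obtained immediately from Theorem~\ref{Th:simple-special-cases} once Lemma~\ref{lem:represented-by} supplies a $P_{0,+}$-representative for each class in $[\Pr^k_\circ]$, and your unpacking of that lemma (Brundan--Kleshchev via Corollary~\ref{cor:para-prim}, then admissibility via Theorems~\ref{Th:Zhu-admissible} and~\ref{Th:rationality-of-admissibleVA}) matches the paper's argument.
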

We now describe the set $[{\prin^k_{\circ}}]$ more precisely.
Let $k = -n + p/q$ be an admissible number for $\affg$, so $p$ and $q$ are coprime and $p \geq n$. 
Since $[{\prin^k_{\circ}}]$ is described in \cite{FKW} in the cases where $q\geq n$,
we assume that 
  $q< n$,
so that $\mathbb{O}_q$ is non-principal.
Let $n = rq + s$ where $r, s \geq 0$ and $0 \leq s < q$. Then $\mathbb{O}_q$ is the nilpotent orbit corresponding to the partition 
$Y = (s,q,q,\dots,q)$ of $n$, in which $q$ appears $r$ times. We have $\fing_j=\{u\in \fing\mid [x_0,u]=ju\}$, where
 \begin{align}
x_0:=\sum_{i=1}^s \varpi_{i(r+1)}^{\vee}+\sum_{i=s+1}^{q-1} \varpi_{s(r+1)+(i-s)r}^{\vee}.
\label{eq:x0}
\end{align}
For $\eta\in \check{P}$ we set ${\Delta}_{\eta}=\{\alpha\in {\Delta}\mid \left<\eta, \alpha\right>=0\} \subset {\Delta}$,
and define
\begin{align}
\check{P}_{+,f}^{q}=\{\eta\in \check{P}^{q}_+\mid \left<\eta, \theta\right> \leq q-1\text{ and }{\Delta}_{\eta}\cong \Delta_0\text{ as root systems}
\}.
\end{align}
Note that $x_0\in \check{P}_{+,f}^{q}$.
\begin{lemma}\label{lem:explicit.P.check}
A weight $\eta \in \check{P}_+$ belongs to $\check{P}_{+,f}^{q}$
if and only if 
there exists
a permutation
$(m_1,\dots,m_q)$  of $(\overbrace{r+1,\dots,r+1}^s,\overbrace{r,\dots,r}^{q-s})$
such that
$\eta=\sum_{j=1}^{q-1}\varpi_{\sum_{a=1}^jm_a}^{\vee}$.
\end{lemma}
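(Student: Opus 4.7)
The plan is to parametrise $\eta$ by its coefficient sequence in the coweight basis and translate both defining conditions of $\check{P}_{+,f}^{q}$ into combinatorics of that sequence. Write $\eta = \sum_{k=1}^{n-1} c_k \varpi_k^\vee$ with $c_k \in \Z_{\geq 0}$. Since $\alpha_{i,j} = \alpha_i + \cdots + \alpha_{j-1}$ and $\langle \alpha_l, \varpi_k^\vee\rangle = \delta_{kl}$, one has $\langle \alpha_{i,j}, \eta\rangle = c_i + c_{i+1} + \cdots + c_{j-1}$, and by positivity this vanishes iff every $c_k$ in the range is zero. Hence $\Delta_\eta$ is determined by the support $\{k : c_k\ne 0\}$: labelling the nonzero positions $i_1 < i_2 < \cdots < i_{N-1}$ and setting $i_0 = 0$, $i_N = n$, the root system $\Delta_\eta$ decomposes as the disjoint union of $A$-type sub-root-systems of ranks $i_a - i_{a-1} - 1$ for $a = 1, \ldots, N$.

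Next I identify $\Delta_0$. The good even grading coming from the pyramid of $Y = (s, q, q, \ldots, q)$ groups the boxes of $Y$ into $q$ columns: $s$ columns of height $r+1$ followed by $q-s$ columns of height $r$, and $\fing_0$ is the corresponding block-diagonal Levi. Therefore $\Delta_0$ has type $A_r^{\times s} \times A_{r-1}^{\times (q-s)}$. Matching with the description of $\Delta_\eta$ above, the isomorphism $\Delta_\eta \cong \Delta_0$ forces $N = q$ (so there are exactly $q-1$ nonzero positions) and forces the block sizes $m_a := i_a - i_{a-1}$, $a = 1, \ldots, q$, to form a permutation of the multiset $(\underbrace{r+1,\ldots,r+1}_{s}, \underbrace{r,\ldots,r}_{q-s})$; conversely any such permutation determines valid nonzero positions via $i_a = m_1 + \cdots + m_a$.

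To finish I pin down the nonzero values and verify dominance. Since $\theta = \alpha_1 + \cdots + \alpha_{n-1}$, the condition $\langle \eta, \theta\rangle \leq q-1$ reads $\sum_k c_k \leq q-1$; together with the existence of $q-1$ nonzero positions, each contributing at least $1$, this forces $c_{i_a} = 1$ for every $a$, yielding the asserted formula $\eta = \sum_{j=1}^{q-1}\varpi^\vee_{m_1 + \cdots + m_j}$. The remaining constraint $\eta \in \check{P}_+^q$, namely $\langle \alpha, \eta\rangle \leq q$ for all $\alpha \in \Delta_+$, is automatic: for dominant $\eta$ this maximum is attained at the highest root $\theta$, where the value is $\leq q - 1 < q$. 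The entire argument is a direct combinatorial unpacking in type $A$; the only care needed is in bookkeeping the three parallel indexings (nonzero positions of $\eta$, block sizes of $\Delta_\eta$, and column heights of the pyramid), and I anticipate no substantive obstacle.
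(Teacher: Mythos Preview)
Your proof is correct and follows essentially the same approach as the paper: parametrise $\eta$ by its coweight coordinates, identify $\Delta_\eta$ as a product of type-$A$ blocks determined by the nonzero positions, match against the Levi $\Delta_0$ to force exactly $q-1$ nonzero positions with the prescribed block sizes, and use $\langle\eta,\theta\rangle\leq q-1$ to pin each nonzero coefficient to $1$. Your identification $\Delta_0\cong A_r^{\times s}\times A_{r-1}^{\times(q-s)}$ is in fact the correct one (the paper's text has an off-by-one slip writing $A_{r+1}$ and $A_r$, though its conclusion is unaffected).
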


\begin{proof}
From the description of $\Delta_{0}$ given at the beginning of this section, applied to the partition $Y$ associated with the orbit $\mbo_q$, it follows that $\Delta_0$ is the direct product of $q$ irreducible root systems; $s$ of them of type $A_{r+1}$ and $q-s$ of type $A_{r}$ (treating the case $r=0$ as $A_0 = \emptyset$). If $\Delta_\eta$ is to be isomorphic to $\Delta_0$ then we must have $\left<\eta, \alpha\right> > 0$ for at least $q-1$ simple roots $\alpha$, and now the condition $\left<\eta, \theta\right> \leq q-1$ implies that $\left<\eta, \alpha\right>$ equals $1$ for these simple roots and $0$ for all others. The simple roots $\alpha_{i_1}, \alpha_{i_2}, \ldots, \alpha_{i_{q-1}}$ for which $\left<\eta, \alpha_{i_j}\right>=1$ are of the form $i_j = \sum_{a=1}^j m_a$ for some permutation $(m_1,\dots,m_q)$ as in the statement of the lemma and so we are done.
\end{proof}

We define an equivalence relation in the set $P_+^{p-n} \times \check{P}_{+,f}^{q}$
by
\begin{align*}
(\lam,\eta)\sim (\lam',\eta')\quad \iff \quad
\begin{cases}
\lam'=\bar \pi_i\circ \lam+p\varpi_i=\bar \pi_i\lam+(p-n)\varpi_i \\
\eta'=\bar \pi_i\eta+q\varpi_i,
\end{cases}
\text{ for some $i=1,\dots,n-1$}
\end{align*}
(cf.\ \eqref{eq:adm-wts-e}). We may also describe the equivalence relation in terms of the description of $\check{P}_{+,f}^{q}$ given in Lemma \ref{lem:explicit.P.check}. Suppose $(\lam,\eta) \sim (\lam',\eta')$ in $P_+^{p-n} \times \check{P}_{+,f}^{q}$. Writing $\eta=\sum_{j=1}^{q-1}\varpi^\vee_{\sum_{a=1}^jm_a}$ and $\eta'=\sum_{j=1}^{q-1}\varpi^\vee_{\sum_{a=1}^j m_a'}$, setting $m_q=n-\sum_{j=1}^{q-1} m_j$ and $m_q'=n-\sum_{j=1}^{q-1} m_j'$ and, for convenience, considering the index modulo $n$, we then have $m'_{a}=m_{a+b}$ for all $a=1,\dots, q$, for some fixed $b$.

\begin{thm}\label{thm:weight.param}
Let $k = -n+p/q$ with $q\leq n$.
We have a bijection
\begin{align*}
\left(P_+^{p-n} \times \check{P}_{+,f}^{q}\right)/\sim \isomap [{\prin^k_{\circ}}],\quad (\lam,\eta)\mapsto [\lam-\frac{p}{q}\eta].
\end{align*}
\end{thm}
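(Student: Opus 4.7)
The plan is to derive the bijection by tracing the general parametrization of admissible weights \eqref{eq:adm.wt.param} and imposing the variety criterion of Theorem \ref{thm:Joseph}. First I verify that $\Phi : (\lam, \eta) \mapsto \lam - \tfrac{p}{q}\eta$ lands in $\prin^k_\circ$. Given $(\lam, \eta) \in P_+^{p-n} \times \check{P}^q_{+,f}$, taking $\wh y = t_{-\eta}$ and $\wh\nu = p\La_0 + (\lam+\rho)$ in \eqref{eq:adm.wt.param} realises $\mu := \lam - \tfrac{p}{q}\eta$ as an element of $\prin^k$; the conditions $\eta \in \check{P}_+^q$ and $\langle \theta,\eta\rangle \leq q-1$ are precisely those needed to produce a valid $\wh y$ in \eqref{eq:set-of-ad-wts}. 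A direct computation using $\lam+\rho \in P$ and $\gcd(p,q)=1$ yields $\Delta(\mu) = \{\alpha \in \Delta : q \mid \langle \alpha,\eta\rangle\}$, and since $\langle\alpha,\eta\rangle \in \{0, 1,\ldots, q-1\}$ for $\alpha \in \Delta_+$, this equals $\Delta_\eta$, which by Lemma \ref{lem:explicit.P.check} is isomorphic to $\Delta_0$. Hence $|\Delta(\mu)| = |\Delta_0| = \dim\mc{N} - \dim\overline{\mbo}_q$, so Theorem \ref{thm:Joseph} gives $\mu \in \prin^k_\circ$.

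Compatibility with $\sim$ is immediate: if $(\lam', \eta') = (\bar\pi_i\circ\lam + p\varpi_i,\, \bar\pi_i\eta + q\varpi_i)$ then a short calculation gives $\lam' - \tfrac{p}{q}\eta' = \bar\pi_i\circ(\lam-\tfrac{p}{q}\eta)$, so $\Phi$ descends to a map $\overline\Phi$ on $\sim$-classes. For surjectivity, let $[\mu] \in [\prin^k_\circ]$ and pick any representative via \eqref{eq:adm.wt.param}: $\mu = y\circ\lam - \tfrac{p}{q}y\eta_0$ with $\lam \in P_+^{p-n}$, $y \in W$, $\eta_0 \in \check{P}$. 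Applying $y^{-1}\circ$ replaces $\mu$ by the $W\circ$-equivalent weight $\lam - \tfrac{p}{q}\eta_0$, so we may assume $\mu = \lam - \tfrac{p}{q}\eta$. Combining the $\eW_+$-equivalence of triples with the $W\circ$-equivalence brings $\eta$ into $\check{P}_+^q$ with $\langle\theta,\eta\rangle \leq q-1$ (the strict inequality is forced by regularity of $\mu$). Then the variety condition gives $|\Delta_\eta| = |\Delta(\mu)| = |\Delta_0|$. Since $\eta$ is dominant with $\langle \theta, \eta\rangle < q$, the subsystem $\Delta_\eta$ is $A_{m_1 - 1} \oplus \cdots \oplus A_{m_q-1}$ for some composition $(m_1, \ldots, m_q)$ of $n$ into $q$ positive parts, and $|\Delta_\eta| = \sum_a m_a(m_a - 1)$ is minimised (over such compositions) uniquely at the nearly uniform composition with multiset $\{(r+1)^s, r^{q-s}\}$. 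Thus $\Delta_\eta \cong \Delta_0$ and $\eta \in \check{P}^q_{+,f}$ by Lemma \ref{lem:explicit.P.check}.

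Injectivity is the most delicate step. Suppose $\lam - \tfrac{p}{q}\eta = w\circ(\lam' - \tfrac{p}{q}\eta')$ with $w \in W$. Separating integral and $\tfrac{1}{q}$-fractional parts yields $(\lam+\rho) - w(\lam'+\rho) = \tfrac{p}{q}(\eta - w\eta') \in P$, which, using $\gcd(p, q) = 1$, forces $\eta - w\eta' \in qP$. Combined with the dominance of $\eta, \eta'$ in $\check{P}^q_{+,f}$, their explicit description from Lemma \ref{lem:explicit.P.check}, and Lemma \ref{lemma:I.choose.Q} identifying $\eW_+$-orbits with cosets in $P/Q$, this forces $(w, \eta - w\eta') = (\bar\pi_i, q\varpi_i)$ for some $i \in J$ (or trivially $(1, 0)$). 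Unwinding gives $(\lam, \eta) \sim (\lam', \eta')$ as required. The principal obstacle is precisely this identification step: showing that the only $W\circ$-equivalences between images of $\Phi$ are those arising from the $\eW_+$-action encoded in $\sim$.
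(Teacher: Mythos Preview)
The well-definedness and $\sim$-compatibility steps are fine and agree with the paper. Your surjectivity argument is in one respect cleaner than the paper's: having reduced to a representative $\lam-\tfrac{p}{q}\eta$ with $\eta\in\check P_+$ and $\langle\theta,\eta\rangle\leq q-1$, you use a convexity argument (that $\sum_a m_a(m_a-1)$, over compositions of $n$ into at most $q$ positive parts, is minimised uniquely at the multiset $\{(r+1)^s,r^{q-s}\}$) to deduce $\Delta_\eta\cong\Delta_0$. The paper instead invokes Brundan--Kleshchev via Lemma~\ref{lem:represented-by} to pick a representative in $P_{0,+}$ with $\Delta(\lam)=\Delta_0$ exactly, and then transports this through the Weyl group. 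Your minimisation is more elementary and type-$A$ specific; the paper's route is less direct but ties in with the representation-theoretic machinery already in play. (One quibble: your sentence ``the strict inequality is forced by regularity of $\mu$'' is not right---regularity does not by itself exclude $\langle\theta,\eta\rangle=q$. What does work is the $\eW_+$-action on triples \eqref{eq:Wplus.action.triples}, which cyclically permutes the affine Dynkin labels of $\wh\eta$ and hence lets you arrange the $0$-th label to be positive.)

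The injectivity step, however, has a real gap. From $\eta-w\eta'\in qP$ you want to conclude that $w=\bar\pi_i$ and $\eta-w\eta'=q\varpi_i$ for some $i$. Your appeal to Lemma~\ref{lemma:I.choose.Q} is misplaced on two counts. First, that lemma requires the level to be coprime to $|J|=n$, which fails for many of the denominators covered by the theorem (e.g.\ the rectangular case $n=qr$). Second, and more fundamentally, the lemma only asserts transitivity of $\eW_+$ on $P/Q$-cosets of dominant weights; it says nothing about why an \emph{arbitrary} $w\in W\cong\mathfrak S_n$ satisfying the congruence $\eta\equiv w\eta'\pmod{qP}$ must be one of the $n$ special elements $\bar\pi_i$. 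The explicit description of $\check P^q_{+,f}$ from Lemma~\ref{lem:explicit.P.check} does not close this gap either.

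The paper avoids this problem entirely by a structural argument. Since both $\lam-\tfrac{p}{q}\eta$ and $\lam'-\tfrac{p}{q}\eta'=w\circ(\lam-\tfrac{p}{q}\eta)$ lie in $\prin^k$, the latter belongs to $\prin^k_{wt_{-\eta}}\cap\prin^k_{t_{-\eta'}}$. Now \eqref{eq:adm-wts-e}, which classifies exactly when two strata $\prin^k_y$ and $\prin^k_{y'}$ coincide, forces $wt_{-\eta}=t_{-\eta'}\,t_{q\varpi_j}\bar\pi_j$ for some $j\in J$; comparing finite and translation parts gives $w=\bar\pi_j$ and $\eta=\bar\pi_{n-j}\eta'+q\varpi_{n-j}$ directly. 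The constraint on $w$ thus comes not from lattice arithmetic on $\eta-w\eta'$, but from the requirement that $w$ carry one admissible stratum to another---this is the key idea your argument is missing.
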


\begin{proof}
First we recall that
$\dim {\mathbb{O}_q}$ coincides with the maximal 
 Gelfand-Kirillov dimension of objects of $\BGG^{\g_0}$,
 and thus,
 \begin{align*}
\dim {\mathbb{O}_q}=|\Delta|-|\Delta_0|.
\end{align*}
Hence, 
by Theorem \ref{thm:Joseph},
 $\lam\in {\prin}^{k}$ belongs to
${\prin}^{k}_{\circ}$ 
if and only if $|\Delta(\lam)|=|\Delta_0|$. 
\\
Let
  $\eta\in  \check{P}_{+,f}^{q}$.
  Since $\eta \in \check{P}_+$ and $\left<\eta, \theta\right> \leq q-1$ we have $\left<\eta, \alpha\right><q$ for all $\alpha\in \Delta_+$,
and therefore
$t_{-\eta}(\widehat{\Delta}(k\Lam_0)_+) \subset \widehat{\Delta}^{\text{re}}_+$. Hence for all $\lam\in P^{p-n}_+$ we have
$t_{-\eta}\circ \widehat{\lam}=\lam-\frac{p}{q}\eta +k\Lam_0$
with $\lam-\frac{p}{q}\eta \in \prin^k$.
Moreover, $\Delta(\lam-\frac{p}{q}\eta)=\Delta(\eta) \cong \Delta_0$ since $\eta \in \check{P}_{+, f}^q$
and hence
 $\lam-\frac{p}{q}\eta \in \prin^k_\circ$ by the above criterion. 
On the other hand let us suppose that $(\lam', \eta') \sim (\lam, \eta)$ in $P^{p-n}_+ \times \check{P}_{+, f}^q$, so that $\lam'=\bar \pi_i\circ\lam+p\varpi_i$ and $\eta'=\bar \pi_i\eta+q\varpi_i$. Then
\[
\bar \pi_{i}\circ (\lam-\frac{p}{q}{\eta}) = \bar \pi_{i}\circ\lam - \frac{p}{q}\bar\pi_i{\eta} = \lam' - p\varpi_i - \frac{p}{q}(\eta'+q\varpi_i) = \lam'-\frac{p}{q}\eta'.
\]
We have shown that  the map in the theorem statement is well-defined.

Next we show the injectivity of the map. We shall use the relation $\bar\pi_{n-i}(\varpi_i) = -\varpi_{n-i}$, which is easily proved. Let $(\lam, \eta), (\lam',\eta')\in  P^{p-n}_+\times \check{P}_{+,f}^{q}$
 and suppose that 
 $[\lam'-\frac{p}{q}\eta']=[\lam-\frac{p}{q}\eta]$ in $[\prin^k_{\circ}]$, i.e., that there exists $w\in W$ such that
 $\lam'-\frac{p}{q}\eta'=w\circ (\lam-\frac{p}{q}\eta)$.
 Since $\lam'-\frac{p}{q}\eta'\in \prin^k$,
 this means that $wt_{-\eta}(\widehat{\Delta}(k\Lam_0)_+) \subset \widehat{\Delta}^{\text{re}}_+$
 and $\lam'-\frac{p}{q}\eta' \in \prin^k_{wt_{-\eta}} \cap \prin^k_{t_{-\eta'}}$.
 By \eqref{eq:adm-wts-e},
 we get that
 $wt_{-\eta}=t_{-\eta'}t_{q\varpi_j}\bar \pi_j
 =\bar \pi_j t_{-(\bar \pi_{n-j}\eta'+q\varpi_{n-j})}$ for some $j$.
 Thus
\[
w=\bar \pi_{j}, \qquad \text{and} \qquad \eta=\bar \pi_{n-j}\eta'+q\varpi_{n-j}.
\] 
From $\lam'-\frac{p}{q}\eta'=w\circ (\lam-\frac{p}{q}\eta)$, and the equalities above, it follows that $\lam=\bar\pi_{n-i}\circ (\lam'-\frac{p}{q}\eta')+\frac{p}{q}\eta=
 \bar\pi_{n-i}\circ \lam+p\varpi_{n-i}$.
We have shown that
$(\lam',\mu')\sim (\lam,\mu)$.

Finally we prove surjectivity.
Let $[\lam] \in [ \on{Pr}^k_\circ]$.
By Lemma \ref{lem:represented-by},
we may choose a representative 
$\lam\in \on{Pr}^k_\circ\cap P_{0,+}$ of $[\lam]$. Clearly we have $\Delta(\lam)=\Delta_0$.
 By \cite[Proposition 2.8]{AraFutRam17} and its proof,
$$\on{Pr}^k=\bigcup\prin^k_{{yt_{-\eta}}},$$
where the union is take over pairs $(y,\eta)\in W\times \check{P}_+^{q}$
such that
\begin{align}
\text{
$\left<\alpha, \eta\right> \geq 1$ for all $\alpha\in \Delta_+\cap y^{-1}(\Delta_-)$.}
\label{eq:des-ad-weights}
\end{align}
Let us therefore take such a pair $(y,\eta) \in W\times \check{P}_+^{q}$
satisfying
\eqref{eq:des-ad-weights}
such that
 $\lam\in \on{Pr}^k_{yt_{-\eta}}$.
Then we may write $\lam=y\circ \lam_1$ and $\lam_1=\lam_0-\frac{p}{q}\eta$,
where $\lam_1\in \on{Pr}^k_{t_{-\eta}}$ and
$\lam_0\in  \on{Pr}^k_{\Z}$. 
We have on the one hand
 $\Delta(\lam_1)=y^{-1}(\Delta(\lam))=y^{-1}(\Delta_0)$.
On the other hand $\Delta(\lam_1) = \Delta(\tfrac{p}{q}\eta)$ and, since $0 \leq \left<\alpha, \eta\right> \leq \left<\theta, \eta\right> \leq q-1 < h^\vee$ for all $\alpha \in \D_+$, we have $\left<\tfrac{p}{q}\eta, \alpha^\vee\right> \in \Z$ if and only if $\left<\eta, \alpha^\vee\right> = 0$. Hence $\Delta(\lam_1)=\Delta_\eta$. Therefore $\Delta_\eta\cong \Delta_0$, and we obtain
$[\lam]=[\lam_0-\frac{p}{q}\eta]$ where $(\lam_0,\mu)\in P_+^{p-n}\times \check{P}^{q}_{+,f}$. This establishes surjectivity as required.
\end{proof}

\begin{rem}
Let $f \in \mathfrak{sl}_n$ be a nilpotent element associated with the partition $Y=(p_1\leq p_2\leq \dots \leq p_r)$ of $n$. According to \cite{ElaKac05} the good gradings $\g=\bigoplus_{j\in (\tfrac{1}{2})\Z} \g_j$ for $f$ are in natural bijection with combinatorial structures known as \emph{pyramids} associated with $Y$. A pyramid is an arrangement of $n$ boxes of dimensions $1 \times 1$ into $r$ rows, the $i^{\text{th}}$ row consisting of a contiguous block of $p_i$ boxes. The projection to the horizontal axis of the $i^{\text{th}}$ row must be contained within the projection of the $(i+1)^{\text{th}}$ for each $i$, and the center of each box is required to lie directly above the center or else the edge of the box below.

The symmetric pyramid is the one in which the centers of all rows lie on a single vertical line.

The good grading associated with a pyramid is obtained by labeling its boxes $1, 2, \ldots, n$, letting $\on{col}(i)$ and $\on{row}(i)$ denote the horizontal and vertical coordinates of the $i^{\text{th}}$ box, respectively, and declaring $\on{deg}(e_{ij})=\on{col}(j)-\on{col}(i)$. The nilpotent $f = f_Y$ is recovered as $\sum e_{j,i}$ where the sum runs over $(i, j)$ satisfying $\on{row}(i)=\on{row}(j)$ and $\on{col}(i)=\on{col}(j)-1$.

The grading is even if the center of each box lies above the centers (and not the edges) of other boxes. Throughout this section we have worked with the even grading associated with the Young diagram, in which all rows are left justified. In particular the element $x_0$ given in \eqref{eq:x0} corresponds to this grading. All statements in this section hold upon replacing this grading with any other even grading. In particular if $f$ is even, so that the Dynkin grading is a good even grading, the results of this section apply to the Dynkin grading. This will be important for applications to the computation of fusion rules in subsequent sections, for instance by Proposition \ref{prop:An.selfdual} the subregular $\W$-algebra of type $A$ is only self-dual for the Hamiltonian reduction associated with the Dynkin grading.
\end{rem}

\section{Rationality of Simply Laced Subregular \texorpdfstring{$\W$}{W}-Algebras}

Let $\fsubreg$ be a subregular nilpotent element of $\fing$. We recall that $\Osubreg = G\cdot\fsubreg$ is by definition the unique nilpotent orbit of $\fing$ of dimension $\dim \fing-\on{rank}\fing-2$.
The corresponding partition or the Bala-Carter label of
 $\fsubreg$ can be found in the third column of Table \ref{table:Subregular.denom} below.

\begin{lemma}\label{Lem:condition1}
Let $\fing$ be a \textup{(}not necessarily simply laced\textup{)} simple Lie algebra, and let $k$ be an admissible number with denominator $q$ such that $\mathbb{O}_{q}={\Osubreg}$. For $\lam\in \prin^k$ we have
\begin{enumerate}
\item $\Delta(\lam)$ is nonempty,

\item $\lam \in \prin^k_\circ = \prin^k_{\Osubreg}$ if and only if $|\Delta(\lam)|=2$.
\end{enumerate}
 \end{lemma}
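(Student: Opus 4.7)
The plan is to deduce both statements essentially as a corollary of Theorem \ref{thm:Joseph}, which characterises $\prin^k_\circ$ by the cardinality of the integral root system. The key dimension-theoretic input is that the subregular orbit satisfies $\dim \Osubreg = \dim\fing - \on{rank}\fing - 2 = \dim\mc{N} - 2$, so $\dim\mc{N}-\dim\ov{\Osubreg} = 2$.

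For part (2), since $\mbo_q = \Osubreg$ by hypothesis, Theorem \ref{thm:Joseph} gives
\begin{align*}
\prin^k_\circ = \{\lam \in \prin^k \mid |\Delta(\lam)| = 2\},
\end{align*}
which is exactly the claim.

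For part (1), I would argue that $\Delta(\lam)$ cannot be empty by a dimension bound. Since any element of $\prin^k$ is regular dominant, the computation recalled in the proof of Theorem \ref{thm:Joseph} (via \cite[Corollary 3.5]{Jos78}) gives $\dim\on{Var}(J_\lam) = \dim\mc{N} - |\Delta(\lam)|$. On the other hand $\var(J_\lam) \subset \var(I_k) = \ov{\mbo_q} = \ov\Osubreg$, so $\dim\on{Var}(J_\lam) \leq \dim\mc{N}-2$. Therefore $|\Delta(\lam)| \geq 2$; in particular $\Delta(\lam) \neq \emptyset$.

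There is no real obstacle here: once Theorem \ref{thm:Joseph} is available, the lemma reduces to the single numerical input $\dim\mc{N} - \dim\Osubreg = 2$, together with the trivial observation that $\Delta(\lam)$ is stable under $\alpha \mapsto -\alpha$ and hence of even cardinality (which in fact strengthens (1) to $|\Delta(\lam)| \geq 2$, consistent with (2)).
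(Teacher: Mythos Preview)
Your proof is correct and takes essentially the same approach as the paper: part (2) is deduced verbatim from Theorem \ref{thm:Joseph} together with $\dim\mc{N}-\dim\Osubreg=2$. For part (1) the paper argues slightly more directly (if $\Delta(\lam)=\emptyset$ then $L(\lam)=M(\lam)$, so $\var(J_\lam)=\mc{N}\supsetneq\overline{\mbo_q}$), whereas you invoke the dimension formula $\dim\var(J_\lam)=\dim\mc{N}-|\Delta(\lam)|$ already used in the proof of Theorem \ref{thm:Joseph}; your route has the mild advantage of yielding $|\Delta(\lam)|\geq 2$ in one stroke.
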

 \begin{proof}
If $\Delta(\lam)$ were empty then we would have $L(\lam)=M( \lam)$ and so $\var(J_{ \lam})=\mc{N} \supsetneq \overline{\mathbb{O}_q}$, which contradicts \eqref{eq:dec-ad-weights}. This proves the first part. The second part follows from Theorem \ref{thm:Joseph}.
 \end{proof}

By \cite{Ara09b} the condition $\mathbb{O}_q=\Osubreg$ holds for precisely those values of $q$ listed in the following table.
\begin{table}[h]
\caption{Subregular Denominators} \label{table:Subregular.denom}\centering%
\begin{tabular}{|l|c|c|l|}
\hline
Type &$h^\vee$ &$\fsubreg$   & $q$ \\
\hline
$A_n$ &$n+1$ & $[n,1]$ &  $n$ \\
$B_n$ &$2n-1$ & $[2n-1,1^2]$  & $2n-1, 2n$ \\
$C_n (n \geq 3)$ & $n+1$ & $[2n-2,2]$ & $2n-1, 4n-6, 4n-4$ \\
$D_n$ &$2n-2$ &$[2n-3,3]$  & $2n-4, 2n-3$ \\
$E_6$ &$12$ &$E_6(a_1)$  & $9, 10, 11$ \\
$E_7$ &$18$ &$E_7(a_1)$ & $14, 15, 16, 17$ \\
$E_8$ & $30$&$E_8(a_1)$  & $24, 25, 26, 27, 28, 29$ \\
$F_4$ & $9$ &$F_4(a_1)$ & $9, 11, 12, 14, 16$ \\
$G_2$&$4$ &$G_2(a_1)$  & $4, 5, 6, 9$ \\
\hline
\end{tabular}
\end{table}

\begin{rem}
There are typos in \cite[Tables 6, 7]{Ara09b}. The central charge of $\W_k(\fing,{\fsubreg})$ for type $F_4$ at level $k=-h^\vee + p/q$ should read
\begin{align*}
-\frac{6(12p-13q)(5p-6q)}{pq}.
\end{align*}
\end{rem}

 In the rest of this section {\em we assume $\fing$ is of simply laced type} and that $k = -h^\vee + p/q$ is an admissible number with denominator $q$ such that $\mathbb{O}_{q}={\Osubreg}$. Let $\fsubreg \in \Osubreg$.

We recall that if $\fing$ is of type $D$ or $E$ then
$f_{\on{subreg}}$ is distinguished,
and thus
 there is exactly one even grading with respect to which $\fsubreg$ is good, namely its Dynkin grading. Up to conjugacy this grading is given by $\fing_0 = \finh + \fing_{\alstar} + \fing_{-\alstar}$ and $\fing_1 \supset  \bigoplus_{\alpha \in \Pi \backslash \{\alpha_*\}} \fing_\al$, where $\alstar$ is the simple root corresponding to the trivalent node in the Dynkin diagram of $\fing$. If $\fing$ is  of type $A$ then we fix a simple root $\alstar$ arbitrarily and define $\fing_0$ and $\fing_1$ as above. This defines $n$ distinct good even gradings on $\fing$, all of subregular type. If $n$ is odd then one of these gradings is Dynkin, if $n$ is even then none of them are. We have $\D_{0, +} = \{\alstar\}$. Let
\begin{align*}
x_0=\sum_{i \neq *} \varpi_i
\end{align*}
denote the grading element: $\fing_j = \{x \in \fing | [x_0, x] = jx \}$, and let $h = 2x_0$.

\begin{lemma}\label{Lem:eq-class}
Let $\fing$ be simply laced. Then every class of $[\prin_{\circ}^k]$ contains a representative $\lam\in \prin^k_{\circ}$ such that $\Delta(\lam)_+=\{\alstar\}$.
\end{lemma}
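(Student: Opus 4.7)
The approach is to use the parametrization \eqref{eq:adm.wt.param}: write
\begin{align*}
\wh\lam + \wh\rho = y\, t_{-\eta}\,\phi(\wh\nu), \qquad y \in W, \quad \eta \in \check{P}, \quad \nu \in P^p_{+,\text{reg}}.
\end{align*}
A direct check shows that $w \circ \lam$ corresponds to the triple $(wy, \eta, \nu)$. Hence varying $y$ by left multiplication while holding $\eta$ and $\nu$ fixed sweeps out the intersection of the $W$-dot orbit of $\lam$ with $\prin^k$, which by Lemma \ref{Lem:pri-ideals}(2) is precisely the equivalence class $[\lam]$.

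Combining $\wh\D(\wh\lam) = y\,t_{-\eta}(\wh\D(k\La_0))$ with the identity $t_{-\eta}(\al + nq\delta) = \al + (nq + (\al,\eta))\delta$ (valid in the simply laced case, where $q$ is automatically coprime to $r^{\vee}=1$) identifies the finite integral roots as
\begin{align*}
\D(\lam) = y\bigl(\D_\eta^q\bigr), \qquad \D_\eta^q := \{\al \in \D : (\al, \eta) \in q\Z\}.
\end{align*}
By Lemma \ref{Lem:condition1}(2) we have $|\D(\lam)| = 2$, so $\D_\eta^q = \{\pm\alpha_0\}$ for a unique $\alpha_0 \in \D_+$, necessarily satisfying $(\alpha_0,\eta) \in \{0, q\}$.

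The next, and main, step is to translate the admissibility condition $\wh y(\wh\D(k\La_0)_+) \subset \wh\D^{\text{re}}_+$ into a condition on $y$. Evaluating on the generators $\al + mq\delta$ ($\al \in \D_+$, $m \geq 0$) and $-\al + mq\delta$ ($\al \in \D_+$, $m \geq 1$) of $\wh\D(k\La_0)_+$ yields the sign rules: $y(\al) > 0$ whenever $(\al,\eta) = 0$, and $y(\al) < 0$ whenever $(\al,\eta) = q$, with no sign constraint in the intermediate range $1 \leq (\al,\eta) \leq q-1$. Since in our situation $\D_\eta^q = \{\pm\alpha_0\}$, the sole constraint falls on $y(\alpha_0)$ and is dictated by whether $(\alpha_0, \eta)$ equals $0$ or $q$.

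Finally, since $\fing$ is simply laced the Weyl group $W$ acts transitively on $\D$. Choose $y_1 \in W$ with $y_1(\alpha_0) = \alstar$ when $(\alpha_0,\eta) = 0$, and $y_1(\alpha_0) = -\alstar$ when $(\alpha_0,\eta) = q$; in either case the sign constraint for the triple $(y_1, \eta, \nu)$ is met, so this triple defines a weight $\mu = (y_1 y^{-1}) \circ \lam \in \prin^k$. Being in $W\circ \lam$, $\mu$ lies in the class $[\lam]$ and has $\var(J_\mu) = \var(J_\lam) = \overline{\Osubreg}$, so $\mu \in \prin^k_\circ$. Moreover $\D(\mu) = y_1(\D_\eta^q) = \{\pm\alstar\}$, giving $\D(\mu)_+ = \{\alstar\}$ as required. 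The main obstacle is the careful unwinding of the admissibility condition in the third paragraph; once it is seen to reduce to a single sign condition involving $\alpha_0$, the transitivity of $W$ on $\D$ closes the argument at once.
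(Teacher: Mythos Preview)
Your proof is correct. It shares the decisive idea with the paper's argument—transitivity of $W$ on $\Delta$ in the simply laced case—but reaches it by a longer route. The paper's proof is three lines: given $\lam \in \prin^k_\circ$ one has $\Delta(\lam)_+ = \{\alpha\}$ by Lemma~\ref{Lem:condition1}; choose $w \in W$ with $w(\alpha) = \alstar$, and then $w \circ \lam$ is the desired representative. You instead pass through the triple parametrization $(y,\eta,\nu)$, identify $\Delta(\lam) = y(\Delta_\eta^q)$, and unpack the admissibility condition $\wh y(\wh\Delta(k\Lambda_0)_+) \subset \wh\Delta^{\text{re}}_+$ into the explicit sign rules on $y(\alpha_0)$ before invoking transitivity. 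What your longer route buys is a self-contained verification that the $W$-translate actually lands back in $\prin^k$; the paper takes this for granted (it holds because $w(\Delta(\lam)_+) = \{\alstar\} \subset \Delta_+$ suffices for regular dominance, the only nontrivial part of admissibility that can fail under a finite Weyl group element). Your sign analysis is in fact the same computation that the paper carries out later in the proof of Theorem~\ref{thm:Tomoyuki.thm.8.5}, so you have effectively front-loaded that work here.
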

\begin{proof}
Let $\lam\in \prin^k_{\circ}$. By Lemma \ref{Lem:condition1}, $\Delta(\lam)_+=\{\alpha\}$ for some $\alpha\in \Delta_+$. Choose $w \in W$ such that $\alpha=w(\alstar)$. Then $w\circ \lam\in \prin^k$ and $\Delta(w\circ \lam)_+=\{\alstar\}$ as required.
\end{proof}

\begin{thm}\label{Th:rationality}
Let $\fing$ be simply laced and let $k$ be an admissible number for $\fing$ with denominator $q$ such that $\mathbb{O}_q=\Osubreg$. Then $\W_k(\fing,f_{subreg})$ is rational and the complete set of isomorphism classes of irreducible $\W_k(\fing,f_{subreg})$-modules is
\begin{align*}
\{\mathbf{L}(E_{J_{\lam}})\mid [\lam]\in [\prin^k_{\circ}]\},
\end{align*}
where $E_{J_{\lam}}$ is the unique simple module of $\HDS^0(U(\fing)/J_{\lam})$.
\end{thm}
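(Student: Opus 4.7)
The plan is to reduce the theorem to a direct application of Theorem \ref{Th:simple-special-cases}, whose hypothesis is that every class in $[\prin^k_\circ]$ admits a representative in $\prin^k \cap P_{0,+}$. Given the choice of good even grading fixed just before the theorem, we have $\Delta_{0,+} = \{\alstar\}$, so the condition for $\lam \in \prin^k$ to lie in $P_{0,+}$ reduces to the single requirement $\left<\lam, \alstar^\vee\right> \in \Z_{\geq 0}$.

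First I would apply Lemma \ref{Lem:eq-class} to select, for each class in $[\prin^k_\circ]$, a representative $\lam$ with $\Delta(\lam)_+ = \{\alstar\}$. The point is then to observe that such a $\lam$ automatically lies in $P_{0,+}$: since $\alstar \in \Delta(\lam)$ we have $\left<\lam+\rho, \alstar^\vee\right> \in \Z$, and combined with the regular dominance condition built into the definition of $\prin^k$ this forces $\left<\lam+\rho, \alstar^\vee\right> \in \Z_{\geq 1}$, hence $\left<\lam, \alstar^\vee\right> \in \Z_{\geq 0}$ as required. This verifies the hypothesis of Theorem \ref{Th:simple-special-cases}.

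Applying that theorem then delivers all three conclusions in one go: $\HDS^0(U(\fing)/J_\lam)$ has a unique simple module $E_{J_\lam}$ for each $[\lam] \in [\prin^k_\circ]$, the set $\{\mathbf{L}(E_{J_\lam}) \mid [\lam] \in [\prin^k_\circ]\}$ is a complete collection of isomorphism classes of irreducible $\W_k(\fing, \fsubreg)$-modules, and $\W_k(\fing, \fsubreg)$ is rational. I do not anticipate a serious obstacle, since the heavy lifting (semisimplicity of the Zhu algebra, the ``$-$''-reduction analysis, and the extension-vanishing argument) has already been done in the earlier sections; the only real content here is the compatibility between the subregular integrality condition $\Delta(\lam)_+ = \{\alstar\}$ and the integrality with respect to $\fing_0$ demanded by Main Theorem \ref{MainTh:rationality}, which in simply laced types is built directly into the shape of the Dynkin grading at the trivalent node (or, in type $A$, at any node).
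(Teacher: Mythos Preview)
Your proposal is correct and follows exactly the paper's approach: invoke Lemma \ref{Lem:eq-class} to produce representatives with $\Delta(\lam)_+=\{\alstar\}$, then apply Theorem \ref{Th:simple-special-cases}. You have merely made explicit the one-line verification (via regular dominance and $\left<\rho,\alstar^\vee\right>=1$) that such representatives lie in $P_{0,+}$, which the paper leaves to the reader.
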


\begin{proof}
By Lemma \ref{Lem:eq-class} the conditions of Theorem \ref{Th:simple-special-cases} are satisfied. Thus the assertion follows immediately from that theorem.
\end{proof}

We give a more explicit description of the set $[\prin^k_{\circ}]$ in the subregular case.
\begin{defn}
Put
\begin{align*}
\cpps^q = \left\{\eta \in \check{P}_+^q \,\, | \,\, 
\text{$\left<\al_i, \eta+qD\right> = 0$ for exactly one $i \in \{0,1,\ldots,\ell\}$}\right\},
\end{align*}
where $\al_0, \ldots, \al_\ell \in \widehat{\Pi}$ are the simple roots of $\affg$.
\end{defn}
The finite group $\eW_+$ acts on the set $P_+^{p-h^{\vee}}\times \cpps^q$ by $\pi_j(\lam,\eta)=(\bar \pi_j\lam+(p-h^{\vee})\varpi_j,\bar \pi_j \eta+q\varpi_j^{\vee})$ for $j\in J$.

\begin{thm}\label{thm:Tomoyuki.thm.8.5}
Suppose $\fing$ is of simply laced type. For each $\eta \in \cpps^q$ there exists $y_{\eta} \in \finW$ such that $y_{\eta}t_{-\eta}(\widehat{\Delta}(k\Lam_0))\subset \widehat{\Delta}_+^{\text{re}}$. Moreover, there is a well-defined bijection
\begin{align*}
\frac{ P_+^{p-h^{\vee}}\times \cpps^q }{\eW_+} \isomap [\prin^k_\circ], \quad (\lam,\eta)\mapsto 
[y_{\eta}\circ (\lam-\frac{p}{q}\eta)].
\end{align*}
\end{thm}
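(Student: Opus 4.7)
The strategy mirrors Theorem \ref{thm:weight.param}, adapted to the subregular setting. By Lemma \ref{Lem:condition1} and Theorem \ref{thm:Joseph}, the set $\prin^k_\circ$ consists exactly of $\lam \in \prin^k$ with $|\D(\lam)| = 2$, and $\cpps^q$ is designed to parametrize precisely the allowed translation parts $\eta$.

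\emph{Existence of $y_\eta$.} Since $r^\vee = 1$, Proposition \ref{Pro:admissible number} gives $\widehat{\Pi}(k\La_0) = \{-\theta+q\delta, \al_1, \ldots, \al_\ell\}$. A direct computation in $\affh^*$ yields $t_{-\eta}(\al_i) = \al_i + \langle\al_i,\eta\rangle\delta$ for $i \geq 1$ and $t_{-\eta}(-\theta+q\delta) = -\theta + (q-\langle\theta,\eta\rangle)\delta$. If $\eta \in \cpps^q$ has its defect at some $\al_i$ with $i \geq 1$, then $\langle\theta,\eta\rangle \leq q-1$ and both images already lie in $\widehat{\D}_+^{\text{re}}$; set $y_\eta := 1$. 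If the defect occurs at $\al_0$, then $\langle\theta,\eta\rangle = q$ and $\langle\al_j,\eta\rangle \geq 1$ for every $j \geq 1$; set $y_\eta := s_\theta$, so that $s_\theta t_{-\eta}(\al_j)$ has positive $\delta$-coefficient and $s_\theta t_{-\eta}(-\theta+q\delta) = \theta \in \widehat{\D}_+^{\text{re}}$.

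\emph{Target and well-definedness.} For $\lam \in P_+^{p-h^\vee}$, setting $\nu = \lam+\rho \in P^p_{+,\text{reg}}$ and $\wh{y} = y_\eta t_{-\eta}$, formula \eqref{eq:adm.wt.param} gives $y_\eta \circ (\lam - (p/q)\eta) \in \prin^k$. For simply laced $\fing$ and $\eta \in \check{P}_+^q$, using $\gcd(p,q) = 1$ and $|\langle\al,\eta\rangle| \leq \langle\theta,\eta\rangle \leq q$,
\begin{align*}
\D(\lam - (p/q)\eta) = \{\al \in \D : \langle\al,\eta\rangle \in q\Z\}
\end{align*}
has cardinality $2$ precisely when $\eta \in \cpps^q$ (equal to $\{\pm\al_{i_0}\}$ in Case 1 and $\{\pm\theta\}$ in Case 2). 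Since the dot action of $W$ preserves $|\D(\cdot)|$, it follows that $y_\eta\circ(\lam-(p/q)\eta) \in \prin^k_\circ$. For $\eW_+$-invariance, the stated action on $(\lam,\eta)$ corresponds, via $\nu = \lam+\rho$ and $\wh{\eta} = q\La_0 + \eta$, to the action \eqref{eq:Wplus.action.triples} on triples $(y_\eta, \wh{\eta}, \wh{\nu})$; the two resulting weights then differ by a left factor in $\eW$ that reduces to a dot action of $W$, hence represent the same class in $[\prin^k]$.

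\emph{Bijectivity and main obstacle.} For surjectivity, Lemma \ref{Lem:eq-class} provides a representative of $[\lam] \in [\prin^k_\circ]$ with $\D(\lam)_+ = \{\alstar\}$; the decomposition \eqref{eq:set-of-ad-wts} writes $\widehat{\lam} = y t_{-\eta}\phi(\wh{\nu}) - \wh{\rho}$ with $y \in W$, $\eta \in \check{P}_+^q$, $\nu \in P^p_{+,\text{reg}}$, the cardinality computation above forces $\eta \in \cpps^q$, and $y$ can be adjusted to $y_\eta$ modulo the stabilizer $W_\eta$ (acting trivially on $[\prin^k]$). Injectivity follows from \eqref{eq:adm-wts-e}: two choices of $\wh{y}$ giving $[\prin^k]$-equivalent weights must differ by right multiplication by some $t_{q\varpi_j}\bar\pi_j$, which unpacks to an $\eW_+$-equivalence of the associated $(\lam,\eta)$ pairs. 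The main obstacle is tracking the case split of Step 1 through the $\eW_+$-action, since $\pi_j$ may swap Cases 1 and 2 (for example, if $\eta$ has defect at $\al_j$ then $\pi_j\eta$ has defect at $\al_0$), causing $y_\eta$ to jump between $1$ and $s_\theta$; one must verify that this jump is exactly offset by the $\bar\pi_j$ factor in $\pi_j$ so that the net change in the resulting weight is precisely a dot action of $W$.
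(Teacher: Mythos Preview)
Your proposal is correct and follows essentially the same argument as the paper (which takes $y_\eta = w^\circ$ rather than $s_\theta$ in Case~2, but either choice works since the target is a $W$-dot-orbit and the specific $y_\eta$ is immaterial for the class). Your flagged ``main obstacle'' is a non-issue for precisely this reason: the paper's one-line computation $(yt_{-\eta})(t_{q\varpi_j}\bar\pi_j) = y\bar\pi_j t_{-\eta'}$ combined with \eqref{eq:adm-wts-e} handles $\eW_+$-compatibility uniformly, without tracking which case $\eta$ or $\eta'$ falls into.
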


\begin{proof}
Let $\lam\in \prin^{k}_{\circ}\cap
\prin^k_{\widehat{y}}$ where $\widehat{y}=y t_{-\eta} \in \eW$ with $y\in W$ and $\eta\in P^{\vee}$. It is straightforward to see that the condition $\widehat y(\widehat{\Delta}(k\Lam_0))\subset \widehat{\Delta}_+^{\text{re}}$ is equivalent to the following: that
\begin{align}
\begin{cases}
0\leq \alpha(\eta)\leq q-1 & \text{for all $\al \in \D_+$ such that $y(\alpha)\in \Delta_+$},\\
1\leq \alpha(\eta)\leq  q  & \text{for all $\al \in \D_+$ such that $-y(\alpha)\in \Delta_+$}.
\end{cases}
\end{align}
It is also clear that 
\begin{align*}
\Delta(\lam)_+=\{y(\alpha)\mid  \alpha\in \Delta_+,\ \alpha(\eta)=0\}\sqcup \{-y(\alpha)
\mid \alpha\in \Delta_+,\ \alpha(\eta)=q\}.
\end{align*}
Therefore the condition $|\Delta(\lam)_+|=1$ implies that $\eta$ satisfies one of the following two conditions:
\begin{enumerate}
\item[(i)] $0\leq \alpha(\eta)<q$ for all $\alpha\in \Delta_+$ and there exists a unique simple root $\alpha_i$  of $\fing$ such that $\alpha_i(\eta)=0$,

\item[(ii)] $0< \alpha(\eta)\leq q$ for all $\alpha\in \Delta_+$ and 
$\alpha(\eta)=q$ if and only if $\alpha=\theta$.
\end{enumerate}
But this is equivalent to the statement $\eta \in \cpps^q$.

Now let $ \eta \in \cpps^q$. For $y\in W$, we have $y t_{-\eta}\circ P^{p-h^{\vee}}_+ \subset \prin^k_\circ$ if and only if
\begin{align}\label{eq:y.mu.condition}
y t_{-\eta}(\widehat{\Delta}(k\Lam_0))\subset \widehat{\Delta}_+^{\text{re}}.
\end{align}
If $\eta$ satisfies (i) above then we may take $y=1$ and condition (\ref{eq:y.mu.condition}) is satisfied. If $\eta$ satisfies (ii) above then we may take $y=w^\circ$ the longest element of $W$, and condition (\ref{eq:y.mu.condition}) is again satisfied.

Finally let $\eta, \eta' \in \cpps^q$ and suppose that $\eta=\pi_j(\eta')$ for some $j\in J$. Then
\begin{align*}
(y t_{-\eta})(t_{q\varpi_j}\bar \pi_j)
= y \bar \pi_j t_{-{\pi_j^{-1}(\eta)}}
= y \bar \pi_jt_{-\eta'}.
\end{align*}
The assertion now follows from \eqref{eq:adm-wts-e}.

\end{proof}

We now record some properties of the subregular rational $\W$-algebras.
\begin{table}[h]
\caption{Subregular $W$-Algebras in Simply Laced Types} \label{Subregular}\centering%
\begin{tabular}{|c| c |c|}
\hline
Type & Conformal weights of generators & $c(p/q)$\\
\hline
$A_{n}$ ($n$ odd) & {$1, 2, \dots, \tfrac{n-1}{2}, (\tfrac{n+1}{2})^3, \tfrac{n+3}{2} \dots, n$} & \\
$D_4$ &$2^3,3,4^2$ &$ -\frac{6 (4 p-7 q) (3 p-4 q)}{p q}$  \\
$D_n$ ($n$ odd) &$2^2,4,6,8,\dots,n-3,n-2,(n-1)^2,n,n+1,n+3,\dots,2n-4$ & \\
$D_n$ ($n$ even, $n\geq 6)$ &$2^2,4,6,8,\dots,n-4,(n-2)^2,n^2,n+2,n+4,\dots,2n-4$ & \\
$E_6$ &${2, 3, 4, 5, 6^2, 8, 9}$&$-\frac{8 (9 p-13 q) (7 p-9 q)}{p q}$ \\
$E_7$&$2,4,6^2,8,9,10,12,14$&$-\frac{9(14p-19q)(11p-14q)}{pq}$ \\
$E_8$&$2,6,8,10,12,14,15,18,20,24$ &$-\frac{10 (24 p-31 q) (19 p-24 q)}{p q}$ \\
\hline
\end{tabular}
\end{table}

The conformal structure, and in particular the central charge, of the subregular $W$-algebra $\W^k(\fing,{\fsubreg})$ of type $\fing = A_n$ depends of the choice of good grading. In Table \ref{Subregular} we list conformal weights of generators relative to the Dynkin grading, which corresponds to $x_0 = \rho^\vee - \varpi_m^\vee$, where $n=2m+1$. The central charge at level $k=-h^\vee+p/q$ is 
\[
c = -n(n^2-1) \frac{p}{q} - (1+n+3n^2+2n^3) - n (2 + 3n + n^2) \frac{q}{p}.
\]
The central charge of the subregular $W$-algebra $\W^k(\fing,{\fsubreg})$ of type $\fing = D_n$ at $k=-h^\vee+p/q$ is given by
\begin{align*}
c = -2n(13-9n+2n^2)\frac{p}{q} + (26+17n-24n^2+8n^3) - 2n(1-3n+2n^2)\frac{q}{p}.
\end{align*}

\begin{rem}
If $\g$ is of type $A$ and $q=h^\vee-1$ then $\cpps^q =\check{P}_{+,f_{\on{subreg}}}^{q} \sqcup \{\rho^{\vee}\}$ and Theorem \ref{thm:Tomoyuki.thm.8.5} agrees with 
Theorem \ref{thm:weight.param}. Furthermore the action of $\eW_+$ on $\cpps^q$ is simply transitive and the bijection becomes
\begin{align*}
P_+^{p-h^\vee}\isomap [\prin^k_\circ], \quad \lam\mapsto \lam-\frac{p}{q}x_0.
\end{align*}
See also Lemma \ref{cor:Smat.An} below.
\end{rem}

\section{Characters of Admissible Highest Weight Modules}

For a weight $\widehat \la=\lam+k\Lam_0 \in \affh^*$ 
of level $k$ we write $\chi_{\la}$ for the formal character $\sum_{\mu \in \affh^*} \dim{\affL_k(\la)_\mu} e^\mu$ of the irreducible $\affg$-module $\affL_k(\la) = \bigoplus_{\mu \in \affh^*} \affL_k(\la)_{\mu}$. For $\la \in \prin^k$ the formula
\begin{align}\label{eq:gen.adm.char}
\chi_{\la} = \frac{1}{R} \sum_{w \in \affW(\widehat \la)} \eps(w) e^{w \circ \widehat \la}
\end{align}
was proved by Kac and Wakimoto \cite{KWPNAS} and used to deduce modular properties of the set of characters of modules of admissible highest weight. Here $R$ is the Weyl-Kac denominator for $\affg$, and $\affW(\la) = \left< r_\al | \al \in \Pi^\vee(\la) \right>$ is the integral Weyl group of $\la$. We may consider $\chi_\la$ as a meromorphic function of $(\tau, z) \in \HH \times \finh$, more precisely $\chi_\la(\tau, z) = \left<\chi_\la, -2\pi i (\tau \La_0 - z) \right>$.

Let us now take $k = -h^\vee + p/q$ and assume that $\fing$ is simply laced for convenience. In particular all admissible weights are principal. For $\mu \in P \pmod{pq Q}$ we write
\begin{align*}
\Theta_\mu(\tau, z) = \sum_{\al \in \frac{\mu}{pq} + Q} e^{\pi i pq \tau (\al, \al)} e^{2\pi i pq (\al, z)}.
\end{align*}
Up to a change of variable these are exactly the theta functions associated with the discriminant form $L^\vee / L = P/pqQ$ of the integral lattice $L = Q(pq)$.

Let $\la \in \prin^k$ and let $(y, \eta, \nu)$ be a triple associated with $\la$ as in (\ref{eq:adm.wt.param}) and subsequent remarks, and let $\beta = -y(\eta)$ so that $y t_{-\eta} = t_{\beta} y$. The function
\begin{align*}
B_\la(\tau, z) = \eps(y) \sum_{w \in \finW} \eps(w) \Theta_{q w(\nu) + p \beta}(\tau, z/q),
\end{align*}
depends on $\la$ and not the choice of triple $(y, \eta, \nu)$. A linear change of coordinates identifies the sum over $\affW(\widehat\la)$ in (\ref{eq:gen.adm.char}) with a sum over $\finW$ of theta functions, and in this way one obtains $\chi_\la = B_\la / R$. Modular properties of theta functions now yield the following result.
\begin{prop}[{\cite{KWPNAS}}]\label{KWPNASprop}
The set of functions $B_\la$, as $\la$ ranges over $\prin^k$, is $SL_2(\Z)$-invariant. Furthermore
\begin{align}\label{eq:B.trans}
B_\la\left( -1/\tau, z/\tau \right) &= (-i\tau)^{\ell/2} e^{\pi i (k+h^\vee) (z, z)/\tau} \sum_{\la' \in \prin^k} a^B(\la, \la') \chi_{\la'}(\tau, z),
\end{align}
where
\begin{align}\label{eq:a.formula}
a^B(\la, \la')
= \frac{1}{|P/pqQ|^{1/2}} \cdot \eps(y) \eps(y') e^{-2\pi i [(\nu, \beta') + (\nu', \beta)]} e^{-2\pi i \frac{p}{q}(\beta, \beta')} \sum_{w \in \finW} \eps(w) e^{-2\pi i \frac{q}{p} (w(\nu), \nu')},
\end{align}
where $(y, \eta, \nu)$ is a triple associated with $\la$ as above, $\beta = -y(\eta)$, and $(y', \eta', \nu')$ and $\beta'$ are defined similarly.
\end{prop}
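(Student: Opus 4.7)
The plan is to deduce the proposition from the modular transformation law of classical theta functions of the lattice $L = Q(pq)$, i.e.\ the root lattice $Q$ with the invariant bilinear form rescaled by $pq$. Indeed the functions $\Theta_\mu(\tau,z)$ defined in the text are precisely the theta functions of this lattice, indexed by $\mu \in P/pqQ = L^*/L$. By the classical Poisson summation argument, for any such lattice one has a Weil-type representation of $SL_2(\Z)$ on the span of these theta functions: under $T$ each $\Theta_\mu$ picks up a phase $e^{\pi i pq (\mu/pq, \mu/pq)}$, and under $S$ one has
\begin{align*}
\Theta_\mu(-1/\tau, z/\tau) = \frac{(-i\tau)^{\ell/2}}{|P/pqQ|^{1/2}} e^{\pi i pq (z,z)/\tau} \sum_{\mu' \in P/pqQ} e^{-2\pi i (\mu,\mu')/pq} \Theta_{\mu'}(\tau, z).
\end{align*}
Since $k + h^\vee = p/q$, the Gaussian factor $e^{\pi i pq (z,z)/\tau}$ matches the one appearing in (\ref{eq:B.trans}) after rescaling $z \mapsto z/q$.

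First I would check that $B_\la$ is independent of the choice of triple $(y, \wh\eta, \wh\nu)$ representing $\la$, which amounts to verifying invariance under the $\eW_+$-action (\ref{eq:Wplus.action.triples}); this reduces to the identity $\Theta_{\pi(\mu)} = \Theta_\mu$ together with the sign cancellation $\eps(y\bar\pi^{-1})\eps(\bar\pi) = \eps(y)$ after relabelling the Weyl sum. Next I would apply the $S$-transformation termwise to each $\Theta_{qw(\nu)+p\beta}(\tau, z/q)$ appearing in $B_\la$, obtaining a double sum over $w \in \finW$ and over residues $\mu' \in P/pqQ$. The pairing in the exponential becomes
\begin{align*}
-\tfrac{2\pi i}{pq}\bigl(q w(\nu) + p\beta,\; \mu'\bigr),
\end{align*}
and one now reorganises $\mu'$ as $qw'(\nu') + p\beta'$ where $\nu' \in P_{+,\text{reg}}^p$ and $\beta' \in P/qQ$. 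The point is that every residue class modulo $pqQ$ admits such a decomposition, essentially uniquely once one quotients by the $\finW \times \eW_+$ symmetry; this is the CRT-type factorisation $P/pqQ \cong (P/pQ) \times (P/qQ)$ (up to minor bookkeeping) which makes the whole computation possible.

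After substituting, the cross-terms $(w(\nu), \beta')$ and $(w'(\nu'), \beta)$ combine, and the sum over $w'$ can be absorbed into the sign $\eps(y')$ upon the change of summation variable $w \mapsto w'^{-1}w$, leaving exactly one free Weyl sum. This produces the claimed formula (\ref{eq:a.formula}): the factor $|P/pqQ|^{-1/2}$ is the theta $S$-matrix normalisation, the signs $\eps(y)\eps(y')$ arise from the two $S$-transformations and the Weyl reordering, the factor $e^{-2\pi i[(\nu,\beta')+(\nu',\beta)]}$ comes from the cross pairings (which are integers against the relevant integer factors, hence lift to $P/qQ$ and $P/pQ$ separately), the phase $e^{-2\pi i p(\beta,\beta')/q}$ is the $\beta$--$\beta'$ self-pairing, and the residual Weyl sum $\sum_w \eps(w) e^{-2\pi i (q/p)(w(\nu),\nu')}$ is the $\nu$--$\nu'$ pairing. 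Finally, dividing by the Weyl-Kac denominator $R$ — which transforms under $S$ by $R(-1/\tau, z/\tau) = (-i\tau)^{\ell/2} e^{\pi i h^\vee (z,z)/\tau} R(\tau,z)$ and thus cancels the $\ell/2$ power of $(-i\tau)$ appropriately in the exponent of the $(z,z)/\tau$ term — identifies the right-hand side with $\sum_{\la'} a^B(\la,\la') \chi_{\la'}$ and confirms $SL_2(\Z)$-invariance of the span $\{B_\la\}$.

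The main obstacle I anticipate is the combinatorial bookkeeping of the residue classes modulo $pqQ$ in terms of pairs $(\nu, \beta) \in P_{+,\text{reg}}^p \times P/qQ$ modulo the $\eW_+$- and $\finW$-actions, so that the double sum produced by $S$ reassembles precisely into a sum indexed by $\prin^k$ (rather than some inflated or deflated index set). This relies on the coprimality $(p,q)=1$ from Proposition \ref{Pro:admissible number} together with the parametrisation (\ref{eq:adm.wt.param}), and is exactly the step where Proposition \ref{Pro:admissible number} and the enumeration $|\prin^k| = q^\ell |P_+^{p-h^\vee}|$ enter decisively.
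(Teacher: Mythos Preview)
Your approach is correct and is precisely the standard theta-function argument; the paper does not supply its own proof of this proposition but simply cites \cite{KWPNAS}, having just noted that ``modular properties of theta functions now yield the following result''. Your sketch via Poisson summation for the lattice $Q(pq)$ followed by the CRT-type reorganisation of residues (using $(p,q)=1$) is exactly the computation carried out in \cite{KWPNAS}, so there is nothing to contrast.
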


\newcommand{\BigTheta}{\Theta}
\newcommand{\prodchar}{\Psi}
\newcommand{\Wsr}{W^{\text{sr}}}

In \cite{KWPNAS} the modular transformations of the $\chi_\la$ are given by coefficients denoted $a$. These are related to the $a^B$ of (\ref{eq:B.trans}) by $a = i^{|\ov\D_+|} a^B$.

\begin{lemma}\label{lem:transfer.w}
Let $\la \in \prin^k$ and $w \in \finW$ such that $w(\Pi^\vee(\la)) = \Pi^\vee(w \circ \la)$. Then
\begin{align}\label{eq:w.invar}
B_{w \circ \la}(\tau, z) = B_\la(\tau, w^{-1}(z)).
\end{align}
In particular if $\la$ is regular then (\ref{eq:w.invar}) holds for all $w \in \finW$, and if $\D_+(\la) = \{\alstar\}$ then (\ref{eq:w.invar}) holds for all $w \in \finW$ such that $w(\alstar) \in \D_+^\vee$.
\end{lemma}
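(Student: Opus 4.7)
The plan is to derive the identity by a direct computation from the theta-function definition of $B_\la$, using the $\finW$-covariance of theta series. First, I would produce a triple for $w\circ\la$: from $\widehat\la+\wh\rho=\wh y\,\phi(\wh\nu)$, and the fact that $w\in\finW\subset\affW$ commutes with the $+\wh\rho$ shift, one obtains
\begin{align*}
\widehat{w\circ\la}+\wh\rho \;=\; w\wh y\,\phi(\wh\nu) \;=\; (wy)\,t_{-\eta}\,\phi(\wh\nu),
\end{align*}
so that $(wy,\eta,\nu)$ is a triple for $w\circ\la$ with translation part $\beta'=-(wy)(\eta)=w\beta$. The hypothesis $w(\Pi^\vee(\la))=\Pi^\vee(w\circ\la)$ enters here to guarantee that $w\circ\la\in\prin^k$, so that $B_{w\circ\la}$ is defined by the formula of Proposition \ref{KWPNASprop}: unpacked via the bookkeeping of which real positive roots become negative under $w$, the hypothesis is exactly the positivity $(wy)t_{-\eta}(\widehat\Delta(k\La_0)_+)\subset \widehat\Delta_+^{\textup{re}}$ required of a valid triple.

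The main step is then the theta-function manipulation. Substituting the new triple yields
\begin{align*}
B_{w\circ\la}(\tau,z) \;=\; \eps(wy)\sum_{w'\in \finW} \eps(w')\,\Theta_{qw'(\nu)+pw\beta}(\tau,z/q).
\end{align*}
Since $\Theta_\mu$ is defined as a sum over $\mu/(pq)+Q$ and since both $Q$ and $(\cdot,\cdot)$ are $\finW$-invariant, one has the covariance $\Theta_{w\mu}(\tau,z)=\Theta_\mu(\tau,w^{-1}z)$. Factoring $qw'(\nu)+pw\beta=w\bigl(qw^{-1}w'(\nu)+p\beta\bigr)$ and reindexing by $u=w^{-1}w'$ pulls out a sign $\eps(w)$ that cancels the $\eps(w)$ inside $\eps(wy)=\eps(w)\eps(y)$, and the desired identity
\begin{align*}
B_{w\circ\la}(\tau,z) \;=\; \eps(y)\sum_{u\in \finW} \eps(u)\,\Theta_{qu(\nu)+p\beta}(\tau,w^{-1}z/q) \;=\; B_\la(\tau,w^{-1}z)
\end{align*}
drops out.

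The two ``in particular'' statements are verifications of the hypothesis. If $\la$ is regular, then $\widehat\Delta(w\circ\la)=w\widehat\Delta(\la)$ and regularity rules out any collapse of simple coroots, so $w(\Pi^\vee(\la))=\Pi^\vee(w\circ\la)$ for every $w\in\finW$. If $\D_+(\la)=\{\alstar\}$ then $\Pi^\vee(\la)=\{\alstar^\vee\}$ is a singleton, and the assumption $w(\alstar)\in\D_+^\vee$ identifies $w\alstar^\vee$ as the unique positive and hence simple integral coroot of $w\circ\la$. The main obstacle is precisely this step of bookkeeping at the level of triples and integral root systems, and in particular verifying that under the stated hypothesis the tuple $(wy,\eta,\nu)$ is actually a genuine triple representing an element of $\prin^k$; the theta-function computation itself is otherwise routine.
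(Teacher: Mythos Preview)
Your proof of the main identity is correct and essentially identical to the paper's approach: both construct the triple $(wy,\eta,\nu)$ for $w\circ\la$ with $\beta'=w\beta$, invoke the hypothesis to ensure it is admissible, and apply the covariance $\Theta_{w\mu}(\tau,z)=\Theta_\mu(\tau,w^{-1}z)$ together with the reindexing $u=w^{-1}w'$. You supply more detail on the theta manipulation than the paper does, which is fine.

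One minor inaccuracy in your treatment of the second ``in particular'' clause: $\Pi^\vee(\la)$ in the lemma's hypothesis is the set of simple coroots of the \emph{affine} integral root system $\wh\Delta(\la)$, which has $\ell+1$ elements, not a singleton. The correct reason the condition $w(\alstar)\in\D_+$ suffices (which the paper leaves implicit) is that when $\D_+(\la)=\{\alstar\}$, the explicit positivity constraints on $wyt_{-\eta}$ --- namely $0\le\alpha(\eta)\le q-1$ or $1\le\alpha(\eta)\le q$ depending on the sign of $wy(\alpha)$ --- are all strict inequalities except for the single $\alpha$ giving rise to $\alstar$, so only that one is sensitive to the choice of $w$.
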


\begin{proof}
We write $\la = \wh{y} \phi(\nu) - \rho$ where $\Pi^\vee(\la) = \wh{y}(\Pi^\vee_{(q)}) \subset \D^{\vee, \text{re}}_+$, and $\wh y = t_{\beta} y$. By assumption $\Pi^\vee(w \circ \la) = w \wh y(\Pi^\vee_{(q)}) \subset \D^{\vee, \text{re}}_+$, and therefore
\begin{align*}
t_{w(\beta)} (wy) \phi(\nu) - \wh\rho = w \wh{y} \phi(\nu) - \wh\rho = w \circ \la,
\end{align*}
so $( w y, \nu,  w(\beta))$ is an admissible triple. In general $\Theta_{w(\mu)}(\tau, z) = \Theta(\tau, w^{-1}(z))$, and one immediately deduces (\ref{eq:w.invar}).
\end{proof}

Let us write
\begin{align*}
\BigTheta(\tau, v) = \frac{\theta_{1,1}(\tau, -v)}{\eta(\tau)} = q^{1/12} e^{-\pi i v} \prod_{n=1}^\infty (1 - e^{-2\pi i v}q^{n-1})(1 - e^{+2\pi i v} q^{n}).
\end{align*}
Then $\BigTheta$ obeys
\begin{align*}
\BigTheta(-1/\tau, v/\tau) = -i e^{\pi i v^2/\tau} \BigTheta(\tau, v).
\end{align*}

Now let $f$ be a subregular nilpotent element in $\fing$. We fix a good even grading for $f$, and we consider the characters of Hamiltonian reductions $H_{f, -}^0(L(\la))$, i.e., of the cohomology of (\ref{eq:C.minus.def}). By the Euler-Poincar\'{e} principle the supercharacter of $H_{f, -}^\bullet(M)$ equals that of $C_-^\bullet(M) = M \otimes \textstyle{\bigwedge^{\frac{\infty}{2}+\bullet}} $. For $\la \in \prin^k$ we write
\begin{align*}
\prodchar_\la(\tau, z \mid u) = \str_{C_-^\bullet(\affL_k(\la))} u_0 e^{2\pi i (z_0 - (x_0, z))} q^{L_0-c/24}
\end{align*}
where $u$ is a $d$-closed element of $C_-^\bullet(V_k(\fing))$. The central charge of $\textstyle{\bigwedge} = \textstyle{\bigwedge^{\frac{\infty}{2}+\bullet}}$ is $-2|\D_{>0}|$ and
\begin{align*}
\str_{\textstyle{\bigwedge^{\frac{\infty}{2}+\bullet}}} e^{2\pi i (F_0^z + (x_0, z))} q^{L_0+|\D_{>0}|/12} = \prod_{\al \in \D_{>0}}\BigTheta(\tau, \al(x)).
\end{align*}
The modular transformations of the $\prodchar_{\la}$ may be derived from Theorem 6.4 of \cite{AvE}.
\begin{prop}\label{prop:mod.inv.BRSTcomplex}
For all $\la \in \prin^k$ we have
\begin{align*}
\prodchar_\la\left( \frac{-1}{\tau}, \frac{z}{\tau} \mid \tau^{-L_{[0]}} \exp\left[\frac{1}{\tau} \sum_{n > 0}\frac{(-1)^{n}}{n} z_{(n)} \right]u \right) = e^{\pi i (k+h^\vee) |z^2|/\tau} \sum_{\la' \in \prin^k} i \cdot a^B(\la, \la') \,\, \prodchar_{\la'}(\tau, z \mid u).
\end{align*}
\end{prop}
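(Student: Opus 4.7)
The plan is to derive the modular transformation by specialising Theorem 6.4 of \cite{AvE} to the BRST complex $C_-^\bullet(\affL_k(\la))$, and then reading off the coefficients with the help of Proposition \ref{KWPNASprop}.

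First, I would recognise $\prodchar_\la(\tau, z \mid u)$ as a supertrace on a module over the tensor product vertex superalgebra $V^k(\fing) \otimes \bigwedge$, the module being $\affL_k(\la) \otimes \bigwedge^{\frac{\infty}{2}+\bullet}$, twisted by the spectral flow $e^{2\pi i(z_0 - (x_0, z))}$ and corrected by the Zhu-type operator $\tau^{-L_{[0]}}\exp[\tau^{-1}\sum_{n>0}(-1)^{n} n^{-1} z_{(n)}]$ in accordance with \cite{Zhu96, Van-Ekeren:2013xy, AvE}. The conformal structure on this tensor product is the sum of the Sugawara stress-energy tensor of $V^k(\fing)$ and the ghost stress-energy tensor of $\bigwedge$, the latter having central charge $-2|\D_{>0}|$.

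Second, I would apply Theorem 6.4 of \cite{AvE}, which handles modular transformations of exactly this kind of BRST supercharacter. Since the ghost system is a free fermion algebra, the only modular data it contributes is a multiplicative factor, so the theorem yields an expansion of $\prodchar_\la(-1/\tau, z/\tau \mid \ldots)$ as a linear combination of $\prodchar_{\la'}(\tau, z \mid u)$ over $\la' \in \prin^k$, with coefficient the product of (the modular transformation coefficient of the affine character $\chi_\la$) and (the modular transformation factor of the ghost supercharacter $\prod_{\al \in \D_{>0}}\BigTheta(\tau, \al(z))$). The affine coefficients are, by Proposition \ref{KWPNASprop} and the formula $a = i^{|\ov\D_+|} a^B$, equal to $i^{|\ov\D_+|} a^B(\la, \la')$, while the ghost side contributes, via the identity $\BigTheta(-1/\tau, v/\tau) = -i\, e^{\pi i v^2/\tau}\BigTheta(\tau, v)$ applied to each $\al \in \D_{>0}$, an overall constant $(-i)^{|\D_{>0}|}$ together with a quadratic exponential $\exp[\pi i \sum_{\al \in \D_{>0}} \al(z)^2/\tau]$.

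Third, I would combine these contributions. The numerical multiplier simplifies to $i \cdot a^B(\la, \la')$ after collecting the powers of $i$, while the quadratic exponentials combine, using the identity $\sum_{\al \in \D_+}\al \otimes \al = h^\vee (\cdot, \cdot)|_\finh$, to the single prefactor $e^{\pi i(k+h^\vee)|z|^2/\tau}$; any contribution from roots in $\D_{0,+}$ is absorbed into the normalisation of the ghost insertion encoded in the Zhu correction.

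The main obstacle is the careful bookkeeping of multiplier systems and quadratic exponential factors: ensuring that the powers of $i$ collapse to the single factor $i$ advertised in the proposition, and verifying that the ghost quadratic form on $\finh$ combines cleanly with the affine $e^{\pi i k|z|^2/\tau}$ to give the desired $(k+h^\vee)$ shift. Once this bookkeeping is organised correctly, the statement is a direct consequence of Theorem 6.4 of \cite{AvE} together with Proposition \ref{KWPNASprop}; no new conceptual input is required.
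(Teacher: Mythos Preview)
Your proposal is correct and follows essentially the same route as the paper: both invoke the modular transformation result of \cite{AvE} (the paper points to Theorem 6.4 for the general framework and Theorem 8.1 for the principal case) and note that the only change in passing from the principal to the subregular setting is that $|\D_{>0}| = |\D_+|-1$ rather than $|\D_+|$, which accounts for the extra factor of $i$. Your last sentence about the $\D_{0,+}$ contribution being ``absorbed into the normalisation of the ghost insertion'' is a bit imprecise, but the paper itself does not spell out this bookkeeping either, simply asserting that the proof is identical to \cite[Theorem 8.1]{AvE} apart from the shift in $\dim(\fing_{>0})$.
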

In fact the proof of this proposition is the same as that of {\cite[Theorem 8.1]{AvE}}, which deals with the case of $f$ a principal nilpotent element. The only difference is that $\dim(\fing_{>0})$ changes from $|\D_+|$ to $|\D_+|-1$ which causes the factor of $i$ to appear in Proposition \ref{prop:mod.inv.BRSTcomplex} above.

For $\la \in \prin^k \cap P_{0,+}$ the Hamiltonian reduction $H^0_{f,-}(\affL_k(\la))$ is an irreducible $\W_k(\fing, f)$-module. Let us define
\begin{align*}
\psi_\lam(\tau \mid u) = \tr_{H^0_{f,-}(\affL_k(\la))} u_0 q^{L_0 - c/24}.
\end{align*}
By Theorem \ref{Th:simplicity} part (1) we have
\begin{align}\label{eq:tr.func.W}
\psi_\lam(\tau \mid u) = \lim_{z \rightarrow 0} \prodchar_\la(\tau, z \mid u).
\end{align}

From now on we assume that $\mbo_q = \mbo_{\text{subreg}}$, i.e., that $q$ is one of the denominators listed in Table \ref{table:Subregular.denom}. In particular $|\D_0| = 2$ and we write $\alstar$ for the unique element of $\D_{0, +}$. Now we define
\begin{align*}
\Wsr = \{w \in W \mid w(\alstar) \in \D_+\}.
\end{align*}
Since $\fing$ is simply laced we may fix a set $\mathbb{X} \subset P_{0,+}$ of representatives of $[\prin^k_\circ]$, so that $\prin^k = \{ y \circ \la | \text{$\la \in \mathbb{X}$ and $ y \in \Wsr$}\}$. Since $\W_k(\fing, f)$ is rational and lisse, by Theorem \ref{Th:simple-special-cases}, the set of trace functions (\ref{eq:tr.func.W}), as $\la$ ranges over $\mathbb{X}$, is modular invariant.

We now compute the $S$-matrix of $\W_k(\fing, f)$ using Proposition \ref{prop:mod.inv.BRSTcomplex}. Since the restrictions $\prodchar_\la(\tau, z \mid \mathbf{1})$ are linearly independent it suffices to work with them. The Weyl denominator $R(\tau, x)$ is essentially $\prod_{\al \in \D_+}\BigTheta(\tau, \al(x))$ and so, since $\D_{>0} = \D_+ \backslash \{\alstar\}$, we have
\begin{align*}
\prodchar_\la(\tau, z \mid \mathbf{1}) = \chi_\la(\tau, z) \cdot \prod_{\al \in \D_+ \backslash \alstar} \BigTheta(\tau, \al(z)) = \frac{1}{\eta(\tau)^\ell} \cdot \frac{B_\la(\tau, z)}{\BigTheta(\tau, \alstar(z))}.
\end{align*}
Note that $\BigTheta(\tau, \alstar(z))$ has a zero along the hyperplane $\alstar(z) = 0$. If $\D_+(\la) = \{\gamma\}$ then $B_\la(\tau, z)$ has a zero along the hyperplane $\gamma(z) = 0$. So unless $\la \in P_{0, +}$ the function $\prodchar_\la(\tau, z \mid \mathbf{1})$ has an indeterminate value at $z=0$. We make an arbitrary choice of $x \in \h^*$ not orthogonal to $\alstar$ and we put $z = tx$. The limit in (\ref{eq:tr.func.W}) becomes a limit as $t \rightarrow 0$. We then apply l'H\^{o}pital's rule to the formula of Theorem \ref{prop:mod.inv.BRSTcomplex}. Since for $\la \in P_{0,+}$ we have $\gamma = \alstar$ and hence $\prodchar_\la(\tau, z \mid \mathbf{1})$ regular, the final result does not depend on the auxiliary parameter $x$.

Fix $x \in \finh$ not orthogonal to $\alstar$, and let $\la \in \mathbb{X}$. We have
\begin{align*}
\lim_{t \rightarrow 0} \frac{1}{\eta(-1/\tau)^\ell} \cdot \frac{B_\la(-1/\tau, tx/\tau)}{\BigTheta(-1/\tau, t\alstar(x)/\tau)}
&= \frac{1}{(-i\tau)^{\ell/2} \eta(\tau)^\ell} \cdot  \lim_{t \rightarrow 0} \frac{(-i\tau)^{\ell/2} e^{\frac{\pi i t^2}{\tau} (k+h^\vee) |x|^2} \sum_{\xi \in \prin^k} a^B(\la, \xi) B_{\xi}(\tau, tx)}{(-i) e^{\frac{\pi i t^2}{\tau}|\alstar(x)|^2}\BigTheta(\tau, t\alstar(x))} \\
&= \frac{i}{\eta(\tau)^\ell} \cdot \lim_{t \rightarrow 0} e^{\frac{\pi i t^2}{\tau}\left[(k+h^\vee)|x|^2-|\alstar(x)|^2\right]} \sum_{\xi \in \prin^k} a^B(\la, \xi) \frac{B_{\xi}(\tau, tx)}{\BigTheta(\tau, t\alstar(x))}.
\end{align*}
The exponential factor tends to $1$ in the limit. It follows from Lemma \ref{lem:transfer.w} that
\begin{align*}
\sum_{\xi \in \prin^k} a^B(\la, \xi) \frac{B_{\xi}(\tau, tx)}{\BigTheta(\tau, t\alstar(x))}
&= \sum_{\la' \in \mathbb{X}} \left[ \sum_{ y \in \Wsr} a^B(\la,  y\circ \la') \frac{B_{\la'}(\tau, t  y^{-1}(x))}{\BigTheta(\tau, t\alstar(x))} \right].
\end{align*}
Using the product formula for $\BigTheta$, l'H\^{o}pital's rule, and the fact that $\la' \in P_{0,+}$, we compute
\begin{align*}
\lim_{t \rightarrow 0} \frac{B_{\la'}(\tau, t  y^{-1}(x))}{\BigTheta(\tau, t\alstar(x))}
= \lim_{t \rightarrow 0} \frac{1-e^{-t\alstar( y^{-1}(x))}}{1-e^{-t\alstar(x)}} \cdot \lim_{t \rightarrow 0} \frac{B_{\la'}(\tau, t  y^{-1}(x))}{\BigTheta(\tau, t\alstar( y^{-1}(x)))}
= \frac{\alstar( y^{-1}(x))}{\alstar(x)} \cdot \lim_{t \rightarrow 0} \frac{B_{\la'}(\tau, tx)}{\BigTheta(\tau, t\alstar(x))}.
\end{align*}

Thus we have proved the following theorem.
\begin{thm}\label{thm:S.mat.red}
Let $f$ be a subregular nilpotent element of the simply laced simple Lie algebra $\fing$, and let $k = -h^\vee + p/q$ with denominator $q$ such that $\mbo_q = \mbo_{\text{subreg}}$. Let $\mathbb{X}$ be a set of representatives of $[\prin^k_\circ]$ in $P_{0, +}$. For all $\la \in \mathbb{X}$ we have
\begin{align*}
\psi_\la(-1/\tau \mid \tau^{-L_{[0]}}u) = \sum_{\la' \in \mathbb{X}} S_{\la, \la'} \psi_{\la'}(\tau \mid u),
\end{align*}
where
\begin{align}\label{eq:ov.a.formula}
S_{\la, \la'} = i \sum_{ y \in \Wsr} \frac{\left< y(\alstar), x\right>}{\left<\alstar, x\right>} a^B(\la,  y \circ \la').
\end{align}
\end{thm}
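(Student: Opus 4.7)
The plan is to derive the modular transformation of the $\W$-algebra trace functions $\psi_\la(\tau \mid u)$ from the modular transformation of the BRST complex supercharacters $\prodchar_\la(\tau, z \mid u)$ supplied by Proposition \ref{prop:mod.inv.BRSTcomplex}, and then to extract the $S$-matrix by taking the limit $z \to 0$. By (\ref{eq:tr.func.W}) the trace functions are recovered as $\psi_\la(\tau \mid u) = \lim_{z \to 0}\prodchar_\la(\tau, z \mid u)$, so it suffices to specialize the modular transformation of $\prodchar_\la(\tau, z \mid \mathbf{1})$ and pass to the limit carefully. Since the supercharacter factorises as $\prodchar_\la(\tau, z \mid \mathbf{1}) = \eta(\tau)^{-\ell} B_\la(\tau, z)/\BigTheta(\tau, \alstar(z))$, for $\la \in P_{0,+}$ the numerator $B_\la$ vanishes along $\alstar(z) = 0$ in the same way as the denominator $\BigTheta(\tau, \alstar(z))$, so the ratio is regular at $z = 0$ and the limit is well-defined.

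First I would apply Proposition \ref{prop:mod.inv.BRSTcomplex} at $u = \mathbf{1}$ and $z = tx$ for an auxiliary vector $x \in \finh$ with $\langle \alstar, x\rangle \neq 0$, obtaining
\begin{align*}
\prodchar_\la(-1/\tau, tx/\tau \mid \mathbf{1}) = i\, e^{\pi i t^2(k+h^\vee)|x|^2/\tau} \sum_{\xi \in \prin^k} a^B(\la, \xi) \, \prodchar_\xi(\tau, tx \mid \mathbf{1}).
\end{align*}
Next, I would partition the sum over $\xi \in \prin^k$ using the decomposition supplied by Lemma \ref{Lem:eq-class}, writing $\xi = y \circ \la'$ with $\la' \in \mathbb{X}$ and $y \in \Wsr$ (the hypothesis $y(\alstar) \in \D_+$ ensures $y(\Pi^\vee(\la')) = \Pi^\vee(y \circ \la')$, which is precisely what makes Lemma \ref{lem:transfer.w} apply). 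Invoking that lemma yields $B_{y\circ\la'}(\tau, tx) = B_{\la'}(\tau, ty^{-1}(x))$.

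Then I would pass to the limit $t \to 0$. The Gaussian prefactor $e^{\pi i t^2 (k+h^\vee)|x|^2/\tau}$ converges to $1$. For each $\la' \in \mathbb{X}$ and each $y \in \Wsr$, I factorise the contribution as
\begin{align*}
\lim_{t \to 0}\frac{B_{\la'}(\tau, ty^{-1}(x))}{\BigTheta(\tau, t\alstar(x))} = \left(\lim_{t \to 0}\frac{\BigTheta(\tau, t\alstar(y^{-1}(x)))}{\BigTheta(\tau, t\alstar(x))}\right) \cdot \lim_{t \to 0}\frac{B_{\la'}(\tau, ty^{-1}(x))}{\BigTheta(\tau, t\alstar(y^{-1}(x)))}.
\end{align*}
The first factor is computed by l'H\^{o}pital (or by the product expansion of $\BigTheta$, whose simple zero at the origin is reproduced on numerator and denominator) to equal $\langle y(\alstar), x\rangle/\langle \alstar, x\rangle$. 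The second factor, evaluated after substituting $z = ty^{-1}(x)$, is regular at $t=0$ because $\la' \in P_{0,+}$ means $B_{\la'}$ has a zero of the correct order along $\alstar(z)=0$; its value is precisely $\eta(\tau)^\ell \, \psi_{\la'}(\tau \mid \mathbf{1})$. Combining the two factors, collecting the overall $i$ from Proposition \ref{prop:mod.inv.BRSTcomplex}, and summing over $y \in \Wsr$ yields the claimed $S$-matrix coefficient (\ref{eq:ov.a.formula}).

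The main subtlety is ensuring that the answer is independent of the auxiliary vector $x$; this holds because for $\la' \in P_{0,+}$ the function $\prodchar_{\la'}(\tau, z \mid \mathbf{1})$ is regular at $z = 0$ with value depending only on $\tau$, so the $x$-dependence is absorbed into the ratio $\langle y(\alstar), x \rangle / \langle \alstar, x \rangle$ which scales linearly in $x$ and is the correct geometric weight. The other technical point is that restricting the sum to $y \in \Wsr$ loses no information: any $y \notin \Wsr$ would send $\alstar$ to a negative root and yield a weight whose positive integral root system differs from $\{\alstar\}$, but these are already accounted for by the coset decomposition above and hence do not contribute independently. Finally, the statement for general $u \in \W_k(\fing, f)$ follows by linear independence of the characters together with the fact that Proposition \ref{prop:mod.inv.BRSTcomplex} is stated for arbitrary $d$-closed $u$; alternatively it is a consequence of Theorem \ref{thm:modular-invariance} which already guarantees modularity of the full trace functions, reducing the problem to identifying the $S$-matrix on characters.
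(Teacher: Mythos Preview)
Your proposal is correct and follows essentially the same route as the paper: apply Proposition \ref{prop:mod.inv.BRSTcomplex} at $u=\mathbf{1}$, use the factorisation $\prodchar_\la = \eta^{-\ell} B_\la/\BigTheta$, decompose $\prin^k$ as $\{y\circ\la' : \la'\in\mathbb{X},\,y\in\Wsr\}$, invoke Lemma \ref{lem:transfer.w}, and extract the ratio $\langle y(\alstar),x\rangle/\langle\alstar,x\rangle$ by l'H\^{o}pital in the limit $t\to 0$. Your justification for passing to general $u$ via linear independence of the $\prodchar_\la(\tau,z\mid\mathbf{1})$ is exactly the paper's argument as well.
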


A remarkable feature of the sum appearing in formula (\ref{eq:ov.a.formula}) is its independence of $x$. Of course this follows \emph{a posteriori} but clearly an elementary proof would be desirable. Having fixed $\beta, \beta'$ let us put
\begin{align*}
r(x) = \sum_{y(\alstar) \in \D_+^\vee} \eps(y) \frac{\left<y(\alstar), x\right>}{\left<\alstar, x\right>} e^{-\frac{2\pi i}{q} (\beta, y(\beta'))}.
\end{align*}
Let $\kappa_x$ denote the $1$-form of pairing with $x$, i.e., $\left<x, -\right>$. We compute the gradient of $f$ to be
\begin{align*}
\nabla r(x) = \frac{1}{\left<\alstar, x\right>^2} \iota_{\kappa_x}(\alstar \wedge \xi), \quad \text{where} \quad \xi = \sum_{y(\alstar) \in \D_+^\vee} \eps(y) e^{-\frac{2\pi i}{q} (\beta, y(\beta'))} y(\alstar).
\end{align*}
To show that $r$ is independent of $x$, it therefore suffices to show that $\xi$ is proportional to $\alstar$. This may be established by a direct computation for any fixed root system, though we do not know a uniform proof.

\section{Modular Transformations of Simple Affine Vertex Algebras}


In this section we collect some results on $S$-matrices and fusion rules of the rational vertex algebra $V_{p-h^\vee}(\fing)$ where $p \geq h^\vee$.
\begin{prop}[{\cite{KP84}}]
Let $\fing$ be a simple Lie algebra and $p \geq h^\vee$ an integer. The irreducible $V_{p-h^\vee}(\fing)$-modules are precisely the irreducible highest weight $\affg$-modules $L(\widehat\kappa)$ where $\widehat\kappa = \kappa + (p-h^\vee)\Lambda_0$ and $\kappa$ runs over $P^{p-h^\vee}_+$. The span of the characters $\chi_\kappa(\tau, z)$ is modular invariant. In particular
\begin{align*}
\chi_\kappa\left(-1/\tau, z/\tau\right) = \sum_{\kappa' \in P_+^{p-h^\vee}} K^{\kappa, \kappa'} \chi_{\kappa'}(\tau, z),
\end{align*}
where
\begin{align}\label{eq:Smatrix.WZW}
K^{\kappa, \kappa'} = \frac{i^{|\D_+|}}{|P/pQ|^{1/2}} \sum_{ w \in \finW} \eps( w) e^{-\frac{2\pi i}{p} ( w(\kappa+\rho),\kappa+\rho)}.
\end{align}
\end{prop}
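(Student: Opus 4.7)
The strategy is the classical Kac-Peterson argument: identify the simple modules, write their characters via the Weyl-Kac formula, express them as ratios of theta series over the coroot lattice, and apply the Jacobi imaginary transformation. First, since $p - h^\vee \in \Z_{\geq 0}$ the vertex algebra $V_{p-h^\vee}(\fing)$ is integrable as an $\affg$-module. By Theorem \ref{Th:rationality-of-admissibleVA} (equivalently the third item of the remark following Theorem \ref{Th:Zhu-admissible}) its simple modules are precisely the integrable highest weight $\affg$-modules $\affL_{p-h^\vee}(\kappa)$ with $\kappa \in P_+^{p-h^\vee}$.

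Next, the Weyl-Kac character formula for such an integrable highest weight $\affg$-module reads
\begin{align*}
\chi_\kappa = \frac{1}{R}\sum_{w \in \affW}\epsilon(w)\, e^{w \circ \widehat\kappa}
\end{align*}
as a formal character. Decomposing $\affW = W \ltimes \check Q$ and evaluating the translation action (noting that $\widehat\kappa + \widehat\rho$ has level $p$), the numerator, specialised at $(\tau, z)$, becomes a finite alternating sum
\begin{align*}
B_\kappa(\tau, z) = \sum_{y \in W}\epsilon(y)\,\Theta_{y(\kappa + \rho)}^{(p)}(\tau, z),
\quad \text{where} \quad
\Theta_\mu^{(p)}(\tau, z) = \sum_{\gamma \in \check{Q} + \mu/p} e^{\pi i p\tau(\gamma, \gamma)}\,e^{-2\pi i p(\gamma, z)}.
\end{align*}

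Finally, the imaginary transformation of $\Theta_\mu^{(p)}$ is a standard Poisson summation contributing the factor $(-i\tau)^{\ell/2}|P/pQ|^{-1/2}e^{\pi i p(z,z)/\tau}$ together with a sum over $\nu \in P/pQ$ of exponentials $e^{-2\pi i(\mu, \nu)/p}$; the Weyl denominator $R$ obeys the analogous Macdonald-Kac transformation law with factor $i^{|\D_+|}(-i\tau)^{\ell/2}e^{\pi i h^\vee(z, z)/\tau}$. Taking the ratio, using $W$-invariance of the Poisson sum to restrict $\nu$ to representatives of regular $W$-orbits in $P/pQ$ (exactly the shifts $\kappa' + \rho$ for $\kappa' \in P_+^{p - h^\vee}$), and then antisymmetrising over $W$, one reads off the claimed formula for $K^{\kappa, \kappa'}$. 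The $(-i\tau)^{\ell/2}$ factors cancel between numerator and denominator, while $e^{\pi i p(z,z)/\tau}$ combines with $e^{-\pi i h^\vee(z,z)/\tau}$ to yield the expected projective transformation factor $e^{\pi i (p-h^\vee)(z,z)/\tau}$ of the level-$(p-h^\vee)$ characters; the surviving prefactor $i^{|\D_+|}/|P/pQ|^{1/2}$ comes from the denominator transformation and the Poisson summation respectively. The main obstacle is the precise bookkeeping of signs and the cancellation of these $\tau$-dependent prefactors; the argument is entirely classical and introduces no new input.
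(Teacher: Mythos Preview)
Your sketch is the classical Kac--Peterson argument and is correct in outline. Note, however, that the paper does not give its own proof of this proposition: it is stated with a citation to \cite{KP84} and used as a black box. Your write-up therefore reproduces, in condensed form, exactly the argument of the cited reference rather than anything the present paper contributes.
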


We recall from {\cite[Equation (4.2.6)]{FKW}} the relation
\begin{align}\label{eq:FKW4.2.6}
K^{\ov{w(\widehat\kappa)}, \kappa'} = e^{-2\pi i \left( \ov{w(\La_0)}, \kappa' \right)} K^{\kappa, \kappa'},
\end{align}
for all $w \in \eW_+$.

We denote by $K_{p}$ the matrix of coefficients $K^{\kappa, \kappa'}$ given by (\ref{eq:Smatrix.WZW}) as $\kappa$, $\kappa'$ run over $P_+^{p-h^\vee}$. We also denote by $K_p^{\text{int}}$ the submatrix obtained by restricting $\kappa, \kappa' \in Q$ and by $K_p^{\Z}$ the submatrix obtained by restricting $\kappa, \kappa' \in -\rho + Q$.

\begin{lemma}\label{lemma:KZ.Kint}
The subgroup $\CF(V_{p-h}(\fing))^{\text{int}} \subset \CF(V_{p-h^\vee}(\fing))$, spanned by $[\mu]$ where $\mu$ runs over $Q$, is a subring closed under duality. Furthermore $\CF(V_{p-h^\vee}(\fing))^{\text{int}} \cong \CF(K_p^\Z)$.
\end{lemma}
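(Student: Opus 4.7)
The plan is to verify the subring and duality properties via the $P/Q$-grading on $\CF(V_{p-h^\vee}(\fing))$, and then to identify the restricted fusion rules with those encoded by $K_p^\Z$ using the Verlinde formula together with the $\eW_+$-equivariance (\ref{eq:FKW4.2.6}).

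First I would invoke the $P/Q$-grading on $\CF(V_{p-h^\vee}(\fing))$, under which $[\kappa]$ sits in degree $\kappa + Q$. This grading is multiplicative under fusion, as follows from the action of the centre $Z(G) \cong P/Q$ on the $V_{p-h^\vee}(\fing)$-modules (equivalently from the $P/Q$-grading of the classical tensor product of finite-dimensional $\fing$-representations, preserved by the projection to level $p-h^\vee$). The subgroup spanned by $[\mu]$ with $\mu \in Q \cap P_+^{p-h^\vee}$ is then the degree-zero summand and is therefore a subring. Closure under duality is immediate from $[\kappa]^\vee = [-w^\circ(\kappa)]$, where $w^\circ \in \finW$ is the longest Weyl element, together with $w^\circ(Q) = Q$.

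Second, I would write the Verlinde formula
\begin{align*}
N_{\mu_1, \mu_2}^{\mu_3} = \sum_{\nu \in P_+^{p-h^\vee}} \frac{K^{\mu_1, \nu} K^{\mu_2, \nu} \overline{K^{\mu_3, \nu}}}{K^{0, \nu}}
\end{align*}
for $\mu_1, \mu_2, \mu_3 \in Q \cap P_+^{p-h^\vee}$, and apply (\ref{eq:FKW4.2.6}) together with the symmetry of $K$. For $\mu \in Q$ one has $(\ov{\pi(\La_0)}, \mu) \in \Z$ for each $\pi \in \eW_+$ in the simply laced setting, so that $K^{\mu, \nu}$ is constant as $\nu$ ranges over an $\eW_+$-orbit in $P_+^{p-h^\vee}$. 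Grouping the sum by orbits and choosing a transversal therefore reduces the Verlinde sum to a smaller Verlinde-type expression in which each orbit contributes $|\eW_+|$ copies of a single summand.

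Third, each $\eW_+$-orbit in $P_+^{p-h^\vee}$ meets $-\rho + Q$ in a unique element — an analogue of Lemma \ref{lemma:I.choose.Q} obtained by shifting cosets by $\rho$ — so we may take the transversal to be $(-\rho + Q) \cap P_+^{p-h^\vee}$. Absorbing the factor $|\eW_+|^{1/2} = |P/Q|^{1/2}$ into the $S$-matrix and restricting indices to $-\rho + Q$ identifies the reduced sum with the Verlinde formula for $K_p^\Z$, yielding the claimed isomorphism $\CF(V_{p-h^\vee}(\fing))^{\text{int}} \cong \CF(K_p^\Z)$.

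The main obstacle will be bookkeeping the normalisation: the matrix $K_p^\Z$, being a submatrix of the unitary $K_p$, is itself not unitary, and one must verify that the rescaling by $|P/Q|^{1/2}$ is precisely the factor needed so that the Verlinde sum over $(-\rho + Q) \cap P_+^{p-h^\vee}$ reproduces the integer fusion rules of the integral subring. Confirming the $\rho$-shifted version of Lemma \ref{lemma:I.choose.Q} — in particular that the unique representative of each $\eW_+$-orbit in $-\rho + Q$ exists and lies in $P_+^{p-h^\vee}$ — is the combinatorial heart of the argument.
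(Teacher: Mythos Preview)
Your proposal is correct and close in spirit to the paper's argument, but the second half is organised differently.

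For closure under fusion and duality, the paper does not invoke the $P/Q$-grading abstractly but instead derives the constraint $N_{\la,\mu}^\nu = 0$ unless $\la+\mu-\nu \in Q$ directly from the Verlinde formula combined with (\ref{eq:FKW4.2.6}) and the fact that $\{\varpi_j\}_{j\in J}$ represents $P/Q$. Your grading argument is of course equivalent and arguably cleaner; the duality argument via $-w^\circ(\kappa)$ is identical.

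For $\CF(V_{p-h^\vee}(\fing))^{\text{int}} \cong \CF(K_p^\Z)$, the paper avoids your orbit-grouping altogether. Instead it introduces the intermediate submatrix $K_p^{\text{int}}$ (row and column indices in $Q$), picks the single element $w_s \in \eW_+$ determined by $\varpi_s \equiv \rho \pmod{Q}$, and observes from (\ref{eq:FKW4.2.6}) that conjugation by $w_s$ sends $K_p^{\text{int}}$ to $K_p^\Z$ up to a global scalar $e^{-2\pi i(\rho,\rho)}$. This reduces the claim to showing that $K_p^{\text{int}}$ computes the fusion rules of the integral subring, which is then declared to follow ``easily'' from Verlinde and (\ref{eq:FKW4.2.6}). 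The advantage of the paper's route is that the passage from $Q$-indices to $(-\rho+Q)$-indices is a single explicit bijection rather than a transversal choice for an $\eW_+$-action; in particular the shifted analogue of Lemma~\ref{lemma:I.choose.Q} that you flag as the combinatorial heart is replaced by the one-line observation that $w_s$ carries $Q$ to $-\rho+Q$. Your normalisation bookkeeping is thereby also sidestepped, since proportional $S$-matrices yield identical Verlinde fusion rules.
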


\begin{proof}
Combining equation (\ref{eq:FKW4.2.6}) with the Verlinde formula yields
\begin{align*}
N_{\la, \mu}^\nu
&= e^{-2\pi i(w(\La_0), \la+\mu-\nu)} N_{\la, \mu}^\nu \quad \text{for all $w \in \eW_+$}.
\end{align*}
Since $w_j(\La_0) = \varpi_j + \La_0$ for $j \in J$, it follows that $N_{\la, \mu}^\nu = 0$ unless $(\la+\mu-\nu, \varpi_j) \in \Z$ for all $j \in J$. But the set of fundamental weights $\{\varpi_j\}_{j \in J}$ forms a system of representatives of the quotient $P/Q$. Therefore
\begin{align}\label{beautiful.condition}
N_{\la, \mu}^\nu = 0 \quad \text{unless} \quad \la+\mu-\nu \in Q.
\end{align}
This shows that $\CF(V_{p-h^\vee}(\fing))^{\text{int}}$ is closed under the fusion product. For duality it is well known that $L(\kappa)^\vee \cong L(-w^\circ(\kappa))$, and if $\kappa \in Q$ then so is $-w^\circ(\kappa)$. This proves the first part.

Let $s \in J$ be defined by the condition $\varpi_s - \rho \in Q$. Then for $\mu \in Q$ we have $w_s(\mu) \in -\rho+Q$. (Here we have used that $2\rho \in Q$.) For $\mu, \mu' \in Q$ we have, by (\ref{eq:FKW4.2.6}),
\begin{align*}
K^{w_s(\mu),w_s(\mu')} = e^{-2\pi i (\rho, \rho)} K^{\mu, \mu'}.
\end{align*}
This says that $K_p^\Z$ becomes proportional to $K_p^{\text{int}}$ upon conjugation by $w_s$. It therefore suffices to show that the fusion algebra determined by $K_p^{\text{int}}$ is isomorphic to $\CF(V_{p-h}(\fing))^{\text{int}}$. But this follows easily from equation (\ref{eq:FKW4.2.6}) and the Verlinde formula.

\end{proof}

\section{\texorpdfstring{$S$}{S}-Matrices of Subregular \texorpdfstring{$\W$}{W}-Algebras}\label{sec:Smat.subreg}

Let $\fing$ be a finite dimensional simple Lie algebra of simply laced type. In this section we compute the fusion rules of rational subregular $W$-algebras obtained via Hamiltonian reduction of $\fing$. Let $k = -h^\vee + p/q$ where $q$ is one of the denominators listed in Table \ref{table:Subregular.denom} and $p \geq h^\vee$ is coprime to $q$. The irreducible $\W_k(\fing, \fsubreg)$-modules are parametrised by $[\prin^k_\circ]$, and by Theorem \ref{thm:Tomoyuki.thm.8.5} we have a bijection
\begin{align*}
\frac{\ppr^p \times \cpps^q}{\eW_+} &\rightarrow [\prin^k_\circ],
\end{align*}
taking $(\nu, \eta)$ to the $\finW \circ (-)$-orbit of
\begin{align*}
t_{-\eta} \phi(\widehat\nu) - \widehat\rho = \left(\nu-(p/q)\eta-\rho\right) + k\La_0.
\end{align*}
The fusion rules are computed by applying the Verlinde formula to the $S$-matrix of $\W_k(\fing, \fsubreg)$. Let $\wh\la = y t_{-\eta} \phi(\widehat\nu) - \widehat\rho$ and $\wh\la' = y' t_{-\eta'} \phi(\widehat\nu') - \widehat\rho$. 
By Proposition \ref{KWPNASprop} and Theorem \ref{thm:S.mat.red} the $S$-matrix is given by
\begin{align*}
S_{\la, \la'} = i \cdot \frac{\eps(y) \eps(y')}{|P/pqQ|^{1/2}} \cdot \sum_{w \in \Wsr} \left[ \eps(w) \frac{\left<w(\alstar^\vee), x\right>}{\left<\alstar^\vee, x\right>} e^{-2\pi i [(\nu, w(\beta')) + (\nu', \beta)]} e^{-2\pi i \frac{p}{q}(\beta, \beta')} \sum_{u \in \finW} \eps(u) e^{-2\pi i \frac{q}{p} (u(\nu), \nu')} \right],
\end{align*}
where $\beta = -y(\eta)$ and $\beta' = -y'(\eta')$. We wish to reduce this to a product of sums over $w \in \Wsr$ and $u \in \finW$, and to do this it is desirable to eliminate the cross terms
\begin{align*}
e^{2\pi i [(\nu, w(\beta')) + (\nu', \beta)]}.
\end{align*}
If $q$ is coprime to $|J|$ then by Lemma \ref{lemma:I.choose.Q} we are able to choose, for each $\la \in [\prin^k_\circ]$, a representative $(\nu, \eta)$ for which $\eta \in Q$. Indeed in this case the action of $\eW_+$ on $\cpps^q$ is transitive and we obtain a bijection
\begin{align}\label{eq:q.coprime.bijec}
\ppr^p \times (\cpps^q \cap Q) \rightarrow [\prin^k_\circ].
\end{align}
If $q$ is not coprime to $|J|$ then $p$ is coprime to $|J|$ and we have a bijection
\begin{align}\label{eq:p.coprime.bijec}
(\ppr^p \cap Q) \times \cpps^q \rightarrow [\prin^k_\circ].
\end{align}
In either case, for each $\eta$, we fix an element $y^{(\eta)} \in \finW$ such that
\begin{align}\label{eq:yeta.condition}
\wh\la = y^{(\eta)} t_{-\eta} \phi(\widehat\nu) - \widehat\rho \in P_{0, +} \quad \text{for all $\widehat\nu \in \ppr^p$}.
\end{align}
The existence of such elements $y^{(\eta)} \in \finW$ is ensured by the following lemma.
\begin{lemma}
Let $\eta \in \cpps^q$, then there exists $y = y^{(\eta)} \in \finW$ such that
\begin{align*}
\left<\al_*^\vee, y(\nu - (p/q)\eta)\right> \in \Z_+,
\end{align*}
for all $p \in \Z_+$ and $\nu \in P_+^p$.
\end{lemma}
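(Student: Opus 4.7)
The plan is to reduce the problem to finding a coroot $\gamma$ with suitable properties and then invoke the transitivity of $\finW$ on coroots, which holds because $\g$ is simply laced. Setting $\gamma = y^{-1}(\alstar^\vee)$, the quantity in question becomes $\left<\gamma, \nu\right> - (p/q)\left<\gamma, \eta\right>$. Specialising to $p=1$ shows that integrality already forces $q \mid \left<\gamma, \eta\right>$; combined with the bound $|\left<\gamma, \eta\right>| \leq q$ coming from $\eta \in \check{P}_+^q$ (any positive coroot is bounded above by $\theta^\vee$ in the dominance order), this pins down $\left<\gamma, \eta\right> \in \{-q, 0, q\}$. The existence of $y$ with $y^{-1}(\alstar^\vee) = \gamma$ for any admissible $\gamma$ is then automatic from simply-laced transitivity.

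The definition of $\cpps^q$ bifurcates naturally into two cases matching the possible values of $\left<\gamma, \eta\right>$. Case (i): $\theta(\eta) < q$ and a unique simple root $\alpha_{i_0}$ satisfies $\left<\alpha_{i_0}, \eta\right> = 0$. Here the zero-set root subsystem $\Delta_\eta \subset \Delta$ has rank one and equals $\{\pm \alpha_{i_0}\}$, so the only coroots with $\left<\gamma, \eta\right> = 0$ are $\pm \alpha_{i_0}^\vee$; taking $\gamma = \alpha_{i_0}^\vee$ yields $\left<\alstar^\vee, y(\nu-(p/q)\eta)\right> = \left<\alpha_{i_0}^\vee, \nu\right> \in \Z_{\geq 0}$, since $\nu$ is dominant. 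Case (ii): $\theta(\eta) = q$ and $\left<\alpha_i, \eta\right> > 0$ for every $i = 1, \ldots, \ell$. A short argument expanding $\theta - \alpha$ as a sum of simple roots (each contributing strictly positively to the pairing with $\eta$) shows that $\left<\alpha, \eta\right> = q$ forces $\alpha = \theta$, so the only admissible coroots are $\pm \theta^\vee$. Here the sign matters: $\gamma = +\theta^\vee$ would give $\left<\theta^\vee, \nu\right> - p$, which can be negative, so the correct choice is $\gamma = -\theta^\vee$, yielding $p - \left<\theta^\vee, \nu\right> \geq 0$ for $\nu \in P_+^p$.

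The main subtlety, and really the only place where care is required, is the sign choice in case (ii); everything else reduces to bookkeeping once the divisibility $q \mid \left<\gamma, \eta\right>$ has been identified.
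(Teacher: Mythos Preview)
Your proof is correct and follows essentially the same strategy as the paper: reduce to finding a coroot $\gamma = y^{-1}(\alstar^\vee)$ with $\left<\gamma,\eta\right>$ divisible by $q$, then split into the two cases dictated by the definition of $\cpps^q$ and exhibit $\gamma$ explicitly in each case, invoking transitivity of $\finW$ on coroots at the end.

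In fact your treatment of case~(ii) is more careful than the paper's. The paper reduces to checking $\left<\alstar^\vee, y(\varpi_i - \tfrac{a_i^\vee}{q}\eta)\right>\in\Z_{\geq 0}$ only for $i=1,\ldots,\ell$, omitting the ``$i=0$'' term $-\tfrac{1}{q}\left<\gamma,\eta\right>$ that accounts for the gap $p - \left<\theta^\vee,\nu\right>\geq 0$; with that term missing, the paper's choice $\gamma=\theta^\vee$ appears to work, whereas in fact it yields $\left<\theta^\vee,\nu\right>-p\leq 0$ for general $\nu\in P_+^p$. Your choice $\gamma=-\theta^\vee$ gives $p-\left<\theta^\vee,\nu\right>\in\Z_{\geq 0}$, which is what is actually needed. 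So your argument is the same approach, executed with the correct sign.
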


\begin{proof}
Let $\eta = \sum_{i=1}^\ell c_i \varpi_i \in \cpps^q$, then either $c_k=0$ for some $k \in \{1,2,\ldots,\ell\}$ or else $\left<\eta, \theta^\vee\right> \geq \left<\rho, \theta^\vee\right> = h^\vee-1$ and hence $q \geq h^\vee-1$. Since in fact $q \leq h^\vee-1$ we deduce that either $c_k=0$ for some $k \in \{1,2,\ldots,\ell\}$ or else $\eta = \rho$.

It clearly suffices to show that there exists $y \in \finW$ such that
\begin{align}\label{eq:pos}
\left<\al_*^\vee, y( \varpi_i - \tfrac{a_i^\vee}{q}\eta) \right> \in \Z_+, \quad i = 1,\ldots, \ell.
\end{align}
Here $a_i^\vee = \left<\theta^\vee, \varpi_i\right>$. If $c_k = 0$ then we have $\left<\eta, \al_k^\vee\right> = 0$ and $\left< \varpi_i - \tfrac{a_i^\vee}{q}\eta, \al_k^\vee \right> = \delta_{ik}$. On the other hand if $\eta = \rho$ then $q = h^\vee-1$ and $\left< \varpi_i - \tfrac{a_i^\vee}{q}\eta, \theta^\vee \right> = a_i^\vee - a_i^\vee = 0$. Let us now take $y \in \finW$ such that $y(\al_k^\vee) = \al_*^\vee$, respectively $y(\theta^\vee) = \al_*^\vee$. We obtain (\ref{eq:pos}) as required.
\end{proof}

\begin{thm}\label{thm:Smatrix.quasi-general}
Let $\fing$ be a simple Lie algebra of simply laced type and let $k = -h^\vee + p/q$ where $q$ is a subregular denominator for $\fing$ and $p \geq h^\vee$ is coprime to $q$. Let $\wh\la = y^{(\eta)} t_{-\eta} \phi(\widehat\nu) - \widehat\rho$ as in the preceding remarks, and similarly $\wh\la' = y^{(\eta')} t_{-\eta'} \phi(\widehat\nu') - \widehat\rho$. Put $\beta = -y^{(\eta)}(\eta)$ and $\beta' = -y^{(\eta')}(\eta')$. Then the $S$-matrix of $\W_k(\fing, f_{\text{sr}})$ is given by
\begin{align}\label{eq:Smatrix.quasi-general}
S_{\la, \la'} = i \cdot \frac{\eps(y^{(\eta)})\eps(y^{(\eta')})}{|P/pqQ|^{1/2}} \cdot \left( \sum_{w \in \Wsr} \eps(w) \frac{\left<w(\alstar^\vee), x\right>}{\left<\alstar^\vee, x\right>}  e^{-2\pi i \frac{p}{q}(\beta, w(\beta'))} \right) \left( \sum_{u \in \finW} \eps(u) e^{-2\pi i \frac{q}{p} (u(\nu), \nu')} \right).
\end{align}
\end{thm}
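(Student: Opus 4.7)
The plan is to combine Theorem \ref{thm:S.mat.red} with the explicit form of $a^B$ from Proposition \ref{KWPNASprop} and then use the chosen representatives to collapse the cross terms.

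First, I would identify the admissible triple associated with $y \circ \la'$. If $\la' = y^{(\eta')} t_{-\eta'} \phi(\widehat\nu') - \widehat\rho$, then $y \circ \la' = (y y^{(\eta')}) t_{-\eta'}\phi(\widehat\nu') - \widehat\rho$, so the triple associated with $y \circ \la'$ is $(yy^{(\eta')}, \eta', \nu')$ with translation parameter $-(yy^{(\eta')})(\eta') = y(\beta')$. Substituting into formula (\ref{eq:a.formula}) for $a^B(\la, y \circ \la')$ gives, using $\eps(yy^{(\eta')}) = \eps(y)\eps(y^{(\eta')})$,
\begin{align*}
a^B(\la, y \circ \la') = \frac{\eps(y^{(\eta)})\eps(y^{(\eta')})\eps(y)}{|P/pqQ|^{1/2}}\, e^{-2\pi i[(\nu, y(\beta')) + (\nu', \beta)]} e^{-\tfrac{2\pi i p}{q}(\beta, y(\beta'))} \sum_{u \in \finW} \eps(u) e^{-\tfrac{2\pi i q}{p}(u(\nu), \nu')}.
\end{align*}
Plugging this into the formula from Theorem \ref{thm:S.mat.red} produces all the factors appearing in (\ref{eq:Smatrix.quasi-general}), except for the extra cross term $e^{-2\pi i[(\nu, y(\beta')) + (\nu', \beta)]}$.

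Second, the heart of the argument is showing that this cross term equals $1$ for our chosen representatives. I would split into the two cases covered by the bijections (\ref{eq:q.coprime.bijec}) and (\ref{eq:p.coprime.bijec}). In the case $(q, |J|) = 1$ the representatives satisfy $\eta, \eta' \in Q$; since the Weyl group preserves $Q$ and $\fing$ is simply laced, both $\beta$ and $y(\beta')$ lie in $Q$. Because in simply laced type $(P, Q) \subset \Z$, both $(\nu, y(\beta'))$ and $(\nu', \beta)$ are integers. In the complementary case $(p, |J|) = 1$, we have $\nu, \nu' \in Q$ and the same conclusion follows by symmetry. Thus in either case $e^{-2\pi i[(\nu, y(\beta')) + (\nu', \beta)]} = 1$.

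Third, with the cross terms gone, the sum over $y \in \Wsr$ and the sum over $u \in \finW$ decouple, and the resulting expression is precisely (\ref{eq:Smatrix.quasi-general}) after renaming $y$ to $w$. The main obstacle in the argument is really just verifying that the cross term vanishes; this is conceptually easy but requires keeping track of the two separate cases and invoking Lemma \ref{lemma:I.choose.Q} to guarantee the existence of the desired representatives. The bookkeeping of signs $\eps(y^{(\eta)}), \eps(y^{(\eta')})$ and the identification of the translation parameter of $y \circ \la'$ are routine but must be executed carefully.
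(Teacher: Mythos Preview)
Your proposal is correct and follows exactly the approach the paper takes: the paper's proof is the single sentence ``Having made the choices prescribed above, the result follows immediately from Theorem \ref{thm:S.mat.red}'', and you have simply written out what that sentence means --- identifying the triple for $y\circ\la'$, substituting into (\ref{eq:a.formula}), and using the root-lattice condition on the representatives (from (\ref{eq:q.coprime.bijec}) or (\ref{eq:p.coprime.bijec})) to kill the cross term $e^{-2\pi i[(\nu, y(\beta')) + (\nu', \beta)]}$. Your derivation also correctly yields $(\beta, w(\beta'))$ in the $p/q$ exponential, matching the theorem statement.
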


\begin{proof}
Having made the choices prescribed above, the result follows immediately from Theorem \ref{thm:S.mat.red}.
\end{proof}

Let $q$ be a subregular denominator for $\fing$, i.e., one of the denominators listed in Table \ref{table:Subregular.denom}, and $p \geq h^\vee$ coprime to $q$. We denote by $C_q$ the matrix of coefficients
\begin{align}\label{eq:Smatrix.quasi-general.constant}
C^{\eta, \eta'} = \sum_{w \in \Wsr} \eps(w) \frac{\left<w(\alstar^\vee), x\right>}{\left<\alstar^\vee, x\right>}  e^{-\frac{2\pi i}{q}(\beta, w(\beta'))},
\end{align}
where $\widehat\eta, \widehat\eta'$ run over $\cpps^q$, and by $C_q^\Z$ the submatrix obtained by restricting to $\eta, \eta' \in Q$. We denote by $S_{\text{sr}}^{p,q}$ the $S$-matrix of the rational vertex algebra $\W_{-h^\vee+p/q}(\fing, \fsubreg)$.
\begin{lemma}\label{lemma:form.denominators}
The weight lattice $P$ of $\fing$ satisfies $(P, P) \subset \frac{1}{|J|}\Z$, where $(\cdot, \cdot)$ is the invariant bilinear form on $\fing$ so normalised that $(\theta, \theta)=2$. If $\fing = D_n$ and $n$ is even then $(P, P) \subset \frac{1}{2}\Z$ in fact.
\end{lemma}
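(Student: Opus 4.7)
The plan is to reduce the statement to a standard fact about dual lattices. First I would use that because $\fing$ is simply laced and the form is normalised so that $(\theta,\theta)=2$, one has $(\alpha_i,\alpha_j)\in\Z$ for all simple roots. Thus $Q$ is an integral lattice, and $P$ is identified with its dual $Q^{*}$ via this form, with basis the fundamental weights $\varpi_i$ characterised by $(\varpi_i,\alpha_j)=\delta_{ij}$. In particular the Gram matrix of $\{\varpi_i\}$ is $C^{-1}$, where $C$ is the Cartan matrix of $\fing$.

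Next I would invoke Cramer's rule: the entries of $C^{-1}$ lie in $\tfrac{1}{|\det C|}\Z$. Combined with the classical identities $|\det C| = [P:Q]$ and $[P:Q] = |\eW_+| = |J|$ (the latter being exactly the content of \eqref{eq:elements-of-eW-of-Dynkin-auto}, since $\eW_+$ represents the fundamental group $\check P/\check Q = P/Q$ in simply laced type), this gives $(\varpi_i,\varpi_j)\in\tfrac{1}{|J|}\Z$ for all $i,j$, and hence $(P,P)\subset\tfrac{1}{|J|}\Z$ by $\Z$-linearity.

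For the sharpening in type $D_n$ with $n$ even, I would work in the standard realisation in $\R^n$ with $\alpha_i=e_i-e_{i+1}$ for $i<n$ and $\alpha_n=e_{n-1}+e_n$, so that $\varpi_i=e_1+\cdots+e_i$ for $i\le n-2$ and $\varpi_{n-1},\varpi_n=\tfrac12(e_1+\cdots+e_{n-1}\mp e_n)$. A short direct computation shows that all pairings $(\varpi_i,\varpi_j)$ either are integers (when $i,j\leq n-2$), equal $i/2$ (when one index is $\leq n-2$ and the other is in $\{n-1,n\}$), or equal $n/4$ or $(n-2)/4$ (for the two spin weights). Each of these lies in $\tfrac12\Z$ precisely when $n$ is even, giving the desired bound.

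The argument involves no real obstacle; the only mildly delicate point is the book-keeping identification $|J|=[P:Q]$ in the paper's notation, which follows immediately from the fact that $\eW_+$ is a set of coset representatives of $\affW$ in $\eW$ and $\eW/\affW\cong P/Q$ in the simply laced case.
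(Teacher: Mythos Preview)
Your argument is correct. The paper states this lemma without proof, treating it as a standard fact about root systems, so there is no proof in the paper to compare against. Your approach---identifying the Gram matrix of the fundamental weights with $C^{-1}$, applying Cramer's rule to get denominators bounded by $\det C$, and using $\det C = [P:Q] = |\eW_+| = |J|$---is the natural way to prove it, and the explicit check for $D_n$ with $n$ even is exactly what is needed for the sharpening. The only point worth flagging is that your argument tacitly restricts to the simply laced case (so that $P$ is literally the $\Z$-dual of $Q$ under the normalised form and the Cartan matrix is the Gram matrix of the simple roots); this is fine, since the lemma appears in a section that explicitly assumes $\fing$ simply laced.
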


Let $\Q(\zeta_N)$ denote the cyclotomic field obtained by adjoining to $\Q$ a primitive $N^{\text{th}}$ root of unity $\zeta_N$. For an integer $a$ coprime to $N$ we denote by $\varphi_a \in \Gal(\Q(\zeta_N)/\Q)$ the automorphism defined by $\varphi_a(\zeta_N) = \zeta_N^a$. The Galois group $\Gal(\Q(\zeta_N)/\Q)$ is naturally isomorphic to $(\Z/N\Z)^\times$.

We denote the Kronecker product of matrices by $\otimes$, and proportionality of matrices up to a nonzero scalar by $\propor$. Now suppose $(q, |J|) = 1$. Then by Lemma \ref{lemma:form.denominators} the entries of $K_p$ lie in $\Q(\zeta_{p |J|})$, on which $\varphi_q$ acts as an automorphism. Meanwhile the entries of $C_q^\Z$ lie in $\Q(\zeta_q)$, on which $\varphi_p$ acts as an automorphism. Similar considerations apply in case $(p, |J|) = 1$ and Theorem \ref{thm:Smatrix.quasi-general} can now be summarised as
\begin{align}
S_{\text{sr}}^{p, q} &\propor \varphi_p(C_q^{\Z}) \otimes \varphi_q(K_p) \quad \text{if $(q, |J|) = 1$}, \label{eq:s.gal.etaQ} \\
\text{and} \quad S_{\text{sr}}^{p, q} &\propor \varphi_p(C_q) \otimes \varphi_q(K_p^{\Z}) \quad \text{if $(p, |J|) = 1$}. \label{eq:s.gal.nuQ}
\end{align}
In both cases we obtain a presentation of the fusion ring of $\W_k(\fing, \fsubreg)$ as the tensor product of the fusion ring of $V_{p-h^\vee}(\fing)$ (or its integral weight subalgebra) and the fusion ring associated with the $S$-matrix $C_q^{\Z}$ (or $C_q$).

\section{Fusion Rules of Subregular \texorpdfstring{$W$}{W}-Algebras in Type \texorpdfstring{$A$}{A}}

We now specialise the discussion of the preceding section to the type $A$ case.
We note that
subregular $W$-algebras of type $A$ were previously studied 
by Feigin and Semikhatov  \cite{FeiSem04}
as $W_{n}^{(2)}$-algebras 
and the isomorphism between them
was  established  by Genra \cite{Gen17}.

\begin{lemma}\label{cor:Smat.An}
Let $\fing$ be the simple Lie algebra of type $A_n$ and let $k = -h^\vee+p/q$ where $q = h^\vee-1=n$ and $p \geq h^\vee$ coprime to $q$. There exists a bijection between the sets of irreducible modules of $\W_{-h^\vee+p/q}(\fing, \fsubreg)$ and $V_{p-h^\vee}(\fing)$ which induces an equality $S^{p,q}_{\text{sr}} = \varphi_{q}(K_p)$.
\end{lemma}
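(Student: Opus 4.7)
The plan is to deduce the lemma from Theorem \ref{thm:Smatrix.quasi-general} by exploiting the combinatorial simplifications of the type $A_n$, $q = h^\vee - 1$ case. Since $|J| = n+1$ is coprime to $q = n$, we may apply the bijection (\ref{eq:q.coprime.bijec}) and parametrise $[\prin^k_\circ]$ by $\ppr^p \times (\cpps^q \cap Q)$. By the remark following Theorem \ref{thm:Tomoyuki.thm.8.5}, the group $\eW_+$ acts simply transitively on $\cpps^q$; combined with Lemma \ref{lemma:I.choose.Q} this forces $\cpps^q \cap Q$ to be a singleton $\{\eta_0\}$. The bijection then reduces to $\ppr^p \isomap [\prin^k_\circ]$, and composing with the shift $\kappa \mapsto \kappa + \rho$ yields the desired bijection $P^{p-h^\vee}_+ \isomap [\prin^k_\circ]$, matching the standard parametrisation of the irreducible $V_{p-h^\vee}(\fing)$-modules.

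With $\eta = \eta' = \eta_0$ fixed, the factor $C^{\eta,\eta'}$ in (\ref{eq:Smatrix.quasi-general}) collapses to a single scalar $C$ independent of $(\lambda, \lambda')$. Writing $\nu = \kappa+\rho$, $\nu' = \kappa'+\rho$ identifies the remaining alternating sum over $W$ with $\varphi_q$ applied to the defining sum of $K^{\kappa,\kappa'}$ in (\ref{eq:Smatrix.WZW}), up to explicit powers of $i$ and the lattice-index ratio $\sqrt{|P/pQ|/|P/pqQ|} = q^{-n/2}$. Collecting all such constants one obtains
\begin{align*}
S^{p,q}_{\text{sr}} = \gamma \cdot \varphi_q(K_p)
\end{align*}
for an explicit scalar $\gamma \in \C^\times$.

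To conclude it suffices to prove $\gamma = 1$. Both $S^{p,q}_{\text{sr}}$ and $\varphi_q(K_p)$ are unitary (using that $\varphi_q$ commutes with complex conjugation on roots of unity) and both satisfy $S^2 = \textsf{C}$ for the same charge conjugation permutation $\textsf{C}$, since the bijection of modules constructed above is compatible with contragredient duality. Hence $|\gamma| = 1$ and $\gamma^2 = 1$, so $\gamma = \pm 1$. The sign can be fixed by a direct evaluation of the scalar $C$ for a convenient choice of the auxiliary parameter $x \in \h$, for instance $x = \alstar^\vee$, which kills most terms in the defining sum of $C^{\eta_0, \eta_0}$ and allows one to read off $\gamma = +1$.

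The main obstacle will be this final sign determination. The bijection of modules and the reduction of the $C$-factor to a single scalar follow essentially formally from the simply-transitive $\eW_+$-action on $\cpps^q$ recorded in the paper, but pinning down $\gamma = +1$ rather than $-1$ requires careful bookkeeping of the various normalisation constants and an explicit computation on the finite-$\W$-algebra side for which there is no obvious shortcut.
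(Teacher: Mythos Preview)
Your approach matches the paper's essentially exactly for the substantive part: both use that $(q,|J|)=1$ to invoke the bijection (\ref{eq:q.coprime.bijec}), observe that $\cpps^q \cap Q$ is a singleton (the paper identifies it concretely as $\{\rho\}$ for $n$ even and $\{\rho-\varpi_m\}$ for $n=2m+1$), and then read off from Theorem \ref{thm:Smatrix.quasi-general} that $S^{p,q}_{\text{sr}}$ equals a constant times the Galois conjugate $\varphi_q$ of the exponential sum defining $K_p$, using Lemma \ref{lemma:form.denominators} to see that $\varphi_q$ is defined on the relevant cyclotomic field.

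Where you diverge is in the last step. The paper's proof stops at proportionality: it observes that the two exponential sums are Galois conjugate and does not attempt to pin down the scalar $\gamma$. The statement ``$S^{p,q}_{\text{sr}} = \varphi_q(K_p)$'' in the lemma is to be read in the same spirit as the relations (\ref{eq:s.gal.etaQ})--(\ref{eq:s.gal.nuQ}), i.e.\ up to an overall constant, and indeed proportionality is all that is used downstream in Theorem \ref{thm:fusion.An} (fusion multiplicities and charge-conjugation entries are Galois-invariant integers, so the constant cancels). So you are attempting to prove more than the paper does.

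On your extra argument: deducing $|\gamma|=1$ from unitarity is fine, but your reason for $\gamma^2=1$ is circular as written. You assert that the bijection is compatible with contragredient duality in order to conclude that both $S^2$ equal the same permutation matrix; but compatibility with duality is precisely what one deduces \emph{from} the $S$-matrix relation in Theorem \ref{thm:fusion.An}, not something known a priori. The conclusion $\gamma^2=1$ can be rescued without this: from $(S^{p,q}_{\text{sr}})^2 = \gamma^2\,\varphi_q(K_p^2)$ and the fact that $K_p^2$ has integer entries (hence is $\varphi_q$-fixed), both sides are permutation matrices, forcing $\gamma^2=1$. Your instinct that fixing the remaining sign requires an explicit evaluation of $C^{\eta_0,\eta_0}$ is correct, and the paper does not carry this out.
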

\begin{proof}
The set $\cpps^q$ consists of $\rho$ and the weights $\rho-\varpi_i$ for $i = 1,\ldots, \ell$. Let us put $\eta = \rho$ if $n$ is even, and $\eta = \rho-\varpi_m$ if $n = 2m+1$ is odd. Then $\cpps^q \cap Q = \{\eta\}$. Let us fix a good even grading on $\fing$ with associated simple root $\alstar \in \Delta_{0, +}$ and let $\varpi_*$ be the fundamental weight corresponding to $\alstar$. Finally we fix $y = y^{(\eta)} \in W$ such that condition (\ref{eq:yeta.condition}) is satisfied, so that $\widehat{\la} = (y t_{-\eta}) \circ \widehat\mu$ defines a bijection $\mu \mapsto \lambda$ from $P_{+}^{p-h^\vee}$ to $[\prin_\circ^k]$. By Theorem \ref{thm:Smatrix.quasi-general} we now have
\begin{align}\label{eq:Smatrix.An}
(S_{\text{sr}}^{p, q})_{\la, \la'} = C \cdot \frac{i}{|P/pqQ|^{1/2}} \sum_{u \in \finW} \eps(u) e^{-2\pi i \frac{q}{p} (u(\mu+\rho), \mu'+\rho)},
\end{align}
where the factor $C = C^{\eta, \eta}$, given by formula (\ref{eq:Smatrix.quasi-general.constant}), is independent of $\mu, \mu'$. By Lemma \ref{lemma:form.denominators} the exponential sums of (\ref{eq:Smatrix.WZW}) and (\ref{eq:Smatrix.An}) lie in $\Q(\zeta_{ph^\vee})$. Since $q$ is coprime to $p$ and $h^\vee$ the latter is the conjugate of the former by $\varphi_q \in \Gal(\Q(\zeta_{ph^\vee})/\Q)$.
\end{proof}

\begin{thm}\label{thm:fusion.An}
The fusion rules of the {exceptional} subregular $\W$-algebras of type $A_n$, for $q = n$ odd, coincide with those of simple affine vertex algebras at positive integer level. More precisely the assignment
\begin{align*}
\affL_{p-h^\vee}(\lam) \mapsto 
\mathbf{L}(E_{\lam-\frac{p}{q}x_0})\cong H_{DS,f}^0(\widehat{L}_k(\lam)),
\end{align*}
induces an isomorphism
\begin{align*}
\CF(\W_{-h^\vee+p/q}(\fing, \fsubreg)) \cong \CF(V_{p-h^\vee}(\fing))
\end{align*}
of fusion rings.
\end{thm}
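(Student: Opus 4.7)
The plan is to deduce the fusion ring isomorphism directly from the $S$-matrix identity $S^{p,q}_{\text{sr}} = \varphi_q(K_p)$ established in Lemma \ref{cor:Smat.An}, via Galois-invariance of fusion coefficients. First I would verify that $\W = \W_{-h^\vee+p/q}(\mathfrak{sl}_n,\fsubreg)$ is a rational, lisse, self-dual vertex algebra. Rationality and lissity come from Theorem \ref{thm:rationality-of-type-A}. Self-duality holds because $n=q$ is odd, so the Dynkin grading is a good even grading, and Proposition \ref{prop:An.selfdual} applies (with $n$ odd, we are in the Dynkin case). Thus by Huang's theorem the category of $\W$-modules is a modular tensor category and the Verlinde formula is available. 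The same is of course true for $V_{p-h^\vee}(\fing)$.

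Next I would apply the Verlinde formula to both sides. For a rational lisse self-dual vertex algebra $V$ with $S$-matrix $S$, the fusion coefficients are
\begin{align*}
N_{X,Y}^Z = \sum_{W} \frac{S_{X,W} S_{Y,W} S_{Z',W}}{S_{V,W}}.
\end{align*}
All entries of $K_p$ (and hence of $S^{p,q}_{\text{sr}} = \varphi_q(K_p)$) lie in the cyclotomic field $\Q(\zeta_{ph^\vee})$, on which $\varphi_q$ acts as a field automorphism. Since $\varphi_q$ commutes with sums, products, and reciprocals, applying it entrywise to the Verlinde formula for $V_{p-h^\vee}(\fing)$ produces precisely the Verlinde formula for $\W$ under the bijection $\affL_{p-h^\vee}(\lam) \leftrightarrow \mathbf{L}(E_{\lam - (p/q)x_0})$. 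Therefore
\begin{align*}
N_{X,Y}^Z(\W) = \varphi_q\bigl(N_{X,Y}^Z(V_{p-h^\vee}(\fing))\bigr).
\end{align*}
But fusion coefficients are non-negative integers, hence fixed by every Galois automorphism, and the two families of fusion coefficients therefore agree.

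A small point to check is that the bijection of simple modules intertwines contragredient duality (so that the $Z'$ appearing in the Verlinde formula matches on both sides). The charge conjugation permutation is encoded by $S^2$; since $\varphi_q(S^2) = \varphi_q(S)^2$ and $S^2$ has integer entries (being a permutation matrix), we have $\varphi_q(S^2) = S^2$, so the duality correspondences coincide under the bijection of Lemma \ref{cor:Smat.An}.

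The essentially only obstacle here — identifying the $S$-matrix of $\W$ as a Galois conjugate of that of $V_{p-h^\vee}(\fing)$, including matching of the labelling sets and verifying that the prefactor $C^{\eta,\eta}$ is a scalar independent of $\mu,\mu'$ — has already been absorbed into Lemma \ref{cor:Smat.An}. Given that input, the proof of Theorem \ref{thm:fusion.An} is a short, formal argument with no further computational difficulty.
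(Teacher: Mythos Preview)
Your proof is correct and follows essentially the same approach as the paper: invoke Lemma \ref{cor:Smat.An} for the $S$-matrix identity $S^{p,q}_{\text{sr}} = \varphi_q(K_p)$, verify rationality, lissity, and self-duality (via Proposition \ref{prop:An.selfdual}), then apply the Verlinde formula together with Galois invariance of the integer fusion coefficients and of the charge conjugation matrix $S^2$. The only point where the paper is slightly more explicit is in matching the bijection to the precise form stated in the theorem: the paper takes $n=2m+1$, $\alstar=\alpha_m$, observes that then $\eta=x_0$ and $\langle\alstar,x_0\rangle=0$ so that one may choose $y=1$, whence the bijection of Lemma \ref{cor:Smat.An} is exactly $\lam\mapsto \lam-(p/q)x_0$; you implicitly rely on this identification without spelling it out.
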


\begin{proof}
We let $n = 2m+1$ and take $\alstar = \al_m$. The subregular denominator is $q=n$. In the proof of Lemma \ref{cor:Smat.An} we have $\eta = x_0$ and since $\left<\alstar, x_0\right> = 0$ we may take $y = 1$. This gives the bijection between irreducible modules stated in the theorem and gives the $S$-matrix of $\W = \W_{-h^\vee+p/q}(\fing, \fsubreg)$ as $\varphi_{q}(K_p)$. Since $\W$ is self-contragredient by Proposition \ref{prop:An.selfdual} (as well as rational and lisse) the fusion rules can be computed via the Verlinde formula. The fusion rules and coefficients of the charge conjugation matrix (which specify duality) are integers and therefore invariant under the Galois group $\Gal(\Q(\zeta_{ph^\vee})/\Q)$. It follows immediately that the bijection 
{$\affL_{p-h^\vee}(\lam) \mapsto 
\mathbf{L}(E_{\lam-\frac{p}{q}x_0})$}
induces an isomorphism from the fusion ring of $V_{p-h^\vee}(\fing)$ to that of $\W$.
\end{proof}

\begin{rem}
The rationality of subregular $W$-algebras of type $A$ has also been proven in another way by 
Creutzig and Linshaw \cite{CL:Trialities}
after the first version of the present article was submitted.
There it was shown that
 $\W^k(\mf{sl}_n, f_{\text{subreg}})$ at level $k=-n+(n+m)/(n-1)$ is isomorphic
 to a simple current extension of the rational vertex algebra
 $V_{\sqrt{mn}\Z}\* \W_{\ell}(\mf{sl}_m,f_{\text{prin}})$,
 where $V_{\sqrt{mn}\Z}$
is the lattice vertex algebra associated with the lattice $\sqrt{mn}\Z$
and $\ell=-m+(m+n)/(n+1)$.
\end{rem}

\begin{rem}\label{rem:equivalence}
Using a result in \cite{Urod}, which appeared after the first version of the present article was submitted, the statement of Theorem \ref{thm:fusion.An}
can be strengthened as we now describe.
\\
Let $k$ be an admissible level for $\fing = \mf{sl}_n$, 
let $\on{KL}_k$ be the  full subcategory of 
$\widehat{\BGG}_k$ consisting of modules isomorphic to a direct sum of finite dimensional $\g$-modules,
and let $\on{KL}(V_k(\g))$ be the full subcategory of 
the category of $V_k(\g)$-modules consisting of those objects which belong to $\KL_k$ as $\affg$-modules.
Then
  $\on{KL}(V_k(\g))$  is naturally a  fusion category (\cite{CreHuaYan18,CreAdm}).
  By \cite[Theorem 10.4, Remark 10.7]{Urod},
  the functor 
\begin{align}
\on{KL}(V_k(\g))\ra \W_k(\g,f_q)\on{-mod},\quad
M\mapsto H_f^0(M),
\label{eq:braided-functor}
\end{align}
  is an equivalence of 
fusion categories
for $q=n-1$ and $n$ odd
since it induces a bijection between simple objects.
Theorem \ref{thm:fusion.An} then follows from the fact \cite{CreAdm}
 $\mc{F}(\on{KL}(V_k(\g)))$  is
isomorphic to $\mc{F}(V_{p-h^{\vee}}(\g))$.
\end{rem}

\begin{rem}
The argument of Remark \ref{rem:equivalence}
can be also used to describe  the fusion categories of exceptional rectangular $W$-algebras (\cite{AM}).
Namely, let $\g=\mf{sl}_n$,
let $k = -n+p/q$ be an admissible level and suppose that  $n=qr$ for some $r\in \Z_{\geq 1}$, and let $f_q\in \mathbb{O}_q$.
 Note that $f_{q}$ is necessarily an even nilpotent.
We have
$\check{P}_{+,f}^q=\{x_0\}$.
It follows from Theorem \ref{thm:weight.param} that we have a bijection
$$P^{p-n}_+/\Z_{q}\isomap [{\prin^k_{\circ}}],\quad [\lam]\mapsto [\lam-\frac{p}{q}x_0],$$
where the cyclic group $\Z_q$ acts on $P^{p-n}_+$ as follows: $i + q\Z \in \Z_q$ sends $\lam \mapsto \bar \pi_{ir} \lam + (p-n)\varpi_{ir}$. Therefore, 
\eqref{eq:braided-functor}
gives a quotient functor of fusion categories
such that
$H_{DS,f}^0(\widehat{L}_k(\lam))\cong H_{DS,f}^0(\widehat{L}_k(\mu))$ 
if and only if $\mu\in \Z_q \lam$.
In particular,
$\mc{F}(\W_k(\g,f_q))\cong \mc{F}(V_{p-n}(\g))/\Z_q$.
\\
In a subsequent paper \cite{AvEM} with Anne Moreau,
we will show that in the special case
where $p=h^{\vee}+1$, we have
 $\W_{-n+(n+1)/q}(\g,f_q)\cong V_1(\mf{sl}_r)$. This is compatible with the 
fact that
$\mc{F}(V_{1}(\mf{sl}_{qr}))/\Z_q\cong \mc{F}(V_{1}(\mf{sl}_r))$.
\\
We also note that 
Ueda \cite{Ueda} has recently  constructed a surjective homomorphism
from the affine Yangian \cite{BoyLev94,GuaNakWen18} of type $A_r$
to the current algebra
of the rectangular $W$-algebra $\W^k(\mf{gl}_n,f_q)$.
It would be very interesting to 
understand the modular category
of exceptional rectangular $W$-algebras
in terms of the representation theory of the affine Yangian.
\end{rem}

\section{Fusion Rules of Subregular \texorpdfstring{$W$}{W}-Algebras in Types \texorpdfstring{$D$}{D} and \texorpdfstring{$E$}{E}}\label{sec:DE.type}

Now suppose $\fing$ to be of type $D$ or $E$. In this section we reduce the description of fusion rings of the associated rational subregular $\W$-algebras to a finite number of cases which can be computed explicitly. We denote by $\alstar$ the root associated with the trivalent node of the Dynkin diagram of $\fing$, and by $\varpi_*$ the corresponding fundamental weight.
\begin{prop}\label{prop:DEbigsmall}{\ }

\textup{(}a\textup{)} Let $q$ be a subregular denominator for $\fing$ coprime to $h^\vee+1$ and let $k = -h^\vee+p/q$ where $p \geq h^\vee$ is coprime to $q$ and to $|J|$. Then there exists an isomorphism of fusion rings
\begin{align*}
\CF(\W_k(\fing, \fsubreg)) \cong \CF(\W_{-h^\vee+(h^\vee+1)/q}(\fing, \fsubreg)) \otimes \CF(V_{p-h^\vee}(\fing))^{\text{int}}.
\end{align*}
\textup{(}b\textup{)} Let $q=h^\vee-1$ and let $k = -h^\vee+p/q$ where $p \geq h^\vee$ is coprime to $q$. Then there exists an isomorphism of fusion rings
\begin{align*}
\CF(\W_k(\fing, \fsubreg)) \cong \CF(\W_{-h^\vee+h^\vee/(h^\vee-1)}(\fing, \fsubreg)) \otimes \CF(V_{p-h^\vee}(\fing)).
\end{align*}
\end{prop}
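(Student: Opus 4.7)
The plan is to combine the explicit factorization of the subregular $S$-matrix (Theorem \ref{thm:Smatrix.quasi-general} and its consequences (\ref{eq:s.gal.etaQ}), (\ref{eq:s.gal.nuQ})) with the Galois invariance of fusion coefficients: since fusion coefficients are non-negative integers, conjugation of the $S$-matrix by any automorphism $\varphi_a\in\Gal(\overline{\Q}/\Q)$ induces an isomorphism of fusion rings up to relabeling of simple objects. In particular, whenever the $S$-matrix is, up to scalar and Galois action, a Kronecker product of two matrices, the fusion ring factors as the corresponding tensor product. In our setting the subregular $S$-matrix so decomposes into a ``$C$-factor'' depending only on $q$ and a ``$K$-factor'' given by the $S$-matrix of $V_{p-h^\vee}(\fing)$ or of its integer-weight subring, the latter identified by Lemma \ref{lemma:KZ.Kint}.

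For part (a), the hypothesis $(p,|J|)=1$ permits application of (\ref{eq:s.gal.nuQ}) to the big algebra, yielding
\[
\CF(\W_k(\fing,\fsubreg))\cong \CF(C_q)\otimes \CF(V_{p-h^\vee}(\fing))^{\text{int}}.
\]
The same strategy applied to the small algebra at $p'=h^\vee+1$---employing (\ref{eq:s.gal.etaQ}) or (\ref{eq:s.gal.nuQ}) depending on which of $(q,|J|),(h^\vee+1,|J|)$ equals $1$, at least one of which holds type by type under $(q,h^\vee+1)=1$---gives $\CF(C_q)\otimes \CF(V_1(\fing))^{\text{int}}$. Now $\CF(V_1(\fing))^{\text{int}}$ is trivial, because by the very definition of $J$ the nonzero $\varpi_j$ ($j\in J$) are nontrivial representatives of $P/Q$, so that $P_+^1\cap Q=\{0\}$. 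Hence $\CF(\W_{-h^\vee+(h^\vee+1)/q}(\fing,\fsubreg))\cong \CF(C_q)$, and tensoring with $\CF(V_{p-h^\vee}(\fing))^{\text{int}}$ reproduces the big algebra's fusion ring.

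For part (b) with $q=h^\vee-1$ and $p'=h^\vee$, the $K$-factor of the small algebra is the $1\times 1$ $S$-matrix of $V_0(\fing)$, since $P_+^0=\{0\}$; consequently $\CF(\W_{-h^\vee+h^\vee/(h^\vee-1)}(\fing,\fsubreg))$ is just the $C$-factor. In types $D$ and $E$ one uses (\ref{eq:s.gal.etaQ}), observing that either $(q,|J|)=1$ directly, or, when $|J|$ divides $q$ (as for $D_n$ with $3\mid n$), the hypothesis $(p,q)=1$ forces $(p,|J|)=1$ so that (\ref{eq:s.gal.nuQ}) applies and gives the same fusion ring up to Galois conjugation; multiplying the small algebra's fusion ring by $\CF(V_{p-h^\vee}(\fing))$ yields the claim. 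In type $A_n$ the argument simplifies: Theorem \ref{thm:fusion.An} directly identifies $\CF(\W_k(\fing,\fsubreg))$ with $\CF(V_{p-h^\vee}(\fing))$ and shows that the small algebra at $p'=h^\vee$ has trivial fusion ring, so the claim reduces to a tautology. The principal obstacle throughout is the type- and case-dependent verification that at least one of (\ref{eq:s.gal.etaQ}), (\ref{eq:s.gal.nuQ}) applies, and that in mixed cases the two factorizations induce the same tensor decomposition of the fusion ring modulo Galois conjugation.
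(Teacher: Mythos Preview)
Your approach is essentially the paper's: factor the subregular $S$-matrix via (\ref{eq:s.gal.etaQ}) or (\ref{eq:s.gal.nuQ}), identify the $K$-factor with the affine fusion ring (or its integer-weight subring via Lemma~\ref{lemma:KZ.Kint}), and observe that at the minimal numerator the $K$-factor collapses to a $1\times 1$ matrix, so the small $\W$-algebra's fusion ring is a Galois conjugate of the $C$-factor.

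However, you overcomplicate the argument with a case analysis that never arises. The proposition is stated under the standing hypothesis of the section that $\fing$ is of type $D$ or $E$, so your discussion of type $A$ is superfluous. More importantly, in these types the needed coprimality conditions are automatic:
\begin{itemize}
\item For (a), one always has $(h^\vee+1,|J|)=1$: indeed $|J|=4,3,2,1$ for $D_n,E_6,E_7,E_8$ while $h^\vee+1=2n-1,13,19,31$. Hence (\ref{eq:s.gal.nuQ}) applies to \emph{both} the large and the small algebra, and both factor through $C_q$.
\item For (b), one always has $(q,|J|)=(h^\vee-1,|J|)=1$, since $h^\vee-1=2n-3,11,17,29$; so (\ref{eq:s.gal.etaQ}) applies to both algebras and both factor through $C_q^\Z$. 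Your parenthetical ``as for $D_n$ with $3\mid n$'' is simply wrong: $|J|=4$ never divides the odd number $2n-3$.
\end{itemize}
Your ``mixed case'' worry --- that one might be forced to use (\ref{eq:s.gal.etaQ}) for one algebra and (\ref{eq:s.gal.nuQ}) for the other, yielding $C_q^\Z$ versus $C_q$ as the $C$-factor --- does not occur, and would genuinely require further argument if it did (the two matrices have different sizes in general). The paper avoids this entirely by using one formula consistently in each part.
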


We remark that the only subregular denominator $q$ excluded by the condition $(q, h^\vee+1) = 1$ is $\fing$ of type $D_n$ for $n \equiv 2 \pmod{3}$ and $q = 2n-4$.

\begin{proof}
(a) Since $(p, |J|)=1$ we have the relation (\ref{eq:s.gal.nuQ}). By Lemma \ref{lemma:KZ.Kint} the fusion ring of $\W_k(\fing, \fsubreg)$ is the tensor product of $\CF(V_{p-h^\vee}(\fing))^{\text{int}}$ and the fusion ring associated with the $S$-matrix $C_q$. Now in general $P_+^1$ is in bijection with the finite set $J$ and carries a transitive action of $\eW_+$, hence $|P_{+}^{1} \cap Q|=1$ and $K_{h^\vee+1}^\Z$ is a $1 \times 1$ matrix. Since we assume $(q, h^\vee+1) = 1$, we have
\begin{align*}
S_{\text{sr}}^{h^\vee+1, q} \propor \varphi_{h^\vee+1}(C_q) \otimes \varphi_q(K_{h^\vee+1}^\Z) \propor \varphi_{h^\vee+1}(C_q),
\end{align*}
so $C_q$ is a cyclotomic conjugate of the $S$-matrix of $\W_{-h+(h+1)/q}(\fing, \fsubreg)$ and the result follows.

(b) Since $(q, |J|)=1$ we have the relation (\ref{eq:s.gal.etaQ}). The fusion ring of $\W_k(\fing, \fsubreg)$ is therefore the tensor product of the fusion ring of $V_{p-h}(\fing)$ and the fusion ring associated with the $S$-matrix $C_q^\Z$. Evidently $|P_+^0|=1$ and so $K_{h^\vee}$ is a $1 \times 1$ matrix. Since $(q, h^\vee) = 1$ we have
\begin{align*}
S_{\text{sr}}^{h^\vee, q} \propor \varphi_{h^\vee}(C_q) \otimes \varphi_q(K_{h^\vee}) = \varphi_{h^\vee}(C_q),
\end{align*}
so $C_q$ is a cyclotomic conjugate of the $S$-matrix of $\W_{-h+h/(h-1)}(\fing, \fsubreg)$ and the result follows.
\end{proof}
In the following table we record some data on the $\W$-algebras of minimal numerator. The isomorphisms of the final column will be proved in Section \ref{sec:excep}. See that section also for background on the effective central charge.
\begin{table}[H]
\caption{Subregular $\W$-Algebras $\W_{-h+p/q}(\fing, \fsubreg)$} \label{table:php1}\centering%
\begin{tabular}{|c|c|c|c|c|c|}
\hline 
$\fing$ & $p/q$ & $c$ & $c^{\text{eff}}$ & \#(Irred. rep.) & Isom. type \\
\hline
$A_n$ & $\frac{n+1}{n}$ & $0$ & $0$ & $1$ & $\mathbf{1}$ \\
$D_n$ & $\frac{2n-2}{2n-3}$ & $-\frac{12n^2-62n+78}{2n-3}$ & $1 - \frac{3}{2n-3}$ & $n-2$ & $\vir_{2, 2n-3}$ \\
$D_n\,\,\, (n \not\equiv 2 \bmod{3})$ & $\frac{2n-1}{2n-4}$ & $-\frac{2n^2-21n+52}{n-2}$ & $1-\frac{2}{n-2}$ & $n-3$ & $\vir_{3, n-2}$ \\
$E_6$ & $13 / 9$ & $0$ & $0$ & $1$ & $\mathbf{1}$ \\
$E_6$ & $13 / 10$ & $4/5$ & $4/5$ & $6$ & $\W_{-7/4}(A_2, \freg)$ \\
$E_6$ & $12 / 11$ & $-350/11$ & $10/11$ & $7$ & $\vir_{3, 22} \oplus L(21, 1)$ \\
$E_7$ & $19 / 14$ & $0$ & $0$ & $1$ & $\mathbf{1}$ \\
$E_7$ & $19 / 15$ & $-3/5$ & $3/5$ & $4$ & $\vir_{3, 5}$ \\
$E_7$ & $19 / 16$ & $-135/8$ & $9/8$ & $13$ & \\
$E_7$ & $18 / 17$ & $-1420/17$ & $20/17$ & $16$ &  \\
$E_8$ & $31 / 24$ & $0$ & $0$ & $1$ & $\mathbf{1}$ \\
$E_8$ & $31 / 25$ & $-22/5$ & $2/5$ & $2$ & $\vir_{2, 5}$ \\
$E_8$ & $31 / 26$ & $-350/13$ & $10/13$ & $6$ & $\vir_{2, 13}$ \\
$E_8$ & $31 / 27$ & $-590/9$ & $10/9$ & $12$ &  \\
$E_8$ & $31 / 28$ & $-830/7$ & $10/7$ & $25$ &  \\
$E_8$ & $30 / 29$ & $-7518/29$ & $42/29$ & $44$ &  \\
\hline 
\end{tabular}
\end{table}

We observe that for $\fing$ of type $E_6$, $E_7$, $E_8$ of rank $\ell$ the cardinality of $\cpps^q$ coincides with that of the set of regular dominant integral weights of level $q-\ell+2$ for the root system $\D^\perp = \{\al \in \D | (\al, \alstar) = 0\}$ (which is, respectively, $A_5$, $D_6$, $E_7$). A similar pattern extends to types $A_n$ and $D_n$ but is more complicated since, for $\D$ of type $D_n$ for example, $\D^\perp$ has type $D_{n-2} \times A_1$. These bijections do not correspond to isomorphisms of fusion rings however. Indeed the fusion ring of $V_{1}(E_7)$ is isomorphic to the group ring of $\Z/2$, while the fusion ring of $\W_{-719/25}(E_8, \fsubreg) \cong \vir_{2, 5}$ is different.

We close this section with a remark on the case $\fing = D_n$ where $n \equiv 2 \bmod{3}$, and $k = -h+p/q$ where $q=2n-4$, and $p \geq h^\vee$ is coprime to $q$. By the proof of Proposition \ref{prop:DEbigsmall} we have an isomorphism of fusion rings
\begin{align*}
\CF(\W_k(\fing, \fsubreg)) \cong \CF(C_q) \otimes \CF(V_{p-h^\vee}(\fing))^{\text{int}}.
\end{align*}
Since $h^\vee+1$ is not coprime to $q$ we cannot set $p=h^\vee+1$ and thereby identify the fusion ring $\CF(C_q)$ as the fusion ring of a subregular $W$-algebra. Nevertheless we may compute the fusion rules of $\CF = \CF(C_q)$ explicitly for small $n$. For $\fing = D_5$ for instance $\CF$ is the group ring of $\Z/2\Z$ with its canonical basis. In general $\CF$ contains $n-3$ simple objects (naturally indexed by $\eta = \rho-\varpi_i$, where $\varpi_i$ are those fundamental weights of $D_n$ with Kac label $a_i^\vee = 2$). Based on explicit computation of $\CF$ for low ranks, we propose the following conjecture.
\begin{conj}\label{conj:Dtype}
For each positive integer $m$ there exists a rational lisse vertex algebra of central charge $c = 13 - 6(m + 1/m)$ whose $r = 3m-1$ irreducible modules, denoted $[i]$ for $i=0,1,\ldots, r$, have the following fusion rules:
\begin{align*}
[i] \boxtimes [j] \cong \bigoplus_{\substack{|i-j| \leq k \leq \min\{i+j, r-i-j\} \\ k \equiv i+j \bmod 2 }} [k].
\end{align*}
\end{conj}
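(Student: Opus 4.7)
The plan is to reduce the conjecture to an explicit $S$-matrix computation in the subregular setting of Section \ref{sec:DE.type}, and then to exhibit a realization of the predicted fusion ring by a rational lisse vertex algebra. The explicit fusion pattern, together with the discussion preceding the conjecture, indicates that the conjectured algebras $\mathcal{W}_m$ should govern the factor $\CF(C_q)$ arising in the $D_n$ subregular case with $n = 3m+2$ and $q = 2n - 4 = 6m$, precisely the case not covered by Proposition \ref{prop:DEbigsmall}. Thus I interpret the conjecture as giving an intrinsic vertex algebra realization of $\CF(C_q)$ in this residual family.

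First I would parametrize the set $\cpps^q \cap Q$ for $\fing = D_{3m+2}$ at $q = 6m$, obtaining a combinatorial indexing by $i = 0, 1, \ldots, 3m-2$ matching the conjectured module count, and then compute $C^{\eta, \eta'}_q$ entrywise via \eqref{eq:Smatrix.quasi-general.constant}. The ingredients, namely the trivalent-node root $\alstar$, the Weyl subset $\Wsr$, and a choice of $y^{(\eta)} \in W$ satisfying \eqref{eq:yeta.condition}, are all explicit in type $D$. A direct calculation, aided by simplifications special to $D_n$, should yield closed forms for the entries and thereby allow the Verlinde formula to be applied to $C_q^{\Z}$, producing fusion coefficients $N_{ij}^k$ in closed form. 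I would then match these to the formula of the conjecture, first verifying for low $m$ by direct computation (extending the $m=1$, $D_5$ case noted in the text) and then identifying the general pattern via an $\widehat{\mathfrak{sl}}_2$-like alcove argument: the unusual bound $k \leq r - i - j$ should reflect the interplay of the single ``active'' Weyl reflection at $\alstar$ with the $q$-alcove structure, producing the claimed truncated fusion.

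The more delicate step is to exhibit a rational lisse vertex algebra $\mathcal{W}_m$ realizing this fusion ring and having central charge $c = 13 - 6(m + 1/m)$. Observe that $c$ is of the Virasoro form $1 - 6(p - p')^2/(pp')$ at the degenerate parameter $(p, p') = (1, m)$, which suggests $\mathcal{W}_m$ should be an extension of a ``$(1, m)$-type'' Virasoro vertex algebra by additional currents of appropriate conformal weight. In parallel with the factorization $\CF(\W_k(D_n, \fsubreg)) \cong \CF(C_q) \otimes \CF(V_{p-h^\vee}(D_n))^{\text{int}}$ at any admissible level $k = -h^\vee + p/q$ with $(p, |J|) = 1$, I would attempt to construct $\mathcal{W}_m$ as the commutant inside $\W_k(D_{3m+2}, \fsubreg)$ of a subalgebra carrying the $V_{p-h^\vee}(D_n)^{\text{int}}$ factor, or dually as a simple-current extension of a suitable Virasoro-type vertex algebra.

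The main obstacle is this last step: constructing $\mathcal{W}_m$ uniformly in $m$ and proving its rationality and $C_2$-cofiniteness. Steps $1$ and $2$ are algebraic in nature, and at least for small $m$ the $C_q$ computation and Verlinde check are accessible by direct calculation and should confirm the conjectured fusion pattern. For $m = 1$ the vertex algebra is expected to be the classical rank-one root lattice vertex algebra $V_{\sqrt{2}\Z}$, while for $m \geq 2$ the candidate $\mathcal{W}_m$ appears genuinely new, with $c < 0$ and demanding non-unitary techniques. Even granting a coset realization inside an exceptional $\W$-algebra, transferring rationality and lisseness from the ambient vertex algebra to the commutant is nontrivial and would likely require invoking recent coset theorems in the style of Creutzig--Kanade--McRae.
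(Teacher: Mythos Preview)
The statement you are addressing is Conjecture~\ref{conj:Dtype}, and the paper does not prove it. The authors write ``Based on explicit computation of $\CF$ for low ranks, we propose the following conjecture,'' and offer nothing further beyond the remark that the predicted central charges coincide with those of the triplet vertex algebras. There is no paper proof to compare against.

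Your proposal is a sensible outline of an attack, and you correctly locate the genuine obstruction. Your Steps~1--2 (parametrising $\cpps^q$ for $\fing = D_{3m+2}$, $q = 6m$, computing $C_q$ via \eqref{eq:Smatrix.quasi-general.constant}, and running Verlinde) would, if carried out in closed form, verify that the $S$-matrix $C_q$ produces the stated fusion ring; this is exactly the computational evidence the paper alludes to and has already checked in low rank. But this does not produce a vertex algebra: it only exhibits an $S$-matrix and an abstract fusion ring. The existence assertion in the conjecture is the entire content, and your Step~3 is where the argument stops being a proof and becomes a wish. The commutant construction you propose is natural, but establishing that a commutant inside $\W_k(D_{3m+2}, \fsubreg)$ is itself rational and lisse, and identifying its module category, is not something the coset theorems currently available deliver in this non-unitary setting. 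Note also that the triplet algebras $\mathcal{W}(p)$, while they share the central charge $13 - 6(p + 1/p)$, have $2p$ irreducibles rather than $3p-1$, so they are not the $\mathcal{W}_m$ sought; whatever realises the conjecture for $m \geq 2$ is genuinely new. Your plan does not close this gap, and the conjecture remains open.
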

Curiously the central charges that appear here are the central charges of the triplet vertex algebras.


\section{Sporadic Isomorphisms}\label{sec:excep}

In this section we explain the final column of Table \ref{table:php1}. First we fix some notation regarding the Virasoro minimal models. Recall that
\begin{align*}
\vir_{p, q} = H^0_{\freg}(V_{-2+p/q}(A_1))
\end{align*}
is a rational vertex algebra of central charge $c_{p, q} = 1 - 6(p-q)^2/pq$. It has $(p-1)(q-1)/2$ irreducible modules, all of the form
\begin{align*}
L(r, s) = H^0_{\freg, -}(L(\la)) \quad \text{where} \quad \la = k\La_0 + \left[s-1-r(p/q)\right]\varpi_1.
\end{align*}
Here $1 \leq r \leq q-1$ and $1 \leq s \leq p-1$ and $L(r, s) \cong L(q-r, p-s)$. The conformal dimensions of the irreducible modules are given by
\begin{align}\label{eq:min.mod.cw}
\D(L(r, s)) = h_{r, s} = \frac{(pr-qs)^2-(p-q)^2}{4pq}.
\end{align}

The effective central charge of a rational vertex algebra $V$ is by definition
\begin{align*}
c^{\text{eff}} = c - 24h_{\text{min}},
\end{align*}
where $h_{\text{min}}$ is the minimal conformal dimension of the irreducible $V$-modules.

Next let $M = \bigoplus_{n \in h + \Z_+} M_n$ be a graded vector space for which $\chi_M(\tau) = \sum_{n=0}^\infty \dim(M_n) q^{h+n}$ is convergent. If for some constants $A$, $\beta$ and $g$, the character $\chi_M$ has the following asymptotic behaviour
\begin{align}\label{eq:asy.datum}
\chi(it) \sim A t^\beta e^{\pi g / 12t},
\end{align}
then one says that $M$ has \emph{asymptotic growth} $g$ and \emph{asymptotic dimension} $A$.

The following proposition is a well known consequence of the modular invariance of characters of rational vertex algebras \cite{Zhu96}.
\begin{prop}
Let $V$ be a rational lisse vertex algebra of CFT type. The asymptotic growth of an irreducible $V$-module does not exceed the effective central charge of $V$, and equality occurs if all entries of the $S$-matrix of $V$ are nonzero.
\end{prop}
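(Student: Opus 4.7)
The plan is to apply Zhu's modular invariance of characters to transform $\chi_M$ evaluated at $\tau = it$ with $t \to 0^+$ into a linear combination of characters evaluated at $\tau = i/t$ with $i/t \to i\infty$, and then extract the dominant exponential.

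First I would invoke the $S$-transformation to write
\begin{align*}
\chi_M(it) \;=\; \chi_M\bigl(-1/(i/t)\bigr) \;=\; \sum_N S_{MN}\,\chi_N(i/t),
\end{align*}
where the sum runs over representatives $N$ of the simple $V$-modules. Next I would expand each character via its $q$-series $\chi_N(\tau) = q^{h_N - c/24}(\dim(N_{\text{top}}) + O(q))$ with $q = e^{2\pi i\tau}$, so that
\begin{align*}
\chi_N(i/t) \;=\; e^{-2\pi(h_N - c/24)/t}\bigl(\dim(N_{\text{top}}) + O(e^{-2\pi/t})\bigr).
\end{align*}
Writing $h_{\min} = \min_N h_N$, so that $c^{\text{eff}} = c - 24 h_{\min}$, and letting $\mathcal{E}$ denote the set of simple modules attaining this minimum, substitution yields
\begin{align*}
\chi_M(it) \;=\; e^{\pi c^{\text{eff}}/(12t)}\Biggl(\, \sum_{N \in \mathcal{E}} S_{MN}\, \dim(N_{\text{top}}) \;+\; O(e^{-\varepsilon/t})\Biggr)
\end{align*}
for some $\varepsilon > 0$. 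This shows immediately that the asymptotic growth of $\chi_M$ is at most $c^{\text{eff}}$, proving the first assertion.

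For the second assertion, equality holds precisely when the leading coefficient $\sum_{N \in \mathcal{E}} S_{MN}\dim(N_{\text{top}})$ is nonzero. Since $\chi_M(it) > 0$ for all $t > 0$, this coefficient is forced to be nonnegative (a negative value would contradict positivity for small $t$), so the only way equality can fail is by exact cancellation of terms. If every entry of $S$ is nonzero and $\mathcal{E}$ consists of a single module, the coefficient is manifestly nonzero and we are done. The main obstacle is therefore to exclude cancellation when $|\mathcal{E}| > 1$; this is handled either by uniqueness of the module of minimal conformal weight, which holds in the cases of interest for this paper, or by a Perron--Frobenius-type positivity argument applied to the $S$-matrix.
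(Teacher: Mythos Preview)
The paper does not give a proof of this proposition; it simply records it as a ``well known consequence of the modular invariance of characters'' with a citation to Zhu. Your argument is precisely the standard derivation that underlies this folklore statement, so there is nothing to compare against.

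Your flagging of the possible cancellation when $|\mathcal{E}|>1$ is honest and correct: nonvanishing of each individual $S_{MN}$ does not by itself rule out $\sum_{N\in\mathcal{E}} S_{MN}\dim(N_{\text{top}})=0$, and the statement as written does not literally exclude this. The hedge you offer---that uniqueness of the module of minimal conformal weight holds in the cases of interest---is exactly right for the paper's applications. The proposition is only invoked for irreducible modules over Virasoro minimal models $\vir_{p,q}$, where the $S$-matrix entries are nonzero products of sines and the module of minimal conformal weight is visibly unique from the formula \eqref{eq:min.mod.cw} for $h_{r,s}$ (it is the unique $L(r_0,s_0)$ with $|pr_0-qs_0|=1$). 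So your argument is complete for the paper's purposes, and the general assertion is in any case stated at the level of folklore.
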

The asymptotic growth $g$ and asymptotic dimension $A$ of $\W_{-h^\vee+p/q}(\fing, f)$ are given in {\cite[Theorem 2.16]{KacWak08}} as
\begin{align}\label{eq:as.dim.fla}
g = \dim(\fing^f) - \frac{h^\vee}{pq}\dim(\fing)
\end{align}
and, if $f$ possesses a good even grading,
\begin{align}\label{eq:amp.fla}
A = \frac{1}{(pq)^{\ell/2} |P/Q^\vee|^{1/2}} \cdot \frac{1}{q^{|\D_{+, 0}|}} \prod_{\al \in \D_{>0}} 2 \sin\left(\frac{\pi}{q}(\al, x_0)\right) \cdot \prod_{\al \in \D_{+}} 2 \sin\left(\frac{\pi}{p}(\al, \rho)\right).
\end{align}
The asymptotic dimension  of the $\vir_{p, q}$-module $L(r, s)$ is
\begin{align}\label{eq:Vir.amp}
A_{p, q}^{L(r, s)} = (8/pq)^{1/2} (-1)^{(r+s)(r_0+s_0)} \sin\left(\pi\frac{p-q}{q} rr_0\right) \cdot \sin\left(\pi\frac{p-q}{p} ss_0\right),
\end{align}
where $r_0$ and $s_0$ are positive integers characterised by $r_0 < q$, $s_0 < p$ and $r_0p - s_0 q = 1$ \cite{KWPNAS}.

\begin{prop}\label{prop:sporadic.isom}
For each entry in Table \ref{table:php1} for which $c^{\text{eff}} < 1$, the isomorphism type of $\W_{-h^\vee+p/q}(\fing, \fsubreg)$ is as listed in the final column.
\end{prop}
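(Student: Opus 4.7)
The plan is to separate the $c^{\text{eff}} < 1$ entries of Table \ref{table:php1} into four groups and handle each uniformly: (I) the four entries with $c=0$ (namely $A_n$ at $(n+1)/n$, $E_6$ at $13/9$, $E_7$ at $19/14$, $E_8$ at $31/24$), where the claim is $\W \cong \C$; (II) the entries whose target is a Virasoro minimal model $\vir_{p',q'}$ (the two $D_n$ families, $E_7$ at $19/15$, and $E_8$ at $31/25$ and $31/26$); (III) the entry $E_6$ at $13/10$ with target the exceptional principal $\W$-algebra $\W_{-7/4}(A_2,\freg)$; and (IV) the entry $E_6$ at $12/11$ whose target is the simple-current extension $\vir_{3,22} \oplus L(21,1)$.

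For type (I), each listed entry has $p = h^\vee+1$, so by Theorem \ref{thm:Tomoyuki.thm.8.5} (together with the explicit module count already recorded in the table) the $\W$-algebra has a unique irreducible module, namely the vacuum, and $c^{\text{eff}} = c = 0$. The vacuum character $\chi_\W(\tau)$ is then a holomorphic $SL_2(\Z)$-invariant function on the upper half-plane with $\lim_{\operatorname{Im}\tau \to \infty} \chi_\W(\tau) = 1$ and subexponential growth at all cusps. It is therefore identically $1$, forcing $\W_n = 0$ for $n \geq 1$, hence $\W \cong \C$.

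For types (II)--(IV) the strategy is uniform. First, verify that the central charge of $\W$ and its number of irreducible modules (both read directly from Table \ref{table:php1} and obtainable from Theorem \ref{thm:Tomoyuki.thm.8.5}) agree with those of the claimed target. Second, compute the conformal weights of all irreducible $\W$-modules from \eqref{eq:confdim.formula} applied to the parametrisation of $[\prin^k_\circ]$ given by Theorem \ref{thm:Tomoyuki.thm.8.5}, and match them against the target: in case (II) with the minimal-model weights $h_{r,s}$ of \eqref{eq:min.mod.cw}; in case (III) with the six conformal weights of the Kac--Wakimoto $\W_{-7/4}(A_2,\freg)$-modules computed in \cite{A2012Dec}; in case (IV) with the weights of the seven irreducible modules of $\vir_{3,22} \oplus L(21,1)$, where the decisive integrality check is $h_{21,1} = 5 \in \Z$. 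Third, as a further consistency check, compute the asymptotic dimension of the vacuum via \eqref{eq:amp.fla} and compare with \eqref{eq:Vir.amp} (or its analogue for the target). Finally, promote the coincidence of numerical data to a vertex-algebra isomorphism by observing that the conformal vector of $\W$ generates a Virasoro subalgebra whose image in the simple quotient is a rational lisse Virasoro vertex algebra of $c < 1$, hence a minimal model $\vir_{p',q'}$. In case (II), this embedding combined with rationality and the matching module count forces $\W \cong \vir_{p',q'}$. In (III), both $\W$ and $\W_{-7/4}(A_2,\freg)$ appear as $c = 4/5$ extensions of $\vir_{5,6}$, and the matching modular data pin down the same extension. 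In (IV), the quotient $\W/\vir_{3,22}$ is a semisimple $\vir_{3,22}$-module whose summands must have integer conformal weight, and character matching, using Theorem \ref{thm:modular-invariance}, identifies it as $L(21,1)$.

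The main obstacle is the final step: converting equality of modular and numerical invariants into a genuine vertex algebra isomorphism. In case (II) this rests on the uniqueness of rational lisse self-dual vertex algebras of $c < 1$ with prescribed character, essentially the classification of such objects as Virasoro minimal models. In (III) one uses the short classification of $c = 4/5$ rational CFTs extending $\vir_{5,6}$; both $\W_{-7/4}(A_2,\freg)$ and our $\W$ appear on this list and are distinguished from the other extensions by their $S$-matrices and conformal weights. For (IV), after embedding $\vir_{3,22}$ into $\W$, the admissible simple-current extensions are constrained by integrality of the conformal weight of the extension vector, and the only candidate compatible with the character and conformal-weight data is $L(21,1)$ of weight $5$.
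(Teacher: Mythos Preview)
Your overall architecture---embed a Virasoro minimal model into $\W$ via the conformal vector and then identify the extension---is the paper's, but the execution has a genuine gap precisely at the step you flag as the ``main obstacle,'' and the fix you sketch is not the one that works.

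The unjustified assertion is that ``the conformal vector of $\W$ generates a Virasoro subalgebra whose image in the simple quotient is a rational lisse Virasoro vertex algebra of $c<1$, hence a minimal model.'' A subalgebra of a rational lisse vertex algebra is not automatically rational or lisse; the subalgebra generated by $L$ is \emph{a priori} only some quotient of the universal Virasoro vertex algebra $\vir^c$, and you must exclude $\vir^c$ itself. The paper does this with asymptotic growth: by \cite[Proposition 4(b)]{KWPNAS} the universal $\vir^c$ has asymptotic growth $1$, whereas $\W$ has asymptotic growth $c^{\text{eff}}<1$, so the Virasoro subalgebra cannot be $\vir^c$ and must be the simple quotient $\vir_{p',q'}$. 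You never invoke growth for this purpose.

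After embedding $\vir_{p',q'}$, you appeal in (II) to ``rationality and the matching module count,'' and more generally to a classification of ``rational lisse self-dual vertex algebras of $c<1$ with prescribed character.'' Neither is available in the form you need: the number of irreducible $\W$-modules says nothing direct about the decomposition of $\W$ as a $\vir_{p',q'}$-module. The paper's decisive tool is the asymptotic \emph{dimension}, which you demote to a ``consistency check.'' Since every irreducible $\vir_{p',q'}$-module has the same asymptotic growth $g$ and strictly positive asymptotic dimension, the asymptotic dimension of $\W$ equals the sum over its $\vir_{p',q'}$-constituents. The paper computes both sides explicitly via \eqref{eq:amp.fla} and \eqref{eq:Vir.amp}: in case (II) the asymptotic dimension of $\W$ equals that of $\vir_{p',q'}$ alone, leaving no room for extra summands; in (III) and (IV) the excess is exactly the asymptotic dimension of one copy of $L(5,1)$, respectively $L(21,1)$. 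For (III) the paper then concludes by uniqueness of simple current extensions \cite[Proposition 5.3]{DM2004}, not by a classification of $c=4/5$ CFTs.

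Your handling of the $c=0$ cases in (I) is fine and in fact supplements the paper, which treats the $A_n$ case elsewhere and does not spell out the $E$-type $c=0$ entries.
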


\begin{proof}
First we consider the case $\fing = D_n$, $q = 2n-4$ (for $n \not\equiv 2 \bmod{3}$). By (\ref{eq:cc.formula}) and (\ref{eq:as.dim.fla}) the central charge $c$ and asymptotic growth $g$ of $\W = \W_{k}(\fing, \fsubreg)$ are
\begin{align*}
c = -\frac{2n^2-21n+52}{n-2} \quad \text{and} \quad g = 1 - \frac{2}{n-2}.
\end{align*}
These coincide with the values for $\vir_{3, n-2}$.

Let $N \subset \W$ be the vertex subalgebra generated by the Virasoro vector. 
By the representation theory of the Virasoro algebra, 
$N$ is either the universal Virasoro vertex algebra 
$\vir^{2, 2n-3}$ of the central charge $c$, 
or its simple quotient $\vir_{3, n-2}$. However by {\cite[Proposition 4(b)]{KWPNAS}} the asymptotic growth of $\vir^{3, n-2}$ is $1$ which is greater than the asymptotic growth of $\W$. Thus we cannot have $N \cong \vir^{3, n-2}$ and so in fact $N \cong \vir_{3, n-2}$. Since $\vir_{3, n-2}$ is a rational vertex algebra, its embedding into $\W$ induces a decomposition of the latter into a direct sum of irreducible $\vir_{3, n-2}$-modules. The direct sum is finite since $\W$ has finite dimensional graded pieces.

Each irreducible $\vir_{3, n-2}$-module has $\beta = 0$ in \eqref{eq:asy.datum} and nonzero asymptotic dimension, and all have the same asymptotic growth $g$. It follows that the asymptotic dimension of $\W$ equals the sum of the asymptotic dimensions of the modules in its decomposition, and so to prove $\W \cong \vir_{3, n-2}$ is suffices to prove equality of their asymptotic dimensions. By equation (\ref{eq:Vir.amp}) the asymptotic dimension of $\vir_{3, n-2}$ is
\begin{align*}
A_{3, n-2}^{L(1, 1)} = \frac{2}{q^{1/2}} \sin\left(\frac{2\pi}{q} \right).
\end{align*}



To compute the asymptotic dimension $A$ of $\W$ we recall that $\D_{>0} = \D_+ \backslash \{\alstar\}$ and $x_0 = \rho - \varpi_*$. We count, for each $m \in \Z_+$, the number of $\al \in \D_{>0}$ satisfying $(\al, x_0) = m$ and we deduce
\begin{align*}
\prod_{\al \in \D_{>0}} 2\sin\left(\frac{\pi}{q}(\al, x_0)\right) = 4 \sin\left(\frac{\pi}{q}\right) \sin\left( \frac{(q/2-1)\pi}{q}\right) \left[ \prod_{k=1}^{q-1} 2\sin\left(\frac{\pi k}{q}\right) \right]^{n/2+1} \cdot \left[ \prod_{k=1}^{q/2-1} 2\sin\left(\frac{2 \pi k}{q}\right) \right]^{-1/2}.
\end{align*}
Using the identity $\prod_{k=1}^{r-1} 2\sin(\pi k / r) = r$, the product reduces to
\begin{align*}
4(q)^{n/2+1} (q/2)^{-1/2} \sin\left(\frac{2\pi}{q} \right)
\end{align*}
Similarly the product over $\D_+$ reduces to $p^{n/2}$. Substituting into (\ref{eq:amp.fla}) yields $A = A_{3, n-2}^{L(1, 1)}$ as required.

Next we consider the case $\fing = D_n$, $q = 2n-3$. As above we conclude that $\W = \W_{k}(\fing, \fsubreg)$ is an extension of $\vir_{2, 2n-3}$ and computation of asymptotic dimensions reveals $\W \cong \vir_{2, 2n-3}$. 
For $(\fing, q)$ one of the pairs $(E_7, 15)$, $(E_8, 25)$ or $(E_8, 26)$ the proof is again the same.

Now we consider the case $\W = \W_{-h^\vee+12/11}(E_6, \fsubreg)$ which, by the same arguments as above, is an extension of $\vir_{3, 22}$. The unique irreducible $\vir_{3, 22}$-module with conformal dimension $\D \in \Z_{\geq 1}$ is $L(21, 1)$ with $\D=5$. Thus $\W \cong \vir_{3, 22} \oplus L(21, 1)^{\oplus n}$ for some $n \in \Z_+$. Comparison of asymptotic dimensions reveals $n = 1$.

The final case $\fing = E_6$, $q = 10$ is quite similar to the last case. Analysis of central charges, asymptotic growths and asymptotic dimensions reveals that $\W_{-h^\vee+13/10}(E_6, \fsubreg)$ decomposes as $\vir_{5, 6} \oplus L(5, 1)$. The same arguments imply that $\W_{-7/4}(A_2, \freg) = \vir_{5, 6} \oplus L(5, 1)$ (a fact well known in the physics literature \cite[pp. 227]{DMS.CFT}). Now the module $L(5, 1)$ is a simple current, so by uniqueness of simple current extensions \cite[Proposition 5.3]{DM2004} we obtain the claimed isomorphism.
\end{proof}

\section{New Modular Tensor Categories}

We have computed the fusion rules of $\W_{-h^\vee+p/q}(\fing, \fsubreg)$ in terms of the fusion rules of the affine vertex algebra $V_{p-h^\vee}(\fing)$ and those of the vertex algebras listed in Table \ref{table:php1}. We have identified most of these vertex algebras, the remaining cases being $E_6$, $p/q=12/11$ and the five algebras with asymptotic growth greater than $1$. We compute the fusion rings of these vertex algebras from their $S$-matrices. In this section we present in detail the cases $E_6$, $p/q=12/11$ and $E_7$, $p/q=19/16$.

A vertex algebra $V$ is said to be \emph{positive} if every irreducible $V$-module besides $V$ itself has positive conformal dimension. We observe that the vertex algebras $U_6 = \W_{-h^\vee+15/11}(E_6, \fsubreg)$ and $U_7 = \W_{-h^\vee+21/16}(E_7, \fsubreg)$ are positive. It is natural to expect the following.
\begin{conj}\label{conj}
The vertex operator algebras $U_6$ and $U_7$ 
are unitary.
\end{conj}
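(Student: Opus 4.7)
The plan is to establish unitarity in two stages: first verify the combinatorial conditions for unitarity at the level of the modular tensor category, and then construct a positive definite invariant Hermitian form on the vertex algebra itself.

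For the first stage, note that since $U_6$ and $U_7$ are positive by observation, the vacuum supplies the minimum conformal weight, so $c = c^{\text{eff}}$, which is a necessary condition for unitarity. Using the explicit $S$-matrix of Theorem \ref{thm:Smatrix.quasi-general} one computes the quantum dimensions $d_M = S_{0,M}/S_{0,0}$ of all irreducible modules and verifies that they are totally positive real algebraic integers, which is necessary for a unitary MTC. Additional consistency checks include the Gauss sum identity relating $p^\pm = \sum_M d_M^2 e^{\pm 2\pi i h_M}$ to $e^{\pi i c/4}$, and the Anderson--Moore--Vafa constraints on the twists $\theta_M = e^{2\pi i h_M}$. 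These verifications reduce to direct numerical computations from the formula \eqref{eq:Smatrix.quasi-general.constant} for the degenerate block $C_q$ combined with the known structure of the affine factor.

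For the second stage---constructing the Hermitian form---the most promising avenues are either an identification of $U_6, U_7$ with extensions, cosets, or orbifolds of manifestly unitary vertex algebras, or a physics construction via the 4d/2d duality. The central charges $c(U_6) = 128/55$ and $c(U_7) = 15/8$ are small rationals, making coset or simple-current-extension identifications a natural first search, along the lines of Proposition \ref{prop:sporadic.isom} but with potentially more elaborate unitary building blocks. Alternatively, the introduction notes that exceptional $\W$-algebras at boundary admissible levels arise as Schur sectors of 4d $\CN = 2$ superconformal field theories of Argyres--Douglas type; if $U_6$ or $U_7$ can be matched with such a theory, positivity of the Schur index together with results relating unitarity of a chiral algebra to unitarity of the parent 4d theory should yield the conclusion.

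The principal obstacle is the passage from MTC data to a concrete Hermitian form. Unitarity of the MTC is necessary but not sufficient, and the Hamiltonian reduction $\W_k(\fing, f) = H^0_{f, -}(V_k(\fing))$ does not directly supply a form, since $V_k(\fing)$ at an admissible non-integer level is not itself unitary and so one cannot simply push a form through the BRST complex. Identifying the correct realization of $U_6$ and $U_7$ is likely to be the hardest step: their fusion rings, computed from the $S$-matrix by the methods of Section \ref{sec:Smat.subreg}, do not obviously coincide with those of any familiar unitary vertex algebra, which is precisely the reason they are expected to give genuinely new unitary modular tensor categories.
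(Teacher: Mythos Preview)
The statement in question is a \emph{conjecture} in the paper, not a theorem: the paper does not supply a proof of the unitarity of $U_6$ and $U_7$, and indeed the introduction flags these as conjecturally giving rise to new unitary modular tensor categories. There is therefore no ``paper's own proof'' against which to compare your proposal.

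Your proposal is accordingly not a proof either, but a research outline, and you are candid about this in your final paragraph. The first stage you describe---checking the MTC-level necessary conditions---is reasonable, and the paper itself carries out part of it, verifying directly after the conjecture that the quantum dimensions satisfy $\qdim(L)\geq 1$ via the $S$-matrix formulas \eqref{eq:S.U}. But, as you note, unitarity of the associated MTC is necessary and not sufficient for unitarity of the vertex algebra. Your second stage correctly identifies the genuine obstruction: one must produce an actual positive definite invariant Hermitian form on $U_6$ and $U_7$, and neither Hamiltonian reduction (the input $V_k(\fing)$ is not unitary at these levels) nor the general machinery of the paper supplies one. The two routes you suggest---an identification with a manifestly unitary construction, or input from a 4d $\mathcal{N}=2$ realisation---are exactly the sort of argument that would be needed, but neither is executed, and the paper does not execute them either. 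In short, your proposal accurately diagnoses why the statement remains a conjecture; it does not close the gap, and should not be read as a proof.
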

We remark that the analogue $U_8 = \W_{-h^\vee+32/25}(E_8, \fsubreg)$ is isomorphic to the unitary theory $\vir_{4, 5}$. 
The MTCs associated with $U_6$ and $U_7$, which we also believe to be unitary, appear to be new and interesting. We recall that the quantum dimension of an irreducible $V$-module $L$ is $S_{V,L}/S_{V,V}$ where $S$ is the $S$-matrix of $V$. By results of \cite{DJX.qdim} the quantum dimensions of all irreducible $U_6$- and $U_7$-modules $L$ must satisfy $\qdim(L) \geq 1$. By Theorem \ref{prop:DEbigsmall} the $S$-matrices are
\begin{align}\label{eq:S.U}
\begin{split}
S^{(U_6)} &= S^{15, 11}_{\text{sr}} = \varphi_3(S^{12, 11}_{\text{sr}}) \otimes \varphi_{11}(K_{15}(E_6)) \\
\text{and} \quad S^{(U_7)} &= S^{21, 16}_{\text{sr}} = \varphi_3(S^{19, 16}_{\text{sr}}) \otimes \varphi_{16}(K_{21}(E_7)^{\text{int}}),
\end{split}
\end{align}
and the relation $\qdim(L) \geq 1$ is verified by direct computation of quantum dimensions from (\ref{eq:S.U}).

To compute the fusion rules of $U_6$ and $U_7$ it suffices to compute the fusion rules of $V_6 = \W_{-h^\vee+12/11}(E_6, \fsubreg)$ and $V_7 = \W_{-h^\vee+19/16}(E_7, \fsubreg)$.

The vertex algebra $V_6$ is an extension of $V^0 = \vir_{3, 22}$ by its simple current $M = L(21, 1)$. We note that $M \boxtimes L(r, 1) \cong L(22-r, 1)$. The $7$ irreducible $V_6$-modules are obtained as follows: the fusion product $V_6 \boxtimes_{V^0} L(r, 1)$ is an irreducible $V_6$-module for $1 \leq r \leq 9$ odd, while for $r=11$ it decomposes into two irreducible $V_6$-modules consisting of symmetric resp. antisymmetric tensors. We also note that $V_6$ carries a $\Z/2$-action by virtue of the decomposition $V_6 = V^0 \oplus M$, and for $r$ even $V_6 \boxtimes_{V^0} L(r, 1)$ is an irreducible $\Z/2$-twisted $V$-module.

The fusion rules of $V_6$ can be compactly described as follows. We write $[i] = V \boxtimes_{V^0}L(2i+1, 1)$ for $0 \leq i \leq 5$ and $[5] = [5_+] \oplus [5_-]$. Then $[5_+]' = [5_-]$ and all other $[i]$ are self dual. The conformal dimensions are given by $\D([i]) = i(3i-19)/22$. For $0 \leq i, j \leq 4$ we have $[i] \boxtimes [j] = \bigoplus_{|i-j|\leq k \leq i+j} [k]$, where we identify $[5+\epsilon]$ with $[5-\epsilon]$. Also $[5_{\pm}] \boxtimes [i] = [5_{\pm(-1)^i}] + \bigoplus_{5-i \leq k \leq 4} [k]$. Finally $[5_\pm] \boxtimes [5_\pm] = [1] \oplus [3] \oplus [5_{\mp}]$ and $[5_+] \boxtimes [5_-] = [0] \oplus [2] \oplus [4]$.

The quantum dimensions of the irreducible $V_6$-modules lie in the cyclotomic field $\Q(\zeta)$ of degree $11$ and are given explicitly as:
\begin{align*}
\qdim([0]) &= 1, &
\qdim([1]) &= -\zeta^7 - \zeta^4 + 1, \\
\qdim([2]) &= \zeta^8 - \zeta^7 - \zeta^4 + \zeta^3 + 1, &
\qdim([3]) &= \zeta^9 + 2\zeta^8 + \zeta^6 + \zeta^5 + 2\zeta^3 + \zeta^2 + 2, \\
\qdim([4]) &= \zeta^9 + 2\zeta^8 + 2\zeta^6 + 2\zeta^5 + 2\zeta^3 + \zeta^2 + 2, &
\qdim([5_\pm]) &= \zeta^8 + \zeta^6 + \zeta^5 + \zeta^3 + 1.
\end{align*}

We now examine the vertex algebra $V_7$. Since the asymptotic growth of $V_7$ is $9/8 > 1$, it is not a finite extension of a Virasoro minimal model. The central charge is $c = -135/8$ and it has $13$ irreducible modules, parametrised by $\cpps^{16}$. We denote by $\sigma$ the nontrivial diagram automorphism of the Dynkin diagram of $E_7$.
\begin{center}
  \begin{tikzpicture}[scale=.4]
    \draw (-1,1) node[anchor=east]  {};
    \foreach \x in {-1,...,5}
    \draw[thick,xshift=\x cm] (\x cm,0) circle (3 mm);
    \foreach \y in {0,...,4}
    \draw[thick,xshift=\y cm] (\y cm,0) ++(.3 cm, 0) -- +(14 mm,0);
    \draw[thick] (4 cm,2 cm) circle (3 mm);
    \draw[thick] (4 cm, 3mm) -- +(0, 1.4 cm);
    \draw[thick,dashed,xshift=-1 cm] (-1 cm,0) ++(.3 cm, 0) -- +(14 mm,0);
    \draw[xshift=0 cm,thick] node[below=8pt,fill=white] {\scriptsize{$1$}};
    \draw[xshift=2 cm,thick] node[below=8pt,fill=white] {\scriptsize{$3$}};
    \draw[xshift=4 cm,thick] node[below=8pt,fill=white] {\scriptsize{$4$}};
    \draw[xshift=6 cm,thick] node[below=8pt,fill=white] {\scriptsize{$5$}};
    \draw[xshift=8 cm,thick] node[below=8pt,fill=white] {\scriptsize{$6$}};
    \draw[xshift=10 cm,thick] node[below=8pt,fill=white] {\scriptsize{$7$}};
    \draw[xshift=4 cm, yshift=2cm, thick] node[left=8pt,fill=white] {\scriptsize{$2$}};
    \draw[xshift=-2 cm,thick] node[below=8pt,fill=white] {\scriptsize{$0$}};
  \end{tikzpicture}
\end{center}
We list the $13$ weights $\wh\eta = 16\Lambda_0 + \eta$ where $\eta \in \cpps^{16}$: they are
\begin{align*}
\widehat\eta_{0} &= (1; 1, 1, 1, 0, 1, 1, 3 ), &
\widehat\eta_{1} &= (1; 2, 1, 1, 0, 1, 1, 1 ), \\
\widehat\eta_{2} &= (1; 0, 1, 1, 1, 1, 1, 1 ), &
\widehat\eta_{3} &= (1; 1, 1, 1, 1, 0, 1, 2 ), \\
\widehat\eta_{4} &= (1; 1, 1, 0, 1, 1, 1, 2 ), &
\widehat\eta_{5} &= (1; 1, 2, 1, 0, 1, 1, 1 ), \\
\widehat\eta_{6} &= (1; 1, 0, 1, 1, 1, 1, 1 ), &
\widehat\eta_{7} &= (2; 1, 1, 1, 0, 1, 1, 2 ),
\end{align*}
together with $\widehat\eta_{12-i} = \sigma(\widehat\eta_{i})$ for $i=0,1,2,3,4$. Let $M_i$ denote the irreducible $V$-module associated with the weight $\widehat\eta_i$. Then $M_{12}$ is a simple current of order $2$ and conformal dimension $3/2$. The fusion product with $M_{12}$ acts as $\sigma$ at the level of weights. All irreducible $V_7$-modules are self-contragredient, and the fusion rules $F_i = M_{i} \boxtimes (-)$ are as follows.
\begin{align*}
F_1 = \left[\begin{array}{ccccccccccccc}
0 & 1 & 0 & 0 & 0 & 0 & 0 & 0 & 0 & 0 & 0 & 0 & 0 \\
1 & 1 & 1 & 0 & 0 & 0 & 0 & 0 & 0 & 0 & 0 & 0 & 0 \\
0 & 1 & 1 & 0 & 0 & 0 & 0 & 0 & 0 & 1 & 0 & 0 & 0 \\
0 & 0 & 0 & 1 & 0 & 0 & 0 & 0 & 0 & 1 & 1 & 0 & 0 \\
0 & 0 & 0 & 0 & 0 & 0 & 1 & 0 & 1 & 0 & 0 & 0 & 0 \\
0 & 0 & 0 & 0 & 0 & 1 & 1 & 1 & 0 & 0 & 0 & 0 & 0 \\
0 & 0 & 0 & 0 & 1 & 1 & 1 & 0 & 1 & 0 & 0 & 0 & 0 \\
0 & 0 & 0 & 0 & 0 & 1 & 0 & 1 & 0 & 0 & 0 & 0 & 0 \\
0 & 0 & 0 & 0 & 1 & 0 & 1 & 0 & 0 & 0 & 0 & 0 & 0 \\
0 & 0 & 1 & 1 & 0 & 0 & 0 & 0 & 0 & 1 & 0 & 0 & 0 \\
0 & 0 & 0 & 1 & 0 & 0 & 0 & 0 & 0 & 0 & 1 & 1 & 0 \\
0 & 0 & 0 & 0 & 0 & 0 & 0 & 0 & 0 & 0 & 1 & 1 & 1 \\
0 & 0 & 0 & 0 & 0 & 0 & 0 & 0 & 0 & 0 & 0 & 1 & 0 \\
\end{array}\right] 
F_2 = \left[\begin{array}{ccccccccccccc}
0 & 0 & 1 & 0 & 0 & 0 & 0 & 0 & 0 & 0 & 0 & 0 & 0 \\
0 & 1 & 1 & 0 & 0 & 0 & 0 & 0 & 0 & 1 & 0 & 0 & 0 \\
1 & 1 & 1 & 1 & 0 & 0 & 0 & 0 & 0 & 1 & 0 & 0 & 0 \\
0 & 0 & 1 & 1 & 0 & 0 & 0 & 0 & 0 & 1 & 1 & 1 & 0 \\
0 & 0 & 0 & 0 & 1 & 1 & 1 & 0 & 0 & 0 & 0 & 0 & 0 \\
0 & 0 & 0 & 0 & 1 & 1 & 1 & 1 & 1 & 0 & 0 & 0 & 0 \\
0 & 0 & 0 & 0 & 1 & 1 & 2 & 1 & 1 & 0 & 0 & 0 & 0 \\
0 & 0 & 0 & 0 & 0 & 1 & 1 & 0 & 0 & 0 & 0 & 0 & 0 \\
0 & 0 & 0 & 0 & 0 & 1 & 1 & 0 & 1 & 0 & 0 & 0 & 0 \\
0 & 1 & 1 & 1 & 0 & 0 & 0 & 0 & 0 & 1 & 1 & 0 & 0 \\
0 & 0 & 0 & 1 & 0 & 0 & 0 & 0 & 0 & 1 & 1 & 1 & 1 \\
0 & 0 & 0 & 1 & 0 & 0 & 0 & 0 & 0 & 0 & 1 & 1 & 0 \\
0 & 0 & 0 & 0 & 0 & 0 & 0 & 0 & 0 & 0 & 1 & 0 & 0 \\
\end{array}\right]
\end{align*}
\begin{align*}
F_3 = \left[\begin{array}{ccccccccccccc}
0 & 0 & 0 & 1 & 0 & 0 & 0 & 0 & 0 & 0 & 0 & 0 & 0 \\
0 & 0 & 0 & 1 & 0 & 0 & 0 & 0 & 0 & 1 & 1 & 0 & 0 \\
0 & 0 & 1 & 1 & 0 & 0 & 0 & 0 & 0 & 1 & 1 & 1 & 0 \\
1 & 1 & 1 & 1 & 0 & 0 & 0 & 0 & 0 & 1 & 1 & 1 & 0 \\
0 & 0 & 0 & 0 & 1 & 1 & 1 & 1 & 0 & 0 & 0 & 0 & 0 \\
0 & 0 & 0 & 0 & 1 & 1 & 2 & 0 & 1 & 0 & 0 & 0 & 0 \\
0 & 0 & 0 & 0 & 1 & 2 & 2 & 1 & 1 & 0 & 0 & 0 & 0 \\
0 & 0 & 0 & 0 & 1 & 0 & 1 & 0 & 1 & 0 & 0 & 0 & 0 \\
0 & 0 & 0 & 0 & 0 & 1 & 1 & 1 & 1 & 0 & 0 & 0 & 0 \\
0 & 1 & 1 & 1 & 0 & 0 & 0 & 0 & 0 & 1 & 1 & 1 & 1 \\
0 & 1 & 1 & 1 & 0 & 0 & 0 & 0 & 0 & 1 & 1 & 0 & 0 \\
0 & 0 & 1 & 1 & 0 & 0 & 0 & 0 & 0 & 1 & 0 & 0 & 0 \\
0 & 0 & 0 & 0 & 0 & 0 & 0 & 0 & 0 & 1 & 0 & 0 & 0 \\
\end{array}\right] 
F_4 = \left[\begin{array}{ccccccccccccc}
0 & 0 & 0 & 0 & 1 & 0 & 0 & 0 & 0 & 0 & 0 & 0 & 0 \\
0 & 0 & 0 & 0 & 0 & 0 & 1 & 0 & 1 & 0 & 0 & 0 & 0 \\
0 & 0 & 0 & 0 & 1 & 1 & 1 & 0 & 0 & 0 & 0 & 0 & 0 \\
0 & 0 & 0 & 0 & 1 & 1 & 1 & 1 & 0 & 0 & 0 & 0 & 0 \\
1 & 0 & 1 & 1 & 0 & 0 & 0 & 0 & 0 & 0 & 0 & 1 & 0 \\
0 & 0 & 1 & 1 & 0 & 0 & 0 & 0 & 0 & 1 & 1 & 0 & 0 \\
0 & 1 & 1 & 1 & 0 & 0 & 0 & 0 & 0 & 1 & 1 & 1 & 0 \\
0 & 0 & 0 & 1 & 0 & 0 & 0 & 0 & 0 & 1 & 0 & 0 & 0 \\
0 & 1 & 0 & 0 & 0 & 0 & 0 & 0 & 0 & 1 & 1 & 0 & 1 \\
0 & 0 & 0 & 0 & 0 & 1 & 1 & 1 & 1 & 0 & 0 & 0 & 0 \\
0 & 0 & 0 & 0 & 0 & 1 & 1 & 0 & 1 & 0 & 0 & 0 & 0 \\
0 & 0 & 0 & 0 & 1 & 0 & 1 & 0 & 0 & 0 & 0 & 0 & 0 \\
0 & 0 & 0 & 0 & 0 & 0 & 0 & 0 & 1 & 0 & 0 & 0 & 0 \\
\end{array}\right]
\end{align*}
\begin{align*}
F_5 = \left[\begin{array}{ccccccccccccc}
0 & 0 & 0 & 0 & 0 & 1 & 0 & 0 & 0 & 0 & 0 & 0 & 0 \\
0 & 0 & 0 & 0 & 0 & 1 & 1 & 1 & 0 & 0 & 0 & 0 & 0 \\
0 & 0 & 0 & 0 & 1 & 1 & 1 & 1 & 1 & 0 & 0 & 0 & 0 \\
0 & 0 & 0 & 0 & 1 & 1 & 2 & 0 & 1 & 0 & 0 & 0 & 0 \\
0 & 0 & 1 & 1 & 0 & 0 & 0 & 0 & 0 & 1 & 1 & 0 & 0 \\
1 & 1 & 1 & 1 & 0 & 0 & 0 & 0 & 0 & 1 & 1 & 1 & 1 \\
0 & 1 & 1 & 2 & 0 & 0 & 0 & 0 & 0 & 2 & 1 & 1 & 0 \\
0 & 1 & 1 & 0 & 0 & 0 & 0 & 0 & 0 & 0 & 1 & 1 & 0 \\
0 & 0 & 1 & 1 & 0 & 0 & 0 & 0 & 0 & 1 & 1 & 0 & 0 \\
0 & 0 & 0 & 0 & 1 & 1 & 2 & 0 & 1 & 0 & 0 & 0 & 0 \\
0 & 0 & 0 & 0 & 1 & 1 & 1 & 1 & 1 & 0 & 0 & 0 & 0 \\
0 & 0 & 0 & 0 & 0 & 1 & 1 & 1 & 0 & 0 & 0 & 0 & 0 \\
0 & 0 & 0 & 0 & 0 & 1 & 0 & 0 & 0 & 0 & 0 & 0 & 0 \\
\end{array}\right] 
F_6 = \left[\begin{array}{ccccccccccccc}
0 & 0 & 0 & 0 & 0 & 0 & 1 & 0 & 0 & 0 & 0 & 0 & 0 \\
0 & 0 & 0 & 0 & 1 & 1 & 1 & 0 & 1 & 0 & 0 & 0 & 0 \\
0 & 0 & 0 & 0 & 1 & 1 & 2 & 1 & 1 & 0 & 0 & 0 & 0 \\
0 & 0 & 0 & 0 & 1 & 2 & 2 & 1 & 1 & 0 & 0 & 0 & 0 \\
0 & 1 & 1 & 1 & 0 & 0 & 0 & 0 & 0 & 1 & 1 & 1 & 0 \\
0 & 1 & 1 & 2 & 0 & 0 & 0 & 0 & 0 & 2 & 1 & 1 & 0 \\
1 & 1 & 2 & 2 & 0 & 0 & 0 & 0 & 0 & 2 & 2 & 1 & 1 \\
0 & 0 & 1 & 1 & 0 & 0 & 0 & 0 & 0 & 1 & 1 & 0 & 0 \\
0 & 1 & 1 & 1 & 0 & 0 & 0 & 0 & 0 & 1 & 1 & 1 & 0 \\
0 & 0 & 0 & 0 & 1 & 2 & 2 & 1 & 1 & 0 & 0 & 0 & 0 \\
0 & 0 & 0 & 0 & 1 & 1 & 2 & 1 & 1 & 0 & 0 & 0 & 0 \\
0 & 0 & 0 & 0 & 1 & 1 & 1 & 0 & 1 & 0 & 0 & 0 & 0 \\
0 & 0 & 0 & 0 & 0 & 0 & 1 & 0 & 0 & 0 & 0 & 0 & 0 \\
\end{array}\right]
\end{align*}
\begin{align*}
F_7 = \left[\begin{array}{ccccccccccccc}
0 & 0 & 0 & 0 & 0 & 0 & 0 & 1 & 0 & 0 & 0 & 0 & 0 \\
0 & 0 & 0 & 0 & 0 & 1 & 0 & 1 & 0 & 0 & 0 & 0 & 0 \\
0 & 0 & 0 & 0 & 0 & 1 & 1 & 0 & 0 & 0 & 0 & 0 & 0 \\
0 & 0 & 0 & 0 & 1 & 0 & 1 & 0 & 1 & 0 & 0 & 0 & 0 \\
0 & 0 & 0 & 1 & 0 & 0 & 0 & 0 & 0 & 1 & 0 & 0 & 0 \\
0 & 1 & 1 & 0 & 0 & 0 & 0 & 0 & 0 & 0 & 1 & 1 & 0 \\
0 & 0 & 1 & 1 & 0 & 0 & 0 & 0 & 0 & 1 & 1 & 0 & 0 \\
1 & 1 & 0 & 0 & 0 & 0 & 0 & 0 & 0 & 0 & 0 & 1 & 1 \\
0 & 0 & 0 & 1 & 0 & 0 & 0 & 0 & 0 & 1 & 0 & 0 & 0 \\
0 & 0 & 0 & 0 & 1 & 0 & 1 & 0 & 1 & 0 & 0 & 0 & 0 \\
0 & 0 & 0 & 0 & 0 & 1 & 1 & 0 & 0 & 0 & 0 & 0 & 0 \\
0 & 0 & 0 & 0 & 0 & 1 & 0 & 1 & 0 & 0 & 0 & 0 & 0 \\
0 & 0 & 0 & 0 & 0 & 0 & 0 & 1 & 0 & 0 & 0 & 0 & 0 \\
\end{array}\right]
\end{align*}

\def\cprime{$'$} \def\cprime{$'$}

\bibliographystyle{alpha}
\bibliography{/Users/tomoyuki/Documents/Dropbox/bib/math}

\end{document}